\numberwithin{equation}{section}
\setlist[enumerate,1]{label={\rm(\roman*)}, ref={\rm\roman*}}
\theoremstyle{definition}
\newtheorem{definition}{Definition}[section]
\theoremstyle{plain}
\newtheorem{proposition}[definition]{Proposition}
\newtheorem{theorem}[definition]{Theorem}
\newtheorem{corollary}[definition]{Corollary}
\newtheorem{lemma}[definition]{Lemma}
\newtheorem{conjecture}{Conjecture}
\theoremstyle{remark}
\newtheorem{remark}[definition]{Remark}
\newtheorem{example}[definition]{Example}
\newenvironment{claim}[1]{\innercustomclaim}{\endinnercustomclaim}
\newcommand{\R}{\mathbb{R}}
\newcommand{\Z}{\mathbb{Z}}
\newcommand{\C}{\mathbb{C}}
\newcommand{\cp}{\mathbb{C}P^2}
\newcommand{\ppp}{\mathbb{C}P^1\times \mathbb{C}P^1\times \mathbb{C}P^1}
\newcommand{\cpp}{\mathbb{C}P^3}
\newcommand{\pp}{\mathbb{C}P^1\times \mathbb{C}P^1}
\newcommand{\rprprp}{\mathbb{R}P^1\times \mathbb{R}P^1\times \mathbb{R}P^1}
\newcommand{\ssrp}{\mathbb{S}^2\times \mathbb{R}P^1}
\DeclareMathOperator{\Crit}{Crit}
\DeclareMathOperator{\GW}{GW}
\DeclareMathOperator{\ind}{ind}
\DeclareMathOperator{\Pic}{Pic}
\DeclareMathOperator{\Sing}{Sing}
\DeclareMathOperator{\SO}{SO}
\DeclareMathOperator{\Pin}{Pin}
\DeclareMathOperator{\Spin}{Spin}
\DeclareMathOperator{\rank}{rank}
\DeclareMathOperator{\ev}{ev}
\renewcommand{\Im}{\operatorname{Im}}
\renewcommand{\sp}{\operatorname{sp}}
\newcommand{\Sym}{\operatorname{Sym}}
\newcommand{\PD}{\operatorname{PD}}
\newcommand{\pt}{\mathrm{pt}}
\newcommand{\sing}{\mathrm{sing}}
\newcommand{\lra}{\longrightarrow}
\def\lowcong{\vbox to 0pt{\vss\hbox{$\scriptstyle\cong$}\vskip-1.5pt}}
\newcommand{\supth}[1]{\ensuremath{#1^{\mathrm{th}}}}
\title{Gromov--Witten and Welschinger invariants of del Pezzo varieties}
\author{Thi Ngoc Anh Nguyen}
\address{Faculty of Fundamental Sciences, Phenikaa University, Nguyen Trac Street, Duong Noi Ward, Hanoi 12116, Viet Nam}
\email{anh.nguyenthingoc3@phenikaa-uni.edu.vn}
\begin{document}



\maketitle

\begin{prelims}

\DisplayAbstractInEnglish

\bigskip

\DisplayKeyWords

\medskip

\DisplayMSCclass

\end{prelims}


\newpage

\setcounter{tocdepth}{1}

\tableofcontents


\section{Introduction and statement of the main results} \label{intro}

In 2016, E.~Brugall\'e and P.~Georgieva achieved a complete calculation of the genus $0$ Gromov--Witten and Welschinger invariants for $\cpp$ (see \cite{brugalle2016pencils}),  establishing a correspondence between the invariants of $\mathbb{C}P^3$ and those of $\mathbb{C}P^1 \times \mathbb{C}P^1$.  The idea comes from the observation of J.~Koll\'ar, who demonstrated that pencils of quadrics in $\mathbb{C}P^3$ can be utilized to evaluate certain enumerative invariants of $\mathbb{C}P^3$ (see \cite{kollar2015examples}). In this paper, we extend the results established in \cite{brugalle2016pencils} to other high-degree del Pezzo varieties~$X$. In this context, the occurrence of monodromy phenomena necessitates a more sophisticated approach to the enumeration of curves on pencils of surfaces. Moreover, the enumeration of real curves requires the incorporation of additional structures, such as spinor states, in order to accurately determine and compare the signs associated with real curves on varieties of different dimensions. More precisely, we compare the genus $0$ Gromov--Witten (respectively, Welschinger) invariants of $3$-dimensional (respectively, real $3$-dimensional)
del Pezzo varieties with those of non-singular (respectively, real non-singular) 
surfaces that realize half of the first Chern class of $X$, that is,  non-singular (respectively, real non-singular) surfaces in the linear system $|-\frac{1}{2}K_X|$, where $K_X$ denotes the canonical divisor of $X$. 
According to Proposition~\ref{Prop-delPezzo.surfaces}, these surfaces are themselves del Pezzo surfaces. 
The method employed in this work  (see Proposition~\ref{Prop-why.degree.is.678}) confines our attention to $3$-dimensional del Pezzo varieties of degree $6$, $7$ or $8$. In 2018,  A.~Zahariuc  adopted a similar approach to study  genus $0$ Gromov--Witten invariants  of $3$-dimensional del Pezzo varieties of degree $2$, $3$, $4$ and $5$ (see \cite{zahariuc2018rational}). It would be interesting to investigate possible connections between his formula and the one obtained in Theorem~\ref{Theorem1-GW}  of the present work. More generally, we hope to further explore relations between the Gromov--Witten and Welschinger invariants of (real) del Pezzo varieties of dimension $2$ and $3$ for arbitrary degree.

We begin by introducing some notation and summarizing the main results of this paper.

\subsubsection*{Some notations}
Throughout this text, the lower dot ``$.$'' denotes the intersection pairing between two homology classes, while the centered dot ``$\cdot$'' denotes the cup product.

\subsection{Introduction and motivation}
A Fano variety of dimension $n$, first introduced by G.~Fano in the 1930s, is a non-singular complex projective variety $X_n$ of complex dimension $n$ whose anti-canonical divisor $-K_{X_n}$ is ample. The positive integer $r>0$ for which $-K_{X_n}=rH$, where $H$ is a primitive divisor in $\Pic(X_n)$, is called the \textit{index of\, $X_n$} and is denoted by $\ind(X_n)$. An $n$-dimensional Fano variety of index divisible by $n-1$ is called an \textit{$n$-dimensional del Pezzo variety} for every $n\geq 2$. All $2$-dimensional Fano varieties are del Pezzo surfaces.  Non-singular Fano threefolds were essentially classified in the 1980s by V.\,A.~Iskovskikh and later refined by S.~Mukai. For higher dimensions, however, only partial results of classification are known, with important contributions of T.~Fujita on polarized varieties (see 
 \cite{iskovskikh2010algebraic}). The degree of an $n$-dimensional del Pezzo variety $X_n$, denoted by $\deg(X_n)$, is the integer given by $(\frac{1}{n-1})^n(-K_{X_n})^n$. It should be noted that the degree of a $3$-dimensional del Pezzo variety does not exceed $8$  (see \cite[Chapters~2 and~3]{iskovskikh2010algebraic}).

We consider a rational curve realizing a homology class $\beta\in H_2(X_n;\Z)$ as the image of a morphism $f\colon \mathbb{C}P^1\rightarrow X_n$ such that $f_*([\mathbb{C}P^1])=\beta$. Suppose that $n-1$ divides $\left(c_1(X_n)\cdot\beta+n-3\right)$. We denote by $k_\beta$ the integer $\frac{1}{n-1} \left(c_1(X_n)\cdot\beta+n-3\right)$. In general, Gromov--Witten invariants are defined using the \emph{virtual fundamental class} on the moduli space of stable maps (see \cite{KontsevichManin}). For del Pezzo surfaces and threefolds, however, the genus $0$ Gromov--Witten invariants are enumerative (see \cite{Vakil,Coskun,iskovskikh2010algebraic}). We have the following definition.

\begin{definition} \label{definitionGW}
The \emph{genus $0$ Gromov--Witten invariant of an  $n$-dimensional del Pezzo variety $X_n$ $(n\in \{2,3\})$ of homology class $\beta\in H_2(X;\Z)$},  denoted by $\GW_{X_n}(\beta)$, is the number of irreducible rational curves representing the homology class $\beta$ and passing through a generic configuration $\underline{x}_\beta$ of $k_\beta$ points in $X_n$.
\end{definition}

We denote by $(X_n,\tau)$ a real $n$-dimensional del Pezzo variety, that is, an $n$-dimensional del Pezzo variety~$X_n$ equipped with an anti-holomorphic involution  $\tau$, referred to as a real structure. The fixed locus of this involution is called the real part of $X_n$ and is denoted by $\R X_n$. An algebraic curve in  $(X_n,\tau)$  is said to be  \textit{real} if it is preserved by the real structure $\tau$. A \textit{real} configuration of points in  $(X_n,\tau)$ is a configuration of points that are either real or exchanged by the real structure $\tau$. Denote  by
$$ H_2^{-\tau}(X_n;\Z)=\{\beta \in H_2(X_n;\Z): \tau_*(\beta)=-\beta \}$$
the group of $\tau$-anti-invariant classes.
Let $\beta\in  H_2^{-\tau}(X_n;\Z)$ and let $\underline{x}_\beta$ be a real  generic configuration of $k_\beta$ points in $X_n$ consisting of $r$ real points and $l$ pairs of complex conjugate points, where $k_\beta=r+2l$. 

For each real rational curve in  $(X_n,\tau)$ passing through a real configuration of points, where the number of real points is fixed, one can associate a sign $\pm 1$, known as the \textit{Welschinger's sign} (see \cite{welschinger2005fourfolds,welschinger2005spinor}).  When these curves are counted with their associated signs, the resulting total is an invariant that does not depend on the choice of real configurations, is called the\textit{ Welschinger invariant} and is defined in the following definition. 

\begin{definition} \label{definitionW}
Let $(X_n,\tau)$ denote a real  $n$-dimensional ($n\in \{2,3\}$) del Pezzo variety whose real part is $\R X_n$.
The \emph{genus $0$ Welschinger invariant of\, $(X_n,\tau)$ of homology class $\beta\in H_2^{-\tau}(X_n;\Z)$} is the number of irreducible rational curves, counted with the Welschinger's sign (see Definitions~\ref{def-sign.curve.w.node.dim2} and~\ref{def-spinor3dim}), representing the homology class $\beta$ and passing through a real generic configuration $\underline{x}_\beta$ of $k_\beta$ points in $X_n$ consisting of $l$ pairs of complex conjugate points.

In particular, if $\R X_3$ is orientable with orientation $\mathfrak{o}_{X_3}$ and is equipped with  a $\Spin_3$-structure $\mathfrak{s}_{X_3}$, we denote this invariant by $W_{\R X_3}^{\mathfrak{s}_{X_3}, \mathfrak{o}_{X_3}}(\beta,l)$.

When $n=2$, this invariant is simply denoted by  $W_{\R X_2}(\beta,l)$.
\end{definition}
 
Welschinger invariants, introduced by J.-Y.~Welschinger in the early 2000s, are regarded as the real counterpart of the aforementioned Gromov--Witten invariants. For real rational surfaces (such as real del Pezzo surfaces), Welschinger's original papers \cite{welschinger2005fourfolds,welschinger2005spinor} gave some explicit computations, which were later generalized and approached by tropical geometry techniques (notably by G.~Mikhalkin \cite{mikhalkin2005enumerative}, I.~Itenberg--V.~Kharlamov--E.~Shustin \cite{itenberg2009welschinger}, E. Brugall\'e \cite{brugalle2007enumeration}, E.~Brugall\'e--G.~Mikhalkin using floor di\-a\-grams \cite{brugalle2015floor}, X.~Chen--A.~Zinger using  WDVV-type relations \cite{chen2021wdvv} and others).  In contrast, computational techniques for determining Welschinger invariants of threefolds remain comparatively limited. Some specific cases of threefold invariants have been computed in \cite{brugalle2007enumeration,georgieva2017enumeration,chen2021wdvv}. In particular, the case of convex algebraic threefolds (including Fano threefolds like $\mathbb{C}P^3$) was first addressed by J.-Y.~Welschinger (in \cite{welschinger2005spinor}), who defined these invariants in higher dimensions by means of spinor states and provided computations for small degrees. Further developments of explicit computations  were investigated for $\mathbb{C}P^3$ by E.~Brugall\'e and P.~Georgieva \cite{brugalle2016pencils}, for a blow-up of $\mathbb{C}P^3$ by Y.~Ding \cite{ding2020remark}) both using the observations by J.~Koll\' ar in \cite{kollar2015examples}.  Generally, explicit computation in dimension $3$ requires careful attention to Spin- or $\Pin^\pm$-structures, and the area remains an active research field, with connections to tropical geometry, degeneration techniques and the study of the sharpness of lower bounds in real enumerative geometry.

\subsection{Main results}

Throughout the remainder of this paper, unless otherwise stated, we denote by $X$ a $3$-dimensional del Pezzo variety. Let $\Sigma\subset X$ be a non-singular surface  in the linear system $|-\frac{1}{2}K_X|$, and let $\psi\colon H_2(\Sigma;\mathbb{Z})\rightarrow H_2(X;\mathbb{Z})$ denote the homomorphism induced by the inclusion $\Sigma \hookrightarrow X$. We have the following theorem.

\begin{theorem}\label{Theorem1-GW} 
Let $X$ denote a $3$-dimensional del Pezzo variety such that $\ker(\psi) \cong \Z$, where $\Sigma$ and $\psi$ are as described above. Let $S$ be a generator of\, $\ker (\psi)$.
For every homology class $d\in H_2(X;\Z)$, one has
$$\GW_X(d)=\frac{1}{2}\sum_{D\in \psi^{-1}(d)}(D.S)^2 \GW_{\Sigma}(D).$$
\end{theorem}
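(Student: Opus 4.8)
The plan is to reduce the enumeration of rational curves in $X$ to enumeration inside the surfaces of a pencil, following the strategy of Brugall\'e--Georgieva. Since $X$ is a three-dimensional del Pezzo variety of degree in $\{6,7,8\}$ and $\Sigma \in |-\frac{1}{2}K_X|$, the linear system $|-\frac12 K_X|$ is base-point free of projective dimension at least one, so a generic pencil $\{\Sigma_t\}_{t\in\mathbb{C}P^1}\subset |-\frac12 K_X|$ sweeps out $X$, with base locus a smooth curve $B = \Sigma_{t_0}\cap\Sigma_{t_1}$. The first step is to set up this pencil carefully: choose the $k_d$ point constraints generically, note that each point $x_i$ lies on a unique member $\Sigma_{t(i)}$ of the pencil, and observe that an irreducible rational curve $C\subset X$ of class $d$ through $\underline{x}_d$ meeting a member $\Sigma_t$ in more than $C.\Sigma_t = -\frac12 K_X.C = k_d+1$ points (counted with multiplicity) must be contained in that $\Sigma_t$. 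Because the $k_d$ points are generic and hence spread over $k_d$ distinct members of the pencil, a curve $C$ through all of them either lies entirely in one member $\Sigma_t$ (and then all $k_d$ points lie on $\Sigma_t$, forcing all the $t(i)$ equal, which is impossible) --- no wait, the correct dichotomy is the one exploited in \cite{brugalle2016pencils}: $C$ meets each generic member in $k_d+1$ points, so if we drop one constraint $x_j$, the curve is still constrained to pass through $k_d-1$ points and lies on the pencil member $\Sigma_{t(j)}$ through the remaining freedom, forcing a finite correspondence. Concretely, I would argue that every such $C$ is contained in exactly one member $\Sigma_t$ of the pencil, because $-\frac12 K_X . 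C = k_d+1$ while $C$ must meet $\Sigma_{t(1)},\dots,\Sigma_{t(k_d)}$ each in at least one of the $k_d$ distinct points, and a rational curve meeting a surface section in $k_d$ prescribed points plus being irreducible of the given class forces containment by a dimension count on the space of sections vanishing appropriately --- this is exactly Koll\'ar's observation.

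Granting the containment, the next step is the counting. Each member $\Sigma_t$ is (by Proposition \ref{Prop-delPezzo.surfaces}) a del Pezzo surface, and the classes $D\in H_2(\Sigma_t;\mathbb{Z})$ mapping to $d$ under $\psi$ are in bijection with the monodromy orbit of such lifts; since $\ker\psi\cong\mathbb{Z}$ with generator $S$, the fiber $\psi^{-1}(d)$ is a coset $D_0 + \mathbb{Z}S$. For a fixed $D\in\psi^{-1}(d)$, an irreducible rational curve of class $D$ in a varying member $\Sigma_t$ through $k_d$ generic points: here one must match $k_d = k_\beta$ for $X$ against $k_D$ for the surface $\Sigma$. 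On a del Pezzo surface $k_D = -K_\Sigma . D - 1 = -\frac12 K_X . D - 1$ wait --- in the surface $-K_\Sigma = (-\frac12 K_X)|_\Sigma$ by adjunction, so $-K_\Sigma . D = -\frac12 K_X . \psi(D) \cdot(\text{something})$; the correct identity is $-K_\Sigma.D = (-\tfrac12K_X).d = k_d+1$ in three-fold terms, hence $k_D = k_d$, i.e. the number of point constraints matches exactly. So for each of the finitely many $t$ such that $\Sigma_t$ contains a rational curve of class $D$ through the $k_d$ points, we get $GW_\Sigma(D)$ curves; but $t$ ranges over a finite set whose cardinality I must compute. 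The key geometric input is that the $k_d$ points impose one condition each on the choice of $t$ "used up" so that the correspondence $t \mapsto \{$curves$\}$ has degree $(D.S)^2$ or $(D.B)$ --- precisely, the number of members $\Sigma_t$ passing through a point of a fixed rational curve, and the self-intersection governing how the base curve $B$ meets a curve of class $D$. The factor $(D.S)^2$ arises because: one factor $D.S = D.B$ counts the intersection points of a class-$D$ curve with the base curve $B$ (each giving a way the curve can "sit" in the pencil, related to which member it belongs to after fixing constraints), and a second factor comes from the two-to-one nature / the monodromy double cover, matching the $\frac12$ out front.

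The main obstacle, as in \cite{brugalle2016pencils}, will be the precise bookkeeping of the multiplicity $(D.S)^2$ and the factor $\frac12$. I expect to prove it by the following refined argument: fix a configuration $\underline{x}_d$ of $k_d$ generic points. There is an incidence variety $\mathcal{I}$ of pairs $(t, C)$ with $C\subset\Sigma_t$ an irreducible rational curve of some class $D\in\psi^{-1}(d)$ through the $\lfloor$ appropriate number of $\rfloor$ points; projecting to the surface-class data, each $\Sigma_t$ contains $GW_\Sigma(D)$ such curves when $\Sigma_t$ passes through the right sub-configuration, and I count for how many $t$ this happens. Since a generic member $\Sigma_t$ contains none of the $k_d$ points, but the pencil has a one-parameter family, requiring $\Sigma_t \ni x_i$ cuts the parameter to a point; the curve $C$ however only needs $k_D = k_d$ points on it while living in a surface through possibly fewer --- the resolution is that $C$ passes through all $k_d$ points, each $x_i$ forces $\Sigma_t = \Sigma_{t(i)}$, contradiction unless $C$ meets the base curve $B$. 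So in fact $C$ must contain points of $B$, and the number of members of the pencil containing a fixed such $C$ is governed by $C.B = D.S$. Running the correspondence in the other direction and using the projection formula on $\mathcal{I}$, together with the fact that $\Sigma\to X$ induces $\psi$ with $\ker\cong\mathbb{Z}\langle S\rangle$ and $S.B = S^2 = D.S$ type relations, yields the weight $(D.S)^2$; the $\frac12$ is the degree of the natural double cover of $\mathbb{C}P^1$ branched at the two singular members, over which the family of $C$'s is pulled back. I would first do the case $X = \cpp$ to recover \cite{brugalle2016pencils} verbatim, checking $S = $ line, $D.S = $ degree, then argue the general del Pezzo case differs only in that $H_2(\Sigma;\mathbb{Z})$ has higher rank, which is absorbed by summing over $\psi^{-1}(d)$ and changes nothing in the local multiplicity computation since all members of the pencil are deformation-equivalent del Pezzo surfaces.
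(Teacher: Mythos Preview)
Your overall strategy---specialize the point configuration and force each rational curve into a member of a pencil in $|-\tfrac12 K_X|$---is the right one, but several load-bearing details are incorrect and the mechanism that produces the weight $(D.S)^2$ is missing.

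First, the configuration $\underline{x}_d$ must be taken on the base locus $E$ of the pencil, not in general position in $X$. The base locus is a smooth elliptic curve realizing $\tfrac14 K_X^2$ (Proposition~\ref{Prop-E.is.elliptic}). With the points on $E$, any irreducible rational curve $C$ of class $d$ through $\underline{x}_d$ meets every member $\Sigma_t$ in at least $k_d$ points; since $C\cdot\Sigma_t=-\tfrac12 K_X\cdot d=k_d$ (not $k_d+1$ as you write), B\'ezout forces $C\subset\Sigma_t$ for some $t$. Your alternative, placing the $x_i$ on distinct members $\Sigma_{t(i)}$, leads exactly to the contradiction you noticed and cannot be repaired by letting $C$ meet $B$.

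Second, you miscount the surface constraints: $k_D=c_1(\Sigma)\cdot D-1=-K_\Sigma\cdot D-1=k_d-1$, not $k_d$ (Lemma~\ref{lemkDandkd}). So a curve of class $D$ in $\Sigma_t$ is determined by $k_d-1$ of the points; the remaining intersection with $E$ is then fixed by linear equivalence on $E$, and the question becomes: for how many $t$ does $D|_{E,\Sigma_t}$ equal the divisor class $[\underline{x}_d]\in\mathrm{Pic}_{k_d}(E)$?

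Third, this is where $(D.S)^2$ actually comes from, and it is not an intersection number with the base curve. Note $S\cdot E=-K_\Sigma\cdot S=0$, so your identification ``$D.S=D.B$'' is false. The paper constructs a holomorphic map $\phi_{\overline{D}}:\mathcal{Q}\to\mathrm{Pic}_{D.E}(E)/_{x\sim\phi(\overline{D})-x}$ between Riemann spheres, sending $\Sigma_t\mapsto\overline{D|_{E,\Sigma_t}}$, and computes $\deg(\phi_{\overline{D}})=(D.S)^2$ via a commutative square of maps over $\mathrm{Pic}_0(E)/_{y\sim -y}$ (Proposition~\ref{Prop-degree.case.complex}). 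This is the ``$1$-dimensional problem'' and it is the heart of the argument; nothing in your proposal approaches it.

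Finally, the factor $\tfrac12$ is not a double cover branched at two singular fibers (there are four singular fibers, Lemma~\ref{lem-4.singulars}); it arises because the monodromy $T(D)=D+(D.S)S$ identifies $D$ and $T(D)$, and both satisfy $(D.S)^2=(T(D).S)^2$ and $GW_\Sigma(D)=GW_\Sigma(T(D))$, so summing over $\psi^{-1}(d)$ double-counts the sum over monodromy orbits $\{D,T(D)\}$.
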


Theorem~\ref{Theorem1-GW}  generalizes  \cite[Theorem 2]{brugalle2016pencils}. This result enables the computation of  genus $0$ Gromov--Witten invariants for almost  all $3$-dimensional  del Pezzo varieties of degree $6$, $7$ and $8$.

Now suppose that $(X,\tau)$ is a $3$-dimensional del Pezzo variety equipped with a real structure $\tau$. Assume further that $\R X$ is orientable with orientation $\mathfrak{o}_X$ and is equipped with a $\Spin_3$-structure $\mathfrak{s}_X$.  Let the pair $(\Sigma,\tau|_\Sigma) \in |-\frac{1}{2}K_X|$  be a non-singular real del Pezzo surface. Let $\psi\colon H_2(\Sigma;\mathbb{Z})\to H_2(X;\mathbb{Z})$ be as in the Gromov--Witten setting. Define $g(D)=\frac{1}{2}(K_\Sigma . D +D^2+2) \in \Z$, and let $\epsilon\colon H_2(\Sigma;\mathbb{Z})\to \mathbb{Z}_2$ be the map     given by
$$\epsilon(D)=
\begin{cases}
  c_1(\Sigma)\cdot D \mod 2 & \text{ if } \R f^* T \Sigma /T \C P^1 \text{ realizes the isotopy class } E^+,\\
  c_1(\Sigma)\cdot D-1 \mod 2 &\text{ if } \R f^* T \Sigma /T \C P^1\text{ realizes the isotopy class } E^-,
  \end{cases}$$
 where $f\colon \C P^1\to \Sigma\subset X$ is a  real immersion such that $f_*[\C P^1]=D\in H_2(\Sigma;\mathbb{Z})$, and  $E^\pm$ are two isotopy classes of the real part of a holomorphic line subbundle of degree $c_1(\Sigma)\cdot D-2$ of the rank $2$ holomorphic normal  bundle $f^*T X/T \C P^1$ (see Section~\ref{Sect-spinor.dim.3.with.normal.bundle} and Definition~\ref{def-epsilon.function}). Let  $s^{\mathfrak{s}_X|_\Sigma}\colon H_1(\R \Sigma;\Z_2)\to \Z_2$
be the quasi-quadratic enhancement in one-to-one correspondence with the $\Pin^-_2$-structure $\mathfrak{s}_{X|\Sigma}$ over $\R \Sigma$. Such a $\Pin^-_2$-structure  is restricted from the $\Spin_3$-structure $\mathfrak{s}_X$ over $\R X$ (see the construction of this correspondence in Section~\ref{subsect-construction_quasiquad_enhancement}, in particular Proposition~\ref{prop-quasi-quadratic.enhancement.def.prop} and Equation (\ref{eq-RelationW2W2})). Let $\rho\colon  H_2^{-\tau}(\Sigma;\Z) \to H_1(\R \Sigma;\Z_2)$ be the homomorphism given by the homology class realized by the real part of the $\tau$-anti-invariant representative of $D$. We have the following theorem.

\begin{theorem}\label{Theorem2-W}
Let $((X,\tau),\mathfrak{o}_X,\mathfrak{s}_X)$ denote a real $3$-dimensional del Pezzo variety, where  $\mathfrak{o}_X$ is an orientation and $\mathfrak{s}_X$ is a $\Spin_3$-structure on $\R X$. Suppose that $\ker(\psi) \cong \Z$, where $\psi$ is as described above. Let $S$ be a generator of\, $\ker(\psi)$.  Suppose $(\Sigma,\tau|_{\Sigma})\in |-\frac{1}{2}K_X|$ is  such that $S$ is $\tau|_{\Sigma}$-anti-invariant.  For every $d\in H_2^{-\tau}(X;\Z)$ and $0\leq l< \frac{1}{2}k_d$,  one has
$$W^{\mathfrak{s}_X,\mathfrak{o}_X}_{\R X}(d,l)=\frac{1}{2}\sum_{D\in \psi^{-1}(d)}(-1)^{\epsilon(D)+g(D)+s^{\mathfrak{s}_X|_\Sigma}(\rho D)} |D.S| W_{\R \Sigma} (D,l).$$
\end{theorem}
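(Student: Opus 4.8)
The plan is to re-run the geometric correspondence underlying Theorem \ref{Theorem1-GW}, now carried out $\tau$-equivariantly, while tracking Welschinger's sign at every step. Recall that on the complex side one fixes, adapted to the configuration $\underline{x}_d$, a pencil $\{\Sigma_t\}_{t\in \clin}\subset |-\tfrac12 K_X|$ of del Pezzo surfaces and obtains a correspondence between the irreducible rational curves of class $d$ through $\underline{x}_d$ in $X$ and irreducible rational curves of classes $D\in\psi^{-1}(d)$ lying in members $\Sigma_t$, the combinatorics of the lifts being encoded in the factor $(D.S)^2$ together with an overall $\tfrac12$ accounting for a $2{:}1$ redundancy of the construction. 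Since $S$ is chosen $\tau|_\Sigma$-anti-invariant, the pencil, its base locus, and this whole correspondence are defined over $\R$. A real configuration with $r=k_d-2l$ real points and $l$ conjugate pairs (the hypothesis $l<\tfrac12 k_d$ forces $r\geq 1$, providing at least one real point that anchors a real curve to a real member) then produces either real rational curves $C'$ lying in real members $\Sigma_t$, $t\in\R P^1$, or conjugate pairs of curves lying in conjugate pairs of members.

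The first step is the combinatorial reorganization over $\R$. Fix a real curve $C'\subset \Sigma_t$ of class $D$; I would show that, among its $(D.S)^2$ complex lifts to $X$, exactly $|D.S|$ are real and carry a common sign while the rest form conjugate pairs, and that conjugate pairs of lifts (and conjugate pairs of members) contribute $0$ to the Welschinger count. This replaces $(D.S)^2\,GW_\Sigma(D)$ by $\pm|D.S|\,W_{\R\Sigma}(D,l)$, the global $\tfrac12$ --- of the same origin as in the complex case --- being carried along unchanged. It is a statement about the $\R$-points and sign behaviour of the auxiliary variety parametrizing the lifts, and should go through as in the $\cpp$ case of \cite{brugalle2016pencils}. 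Since generic rational curves through generic points on del Pezzo varieties are immersed with at worst nodes, Welschinger's sign is defined throughout.

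The second, and \emph{main}, step is the comparison of the two Welschinger signs attached to a fixed real immersion $f\colon \C P^1\to \Sigma\subset X$ with $f_*[\C P^1]=D$: its sign as a real rational curve in the real $3$-fold $(X,\tau)$ endowed with $\mathfrak{o}_X$ and the $Spin_3$-structure $\mathfrak{s}_X$, versus its sign $(-1)^{\#\{\text{real solitary nodes}\}}$ as a real rational curve in the surface $(\Sigma,\tau|_\Sigma)$. The tool is the normal-bundle exact sequence $0\to N_{C/\Sigma}\to N_{C/X}\to N_{\Sigma/X}|_C\to 0$, where $N_{\Sigma/X}=\mathcal O_\Sigma(-K_\Sigma)$ by adjunction and $\deg N_{C/\Sigma}=c_1(\Sigma)\cdot D-2$. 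Feeding this splitting into the spinor-state description of the $3$-dimensional sign from Section \ref{Sect-spinor.dim.3.with.normal.bundle} should decompose the ratio of the two signs into a product of three contributions: (i) $(-1)^{\epsilon(D)}$, recording whether the real line subbundle $\R N_{C/\Sigma}\subset\R N_{C/X}$ over $\R\C P^1\cong S^1$ is in isotopy class $E^+$ or $E^-$ --- this is precisely Definition \ref{def-epsilon.function}; (ii) $(-1)^{g(D)}$, coming from the adjunction identity $g(D)=\tfrac12(K_\Sigma.D+D^2+2)$, i.e. from relating $D^2$ and the self-intersection of $\R C$ in $\R\Sigma$ to the node count; and (iii) $(-1)^{s^{\mathfrak{s}_X|_\Sigma}(\rho D)}$, coming from expressing the quasi-quadratic enhancement of the restricted $Pin^-_2$-structure $\mathfrak{s}_{X|\Sigma}$ via Proposition \ref{prop-quasi-quadratic.enhancement.def.prop} and Equation (\ref{eq-RelationW2W2}), evaluated on $[\R C]=\rho D\in H_1(\R\Sigma;\Z_2)$. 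Multiplying the three gives the asserted local sign $(-1)^{\epsilon(D)+g(D)+s^{\mathfrak{s}_X|_\Sigma}(\rho D)}$, which is independent of the representative $f$ because $\epsilon$ and $s$ are.

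Assembling these, one sums the local contributions over the real members of the pencil and over $D\in\psi^{-1}(d)$, and uses the deformation invariance of Welschinger invariants to legitimize the use of the adapted real configuration in place of a generic one; this yields the stated formula. I expect the third step --- the clean three-way splitting of the $3$-dimensional Welschinger sign --- to be the crux: one must handle the $Pin^-$/spinor-state formalism for a rank-$2$ real bundle over $S^1$ carrying a prescribed line subbundle, reconcile it with the purely $2$-dimensional solitary-node count, and control the behaviour of the enhancement under the restriction $\mathfrak{s}_X\rightsquigarrow \mathfrak{s}_{X|\Sigma}$, where the relation between $w_2$ and $w_1^2$ intervenes. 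The secondary difficulty is the sign-coherence of the $|D.S|$ real lifts and the vanishing of conjugate contributions in step one.
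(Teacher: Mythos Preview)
Your overall strategy matches the paper's: specialize the configuration to a real elliptic base curve $E$ of a real pencil in $|-\tfrac12 K_X|$, confine each real rational curve to a real member $\Sigma_t$, and compare the three-dimensional spinor state with the two-dimensional Welschinger sign via the normal-bundle exact sequence. Your step~2 is essentially Proposition~\ref{Prop-RelationW2W3}: the three-factor splitting $(-1)^{\epsilon(D)+g(D)+s^{\mathfrak{s}_{X|\Sigma}}(\rho D)}$ is exactly what the paper proves there, and your outline of how each factor arises is correct.

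There are, however, two genuine gaps in your step~1. First, the factor $(D.S)^2$ in the complex theorem does not count ``lifts of a curve $C'\subset\Sigma_t$ to $X$''---there is no lifting; a curve in $\Sigma_t$ is already a curve in $X$. Rather, $(D.S)^2=\deg(\phi_{\overline D})$ counts the \emph{surfaces} $\Sigma_t$ in the pencil for which $D|_{E,\Sigma_t}=[\underline{x}_d]$ in $Pic(E)$ (Proposition~\ref{Prop-degree.case.complex}). The real replacement is not a signed real-point count on a parameter space of lifts but rather a count of real solutions of the equation $|D.S|\,l=[\underline{x}_d]-\phi(\overline L,\overline D)$ in $\R Pic(E)$ (Proposition~\ref{prop-corresponding.surface.picE}, Corollary~\ref{coro-degree.case.real}). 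This count equals $|D.S|$ \emph{only when $D.S$ is odd}; when $D.S$ is even it is $0$ or $2|D.S|$ depending on the real topology of $E$ and the position of $[\underline{x}_d]$, and the paper handles the even case by a separate symmetry argument showing the invariant vanishes. Your proposal does not address this parity dichotomy.

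Second, the $\tfrac12$ does not come for free ``of the same origin as in the complex case''. One must check that $D$ and $T(D)=D+(D.S)S$ contribute with the \emph{same} sign, i.e.\ that $\epsilon(T(D))+g(T(D))+s^{\mathfrak{s}_{X|\Sigma}}(\rho T(D))\equiv \epsilon(D)+g(D)+s^{\mathfrak{s}_{X|\Sigma}}(\rho D)\pmod 2$ (using $|T(D).S|=|D.S|$ and $W_{\R\Sigma}(T(D),l)=W_{\R\Sigma}(D,l)$). This holds precisely because $\epsilon$ and $s^{\mathfrak{s}_{X|\Sigma}}$ each flip by $1$ under $D\mapsto T(D)$ when $D.S$ is odd (Propositions~\ref{prop-function.epsilon1} and~\ref{prop-quasi-quad.and.monodromy}), while $g$ is invariant; you should make this explicit.
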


Theorem~\ref{Theorem2-W} generalizes \cite[Theorem 1]{brugalle2016pencils}. As a consequence, this theorem enables the computation of  genus $0$ Welschinger invariants for almost all real $3$-dimensional del Pezzo varieties of degree $6$, $7$ and $8$. Since the Welschinger invariants count real curves with an assigned sign  $\pm 1$,  their vanishing does not necessarily imply the non-existence of such real curves. Consequently, this gives rise to the sharpness and vanishing phenomena of the Welschinger invariants, which concerns whether the lower bounds provided by these invariants are actually attained. The first examples demonstrating the sharpness of vanishing genus $0$ Welschinger invariants
in projective threespace were explored by G.~Mikhalkin for rational curves of degree~$4$, and subsequently by J.~Koll\'ar for every even degree; see \cite{kollar2015examples}. We extend J.~Koll\'ar's results to certain classes of $3$-dimensional del Pezzo varieties, as established in the following theorem.

\begin{theorem}\label{Theorem3-W}
Let $((X,\tau),\mathfrak{o}_X,\mathfrak{s}_X)$ denote a real $3$-dimensional del Pezzo variety as in Theorem~\ref{Theorem2-W}. Let the class $d\in H_2^{-\tau}(X;\Z)$ be  such that $D.S$ is even for every $D\in \psi^{-1}(d)$. Let $k_d$ be the associated integer of $d$. Then, there exists a real configuration of\, $k_d$ points, containing at least one real point, through which no real irreducible rational curve representing the homology class $d$ passes.
\end{theorem}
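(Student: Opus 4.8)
The plan is to exploit the formula of Theorem~\ref{Theorem2-W} together with Kollár's trick of specializing the point configuration so that it lies on a real surface $\Sigma \in |-\tfrac12 K_X|$ for which $\R\Sigma$ is disconnected in a suitable sense, and then to argue that a real irreducible rational curve of class $d$ passing through such a configuration is forced to lie inside $\Sigma$, contradicting the parity hypothesis $D.S$ even. First I would recall that $X$ carries a pencil of surfaces in $|-\tfrac12 K_X|$: since $\ker(\psi)\cong\Z$ is generated by $S$ with $S$ realized by the (anti-invariant) restriction of the relevant hyperplane-type class, any two members of the pencil meet along a real curve, and $S$ is, up to sign, the class of a fiber of the associated conic bundle / the class cut on a generic $\Sigma$. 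The key numerical point is that for $D\in\psi^{-1}(d)$ the intersection number $D.S$ counts the intersection of a curve of class $D$ in $\Sigma$ with the base curve of the pencil, equivalently it measures how the curve meets the other members of the pencil; if $D.S$ is even for \emph{every} $D\in\psi^{-1}(d)$, then a curve of class $d$ meets each pencil member in an even number of points.

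Next I would set up the specialization argument. Choose the real configuration $\underline{x}_d$ of $k_d$ points, with at least one real point $p_0$, to lie on a single generic real member $\Sigma$ of the pencil (this is possible because $k_d < \dim|-\tfrac12 K_X|$ in the relevant degree range, which is exactly the content needed and is implicit in the hypothesis $0\le l<\tfrac12 k_d$ giving room, and because the pencil of surfaces through $p_0$ still has enough freedom). Suppose $C$ is a real irreducible rational curve of class $d$ through $\underline{x}_d$ that is \emph{not} contained in $\Sigma$. Then $C\cap\Sigma$ is a finite set of points whose total number, counted with multiplicity, is $d.[\Sigma] = d\cdot(-\tfrac12 K_X)$. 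On the other hand, running through the pencil: the curve $C$ lies on at most one member, so for every other member $\Sigma'$ the intersection $C\cap\Sigma'$ is finite of the same cardinality $d\cdot(-\tfrac12 K_X)$, and these intersection points sweep out $C$ as $\Sigma'$ varies; the base locus $B=\Sigma\cap\Sigma'$ is a fixed real curve. The parity obstruction enters as follows: the real points of $C$ are distributed among the real members of the pencil, and because each $D\in\psi^{-1}(d)$ has $D.S$ even, a dimension/degree count forces the number of real intersection points of $C$ with a real pencil member to be even; but $C$ passes through the real point $p_0\in\Sigma$, and a parity count of real intersections (à la Bézout over $\R$, using that the complex intersection number has a fixed parity) shows the real points on $\R\Sigma$ must be odd — contradiction. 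Hence $C\subset\Sigma$.

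Finally, if $C\subset\Sigma$ then $C$ represents some $D\in\psi^{-1}(d)$ with $D.S$ even; here I would invoke the geometry of $\R\Sigma$ together with the evenness of $D.S$ to conclude $C$ cannot pass through $\underline{x}_d$ either — more precisely, one arranges $\underline{x}_d$ (still containing $p_0$) so that it does not lie on any \emph{real} member of the pencil through $p_0$ of the type that could contain such a $C$, which is the analogue of Kollár's original choice where the even-degree curves are ruled out by a connectedness/parity argument on $\R\Sigma$; alternatively one checks directly that $W_{\R\Sigma}(D,l)=0$ forces an even count of real curves in $\Sigma$ so none of them needs to pass through a configuration with an odd number of real points on a fixed component. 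I expect the main obstacle to be the second paragraph: making the parity argument for real intersection points of $C$ with the pencil members fully rigorous — one must control real intersection multiplicities and the interaction with the base curve $B$, and be careful that the hypothesis ``$D.S$ even for all $D\in\psi^{-1}(d)$'' (rather than for a single $D$) is genuinely used to kill every possible class of $C$ simultaneously. A clean way to handle this is to phrase it through the conic-bundle structure $X\to\clin$ (or the relevant fibration) and observe that $d.S$ even means $d$ pushes forward to an even multiple of the point class on the base, so a rational curve of class $d$ has even-degree image, whence Kollár's vanishing/non-existence mechanism applies verbatim on the fibers.
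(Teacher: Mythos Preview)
Your proposal has a genuine gap at its core: you have misidentified the geometric meaning of $D.S$ and built the argument on that misidentification. In the paper's setup, $S$ is a generator of $\ker(\psi)\cong\Z$, realized as the vanishing cycle of the Lefschetz fibration coming from a pencil $\mathcal{Q}\subset|-\tfrac12 K_X|$; it is \emph{not} the class of the base curve of the pencil. The base locus $E$ of $\mathcal{Q}$ satisfies $[E]=-K_\Sigma$, so $D.E=c_1(\Sigma)\cdot D=k_d$, which has nothing to do with the parity of $D.S$. Consequently your claim that ``$D.S$ counts the intersection of a curve of class $D$ with the base curve'' is false, and the subsequent parity argument for real intersection numbers of $C$ with pencil members collapses: there is no reason the number of \emph{real} points of $C\cap\Sigma'$ should be constrained mod~$2$ by $D.S$. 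Likewise, the closing remark about a conic-bundle structure $X\to\C P^1$ and ``$d.S$ even means $d$ pushes forward to an even class'' does not make sense here: $S$ lives in $H_2(\Sigma;\Z)$ and dies under $\psi$, so there is no pairing ``$d.S$'' in $X$.

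The paper's actual mechanism is quite different and more concrete. One specializes $\underline{x}_d$ not onto a single surface but onto a \emph{real elliptic curve} $E$ with $\R E$ disconnected (the base locus of a real pencil $\mathcal{Q}$). By the generalization of Koll\'ar's observation (Proposition~\ref{Prop-Kollar}), every curve in $\mathcal{C}(\underline{x}_d)$ is irreducible and contained in some non-singular member of $\mathcal{Q}$; real ones sit in real members (Lemma~\ref{lem-surfaces.in.real.pencil}). Proposition~\ref{prop-corresponding.surface.picE} then identifies these real members with real solutions $l\in\R Pic(E)$ of $|D.S|\,l=[\underline{x}_d]-\phi(\overline{L},\overline{D})$. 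This is where the evenness of $D.S$ enters: when $\R E=\mathbb{S}^1_0\sqcup\mathbb{S}^1_1$ and $|D.S|$ is even, all real $|D.S|$-torsion points lie in the pointed component, so if one chooses $\underline{x}_d$ with an odd number of real points on the ``wrong'' component (forcing $[\underline{x}_d]-\phi(\overline{L},\overline{D})\in\mathbb{S}^1_1$), the equation has \emph{no} real solutions (Corollary~\ref{coro-degree.case.real}c). Hence no real surface of $\R\mathcal{Q}$ can contain a curve of $\mathcal{R}(\underline{x}_d)$, and $\mathcal{R}(\underline{x}_d)=\emptyset$. Your proposal does not touch this torsion-point mechanism on $\R Pic(E)$, which is the actual obstruction.
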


 The structure of this paper is as follows. In Section~\ref{Section-Pencils}, we recall certain properties of complex and real pencils of surfaces in the linear system $|-\frac{1}{2}K_X|$, and we describe the monodromy action to such pencils. We refer to  the \textit{$1$-dimensional problem} (respectively, the \textit{$1$-dimensional real problem})
as the problem of enumeration on pencils of surfaces (respectively, real pencils of surfaces). Solving this problem contributes to the connecting terms in Theorems~\ref{Theorem1-GW} and~\ref{Theorem2-W}. In Section~\ref{Section-ratcurve}, we investigate the properties of rational curves both on $3$-dimensional del Pezzo varieties $X$ and on non-singular del Pezzo surfaces $\Sigma\in |-\frac{1}{2}K_X|$. As a consequence, we establish a comparison between the genus $0$ Gromov--Witten invariants of $X$ and those of $\Sigma$. Section~\ref{Sect-Welschinger} is devoted to the problem of  Welschinger's signs $\{\pm 1\}$ arising in the enumeration of real curves on real surfaces and real $3$-dimensional varieties. We then propose a comparison of these signs via the quasi-quadratic enhancement corresponding to a particular $\Pin_2^-$-structure on the real part of the algebraic surfaces.
In Section~\ref{Sect-proofs}, we present the proofs of Theorems~\ref{Theorem1-GW}, \ref{Theorem2-W} and~\ref{Theorem3-W}.
In the final section, we first apply Theorem~\ref{Theorem1-GW} in several specific cases (Theorems~\ref{Theorem-GW.in.deg.8},~\ref{Theorem-GW.in.deg.7},~\ref{Theorem-GW.in.deg.6}), providing explicit calculation tables of  genus~$0$ Gromov--Witten invariants for $\C P^1\times\C P^1\times \C P^1$, generated using a Maple program. Subsequently, we apply Theorems~\ref{Theorem2-W} and~\ref{Theorem3-W} to several particular cases (Theorems~\ref{Theorem-W.in.deg.8},~\ref{Theorem-W.in.deg.7},~\ref{Theorem-W.in.deg.6.1},~\ref{Theorem-W.in.deg.6.2}, Propositions~\ref{prop-application.deg8}, \ref{prop-application.deg7}, \ref{prop-application.deg6.1},~\ref{prop-application.deg6.2}). Finally, we exhibit the explicit calculation tables of genus $0$ Welschinger invariants for $\C P^3\sharp \overline{\C P^3}$ with the standard real structure and for  $\C P^1\times\C P^1\times \C P^1$ with  two distinct real structures, also computed using a Maple program. 

\subsection*{Acknowledgments.} The present article forms part of the author's Ph.D. thesis, written at Nantes University under the supervision of Erwan Brugall\'e. The author is sincerely grateful to him for his invaluable support, insightful discussions, and numerous suggestions throughout the development and completion of this work. She would like to thank Xujia Chen for her helpful comments and computational assistance, as well as Duc Pham for his valuable programming skills, which were utilized in the application sections.

\section{Pencils of surfaces in the linear system \texorpdfstring{$\boldsymbol{|-\frac{1}{2}K_X|}$}{|-(1/2)K\textunderscore X|}} \label{Section-Pencils}

Let $\mathcal{Q}$ denote a pencil of surfaces in the linear system $|-\frac{1}{2}K_X|$. Let $E$ be its base locus, and let  $\Sigma$ be a general member of the pencil $\mathcal{Q}$. We first establish that the base locus $E$ is an elliptic curve (Proposition~\ref{Prop-E.is.elliptic}) and that $\Sigma$ is a non-singular del Pezzo surface of the same degree as $X$ (see Proposition~\ref{Prop-delPezzo.surfaces}).  We then explain how the monodromy action is related to enumerative problems. In the following subsection, we provide the main ingredient for resolving the \textit{$1$-dimensional problem} of Theorem~\ref{Theorem1-GW} (see Proposition~\ref{Prop-degree.case.complex}). In the final subsection, by analyzing the properties of real pencils of surfaces, we present the main ingredient for solving the \textit{$1$-dimensional real  problem} of Theorem~\ref{Theorem2-W} (see Proposition~\ref{prop-corresponding.surface.picE}, Corollary~\ref{coro-degree.case.real}).

\subsection{Properties of pencils of surfaces in the linear system $\boldsymbol{|-\frac{1}{2}K_X|}$}

\begin{proposition} 
\label{Prop-E.is.elliptic}
 If the pencil $\mathcal{Q}$ is generic enough, then its base locus is a non-singular elliptic curve. Moreover, this curve realizes the homology class $\frac{1}{4}K^2_X$ in $H_2(X;\mathbb{Z})$.
\end{proposition}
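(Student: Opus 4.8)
The plan is to realize the base locus $E$ as a transverse complete intersection of two general members of the linear system $|-\tfrac12 K_X|=|H|$, where $H$ denotes the primitive divisor with $-K_X=2H$, and then to read off its genus and its homology class from adjunction. Recall that for the three-dimensional del Pezzo varieties we consider the system $|H|$ is base-point free (indeed very ample), see \cite{iskovskikh2010algebraic}. I would take the pencil $\mathcal{Q}$ to be spanned by a smooth member $\Sigma_0\in|H|$ (a generic member is smooth by Bertini's theorem, in characteristic $0$) and a generic second member $\Sigma_\infty\in|H|$; its base locus is then $E=\Sigma_0\cap\Sigma_\infty$, which scheme-theoretically equals the intersection of any two distinct members of $\mathcal{Q}$, so it depends only on $\mathcal{Q}$.

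First I would prove that $E$ is a smooth curve by a second application of Bertini. The restriction $H|_{\Sigma_0}$ is base-point free on $\Sigma_0$, hence the divisor cut out on $\Sigma_0$ by a generic $\Sigma_\infty$, namely $E$, is smooth; equivalently, $\Sigma_\infty$ is smooth along $E$ and meets $\Sigma_0$ transversally there. Thus $E$ is a reduced local complete intersection of the expected dimension $1$, and $[E]=[\Sigma_0]\cdot[\Sigma_\infty]$ as cycle classes. This is what ``generic enough'' should mean here.

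Next I would pin down the canonical class of $\Sigma_0$. By adjunction in $X$,
$$K_{\Sigma_0}=(K_X+\Sigma_0)|_{\Sigma_0}=(-2H+H)|_{\Sigma_0}=-H|_{\Sigma_0},$$
which is anti-ample, so $\Sigma_0$ is a smooth del Pezzo (in particular rational) surface and $h^1(\mathcal{O}_{\Sigma_0})=0$. Since $E\in|H|_{\Sigma_0}|=|{-}K_{\Sigma_0}|$ is an effective ample divisor on $\Sigma_0$, it is connected: from $0\to\mathcal{O}_{\Sigma_0}(-E)\to\mathcal{O}_{\Sigma_0}\to\mathcal{O}_E\to 0$, together with $\mathcal{O}_{\Sigma_0}(-E)\cong\mathcal{O}_{\Sigma_0}(K_{\Sigma_0})$, $H^0(\mathcal{O}_{\Sigma_0}(-E))=0$ and $H^1(\mathcal{O}_{\Sigma_0}(K_{\Sigma_0}))=0$ (Serre duality and $h^1(\mathcal{O}_{\Sigma_0})=0$), one gets $H^0(\mathcal{O}_E)\cong H^0(\mathcal{O}_{\Sigma_0})=\C$. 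Adjunction on the surface $\Sigma_0$, using $E\sim H|_{\Sigma_0}$ and $K_{\Sigma_0}=-H|_{\Sigma_0}$, then gives
$$2g(E)-2=(K_{\Sigma_0}+E)\cdot E=\big({-}H|_{\Sigma_0}+H|_{\Sigma_0}\big)\cdot H|_{\Sigma_0}=0,$$
so $g(E)=1$. Being smooth, connected and of genus $1$, $E$ is an elliptic curve.

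Finally, for the homology class: the intersection $\Sigma_0\cap\Sigma_\infty$ is transverse and $[\Sigma_0]=[\Sigma_\infty]$ is Poincar\'e dual to $H=-\tfrac12 K_X$, whence $[E]=H\cdot H=\big({-}\tfrac12 K_X\big)^2=\tfrac14 K_X^2$, regarded in $H^4(X;\Z)\cong H_2(X;\Z)$; this class is integral precisely because $H$ is a genuine divisor class. The only point requiring genuine care is the smoothness and connectedness of the base locus of a \emph{generic} pencil, as opposed to smoothness of a single general member — exactly what the two applications of Bertini above provide, the second using base-point freeness of $H|_{\Sigma_0}$; everything else is adjunction bookkeeping.
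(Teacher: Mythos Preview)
Your proof is correct and follows essentially the same approach as the paper's: Bertini for a smooth member $\Sigma_0$, a second Bertini on $\Sigma_0$ for smoothness of $E$, adjunction to identify $E\in|{-}K_{\Sigma_0}|$, and adjunction again for $g(E)=1$; the homology class computation is identical. Your argument is in fact more careful than the paper's, which does not explicitly address connectedness of $E$---your cohomological argument via $h^1(\mathcal{O}_{\Sigma_0})=0$ fills that gap cleanly.
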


\begin{proof}
 By Bertini's theorem (see \cite{griffiths1978principles}) and by \cite[Theorem 1.2]{vsokurov1980smoothness}, a general element $\Sigma \in |-\frac{1}{2}K_X|$ is a non-singular surface.  The base locus $E$ of the pencil $\mathcal{Q}$ realizes the homology class $[\Sigma].[\Sigma]$, that is, the self-intersection product of the homology class $[\Sigma]\in H_4(X;\Z)$. 
 By the adjunction formula, we have 
$$K_\Sigma =(K_X + [\Sigma] ).[\Sigma] = -[\Sigma]^2.$$

Thus, the curve $E$ realizes the homology class $-K_\Sigma$, or equivalently the homology class $\frac{1}{4}K^2_X$ in $H_2(X;\mathbb{Z})$. Since two general elements in the pencil have  traversal intersections, the curve  $E$ is in fact a non-singular representative of the class $-K_\Sigma$. By the adjunction formula, it is easily deduced that the genus of the curve $E$ is $1$. Hence, the proposition follows.
\end{proof}

The next proposition explains the interest of studying surfaces in the linear system $|-\frac{1}{2}K_X|$.

\begin{proposition}\label{Prop-delPezzo.surfaces}
Every non-singular surface $\Sigma\in |-\frac{1}{2}K_X|$ is a del Pezzo surface whose degree is equal to $\deg(X)$.
\end{proposition}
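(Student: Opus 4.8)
The plan is to show that a non-singular $\Sigma \in |-\tfrac{1}{2}K_X|$ is Fano with $-K_\Sigma$ ample, and then to compute its degree $(-K_\Sigma)^2$ and check it equals $\deg(X)$. The first step, which has essentially already been done in the proof of Proposition \ref{Prop-E.is.elliptic}, is the adjunction computation: for $\Sigma \in |-\tfrac12 K_X|$ one has
$$K_\Sigma = (K_X + [\Sigma])\cdot[\Sigma]\big|_\Sigma = \left(K_X - \tfrac12 K_X\right)\big|_\Sigma = \tfrac12 K_X\big|_\Sigma = -[\Sigma]\big|_\Sigma.$$
Hence $-K_\Sigma = [\Sigma]|_\Sigma = -\tfrac12 K_X|_\Sigma$ is the restriction to $\Sigma$ of the ample divisor $-\tfrac12 K_X = \tfrac1{n-1}(-K_X)$ on $X$ (ample because $-K_X$ is ample and $X$ is del Pezzo, so $-K_X$ is divisible by $n-1 = 2$ in $\mathrm{Pic}(X)$ with ample quotient). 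The restriction of an ample divisor to a subvariety is ample, so $-K_\Sigma$ is ample and $\Sigma$ is a del Pezzo surface; since every Fano surface is del Pezzo this is consistent with the remark in the introduction.

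Next I would compute the degree. By definition $\deg(\Sigma) = K_\Sigma^2$, and using $K_\Sigma = \tfrac12 K_X|_\Sigma$ together with $[\Sigma] = -\tfrac12 K_X$ in $H_4(X;\Z)$, one gets
$$K_\Sigma^2 = \left(\tfrac12 K_X\right)^2 \cdot [\Sigma] = \tfrac14 K_X^2 \cdot \left(-\tfrac12 K_X\right) = -\tfrac18 K_X^3 = \tfrac18 (-K_X)^3.$$
On the other hand, the degree of the three-dimensional del Pezzo variety $X$ (index $2$, so $n-1 = 2$) is by definition $\deg(X) = \left(\tfrac12\right)^3 (-K_X)^3 = \tfrac18(-K_X)^3$. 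Comparing the two expressions yields $\deg(\Sigma) = \deg(X)$, as claimed.

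The genuinely delicate point is not the arithmetic but justifying that the intersection-theoretic manipulations are legitimate: one must be careful about whether $K_\Sigma^2$ is computed in $\Sigma$ or via the pushforward to $X$, and about the precise identification $-\tfrac12 K_X|_\Sigma \sim -K_\Sigma$ at the level of divisor classes (not merely homology classes) so that ampleness genuinely transfers. I would handle this by working with line bundles: set $L = \mathcal{O}_X(-\tfrac12 K_X)$, note $\mathcal{O}_X(\Sigma) \cong L$ since $\Sigma \in |-\tfrac12 K_X|$, apply adjunction $\omega_\Sigma \cong (\omega_X \otimes \mathcal{O}_X(\Sigma))|_\Sigma \cong (\omega_X \otimes L)|_\Sigma \cong L^{-1}|_\Sigma$, so $-K_\Sigma$ corresponds to $L|_\Sigma$, which is ample by \cite{hartshorne} (ampleness restricts to closed subschemes). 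The degree identity then follows from the projection formula, $(c_1(L|_\Sigma))^2 = (c_1(L)^2 \cdot [\Sigma])_X = (c_1(L)^2 \cdot c_1(L))_X = c_1(L)^3 = \tfrac18(-K_X)^3 = \deg(X)$, and we are done. I expect the main obstacle in the write-up to be simply stating these standard facts cleanly rather than any real mathematical difficulty.
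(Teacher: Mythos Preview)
Your proof is correct and the degree computation matches the paper's essentially verbatim (adjunction gives $K_\Sigma^2=\tfrac14 K_X^2\cdot[\Sigma]=\tfrac18(-K_X)^3=\deg(X)$). The only difference is in establishing that $\Sigma$ is del Pezzo: the paper simply cites Corollary~1.11 of \cite{iskovskikh1977fano}, whereas you give the direct argument that $-K_\Sigma=-\tfrac12 K_X|_\Sigma$ is the restriction of an ample line bundle and hence ample. Your route is more self-contained and requires no external reference; the paper's citation is shorter but opaque. Either is fine here.
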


\begin{proof}
  By \cite[Corollary 1.11]{iskovskikh1977fano}, every non-singular surface in $ |-\frac{1}{2}K_X|$  is a del Pezzo surface. By definition, the degree of the del Pezzo surface $\Sigma$ is equal to $K_\Sigma^2$, while the degree of the del Pezzo threefold $X$ is given by $\frac{1}{8}(-K_X)^3$. The adjunction formula yields $K_{\Sigma}^2 = \frac{1}{4}K_X^2 \cdot [\Sigma]$. Since $[\Sigma] = -\frac{1}{2}K_X$, it follows that $\deg(\Sigma) = \deg(X)$.
\end{proof}

\begin{proposition} \label{prop-elliptic.implies.pencil}
Let $E$ be a non-singular elliptic curve that realizes the homology class $\frac{1}{4}K^2_X$ in $H_2(X;\mathbb{Z})$ and is contained in a non-singular surface $\Sigma\in |-\frac{1}{2}K_X|$. Then, $E$ is the base locus of a pencil of surfaces in the linear system $|-\frac{1}{2}K_X|$.
\end{proposition}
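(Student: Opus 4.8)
The plan is to realize $E$ as the base locus of a pencil by showing that the linear system of surfaces in $|-\frac{1}{2}K_X|$ containing $E$ is exactly one-dimensional. Write $H = -\frac12 K_X$, so that $\Sigma \in |H|$ and $E \subset \Sigma$ with $[E] = \frac14 K_X^2 = H^2$ in $H_2(X;\Z)$; by the adjunction computation already used in Proposition \ref{Prop-E.is.elliptic}, $E$ is a non-singular anticanonical curve on the del Pezzo surface $\Sigma$, i.e.\ $E \in |-K_\Sigma|$. First I would consider the restriction exact sequence of sheaves on $X$,
$$0 \longrightarrow \mathcal{O}_X(H - \Sigma) \longrightarrow \mathcal{O}_X(H) \longrightarrow \mathcal{O}_\Sigma(H) \longrightarrow 0,$$
and since $\Sigma \in |H|$ we have $H - \Sigma \sim 0$, so this reads $0 \to \mathcal{O}_X \to \mathcal{O}_X(H) \to \mathcal{O}_\Sigma(H|_\Sigma) \to 0$. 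The key point is that $H|_\Sigma = -\frac12 K_X|_\Sigma = -K_\Sigma$ by the adjunction formula $K_\Sigma = (K_X + \Sigma)|_\Sigma = \frac12 K_X|_\Sigma$ combined with $\Sigma|_\Sigma \sim -K_\Sigma$, so $\mathcal{O}_\Sigma(H|_\Sigma) \cong \mathcal{O}_\Sigma(-K_\Sigma)$, and $E$ is a member of this anticanonical system on $\Sigma$.

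Next I would pass to cohomology. Taking global sections of the sequence above gives
$$0 \longrightarrow H^0(\mathcal{O}_X) \longrightarrow H^0(\mathcal{O}_X(H)) \longrightarrow H^0(\mathcal{O}_\Sigma(-K_\Sigma)) \longrightarrow H^1(\mathcal{O}_X),$$
and since $X$ is a Fano threefold (rational, hence $H^1(\mathcal{O}_X) = 0$), the restriction map $H^0(\mathcal{O}_X(H)) \to H^0(\mathcal{O}_\Sigma(-K_\Sigma))$ is surjective with one-dimensional kernel spanned by the equation of $\Sigma$ itself. On the del Pezzo surface $\Sigma$ of degree $\delta = \deg X$, Riemann--Roch and Kodaira vanishing give $h^0(\mathcal{O}_\Sigma(-K_\Sigma)) = \delta + 1$, while $h^0(\mathcal{O}_X(H)) = h^0(\mathcal{O}_X(-\frac12 K_X))$; for the del Pezzo threefolds of degree $6,7,8$ in question one checks directly (or via the same exact-sequence bookkeeping, cf.\ the proof of Proposition \ref{Prop-delPezzo.surfaces}) that $\dim |H| = \delta + 2$. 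Hence the surfaces in $|H|$ passing through $E$ — equivalently, those whose restriction to $\Sigma$ contains the fixed curve $E \in |-K_\Sigma|$ — form a linear subsystem of dimension $(\delta + 2) - \delta = 1$, since imposing that the restricted section be a fixed multiple of the section cutting out $E$ on $\Sigma$ (noting $-K_\Sigma - E \sim 0$, so there is a $1$-dimensional space of such restrictions) kills $\delta$ conditions. This subsystem is therefore a pencil $\mathcal{Q}$, and by construction every member contains $E$.

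Finally I would check that $E$ is precisely the base locus of $\mathcal{Q}$, not a proper subset of it. The base locus $\mathrm{Bs}(\mathcal{Q})$ contains $E$ and, since $\Sigma$ itself is a member and $E$ is the divisorial part cut on $\Sigma$ by the rest of the pencil, we have $\mathrm{Bs}(\mathcal{Q}) = \Sigma \cap \Sigma'$ for a second general member $\Sigma'$, which as a cycle equals $[\Sigma]\cdot[\Sigma'] = H^2 = [E]$; as $E$ is an irreducible curve already lying in this intersection and the intersection is a curve in the same class, $\mathrm{Bs}(\mathcal{Q}) = E$ as a set (with the generic transversality of Proposition \ref{Prop-E.is.elliptic} ensuring there is no embedded or multiple structure). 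The main obstacle I anticipate is the dimension count $\dim|-\frac12 K_X| = \deg(X) + 2$: this is where one must use the specific geometry of the degree $6,7,8$ del Pezzo threefolds rather than a uniform argument, and it should be handled by the same restriction-sequence induction (restricting from $X$ to $\Sigma$ to $E$) that underlies Propositions \ref{Prop-E.is.elliptic} and \ref{Prop-delPezzo.surfaces}, together with the classical fact $h^0(-K_\Sigma) = \deg(\Sigma) + 1$ for del Pezzo surfaces.
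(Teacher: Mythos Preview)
Your approach is essentially identical to the paper's: both use the restriction sequence $0\to\mathcal{O}_X\to\mathcal{O}_X(-\tfrac12 K_X)\to\mathcal{O}_\Sigma(-K_\Sigma)\to 0$, the Fano vanishing $H^1(\mathcal{O}_X)=0$, and the dimension counts $h^0(\mathcal{O}_X(-\tfrac12 K_X))=\deg(X)+2$ and $h^0(\mathcal{O}_\Sigma(-K_\Sigma))=\deg(\Sigma)+1$ to conclude that the surfaces through $E$ form a pencil. Regarding your anticipated obstacle, the paper obtains $h^0(\mathcal{O}_X(-\tfrac12 K_X))=\deg(X)+2$ uniformly by Riemann--Roch (citing \cite{iskovskikh2010algebraic}), so no case-by-case check for degrees $6,7,8$ is needed; also be careful with your bookkeeping ``$(\delta+2)-\delta=1$'', which conflates $h^0$ with projective dimension (the preimage of the $1$-dimensional line $\langle s_E\rangle\subset H^0(-K_\Sigma)$ under a surjection with $1$-dimensional kernel is $2$-dimensional, i.e.\ a $\mathbb{P}^1$).
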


\begin{proof}
By the adjunction formula, the elliptic curve $E$ realizes the anti-canonical class $-K_\Sigma \in H_2(\Sigma; \mathbb{Z})$.

We claim that $E$ is the complete intersection of $\Sigma$ with another non-singular surface in $|-\frac{1}{2}K_X|$. It suffices 
to prove that the anti-canonical bundle of $\Sigma$ is the restriction of the line bundle $\mathcal{O}_X(-\frac{1}{2}K_X)$ to $\Sigma$. The isomorphism of these line bundles implies an isomorphism of their global sections, that is,
 $$H^0\left(X,\mathcal{O}_X\left(-\tfrac{1}{2}K_X\right)|_\Sigma\right)=H^0\left(\Sigma,\mathcal{O}_\Sigma\left(-K_\Sigma\right)\right).$$
 
It is clear that $H^0(\Sigma,\mathcal{O}_\Sigma(-K_\Sigma)) \subset H^0(X,\mathcal{O}_X(-\frac{1}{2}K_X)|_\Sigma)$ as a subspace. We will show that these two vector spaces have the same dimension.
 Consider the short exact sequence of sheaves
 $$0\longrightarrow \mathcal{O}_X\longrightarrow \mathcal{O}_X\left(-\tfrac{1}{2}K_X\right)\longrightarrow \mathcal{O}_X\left(-\tfrac{1}{2}K_X\right)|_\Sigma \longrightarrow 0$$
 that induces the following long exact sequence in cohomology:

 \begin{center}
\begin{tikzcd}
 0 \arrow[r]& H^0(X,\mathcal{O}_X) \arrow[r]
& H^0\left(X, \mathcal{O}_X\left(-\tfrac{1}{2}K_X\right)\right) \arrow[r]
\arrow[d, phantom, ""{coordinate, name=Z}]
&  H^0\left(X, \mathcal{O}_X\left(-\tfrac{1}{2}K_X\right)|_\Sigma\right) \arrow[dll,
"\delta", rounded corners, to path={ -- ([xshift=2ex]\tikztostart.east)
|- (Z) [near end]\tikztonodes
-| ([xshift=-2ex]\tikztotarget.west)
-- (\tikztotarget)}] \\
&H^1(X,\mathcal{O}_X) \arrow[r]
&  \cdots. \quad \quad \quad \quad \quad \quad \quad
&
\end{tikzcd}
\end{center}

We have the following facts (see \cite[Section~3.2]{iskovskikh2010algebraic}): 
\begin{itemize}
\item We have $\dim (H^0(X,\mathcal{O}_X))=1$ and $H^1(X,\mathcal{O}_X)=0$.
\item By the Riemann--Roch theorem,
 $$\dim H^0\left(X, \mathcal{O}_X\left(-\tfrac{1}{2}K_X\right)\right)=\left(-\tfrac{1}{2}K_X\right)^3+3-1=\deg (X) +2$$ 
 and 
 $$\dim H^0\left(\Sigma,\mathcal{O}_\Sigma\left(-K_\Sigma\right)\right)=\left(-K_\Sigma\right)^2+2-1=\deg (\Sigma) +1.$$ 
\end{itemize}
It follows from the long exact sequence and the vanishing $H^1(X, \mathcal{O}_X) = 0$ that
 $$\dim H^0\left(X, \mathcal{O}_X\left(-\tfrac{1}{2}K_X\right)|_\Sigma\right)=\dim H^0\left(X, \mathcal{O}_X\left(-\tfrac{1}{2}K_X\right)\right)-1=\deg (X)+1.$$
Since $\deg(X) = \deg(\Sigma)$, we have 
$$\dim H^0(\Sigma,\mathcal{O}_\Sigma(-K_\Sigma))=\dim  H^0\left(X,\mathcal{O}_X\left(-\tfrac{1}{2}K_X\right)|_\Sigma\right),$$
as required.
  \end{proof}

\begin{proposition} \label{Prop-why.degree.is.678}
  For every non-singular surface $\Sigma\in |-\frac{1}{2}K_X|$, there exists a surjective map
  $$\psi \colon  H_2(\Sigma;\mathbb{Z})\lra H_2(X;\mathbb{Z}),$$
  which is induced by the inclusion $\Sigma$ in $X$.
 
In particular, if\,  $X$ is one of\, $\{\C P^3,\; \C P^3\sharp \overline{ \C P^3},\;\C P^1\times \C P^1 \times \C P^1\}$, then $\ker(\psi)\cong \mathbb{Z}$. 
\end{proposition}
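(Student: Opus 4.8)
The plan is to split into the two assertions: surjectivity of $\psi$ for a general non-singular $\Sigma\in|-\frac12 K_X|$, and the identification of $\ker(\psi)$ in the three named examples. For surjectivity, the natural tool is the Lefschetz hyperplane theorem. Since $-\frac12 K_X$ is ample (it is a positive multiple of an ample primitive divisor, $X$ being del Pezzo of index divisible by $2$ in dimension $3$), a general $\Sigma\in|-\frac12 K_X|$ is a smooth ample divisor, so the Lefschetz hyperplane theorem applies: the restriction $H_k(\Sigma;\Z)\to H_k(X;\Z)$ is an isomorphism for $k<\dim\Sigma=2$ and surjective for $k=\dim\Sigma=2$. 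Applying this with $k=2$ gives exactly the surjectivity of $\psi$. (Alternatively one can invoke the Gysin/long exact sequence of the pair and the fact that $H_3(X)$ is torsion-free of the right rank for these Fano threefolds, but Lefschetz is cleanest.)

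For the second assertion I would compute ranks. Surjectivity of $\psi$ gives a short exact sequence $0\to\ker\psi\to H_2(\Sigma;\Z)\to H_2(X;\Z)\to 0$, and since $H_2(X;\Z)$ is free for each of the three varieties, this sequence splits, so $\ker\psi$ is free of rank $\operatorname{rk}H_2(\Sigma;\Z)-\operatorname{rk}H_2(X;\Z)$. Now in each case $\Sigma$ is a del Pezzo surface of degree $\deg(X)\in\{6,7,8\}$ by Proposition \ref{Prop-delPezzo.surfaces}, hence $\operatorname{rk}H_2(\Sigma;\Z)=10-\deg(\Sigma)=10-\deg(X)$. Thus: for $X=\cpp$, $\deg X=8$ (more precisely $(-\frac12 K_X)^3 = (2H)^3/8 = 8$ so $\deg X=8$; here $\Sigma$ is a degree-$8$ del Pezzo, i.e. $\pp$ or a smooth quadric, with $\operatorname{rk}H_2=2$), and $\operatorname{rk}H_2(X)=1$, giving $\operatorname{rk}\ker\psi=1$; for $X=\ppp$, $\deg X=6$, $\Sigma$ a degree-$6$ del Pezzo with $\operatorname{rk}H_2(\Sigma)=4$, and $\operatorname{rk}H_2(X)=3$, again giving rank $1$; for $X=\cpcp$, $\deg X=6$ as well and $\operatorname{rk}H_2(X)=2$, with $\operatorname{rk}H_2(\Sigma)=3$, so rank $1$. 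In all three cases $\ker\psi\cong\Z$ since it is free of rank one.

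The only genuine subtlety — and the step I expect to require the most care — is verifying that these ranks line up, i.e. that $\deg(X)$ and $\operatorname{rk}H_2(X;\Z)$ have exactly the stated values for the three varieties, and in particular checking the degree computation $\deg(X)=\tfrac18(-K_X)^3$ against the classification of del Pezzo surfaces $\operatorname{rk}H_2(\Sigma)=10-K_\Sigma^2$. For $\cpcp$ one must also be slightly careful that the blow-up description gives $\operatorname{rk}H_2=2$ and that the relevant $\Sigma$ is indeed the expected degree-$6$ del Pezzo surface (a $\pp$ blown up at one point, or $\cp$ blown up at two points); this follows from $\deg(\Sigma)=\deg(X)=6$ via Proposition \ref{Prop-delPezzo.surfaces}. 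Everything else is formal once Lefschetz is in hand, so the proof reduces to Lefschetz plus this bookkeeping, with the classification of del Pezzo surfaces supplying the Betti numbers of $\Sigma$.
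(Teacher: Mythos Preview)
Your approach is essentially identical to the paper's: Lefschetz hyperplane theorem for surjectivity (since $-\tfrac12 K_X$ is ample), then a rank count on the short exact sequence to identify $\ker\psi\cong\Z$. One numerical slip to fix: $\deg(\cpcp)=7$, not $6$ (indeed $(2H-E)^3=8-1=7$), and correspondingly the surface $\Sigma$ you describe ($\pp$ blown up at one point, or $\cp$ blown up at two points) is a degree-$7$ del Pezzo, consistent with your stated $\operatorname{rk}H_2(\Sigma)=3=10-7$; the conclusion is unaffected.
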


\begin{proof}
  One has a non-singular subvariety $\Sigma \subset X$ of codimension~$1$. Moreover, as a divisor, $\Sigma$ is ample since $-\frac{1}{2}K_X$ is ample, by the Lefschetz hyperplane theorem (see, for example, \cite[Theorem C.8]{eisenbud20163264}), so the map
$$ H_{\dim(\Sigma)}(\Sigma;\mathbb{Z})\lra H_{\dim(\Sigma)}(X;\mathbb{Z})$$
is surjective. Hence, the first statement follows. 

In particular, we have the following cases: 
\begin{itemize}
\item If $X=\C P^3$, then $\Sigma$ is a quadric surface in $\C P^3$. In this case, $H_2(X;\mathbb{Z})\cong \Z$ and $H_2(\Sigma;\mathbb{Z})\cong \Z^2$.
\item If $X=\C P^3\sharp \overline{ \C P^3}$, then $\Sigma$ is a quadric surface in $\C P^3$ blown up at one point. In this case, we have $H_2(X;\mathbb{Z})\cong \Z^2$ and $H_2(\Sigma;\mathbb{Z})\cong \Z^3$.
\item If $X=\C P^1\times \C P^1 \times \C P^1$, then $\Sigma$ is a quadric surface in $\C P^3$ blown up at two points. In this case, $H_2(X;\mathbb{Z})\cong \Z^3$ and $H_2(\Sigma;\mathbb{Z})\cong \Z^4$.
\end{itemize}
By the first isomorphism theorem for groups, we have $H_2(X;\mathbb{Z})\cong H_2(\Sigma;\mathbb{Z})/\ker(\psi)$. Therefore, if  $X$ is in $\{\C P^3, \C P^3\sharp \overline{ \C P^3} ,\C P^1\times \C P^1 \times \C P^1\}$ and $\Sigma \in  |-\frac{1}{2}K_X|$ is a non-singular surface, then  $\ker(\psi)\cong\mathbb{Z}$. 
\end{proof}

It should be noted that a generic pencil of del Pezzo surfaces in the linear system $|-\frac{1}{2}K_X|$ is a 
 Lefschetz pencil: All singular fibers are irreducible with a single ordinary double point. 

\begin{lemma} \label{lem-4.singulars}
Let $X$ be a non-singular del Pezzo threefold of index $2$ and of degree $d$ $(d\geq 2)$.  
Then a generic pencil of surfaces in the linear system $|-\tfrac{1}{2}K_X|$ has exactly
\[
N \;=\;- \chi_{\mathrm{top}}(X)+24-2d
\]
singular members.  

In particular, if\,  $X$ is one of\, $\{\C P^3,\; \C P^3\sharp \overline{ \C P^3},\;\C P^1\times \C P^1 \times \C P^1\}$, then $N=4$.
\end{lemma}

\begin{proof}
 Fix  a general pencil $\mathcal{Q}$ of surfaces in the linear system $|-\frac{1}{2}K_X|$. If $d\ge 2$, this linear system  is base-point-free.  Moreover, as previously shown, the base locus $E$ of the pencil $\mathcal{Q}$ is an elliptic curve of class $\frac{1}{4}K_X^2$. Let $\pi\colon \widetilde X:=\mathrm{Bl}_E X\to X$ be the blow-up of $X$ along the base locus $E$, and let
$f\colon \widetilde X\to\mathbb{C} P^1$ be the morphism induced by the pencil. Since the fibration $f$ is a Lefschetz fibration, the number of singular members of $\mathcal{Q}$ equals the number of critical points of $f$.

Let $L:=\mathcal O_X(-\frac{1}{2}K_X)$. The critical points of the Lefschetz fibration associated to a pencil in
$|L|$ are counted by the top Chern class of the first jet bundle $J^1(L)$ fitting into the exact sequence
\[
0 \;\longrightarrow\; \Omega_X^1 \otimes L \;\longrightarrow\; J^1(L) \;\longrightarrow\; L \;\longrightarrow\; 0.
\]
This means that
\[
\#\Crit(f)\;=\;\int_X c_3\left(J^1(L)\right).
\]
By the Whitney multiplicativity of total Chern classes, $c(J_1(L))=c(\Omega_X^1\otimes L)\cdot c(L)$. Hence, taking degree~$3$, we have $c_3(J_1(L))=c_1(L)\cdot c_2(\Omega_X^1\otimes L)+c_3(\Omega_X^1\otimes L)$.
The splitting principle applied on a threefold with $l:=c_1(L)$ yields
\[
c_3\left(J^1(L)\right)
\;=\;-c_3(X)+2lc_2(X)-3l^2c_1(X)+4l^3.
\]
So, the number of singular members in a generic pencil of $|-\frac{1}{2}K_X|$ is
\begin{equation*}
\#\Sing(\mathcal{Q})\;=\;-\chi_\mathrm{top}(X)+2L\cdot c_2(X)-3L^2\cdot c_1(X)+4L^3.
\end{equation*}
Since $X$ is a del Pezzo threefold of index $2$ and degree $d$, meaning that $c_1(X)=2L$ and $L^3=d$, the above equation can be simplified as
\begin{equation}\label{eq:discriminant}
\#\Sing(\mathcal{Q})\;=\;-\chi_\mathrm{top}(X)+2c_2(X)\cdot  \left(-\tfrac{1}{2}K_X\right)-2d.
\end{equation}
It remains to compute $c_2(X)\cdot (-\frac{1}{2}K_X)$ for every del Pezzo threefold. By Kodaira vanishing and the Riemann--Roch theorem on a  non-singular threefold, we have 
\[
h^0\left(X,\mathcal O_X\left(-\tfrac{1}{2}K_X\right)\right)\;=\;\chi\left(\mathcal O_X\left(-\tfrac{1}{2}K_X\right)\right)
\]
and
\[
\chi\left(\mathcal O_X\left(-\tfrac{1}{2}K_X\right)\right)
= \frac{\left(-\frac{1}{2}K_X\right)^3}{6}+\frac{c_2(X)\cdot \left(-\frac{1}{2}K_X\right)}{12}+\frac{\left(-\frac{1}{2}K_X\right)^2\cdot c_1(X)}{4}
+\frac{\left(-\frac{1}{2}K_X\right)\cdot c_1(X)^2}{12}+\chi(\mathcal O_X).
\]
With $c_1(X)=-K_X$ and $\chi(\mathcal O_X)=1$, we obtain
\[
h^0\left(X,\mathcal O_X\left(-\tfrac{1}{2}K_X\right)\right)\;=\; d + \frac{c_2(X)\cdot \left(-\frac{1}{2}K_X\right)}{12} + 1.
\]
On the other hand, for a del Pezzo threefold, one has $h^0(X,\mathcal O_X(-\frac{1}{2}K_X))=d+2$. This implies that $c_2(X)\cdot (-\frac{1}{2}K_X) \;=\; 12$.

Substituting $c_2(X)\cdot (-\frac{1}{2}K_X)=12$ into Formula \eqref{eq:discriminant}, we get
\[
\#\Sing(\mathcal{Q})\;=\;-\chi_\mathrm{top}(X)+24-2d,
\]
as required.
The values of $\chi_\mathrm{top}(X)$ and $\#\Sing(\mathcal{Q})$ determined by Equation (\ref{eq:discriminant}) are presented in Table~\ref{Tab:singular.surfaces} for $d\in \{6,7,8\}$.
\begin{table}[h]
\centering\renewcommand{\arraystretch}{1.2}
\begin{tabular}{|c|c|c|c|}
\hline
Degree $d$ & \emph{$X$} & $\#\Sing(\mathcal{Q})$ & $\chi_\mathrm{top}(X)$ \\
\hline
$6$ & $\mathbb{P}(T_{\mathbb{C}P^2}) \simeq (1,1)$-divisor in $\mathbb{C}P^2 \times \mathbb{C}P^2$ & $6$ & $6$ \\
    &$\mathbb{C}P^1 \times \mathbb{C}P^1 \times \mathbb{C}P^1$  & $4$ & $8$ \\
\hline
$7$ & $\mathbb{C}P^3\#\overline{\mathbb{C}P^3}$ & $4$ & $6$ \\
\hline
$8$ & $\mathbb{C}P^3$ & $4$ & $4$ \\
\hline
\end{tabular}
\caption{Topological Euler characteristic of a del Pezzo threefold of degree  $6$, $7$ or $8$ and the number of singular elements in a generic pencil of surfaces in $|-\frac{1}{2}K_X|$.} \label{Tab:singular.surfaces}
\end{table}
\end{proof}

\subsection{Monodromy} \label{Sect-Monodromy}
We employ the same notation as in the proof of Lemma~\ref{lem-4.singulars}. Let $\mathcal{Q}$ denote a Lefschetz pencil of surfaces in the linear system $|-\frac{1}{2}K_X|$, and let $f\colon \tilde{X} \to \mathbb{C}P^1$ be the corresponding Lefschetz fibration. We denote by $\Sigma$ a non-singular fiber over $t\in \C P^1$ and by $\Sigma_{t_i}$ a singular fiber over $t_i\in \C P^1$, for $i\in \{1,\ldots,4\}$. The monodromy transformation on $H_2(\Sigma; \mathbb{Z})$ that corresponds to the vanishing cycle $S_i$, is induced by traversing a path in the base of the fibration $f$ from a regular value $t$ to the critical value $t_i$; see \cite[Section 2.2.2]{nguyen2022real} for more details. This transformation is given by the Picard--Lefschetz formula as follows:
 \begin{align*}
     T_{(S_i)_*}\colon  H_2(\Sigma;\mathbb{Z})&\lra H_2(\Sigma;\mathbb{Z})\\
   D&\longmapsto D+(D.S_i)S_i.
 \end{align*}

It should be noted that, under the monodromy transformation, the vanishing cycle $S_i$ is mapped to its inverse, and its self-intersection number is $-2$.  The monodromy transformation does not depend on the choice of $S_i$ or $-S_i$; that is, $T_{{(-S_i)}_*}(D)=T_{(S_i)_*}(D)$. 

\begin{lemma} \label{Lemma-Vanishing.Cycle}
  Let $X$ be a $3$-dimensional del Pezzo variety such that
  $$\ker(\psi \colon H_2(\Sigma;\mathbb{Z})\to H_2(X;\mathbb{Z}))\cong \Z,$$
where $\Sigma\in |-\frac{1}{2}K_X|$ is a non-singular surface. 
Then, the group $\ker (\psi)$ is generated by a vanishing cycle $S_i$, for every $i\in \{1,\ldots,4\}$.
\end{lemma}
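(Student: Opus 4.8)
The plan is to show that the vanishing cycles $S_i$ lie in $\ker(\psi)$ and that each one generates it. That $S_i \in \ker(\psi)$ is immediate: the vanishing cycle bounds a (Lefschetz thimble) disk in $\tilde X$, hence its image in $X$ is null-homologous; more concretely, $S_i$ contracts to a point as the non-singular fibre $\Sigma$ degenerates to the singular fibre $\Sigma_{t_i}$, so $\psi(S_i) = 0$ in $H_2(X;\Z)$. Thus the subgroup $\langle S_i\rangle \subseteq \ker(\psi)$. Since $\ker(\psi)\cong\Z$ by hypothesis, it remains to prove that $S_i$ is a \emph{primitive} element of $H_2(\Sigma;\Z)$ (equivalently, of $\ker(\psi)$); then $\langle S_i\rangle$ is all of $\ker(\psi)$.

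To see primitivity, I would use the self-intersection $S_i^2 = -2$ together with the monodromy description. Suppose $S_i = m\,C$ for some $C\in H_2(\Sigma;\Z)$ and integer $m\geq 1$. Then $-2 = S_i^2 = m^2 C^2$, forcing $m^2 \mid 2$, hence $m=1$; so $S_i$ is automatically primitive in $H_2(\Sigma;\Z)$. (One must check $S_i\neq 0$, which holds because $S_i^2 = -2 \neq 0$.) A primitive element of a free abelian group that lies in a rank-one subgroup $\ker(\psi)\cong\Z$ must generate that subgroup: if $S_i = k\cdot g$ with $g$ a generator of $\ker(\psi)$, then since $g$ is itself a nonzero — indeed primitive once we know $\ker(\psi)$ is a direct summand, but more simply — element of $H_2(\Sigma;\Z)$ and $S_i$ is primitive in $H_2(\Sigma;\Z)$, we get $k = \pm 1$. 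Hence $S_i$ generates $\ker(\psi)$.

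The one subtlety is the claim that $\ker(\psi)$, viewed inside $H_2(\Sigma;\Z)$, contains $S_i$ as a primitive vector \emph{and} that this already pins down the generator — one should be slightly careful distinguishing "primitive in $H_2(\Sigma;\Z)$" from "primitive in $\ker(\psi)$". The clean way: $\ker(\psi)\cong\Z$ has a generator $g$; write $S_i = k g$; intersect the relation $S_i^2=-2$ appropriately, or simply note $g$ divides $S_i$ in $H_2(\Sigma;\Z)$, and since $S_i$ is primitive in $H_2(\Sigma;\Z)$ the divisor $k$ must be a unit. I expect the main (and really only) obstacle is justifying rigorously that the vanishing cycle maps to zero under $\psi$ — i.e. identifying $S_i$ with the boundary of a vanishing thimble and hence as a nullhomologous class in $X$ — but this is standard Lefschetz-pencil theory (see e.g. the Picard–Lefschetz setup already invoked in Section \ref{Sect-Monodromy}), so the argument is short.
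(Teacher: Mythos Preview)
Your proposal is correct and follows essentially the same approach as the paper: both arguments use that $S_i\in\ker(\psi)$ (by the Lefschetz thimble/degeneration picture) and then write $S_i=kS$ for a generator $S$ of $\ker(\psi)$, concluding $k=\pm1$ from $S_i^2=-2$ via $k^2\mid 2$. The paper's version is slightly more direct --- it works immediately inside $\ker(\psi)$ rather than first establishing primitivity of $S_i$ in $H_2(\Sigma;\Z)$ --- but this is exactly the ``clean way'' you yourself identify at the end, so there is no substantive difference.
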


\begin{proof}
By the definition of a vanishing cycle $S_i\in H_2(\Sigma;\Z)$, it follows that $S_i\in \ker (\psi)$ for every $i\in \{1,\ldots,4\}$. Suppose that $S$ is a generator of $\ker (\psi)\cong \Z$. We can write $S_i=kS$, where $k\in \Z$. Since $S_i^2=-2$, we obtain $k^2=1$ and hence $k\in \{\pm 1\}$. As a result, $S_i=\pm S$ for every $i\in \{1,\ldots,4\}$.
\end{proof}

Now suppose  that $X$ is as in Lemma~\ref{Lemma-Vanishing.Cycle}. In this setting, we may identify each vanishing cycle $S_i$ with a generator $S$ of  $\ker(\psi)$, for any $i\in \{1,\ldots,4\}$, when studying monodromy transformations. From here on, 
we simply denote by $T$---instead of $T_{(S_i)_*}$---the monodromy transformation on $H_2(\Sigma;\Z)$ given by $T(D):=D+(D.S)S$. We define the following equivalence relation $\sim$ on $H_2(\Sigma;\Z)$ (where $\Sigma \in \mathcal{Q}$ is non-singular): 
 $$D_1\sim D_2 \quad\text{if and only if}\quad  D_2=D_1 \text{ or } D_2=D_1+(D_1.S)S.$$
 This relation is indeed an equivalence relation, as the monodromy transformation is an involution on the relevant set. In particular, it is symmetric.  
 We denote by $\overline{D}$  the equivalence class of $D$ in $H_2(\Sigma;\Z)/_\sim$.  For any pair of non-singular elements  $\Sigma$ and $\Sigma'$ in the pencil $\mathcal{Q}$, the corresponding Lefschetz fibration induces an isomorphism
$$H_2(\Sigma;\Z)/_\sim \;\cong H_2(\Sigma';\Z)/_\sim .$$

\begin{example}
   In the case $X=\cpp$, we consider the Lefschetz fibration on $\C P^1$ whose non-singular fibers are isomorphic to $\pp$ (see Figure~\ref{fig-monodromy}). 
   \begin{figure}
 \centering
\includegraphics[scale=0.4]{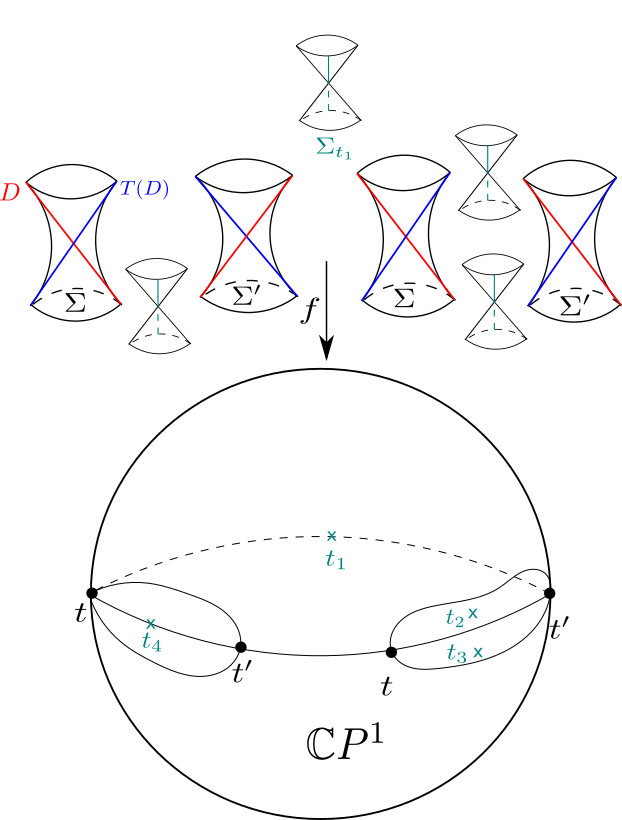}
\caption{Example of a monodromy
transformation where the general fibers $\Sigma$ are quadric surfaces and the singular ones are quadric cones in $\cpp$.}
\label{fig-monodromy}
\end{figure}
   Let $(L_1,L_2)=([\C P^1\times\{\pt\}],[\{\pt\}\times \C P^1])$ be a basis of $H_2(\pp;\Z)$. The vanishing cycle is $S=L_1-L_2\in H_2(\pp;\Z)$.  It is straightforward to verify that, for any $(a, b) \in \mathbb{Z}^2$, the homology classes $aL_1+bL_2$ and $bL_1+aL_2$ are related by the monodromy transformation, that is, $bL_1+aL_2=T(aL_1+bL_2)$.
Thus, the monodromy interchanges the coefficients of $L_1$ and $L_2$. In other words, two classes $aL_1 + bL_2$ and $bL_1 + aL_2$ are equivalent under the equivalence relation induced by monodromy.    Therefore, for any non-singular fiber $\Sigma$ in this fibration, the quotient $H_2(\Sigma;\Z)/_{\sim}$ can be identified  with $H_2(\pp;\Z)\, /\, \langle L_1 - L_2 \rangle$. 
\end{example}

     From here on, we will use interchangeably the notation for homology classes in $H_2(\Sigma;\mathbb{Z})$ and  divisor classes in $\Pic(\Sigma)$, where $\Sigma$ is a del Pezzo surface. Given a divisor $D \in \Pic(\Sigma)$, we denote by $D|_{E,\Sigma} \in \Pic_{D.E}(E)$ its restriction to $E$. Here, $\Pic_{k}(E)$ denotes the group of divisor classes on $E$ of degree $k$ for any $k\in \mathbb{Z}$.  Since $K_\Sigma.S=0$, for $K_\Sigma$ the anti-canonical divisor of any non-singular surface $\Sigma$ in the pencil $\mathcal{Q}$ and $[E]=-K_\Sigma$, it follows that  the restriction of the vanishing cycle to $E$  lies in $\Pic_0(E)$.

\subsection{Enumeration on pencils of surfaces} \label{Sect-Pencils}
Recall that $E$ is the base locus of a pencil of surfaces in the linear system $|-\frac{1}{2}K_X|$ and $S$ is a generator of the kernel of the map $\psi$ as described after Lemma~\ref{Lemma-Vanishing.Cycle}.  Since $S.E=0$, for every divisor $D\in \Pic(\Sigma)$, the restrictions $D|_{E,\Sigma}$ and $T(D)|_{E,\Sigma}$ lie in $\Pic_{D.E}(E)$. Given any divisor $D$ on $\Sigma$ and its image under the monodromy transformation $T(D)$, we define the following holomorphic map:
\begin{align*}
    \phi\colon   \mathcal{Q}& \longrightarrow \Pic_{2D.E}(E)\\
    \Sigma_t &\longmapsto  (D+T(D))|_{E,\Sigma_t}, 
\end{align*}
where $D + T(D)$ is viewed as a divisor on $\Sigma_t$ and $|_{E, \Sigma_t}$ denotes its restriction to $E$.

\begin{lemma} \label{lem-constant.map}
The map $\phi$ is constant.
\end{lemma}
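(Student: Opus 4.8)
The map $\phi\colon \mathcal{Q}\to \operatorname{Pic}_{2D.E}(E)$ sends $\Sigma_t$ to $(D+T(D))|_{E,\Sigma_t}$. Since $D+T(D) = 2D + (D.S)S$ and the restriction of $S$ to $E$ lies in $\operatorname{Pic}_0(E)$ (as noted just before the statement, because $K_\Sigma.S=0$ and $[E]=-K_\Sigma$), it suffices to understand how the restrictions of $D$ and of $S$ to $E$ vary with $t$. My plan is to show that $\mathcal{Q}\cong \mathbb{C}P^1$ is simply connected and that $\phi$, being holomorphic, must therefore be constant once we check it is well-defined (single-valued) — the only subtlety is that $T(D)|_{E,\Sigma_t}$ is a priori only defined up to the monodromy action, so the real content is that the \emph{sum} $D + T(D)$ restricts to something monodromy-invariant.

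First I would make precise the source of $\phi$. Strictly, $H_2(\Sigma_t;\mathbb{Z})$ is only canonically identified across fibers up to monodromy, so "the class $D$ on $\Sigma_t$" makes sense only after choosing a path from the base point $t_0$; going around a loop replaces $D$ by $T^{\pm 1}(D)$, hence $D+T(D)$ by $T(D)+T^2(D) = T\big(D+T(D)\big)$. So I would first check that $T$ acts trivially on the relevant restriction: since $T(D) - D = (D.S)S$ and $S|_{E}\in \operatorname{Pic}_0(E)$ is itself monodromy-fixed (being the restriction of the vanishing-cycle class, which $T$ sends to its inverse, but on $\operatorname{Pic}(E)$ the vanishing cycle $S$ restricts to a torsion/degree-zero class that the Picard–Lefschetz formula fixes — or more directly, $S|_{E,\Sigma_t}$ extends to the total space because $S$ bounds in $X$), the quantity $\big(D+T(D)\big)|_{E,\Sigma_t} = 2\,D|_{E,\Sigma_t} + (D.S)\,S|_{E,\Sigma_t}$ is genuinely monodromy-invariant. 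This makes $\phi$ a well-defined single-valued holomorphic map on the punctured base $\mathcal{Q}\setminus\{t_1,\dots,t_4\}\cong \mathbb{C}P^1\setminus\{4\text{ points}\}$, and one checks it extends holomorphically over the singular fibers (the base locus $E$ sits in each $\Sigma_{t_i}$ away from the node, so the restriction map still makes sense there), giving a holomorphic map $\phi\colon \mathbb{C}P^1 \to \operatorname{Pic}_{2D.E}(E)$.

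Then I would conclude: $\operatorname{Pic}_{2D.E}(E)$ is a torsor under $\operatorname{Pic}_0(E)\cong E$, an abelian variety (a complex torus), and any holomorphic map from $\mathbb{C}P^1$ to a complex torus is constant — the pullback of a nonzero holomorphic $1$-form would be a nonzero holomorphic $1$-form on $\mathbb{C}P^1$, which is impossible since $H^0(\mathbb{C}P^1,\Omega^1)=0$; equivalently $\mathbb{C}P^1$ is simply connected and the torus has universal cover $\mathbb{C}$, so the map lifts to a holomorphic hence constant map $\mathbb{C}P^1\to\mathbb{C}^g$. Hence $\phi$ is constant.

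**Main obstacle.** The only real difficulty is the well-definedness step: verifying that $\phi$ descends to a single-valued holomorphic map, i.e.\ that the monodromy ambiguity in $T(D)$ genuinely cancels in $D+T(D)$ after restriction to $E$, and that $\phi$ extends across the four critical values. Everything after that is the standard rigidity statement "$\mathbb{C}P^1\to$ abelian variety is constant." I expect the cleanest way to package the well-definedness is to observe that $D + T(D)$, viewed modulo the relation $\sim$, gives a well-defined element of $H_2(\Sigma;\mathbb{Z})/_\sim$, that the restriction-to-$E$ map kills the difference between $\sim$-equivalent classes (since $S|_{E,\Sigma}\in\operatorname{Pic}_0(E)$ and in fact extends over the whole pencil), and therefore factors through the quotient; combined with the isomorphisms $H_2(\Sigma;\mathbb{Z})/_\sim \cong H_2(\Sigma';\mathbb{Z})/_\sim$ this yields a global holomorphic section of the relative Picard scheme, to which the rigidity argument applies.
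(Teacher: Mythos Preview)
Your core argument is exactly the paper's: $\phi$ is a holomorphic map from $\mathcal{Q}\cong\mathbb{C}P^1$ (genus $0$) to $\operatorname{Pic}_{2D.E}(E)\cong E$ (genus $1$), hence constant. The paper states this in one line without your well-definedness discussion; note that your monodromy concern is resolved more cleanly than you suggest, since $T$ is an involution and therefore $T(D+T(D))=T(D)+D=D+T(D)$ is already monodromy-invariant as a class, with no need to analyze $S|_E$ separately.
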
 

\begin{proof}
  The pencil $\mathcal{Q}$ is parametrized by $\mathbb{C}P^1$, which is a compact Riemann surface of genus $0$, while $\Pic_{2D.E}(E)$ is an elliptic curve, that is, a compact Riemann surface of genus $1$. 
Any holomorphic map from a surface of lower genus to one of higher genus must be constant unless it is branched (which is not the case here). Therefore, the lemma follows. 
\end{proof}
  
We denote by $\overline{D}_{\phi}$ the divisor $(D+T(D))|_{E,\Sigma_t}$ or, equivalently, the divisor  $2D|_{E,\Sigma_t}+(D.S) S|_{E,\Sigma_t}$. The order~$2$ transformation $x \mapsto \overline{D}_{\phi} - x$ induces the following equivalence  relation on $\Pic_{D.E} (E)$:
$$x \sim y \quad \text{if and only if}\quad y=x \text{ or } y=\overline{D}_{\phi}-x.$$
We denote by $\overline{x}$ the equivalence class of $x$ under this relation.
Note that $\Pic_{D.E} (E)/_{x \sim \overline{D}_{\phi}-x}$ is the quotient of the elliptic curve $E$ by a fixed-point-free involution, and this quotient is holomorphically  
isomorphic to $\C {P}^1$.
We can define the following 
holomorphic map between Riemann spheres:
    \begin{align*}
    \phi_{\overline{D}}\colon  \mathcal{Q} &\longrightarrow \Pic_{D.E} (E)/_{x \sim \overline{D}_{\phi}-x}\\
    \Sigma_t &\longmapsto \overline{D|_{E,\Sigma_t}}, 
\end{align*}
where, for each $\Sigma_t \in \mathcal{Q}$, the class $\overline{D|_{E, \Sigma_t}}$ denotes the equivalence class of $D|_{E, \Sigma_t}$ in $\Pic_{D.E}(E)$ under the relation $x \sim \overline{D}_{\phi} - x$.

\begin{lemma} \label{lem-existence.of.L}
There exists a pair $\{L,T(L)\}\subset H_2(\Sigma;\Z)$ such that $(L.S)^2=1$. In particular, both classes $L$ and  $T(L)$ can be chosen to be effective. 
\end{lemma}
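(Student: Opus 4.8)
The plan is to produce an explicit pair of effective classes $\{L, T(L)\}$ on a general del Pezzo fibre $\Sigma$ with $(L.S)^2 = 1$, which amounts to finding an effective divisor class $L$ whose intersection with the vanishing cycle $S$ is $\pm 1$. First I would recall that $\Sigma$ is a del Pezzo surface of degree $d = \deg(X) \in \{6,7,8\}$ (Proposition \ref{Prop-delPezzo.surfaces}), that $S$ is a $(-2)$-class generating $\ker(\psi) \cong \Z$ with $S^2 = -2$ and $K_\Sigma.S = 0$ (Lemma \ref{Lemma-Vanishing.Cycle} and the discussion in Section \ref{Sect-Monodromy}), so that $S$ is realized by a vanishing cycle of the Lefschetz pencil. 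Because $S$ is primitive, the linear functional $D \mapsto D.S$ on $H_2(\Sigma;\Z)$ is surjective onto $\Z$ (the intersection form on a del Pezzo surface is unimodular), so there certainly exists some class $L_0$ with $L_0.S = 1$; the content of the lemma is that $L_0$ and $T(L_0) = L_0 + (L_0.S)S = L_0 + S$ can be taken effective simultaneously.

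The main step is to arrange effectivity. The natural move is to twist by a large multiple of the ample anti-canonical class: set $L = L_0 + m(-K_\Sigma)$ for $m \gg 0$. Since $K_\Sigma . S = 0$, we still have $L.S = L_0.S = 1$, so $(L.S)^2 = 1$ and $T(L) = L + S$; moreover $T(L).S = -1$, so the roles of $L$ and $T(L)$ are symmetric. To see that both $L$ and $T(L) = L + S$ are effective for $m$ large, I would invoke that $-K_\Sigma$ is ample, so by Fujita-type vanishing (or simply Serre vanishing together with $H^1(\Sigma, \mathcal{O}) = H^2(\Sigma, \mathcal{O}) = 0$ on a del Pezzo surface and Riemann-Roch) the classes $L_0 + m(-K_\Sigma)$ and $L_0 + S + m(-K_\Sigma)$ have strictly positive, growing Euler characteristic and no higher cohomology once $m$ is large enough, hence a nonzero section. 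Concretely, $\chi(D) = \frac{1}{2}D.(D - K_\Sigma) + 1$, which for $D$ of the above shape grows quadratically in $m$, while $h^1 = h^2 = 0$ for $m$ large by Kodaira-type vanishing applied to $D - K_\Sigma$ being ample. Thus both classes are effective, proving the lemma.

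The hard part — or rather the only point requiring care — is to make sure the effectivity argument is uniform enough and does not secretly depend on the fibre $\Sigma$ chosen: since all non-singular fibres of $\mathcal{Q}$ are del Pezzo surfaces of the same degree with the monodromy-equivariant identification of Picard groups, and the vanishing cycle $S$ has the same numerical type on each, the bound on $m$ can be taken once and for all. An alternative, more hands-on route for the specific $X \in \{\C P^3\sharp\overline{\C P^3},\, \C P^1\times\C P^1\times\C P^1\}$ in the degree $6,7,8$ cases would be to write down $S$ and a line/ruling class explicitly in the standard basis of $\mathrm{Pic}(\Sigma)$ (e.g. $S = L_1 - L_2$ on $\pp$, for which $L_1.S = 1$ and both $L_1$ and $T(L_1) = L_2$ are represented by the two rulings, hence effective), but I would present the anti-canonical-twist argument as the general one since it works for all admissible $X$ at once.
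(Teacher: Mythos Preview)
Your proof is correct but takes a different route from the paper's. The paper invokes a structural fact about del Pezzo surfaces (Claim~(6) on p.~26 of \cite{demazure2006seminaire}): the vanishing cycle $S$ can be written as $L_1 - L_2$ where $(L_1,\dots,L_k)$ is a $k$-tuple of effective generators of $H_2(\Sigma;\Z)$, and then simply takes $L = L_2$, $T(L) = L_1$. Your argument is more abstract: you use unimodularity of the intersection form to find some $L_0$ with $L_0.S = 1$, and then twist by a large multiple of $-K_\Sigma$ (which is orthogonal to $S$) to force effectivity via Riemann--Roch and vanishing. Your ``hands-on alternative'' at the end is in fact exactly the paper's approach.

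Both are valid. Your anti-canonical twist argument is cleaner in the sense that it does not rely on an external classification result, and it makes transparent why the lemma holds on any del Pezzo fibre. On the other hand, the paper's explicit $L = L_2$ (a ruling class or exceptional class in the standard basis) is not just convenient but is actually reused in the applications in Section~\ref{sect-applications.GW} and~\ref{sect-applications.W}, where one needs to compute $\phi(\overline{L})$ and $\phi(\overline{L},\overline{D})$ in terms of hyperplane and exceptional classes on $E$; a class of the form $L_0 + m(-K_\Sigma)$ with $m$ large and unspecified would be awkward there. So the paper's choice is tailored to the downstream computations, while yours proves the lemma as stated more self-containedly.
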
 

\begin{proof}
According to \cite[Claim (6), p.~26]{demazure2006seminaire}, the vanishing cycle $S$ can always be expressed as the difference of two effective generators of $H_2(\Sigma; \mathbb{Z})$, and the monodromy transformation $T$ interchanges these two classes. Suppose that $S$ can be written as $L_1-L_2$, where $(L_1,L_2,L_3,\ldots, L_k)$ is a $k$-tuple of effective generators for $H_2(\Sigma;\Z)\cong \Z^k$. The intersection form on $H_2(\Sigma;\Z)$ implies that we can choose $L=L_2$ and $T(L)=L_1$, or \textit{vice versa}, so that $L.S = \pm 1$.  
\end{proof}

\begin{lemma}\label{lem-degree.phi(S)}
 One has $\deg (\phi_{\overline{S}})= 4$.
\end{lemma}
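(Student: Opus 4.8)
The plan is to exhibit $\phi_{\overline S}$ as the descent of an unramified morphism between elliptic curves and to read off its degree from the fibre over $0\in Pic_0(E)$. Since $T(S)=S+(S.S)S=-S$, the constant $\phi(\overline S)=(S+T(S))|_{E,\Sigma_t}$ equals $0$, so the target of $\phi_{\overline S}$ is $Pic_0(E)/_{x\sim -x}$, the quotient of the elliptic curve $Pic_0(E)$ by the inversion involution, a copy of $\C P^1$ (as is the source $\mathcal Q\cong\C P^1$). \textit{Step 1: the value at the four singular fibres $\Sigma_{t_i}$ (Lemma \ref{lem-4.singulars}) is $\overline 0$.} For a generic pencil each $\Sigma_{t_i}$ has a single node off $E$ --- the exceptional divisor of $\tilde X\to X$ maps onto $\mathcal Q$ with every fibre isomorphic to the smooth curve $E$, so the critical points of $\tilde X\to\mathcal Q$ avoid it. On the minimal resolution $\widetilde{\Sigma_{t_i}}$ the vanishing cycle $S_i$ is represented by the exceptional $(-2)$-curve, which is disjoint from $E$; hence, after the simultaneous resolution of the degeneration, the limit of $S|_{E,\Sigma_t}$ as $t\to t_i$ is the restriction of that curve to $E$, namely $\mathcal O_E$. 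Since $S_i=\pm S$ (Lemma \ref{Lemma-Vanishing.Cycle}) this reads $\phi_{\overline S}(\Sigma_{t_i})=\overline 0$.

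\textit{Step 2: no other fibre maps to $\overline 0$,} i.e. $S|_{E,\Sigma_t}\neq 0$ for every smooth member $\Sigma$. Since $S^2=-2$ and $K_\Sigma.S=0$, the classes $S$ and $-S$ are roots on the honest del Pezzo surface $\Sigma$, hence not effective: an effective representative $\sum a_iC_i$ would give $0=S.(-K_\Sigma)=\sum a_i\,C_i.(-K_\Sigma)>0$. Thus $h^0(\mathcal O_\Sigma(S))=h^0(\mathcal O_\Sigma(-S))=0$, and Riemann--Roch together with Serre duality gives $h^1(\mathcal O_\Sigma(S+K_\Sigma))=h^1(\mathcal O_\Sigma(-S))=0$. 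The long exact sequence attached to $0\to\mathcal O_\Sigma(S+K_\Sigma)\to\mathcal O_\Sigma(S)\to\mathcal O_E(S|_{E,\Sigma})\to 0$ (using $[E]=-K_\Sigma$) then forces $h^0(E,\mathcal O_E(S|_{E,\Sigma}))=0$, which is impossible when $S|_{E,\Sigma}=0$. Hence $\phi_{\overline S}^{-1}(\overline 0)=\{t_1,\dots,t_4\}$ set-theoretically, and in particular $\phi_{\overline S}$ is non-constant.

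\textit{Step 3: the count.} Let $\pi:C\to\mathcal Q\cong\C P^1$ be the double cover branched exactly over $\{t_1,\dots,t_4\}$; with four branch points, $C$ is a smooth genus-$1$ curve. The $\Z/2$-monodromy of ``$S$ up to sign'' (Picard--Lefschetz: $S\mapsto -S$ around each $t_i$) becomes trivial on $C$, so $\Sigma_{\pi(c)}\mapsto S|_{E,\Sigma_{\pi(c)}}$ defines a morphism off the ramification locus which, by Step 1, extends to a morphism $g:C\to Pic_0(E)$ with $g(c_i)=0$ at the four ramification points $c_i$ of $\pi$. By construction $g(\iota c)=-g(c)$ for the deck involution $\iota$, so $g$ descends, and the descent is $\phi_{\overline S}$; comparing the two degree-$2$ quotients gives $\deg\phi_{\overline S}=\deg g$. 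Now $g$ is a non-constant morphism between smooth curves of genus $1$, so Riemann--Hurwitz makes it unramified, whence every fibre has exactly $\deg g$ points. By Steps 1 and 2, $g^{-1}(0)=\{c_1,c_2,c_3,c_4\}$ (a point of $C$ over a smooth fibre mapping to $0$ would contradict Step 2), so $\deg g=4$ and therefore $\deg\phi_{\overline S}=4$.

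The main obstacle is the ``exactly there'' half of the combination of Steps 1 and 2: one must know that the vanishing cycle restricts to the trivial bundle on $E$ precisely at the four singular fibres. The ``nowhere else'' direction is handled cleanly by the non-effectivity of roots on a del Pezzo surface; the ``at the four points'' direction relies on placing the nodes away from $E$ and on the simultaneous resolution of the nodal degenerations, which is where genericity of the pencil enters. (As a by-product, combining with $S|_E=\phi(\overline L)-2L|_E$ coming from Lemma \ref{lem-existence.of.L} rewrites the factor $4$ as $4\deg\phi_{\overline L}$ with $\deg\phi_{\overline L}=1$, the form one expects from the general identity $\deg\phi_{\overline D}=(D.S)^2$.)
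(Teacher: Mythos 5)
Your argument is correct, but it reaches the count $4$ by a genuinely different route than the paper. The paper's proof never touches the singular fibres cohomologically: it uses the class $L$ of Lemma \ref{lem-existence.of.L} to identify $\mathcal{Q}$ with $Pic_{L.E}(E)/_{x\sim\phi(\overline{L})-x}$, observes that under this identification $\phi_{\overline{S}}$ is the descent of $x\mapsto \phi(\overline{L})-2x$ (a translate of multiplication by $-2$ on the elliptic curve $Pic(E)$, hence of degree $4$), and matches the fibre over $\overline{0}$ with the four ramification points of $Pic_{L.E}(E)\to\mathcal{Q}$, i.e.\ the singular members --- essentially the same bookkeeping as Claim 2 in Proposition \ref{Prop-degree.case.complex}. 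You instead compute the fibre $\phi_{\overline{S}}^{-1}(\overline{0})$ directly, with no reference to $L$: your Step 2 (non-effectivity of the roots $\pm S$, the vanishings $h^0(\Sigma,S)=h^1(\Sigma,S+K_\Sigma)=0$, and the restriction sequence to $E$) supplies the ``no smooth member maps to $\overline{0}$'' half, which the paper leaves implicit; your Step 1 supplies the ``all four singular members map to $\overline{0}$'' half via the simultaneous resolution after the order-$2$ base change; and Riemann--Hurwitz between the two genus-$1$ curves shows the fibre is reduced, so the set-theoretic count is the degree. What each approach buys: the paper's is shorter and recycles machinery ($\phi_{\overline{L}}$, the degree of multiplication by $n$) needed anyway for Proposition \ref{Prop-degree.case.complex}, while yours makes the identification of $\phi_{\overline{S}}^{-1}(\overline{0})$ with the singular locus fully rigorous at the cost of heavier inputs (Brieskorn--Tyurina simultaneous resolution, and the Lefschetz genericity ensuring the nodes avoid the base locus $E$, which is consistent with the one-node-per-singular-fibre count underlying Lemma \ref{lem-4.singulars}). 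Your closing parenthetical correctly reconciles the two via $S|_{E,\Sigma_t}=\phi(\overline{L})-2L|_{E,\Sigma_t}$.
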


\begin{proof} 
  By Lemma~\ref{lem-existence.of.L}, for any pair of effective divisors $L$ and $T(L)$ on $\Sigma$ such that $L.S= \pm 1$, the pair $(L|_E, T(L)|_E)$  of their restrictions to the base locus $E$ determines a unique surface in the pencil $\mathcal{Q}$.  More precisely, if $L|_E \neq T(L)|_E$, this correspondence yields a non-singular surface in the pencil. Conversely, if $L|_E = T(L)|_E$, then $L$ and $T(L)$ restrict to the same divisor on $E$, which corresponds to a ramification point of the corresponding double covering of the parameter space. In this case, the corresponding surface in the pencil is singular, typically having an ordinary double point. Thus, the singular elements in the pencil are in bijection with the points where $L|_E = T(L)|_E$.  This relationship can be summarized by the following commutative diagram:
\begin{center}
 \begin{tikzcd}
\Pic_{L.E} (E) \arrow[r, "\hat{\pi}\colon x \mapsto \Sigma_x"] \arrow[d, "2:1"]
& \mathcal{Q} \arrow[dl, "\cong"] \\
\Pic_{L.E} (E)/_{x\sim \overline{L}_{\phi} -x}\rlap{\,,} & 
\end{tikzcd}
\end{center}
where $\hat{\pi}$ is a  ramified double covering map. Observe that $\hat{\pi}(\overline{L}_{\phi}-x)=\hat{\pi}(x)$, meaning that $\Sigma_{\overline{L}_{\phi}-x}=\Sigma_x$.
By analyzing the ramification points of $\hat{\pi}$, we obtain exactly four singular surfaces in the pencil $\mathcal{Q}$, which is consistent with Lemma~\ref{lem-4.singulars}. Furthermore, we have the following commutative diagram:
\begin{center}
 \begin{tikzcd}
&& \mathcal{Q} \arrow[d, "\phi_{\overline{S}}"] \arrow[dll] \\
\Pic_{L.E} (E)/_{x\sim \overline{L}_{\phi} -x}   \arrow[rr, "\overline{x}\mapsto \overline{\overline{L}_{\phi}-2 x}"]&&  \Pic_{0} (E)/_{y\sim -y}\rlap{\,.}
\end{tikzcd}
\end{center}

These diagrams together show that  each singular surface in the pencil $\mathcal{Q}$ corresponds to an element $x\in \Pic_{L.E}(E)$ such that $x=\overline{L}_{\phi}-x$, which is mapped to $\overline{0}\in \Pic_0(E)/_{y\sim -y}$. Consequently, the degree of the map $\phi_{\overline{S}}$  is exactly the number of singular surfaces in the pencil $\mathcal{Q}$, which is equal to $4$.
\end{proof}

\begin{corollary} \label{coro-isomorphism.pencil.picard}
Let $\{L,T(L)\}\subset H_2(\Sigma;\Z)$ be such that $(L.S)^2=1$. Then, the corresponding map $\phi_{\overline{L}}$ is an isomorphism. 
\end{corollary}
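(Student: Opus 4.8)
The plan is to reduce the statement to the degree computation already carried out in Lemma \ref{lem-degree.phi(S)}. First I would observe that since $(L.S)^2 = 1$, we have $L.S = \pm 1$, and by Lemma \ref{lem-existence.of.L} we may take $L$ and $T(L) = L + (L.S)S$ to be effective, with $L.S = \pm 1$. The key point is to set up, exactly as in the proof of Lemma \ref{lem-degree.phi(S)}, the commutative diagram
\begin{center}
\begin{tikzcd}
Pic_{L.E}(E) \arrow[r, "\pi"] \arrow[d, "2:1"] & \mathcal{Q} \arrow[dl, "\cong"] \\
Pic_{L.E}(E)/_{x\sim \phi(\overline{L})-x} &
\end{tikzcd}
\end{center}
where $\pi$ sends an effective divisor $x \in Pic_{L.E}(E)$ to the unique surface $\Sigma_x$ in the pencil whose intersection with $E$ recovers the couple $\{x, \phi(\overline{L})-x\}$; here one uses that a pencil of surfaces in $|-\frac12 K_X|$ with base locus $E$ is determined, through the isomorphism $H^0(X,\mathcal{O}_X(-\frac12 K_X)|_\Sigma) = H^0(\Sigma,\mathcal{O}_\Sigma(-K_\Sigma))$ from Proposition \ref{prop-elliptic.implies.pencil}, by the two divisor classes $L|_{E,\Sigma}$ and $T(L)|_{E,\Sigma}$ cut out on $E$. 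The map $\phi_{\overline{L}}: \mathcal{Q} \to Pic_{L.E}(E)/_{x\sim\phi(\overline{L})-x}$ is then precisely the inverse of the induced isomorphism on the quotient, up to the identification of the two quotients, so it is an isomorphism.

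More carefully, the second diagram in the proof of Lemma \ref{lem-degree.phi(S)},
\begin{center}
\begin{tikzcd}
&& \mathcal{Q} \arrow[d, "\phi_{\overline{S}}"] \arrow[dll, "\phi_{\overline{L}}"'] \\
Pic_{L.E}(E)/_{x\sim\phi(\overline{L})-x} & \arrow[r, "\overline{x}\mapsto\overline{\phi(\overline{L})-2x}"] & Pic_0(E)/_{y\sim-y}
\end{tikzcd}
\end{center}
factors $\phi_{\overline{S}}$ through $\phi_{\overline{L}}$ followed by the map $\overline{x}\mapsto\overline{\phi(\overline{L})-2x}$. Since $\phi_{\overline{S}}$ has degree $4$ by Lemma \ref{lem-degree.phi(S)}, and the map $\overline{x}\mapsto\overline{\phi(\overline{L})-2x}$ on the quotients of $Pic(E)$ by the respective involutions is a degree-$4$ map (it is the descent of the multiplication-by-$2$ isogeny on the elliptic curve $E$, which has degree $4$, to the quotients by $x\mapsto\phi(\overline{L})-x$ and $y\mapsto -y$), multiplicativity of degrees forces $\deg(\phi_{\overline{L}}) = 1$. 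A holomorphic map of degree $1$ between Riemann spheres is an isomorphism, so the corollary follows. Alternatively, and perhaps more cleanly, one can argue directly from the first diagram: $\pi$ is a ramified double cover and descends to an isomorphism on the quotient by the deck involution $x\mapsto\phi(\overline{L})-x$, which is exactly $\phi_{\overline{L}}^{-1}$.

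The main obstacle I anticipate is not the degree bookkeeping but the verification that the target quotient $Pic_{L.E}(E)/_{x\sim\phi(\overline{L})-x}$ is genuinely a Riemann sphere and that $\pi$ is well-defined as stated — i.e. that every effective class $x$ on $E$ of the right degree, together with its "partner" $\phi(\overline{L})-x$, does arise from a unique surface in $\mathcal{Q}$, including the degenerate case $x = \phi(\overline{L}) - x$ corresponding to the four singular fibres. This is essentially the content of Proposition \ref{prop-elliptic.implies.pencil} combined with the adjunction computation $[E] = -K_\Sigma$ and $S|_E \in Pic_0(E)$, but care is needed because $\phi_{\overline{L}}$ a priori is only known to be a holomorphic, hence finite, map of the Riemann sphere to itself; its degree being $1$ is what upgrades it to an isomorphism, and that is exactly what the factorization through $\phi_{\overline{S}}$ delivers.
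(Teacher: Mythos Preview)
Your proposal is correct and essentially matches the paper's own reasoning. The paper gives no separate proof for this corollary: it is meant to be read off directly from the first commutative diagram in the proof of Lemma~\ref{lem-degree.phi(S)}, where the diagonal arrow $\mathcal{Q}\to Pic_{L.E}(E)/_{x\sim\phi(\overline{L})-x}$ is already labelled ``$\cong$''; this is exactly the map $\phi_{\overline{L}}$, and it is an isomorphism because $\pi$ is a ramified double cover with deck involution $x\mapsto\phi(\overline{L})-x$. That is your ``alternative'' argument at the end, and it is the paper's intended one-line justification.

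Your main argument via the factorization $\phi_{\overline{S}}=\theta\circ\phi_{\overline{L}}$ with $\deg(\phi_{\overline{S}})=4$ and $\deg(\theta)=4$ is also correct, and in fact anticipates the general mechanism used later in Proposition~\ref{Prop-degree.case.complex} (your map $\overline{x}\mapsto\overline{\phi(\overline{L})-2x}$ is exactly the map $\theta_1$ there, with $D=L$). It is a slightly more roundabout route for this particular case, since the proof of Lemma~\ref{lem-degree.phi(S)} already \emph{uses} the isomorphism $\phi_{\overline{L}}$ to compute $\deg(\phi_{\overline{S}})$, so invoking that degree to recover $\deg(\phi_{\overline{L}})=1$ is logically sound but somewhat circular in presentation. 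The cleaner path, as you note yourself, is the direct one.
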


The following proposition is the main point for solving the \textit{$1$-dimensional problem} that we mentioned at the beginning of this section.

\begin{proposition} \label{Prop-degree.case.complex}
  Let $X$ be a $3$-dimensional del Pezzo variety such that
  $$\ker(\psi\colon  H_2(\Sigma;\mathbb{Z})\to H_2(X;\mathbb{Z}))\cong \Z,$$
  where $\Sigma\in \mathcal{Q}$ is a non-singular surface. Let $D\in \Pic(\Sigma)$ be a divisor. Let  $S\in \ker(\psi)$ be a generator.  Then, the map $\phi_{\overline{D}}$ has degree $(D.S)^2$.
\end{proposition}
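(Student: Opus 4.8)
The plan is to reduce the general case to the already-established case of the vanishing cycle $S$ (Lemma \ref{lem-degree.phi(S)}), using the factorization provided by the divisor $L$ of Corollary \ref{coro-isomorphism.pencil.picard} together with the multiplication-by-$(D.S)$ structure on the elliptic curve $E$. First I would fix a non-singular fibre $\Sigma$, write $n := D.S$, and recall from Lemma \ref{lem-existence.of.L} and Corollary \ref{coro-isomorphism.pencil.picard} a pair $\{L, T(L)\}\subset H_2(\Sigma;\Z)$ with $(L.S)^2=1$, so that $\phi_{\overline L}\colon \mathcal{Q}\to Pic_{L.E}(E)/_{x\sim \phi(\overline L)-x}$ is an isomorphism of Riemann spheres. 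Thus it suffices to compute the degree of the composite $\phi_{\overline D}\circ \phi_{\overline L}^{-1}$, i.e. to express the quantity $\overline{D|_{E,\Sigma_t}}$ as an explicit algebraic function of the variable $\overline{L|_{E,\Sigma_t}}$.

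The key computation is to track how the restriction $D|_{E,\Sigma_t}$ varies with $t$ through the pencil. Since $S|_{E,\Sigma_t}\in Pic_0(E)$ (established in the previous subsection via $K_\Sigma.S=0$ and $[E]=-K_\Sigma$), and since $\phi(\overline D)=2D|_{E,\Sigma_t}+(D.S)\,S|_{E,\Sigma_t}$ is constant in $t$ by Lemma \ref{lem-constant.map}, the $t$-dependence of $D|_{E,\Sigma_t}$ is governed entirely by the $t$-dependence of the degree-$0$ class $S|_{E,\Sigma_t}$, scaled by the integer $n = D.S$: writing $u(t) := S|_{E,\Sigma_t}\in Pic_0(E)$, one has $D|_{E,\Sigma_t} = D|_{E,\Sigma_{t_0}} + \tfrac{n}{2}(u(t)-u(t_0))$ — more precisely, comparing two fibres and using the Picard–Lefschetz formula $T(D)=D+(D.S)S$, the difference $D|_{E,\Sigma_t}-D|_{E,\Sigma_{t_0}}$ is an $n$-fold multiple of the corresponding difference for $S$. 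Taking $D=S$ (where $n=1$ up to sign, by Lemma \ref{Lemma-Vanishing.Cycle}) this says $\phi_{\overline S}$ records exactly $u(t)$ modulo the relevant involution, and for general $D$ the map $\phi_{\overline D}$ factors as $\phi_{\overline S}$ followed by multiplication by $n$ on $Pic_0(E)\cong E$ (passed to the appropriate quotients by the involutions $y\sim -y$). Since multiplication by $n$ on an elliptic curve has degree $n^2$, and descends to a degree-$n^2$ self-map of $E/_{y\sim -y}\cong \mathbb{C}P^1$, while $\deg(\phi_{\overline S})=4$ is absorbed by composing with the isomorphism $\phi_{\overline L}^{-1}$ — wait, more carefully: the correct bookkeeping is that $\phi_{\overline D}$ and $\phi_{\overline L}$ both factor through $Pic_0(E)/_{y\sim -y}$, $\phi_{\overline L}$ being an isomorphism onto its target, so $\deg(\phi_{\overline D}) = \deg(\phi_{\overline D}\circ\phi_{\overline L}^{-1}) = \deg(\text{mult. by } n) = n^2 = (D.S)^2$.

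The main obstacle I anticipate is making the claim ``$\phi_{\overline D}$ factors as $\phi_{\overline L}$ followed by multiplication by $D.S$'' fully precise: one must chase the two commutative diagrams from the proof of Lemma \ref{lem-degree.phi(S)}, verify that the auxiliary maps between the various quotients $Pic_{L.E}(E)/_\sim$ and $Pic_0(E)/_{y\sim -y}$ are compatible with the elliptic-curve group structure, and confirm that the involution $x\mapsto \phi(\overline D)-x$ on $Pic_{D.E}(E)$ intertwines correctly with $y\mapsto -y$ on $Pic_0(E)$ under the affine identification $Pic_{D.E}(E)\cong Pic_0(E)$ (choosing a base point compatibly). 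Once this identification is in place, the degree count $n^2$ for multiplication by $n$ on a genus-$1$ curve — and the fact that it descends to degree $n^2$ on the $\mathbb{C}P^1$ quotient since the involution is by $-1$, which commutes with $[n]$ — is standard and I would not belabor it. The case $D.S=0$ is immediate since then $\phi_{\overline D}$ is constant (its target is a point, as $\phi(\overline D)-x = x$ identically forces the quotient to be trivial), consistent with degree $0$.
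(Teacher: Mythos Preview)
Your core idea---reduce to the fact that multiplication by $n=D.S$ on the elliptic curve has degree $n^2$---is exactly the paper's, and your route through the isomorphism $\phi_{\overline L}$ (rather than through $\phi_{\overline S}$ as the paper does) is a legitimate and slightly more direct variant. However, your writeup contains two concrete errors that need fixing.

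First, the parenthetical ``Taking $D=S$ (where $n=1$ up to sign, by Lemma \ref{Lemma-Vanishing.Cycle})'' is wrong: $S.S=-2$, not $\pm 1$. Lemma \ref{Lemma-Vanishing.Cycle} asserts that each vanishing cycle $S_i$ equals $\pm S$, which is a different statement. This is why $\phi_{\overline S}$ has degree $4$, not $1$, and why your first attempt to factor through $\phi_{\overline S}$ does not immediately give $n^2$. Your self-correction via $\phi_{\overline L}$ is the right move; to make it precise, observe (as the paper later records in Lemma \ref{lem-constant.divisor.L.D}, though it follows directly from Lemma \ref{lem-constant.map}) that $(D-|D.S|\,L)|_{E,\Sigma_t}$ is constant in $t$, so $D|_{E,\Sigma_t}=\mathrm{const}+|D.S|\cdot L|_{E,\Sigma_t}$ and hence $\phi_{\overline D}\circ\phi_{\overline L}^{-1}$ is the affine map $\overline{x}\mapsto\overline{\mathrm{const}+|D.S|\,x}$ between the two $\mathbb{C}P^1$ quotients. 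One must check this descends to the quotients; it does once $L$ (versus $T(L)$) is chosen so that $|D.S|\cdot(L.S)=D.S$, and then its degree is $(D.S)^2$.

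Second, your treatment of $D.S=0$ is wrong: the target $Pic_{D.E}(E)/_{x\sim\phi(\overline D)-x}$ is \emph{not} a point. The involution $x\mapsto\phi(\overline D)-x$ has exactly four fixed points, so the quotient is still a $\mathbb{C}P^1$. What is true is that $\phi_{\overline D}$ is a \emph{constant map} in this case (since $2D|_{E,\Sigma_t}=\phi(\overline D)$ is constant and continuity removes the $2$-torsion ambiguity), hence has degree $0=(D.S)^2$.

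For comparison, the paper sidesteps both issues by setting up the commutative square
\[
\theta_1\circ\phi_{\overline D}=\theta_2\circ\phi_{\overline S},\qquad
\theta_1(\overline x)=\overline{\phi(\overline D)-2x},\quad
\theta_2(\overline y)=(D.S)\,\overline y,
\]
and computing directly $\deg\theta_1=4$, $\deg\theta_2=(D.S)^2$, $\deg\phi_{\overline S}=4$ (Lemma \ref{lem-degree.phi(S)}), whence $\deg\phi_{\overline D}=(D.S)^2$. This avoids having to choose a sign for $L$ and handles $D.S=0$ automatically.
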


\begin{proof}
Consider the following commutative diagram of holomorphic maps between Riemann spheres:
\begin{equation}\tag{I}\label{I}
\centering
    \begin{tikzcd}
\mathcal{Q} \arrow[r, "\phi_{\overline{D}}"] \arrow[d, "\phi_{\overline{S}}"]
& \Pic_{D.E} (E)/_{x\sim \overline{D}_{\phi}-x} \arrow[d, "\theta_1"] \\
\Pic_{0} (E)/_{y\sim -y} \arrow[r, "\theta_2"]
& \Pic_{0} (E)/_{y\sim -y}\rlap{\,,}
\end{tikzcd}
\end{equation}
where
$$\theta_1(\overline{x})=\overline{\overline{D}_{\phi}-2x}\quad \text{and}\quad \theta_2(\overline{y})=(D.S)\overline{y}.$$
As a result, the following equality holds:
$$\deg\left(\phi_{\overline{D}}\right)\times \deg(\theta_1)=\deg\left(\phi_{\overline{S}}\right)\times \deg(\theta_2).$$
By Lemma~\ref{lem-degree.phi(S)}, we have $\deg (\phi_{\overline{S}})= 4$. In order to compute $\deg(\phi_{\overline{D}})$, we proceed to compute $\deg(\theta_1)$ and $\deg (\theta_2)$ as follows.

We first show that $\deg (\theta_2)= (D.S)^2$. Let $\overline{y_1} \in \Pic_{0} (E)/_{y\sim -y}$ (the quotient that is in the lower right corner of Diagram \eqref{I}). The degree of $\theta_2$ can be expressed as
$$\deg(\theta_2)=\left|\left\{\overline{y}\in \Pic_{0} (E)/_{y\sim -y} \; : \; (D.S)\overline{y}=\overline{ y_1}\right\}\right|.$$
The commutative diagram
\begin{center}
    \begin{tikzcd}
\Pic_{0} (E) \arrow[r, "y\mapsto (D.S)y"] \arrow[d, "2:1"]
& \Pic_{0} (E) \arrow[d, "2:1"] \\
\Pic_{0} (E)/_{y\sim -y} \arrow[r, "\theta_2"]
& \Pic_{0} (E)/_{y\sim -y}
\end{tikzcd}
\end{center}
 implies that $\deg(\theta_2)$ is exactly the number of solutions $y\in \Pic_0(E)$ to the equation $(D.S)y=y_1$ in $\Pic_0(E)$, in  which the divisor $y_1$ is given. Moreover, any two solutions $y$ to this equation differ by a torsion point of order $D.S$. This equation then has exactly $(D.S)^2$ solutions $y$. Hence, $\deg(\theta_2)=(D.S)^2$, as required.

We now show that  $\deg (\theta_1)= 4$ in a similar way. Let $\overline{y_1} \in \Pic_{0} (E)/_{y\sim -y}$ (the quotient that is in the lower right corner of Diagram \eqref{I}. The degree of $\theta_1$ can be expressed as
$$\deg(\theta_1)=\left|\left\{\overline{x}\in \Pic_{D.E} (E)/_{x\sim \overline{D}_{\phi} -x} \; : \; \overline{ \overline{D}_{\phi}-2x}=\overline{ y_1}\right\}\right|.$$
The commutative diagram
\begin{center}
    \begin{tikzcd}
\Pic_{D.E} (E) \arrow[r, "x\mapsto \overline{D}_{\phi}-2x"] \arrow[d, "2:1"]
& \Pic_{0} (E) \arrow[d, "2:1"] \\
\Pic_{D.E} (E)/_{x\sim \overline{D}_{\phi} -x} \arrow[r, "\theta_1"]
& \Pic_{0} (E)/_{y\sim -y}
\end{tikzcd}
\end{center}
implies that $\deg(\theta_1)$ is exactly the number of solutions $x\in \Pic_{D.E}(E)$ to the equation $\overline{D}_{\phi}-2x=y_1$ in $\Pic_0(E)$, where the two divisors $y_1$ and $\overline{D}_{\phi}$ are given. Moreover, any two solutions $x$ to this equation differ by a torsion point of order $2$. This equation then has exactly $2^2$ solutions $x$.  Hence, $\deg(\theta_1)=4$, as required.
This completes the proof of the proposition.
\end{proof}

\subsection{Real pencils of real del Pezzo surfaces}
Throughout this subsection,  we restrict our attention to a real $3$-dimensional del Pezzo variety $(X,\tau)$  such that $\ker(\psi) \cong \Z$, where $\Sigma \in |-\frac{1}{2}K_X|$ is a non-singular surface, and the surjection $\psi\colon  H_2(\Sigma;\mathbb{Z})	\twoheadrightarrow H_2(X;\mathbb{Z})$ is induced by the inclusion $\Sigma\hookrightarrow X$. Henceforth, the notions of vanishing cycle and generator of $\ker (\psi)$ may be used interchangeably (see Lemma~\ref{Lemma-Vanishing.Cycle}). Let $\mathcal{Q}$ denote a pencil of surfaces in the linear system $|-\frac{1}{2}K_X|$. Let the elliptic curve $E$ be its base locus. Recall that $E$ realizes the homology class $\frac{1}{4}K_X^2$ (see Proposition~\ref{Prop-E.is.elliptic}). Now suppose  that both the curve $E$ and the pencil $\mathcal{Q}$ are real with non-empty real parts. We first examine certain properties of the real pencil $\mathcal{Q}$ that distinguish them from the complex case.

\subsubsection{Properties of real pencils of real surfaces}

\begin{lemma} \label{lem-surfaces.in.real.pencil}
The surfaces in the real pencil $\mathcal{Q}$ containing a real rational curve in $(X,\tau)$ are in the real part $\R \mathcal{Q}$ of this pencil.
\end{lemma}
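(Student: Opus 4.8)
The statement asserts that if a surface $\Sigma_t$ in the real pencil $\mathcal{Q}$ carries a real rational curve of $(X,\tau)$, then $\Sigma_t$ itself is a real member of the pencil (i.e.\ $\tau(\Sigma_t) = \Sigma_t$). The natural strategy is to argue by contradiction: suppose $C \subset \Sigma_t$ is a real rational curve in $(X,\tau)$, so $\tau(C) = C$, but $\tau(\Sigma_t) \neq \Sigma_t$. Since $\mathcal{Q}$ is a real pencil, $\tau$ acts on the base $\mathbb{C}P^1$ of the pencil, and $\tau(\Sigma_t) = \Sigma_{\bar t}$ where $t \mapsto \bar t$ is this (anti-holomorphic) action; the assumption $\tau(\Sigma_t) \neq \Sigma_t$ means $\bar t \neq t$, so $\Sigma_t$ and $\Sigma_{\bar t}$ are two distinct members of the pencil.

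The key geometric observation is that any two distinct members of a pencil of surfaces in $|-\tfrac12 K_X|$ meet exactly along the base locus $E$ (by definition of a pencil, $\Sigma_t \cap \Sigma_{t'} = E$ for $t \neq t'$). So first I would deduce that $C \subset \Sigma_t \cap \tau(\Sigma_t) = \Sigma_t \cap \Sigma_{\bar t} = E$. But $E$ is a non-singular elliptic curve (Proposition~\ref{Prop-E.is.elliptic}), which is irreducible of genus $1$, while $C$ is (the image of) a rational curve. An irreducible rational curve cannot be contained in a genus-$1$ curve with the same dimension; more precisely, a rational curve mapping into $E$ must be constant, contradicting that $C$ is a curve. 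If one wishes to allow $C$ to be reducible or non-reduced, the argument still works: every irreducible component of $C$ would have to lie in $E$, and since $E$ is irreducible of dimension $1$, each such component equals $E$ as a set, forcing $C = E$ up to multiplicity — but then $C$ is not rational, as $E$ has genus $1$. Either way we reach a contradiction.

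Therefore $\bar t = t$, i.e.\ $\Sigma_t$ is preserved by $\tau$, hence lies in the real part of the pencil $\mathcal{Q}$. The only point requiring a little care — and the step I would regard as the genuine content rather than formality — is justifying that $\Sigma_t \cap \Sigma_{t'} = E$ set-theoretically for distinct members, and that the real structure $\tau$ indeed sends the member over $t$ to the member over $\bar t$ for a genuinely anti-holomorphic involution of the base $\mathbb{C}P^1$ (so that non-realness of $\Sigma_t$ translates to $\bar t \neq t$ with $\Sigma_{\bar t}$ a distinct member). Both are standard consequences of $\mathcal{Q}$ being a real pencil with base locus $E$, but they are the facts that make the one-line dimension/genus argument go through. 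No serious obstacle is anticipated; the proof is short.
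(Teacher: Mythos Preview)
Your argument is correct and matches the paper's proof essentially line for line: assume $\tau(\Sigma_t)\neq\Sigma_t$, observe that $C=\tau(C)\subset\Sigma_t\cap\tau(\Sigma_t)=E$, and derive a contradiction from $E$ being elliptic while $C$ is rational. The extra care you take with the reducible case and the action on the base $\mathbb{C}P^1$ is sound but not needed for the paper's level of detail.
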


\begin{proof}
  Let $C$ be a real rational curve in $(X,\tau)$. Suppose that the surface $\Sigma\in \mathcal{Q}$ contains $C$. Since $\mathcal{Q}$ is a real pencil, we have $\tau(\Sigma)\in \mathcal{Q}$. Since $C$ is real, we have $\tau(C)=C$.  If $\Sigma\notin \R \mathcal{Q}$, \textit{i.e.}, $\tau(\Sigma)\neq \Sigma$, then $C\subset (\Sigma\cap \tau(\Sigma))=E$, which is not possible since $C$ is a rational curve. Thus, by contradiction  $\tau(\Sigma)=\Sigma$. In other words, $\Sigma$ is in the real part of the pencil $\mathcal{Q}$.
\end{proof}

Note that if $(\Sigma_1,\tau|_{\Sigma_1})$ and $(\Sigma_2,\tau|_{\Sigma_2})$ are two real non-singular surfaces in $\R \mathcal{Q}$ such that $\Sigma_2$ is a surgery of $\Sigma_1$ along the vanishing cycle (as formalized in \cite{brugalle2018surgery}), then the Euler characteristics of the real parts of these two surfaces differ by~$2$. Suppose that $\chi(\R \Sigma_2)=\chi(\R \Sigma_1)+ 2$. Also note  from \cite{brugalle2018surgery} that the vanishing cycle class $S$ in $H_2(\Sigma_1;\Z)$ is $\tau|_{\Sigma_1}$-anti-invariant if and only if it is $\tau|_{\Sigma_2}$-invariant.  Let $\alpha \in \{D,T(D)\}$. We say that $\alpha$ is $\tau|_{\Sigma_t}$-anti-invariant if we have both
$\tau|_{\Sigma_t*}(D)=-D$ and
$\tau|_{\Sigma_t*}(T(D))=-T(D)$.

\begin{proposition} \label{prop-tau.vs.mu}
Let $d\in H^{-\tau}(X;\Z)$. Let $(\Sigma_t,\tau|_{\Sigma_t}) \in \R \mathcal{Q}$ be a real del Pezzo surface containing a real rational curve in $X$ $(t\in {\R})$, and let $\psi_t\colon  H_2(\Sigma_t;\mathbb{Z})\rightarrow H_2(X;\mathbb{Z})$ be the surjective map  induced from the inclusion $\Sigma_t\hookrightarrow X$. Let $\alpha \in \{D,T(D)\}\subset \psi_t^{-1}(d)$ be such that $D.S\neq 0$. In the real surface $(\Sigma_t,\tau|_{\Sigma_t})$, the vanishing cycle is $\tau|_{\Sigma_t}$-anti-invariant if and only if $\alpha$ is $\tau|_{\Sigma_t}$-anti-invariant.
\end{proposition}  

\begin{proof}
The sufficient condition follows directly. Indeed, suppose that
$$\begin{cases}
\tau|_{\Sigma_t*}(D)=-D,\\
\tau|_{\Sigma_t*}(T(D))=-(D+(D.S)S)
\end{cases}$$
hold. Then, $$(D.S)\tau|_{\Sigma_t*}(S)=\tau|_{\Sigma_t*}(T(D)-D)=-(D.S)S.$$
If $D \cdot S \neq 0$, we conclude that $\tau|_{\Sigma_t*}(S) = -S$; that is, $S$ is $\tau|_{\Sigma_t}$-anti-invariant. This completes the proof of the sufficiency.

The necessary condition is established in the following two steps.

\textit{Step 1. We show that $\tau|_{\Sigma_t*}(\{D,T(D)\})=\{-D,-T(D)\}$.}
Suppose that $\alpha=D$. It suffices to prove that if $D$ satisfies $\psi_t(D)=d\in H^{-\tau}(X;\Z)$, then one of the following holds:
\begin{enumerate}
    \item $\tau|_{\Sigma_t*}(D)=-D$; \textit{i.e.}, $D$ is $\tau|_{\Sigma_t}$-anti-invariant.
    \item $\tau|_{\Sigma_t*}(D)=-(D+(D.S)S)$, where $S$ is a generator of $\ker(\psi_t)$.
\end{enumerate}
Indeed, since $\tau_* (d)=-d$, we have $\tau_*(\psi_t(D))=-\psi_t (D)$. Because $\psi_t\circ\tau|_{\Sigma_t*}=\tau_*\circ \psi_t$, it follows that $\psi_t (\tau|_{\Sigma_t*}(D)+D)=0$. Moreover, since $\ker (\psi_t) =\Z S$, there exists an integer $k\in \Z$ such that $\tau|_{\Sigma_t*}(D)+D=kS$. The intersection form is preserved under $\tau|_{\Sigma_t*}$; that is, $(\tau|_{\Sigma_t*}(D))^2=D^2$. Thus, $k$ must be either $0$ or $-D.S$. This completes the first step.

\textit{Step 2. We show that if $(\Sigma_t,\tau|_{\Sigma_t})$ is  such that the vanishing cycle $S$ is $\tau|_{\Sigma_t}$-anti-invariant in $H_2(\Sigma_t; \Z)$, then both $D$ and $T(D)$ must be $\tau|_{\Sigma_t}$-anti-invariant. } Suppose, toward a contradiction, that $\tau|_{\Sigma_t*}(D)=-T(D)$. One has $\tau|_{\Sigma_t*}(D)=-D-(D.S)S=\tau|_{\Sigma_t*}(T(D))-(D.S)S$. This implies that 

$$\tau|_{\Sigma_t*}(T(D))-\tau|_{\Sigma_t*}(D)=(D.S)S.$$
Moreover, $$\tau|_{\Sigma_t*}(T(D))=\tau|_{\Sigma_t*}(D+(D.S)S)=\tau|_{\Sigma_t*}(D)+(D.S) \tau|_{\Sigma_t*}(S).$$
Comparing the two expressions, we obtain $S=\tau|_{\Sigma_t*}(S)$. This means that $S$ is not $\tau|_{\Sigma_t}$-anti-invariant, which contradicts our assumption. Thus, the second step is complete, and the proposition follows.
\end{proof}

As a consequence, the enumerative problem of counting real curves representing $\tau$-anti-invariant classes in $X$ can be reduced to the enumerative problem of counting real curves representing $\tau|_{\Sigma_t}$-anti-invariant  classes in some surface $\Sigma_t$. The latter problem is, in general, simpler and has been extensively studied in the literature on real enumerative geometry.

To make a meaningful comparison between these two real enumerative problems, we analyze the real base locus $E$ as in the complex case. It is well known that the choice of a point $x_0\in E$ induces an isomorphism $E\cong \Pic_k(E)$  for any $k\in \mathbb{Z}$. Since $E$ is a real elliptic curve and $\R E\neq \emptyset$, the real parts  $\R E$ and $\R \Pic (E)$ have the same topological type. Consequently, there exists a homeomorphism $\R E \simeq \R \Pic_k(E)$ for any $k\in \mathbb{Z}$. Assume that $x_0\in \R E$. The real part $\R E$ is either connected or disconnected with two connected components. In the former case, we write $\mathbb{R} E =\mathbb{S}^1$. In the latter case, we write $\mathbb{R} E =\mathbb{S}^1_0\sqcup \mathbb{S}^1_1$ and assume that $x_0\in\mathbb{S}^1_0$. We refer to $\mathbb{S}^1_0$ as the \textit{pointed component} and $\mathbb{S}^1_1$ as the \textit{non-pointed component} of $\R E$.

Let $\underline{x}_{d}$ be a real configuration of $|D.E|$ points on $E$. Suppose that $\alpha|_{E,\Sigma_t}=[\underline{x}_{d}]$ in $\Pic_{D.E}(E)$, where $\alpha$ is as in Proposition~\ref{prop-tau.vs.mu}. 
We consider the  maps
    \begin{align*}
    \mu_t\colon \Pic(\Sigma_t)&\lra \Pic(\Sigma_t)\\
    \sum_i n_i [C_i]&\longmapsto \sum_i n_i \left[\tau|_{\Sigma_t}(C_i)\right]
\end{align*}
and
\begin{align*}
    \mu_E\colon \Pic(E)&\lra \Pic(E)\\
    \sum_i n_i [P_i]&\longmapsto \sum_i n_i \left[\tau|_{E}(P_i)\right].
\end{align*}
The restriction of a divisor $D=\sum_i n_i [C_i]\in \Pic(\Sigma_t)$ to $\Pic(E)$ is given by $$D|_{E,\Sigma_t}=\sum_i n_i [C_i\cap E].$$ 
The restriction of $\Pic(\Sigma_t)$ to $\Pic(E)$ induces the following commutative diagram:
 \begin{center}
    \begin{tikzcd}
\Pic(\Sigma_t) \; \; \;  \arrow[rr, "\mu_t"] \arrow[d]
&& \; \; \;  \Pic(\Sigma_t)\arrow[d] \\
\Pic(E)  \; \; \;  \arrow[rr, "\mu_E"]
&& \; \; \;  \Pic(E)\rlap{.}
    \end{tikzcd}
        \end{center}
 
     \begin{lemma} \label{lem-anti-invariant.and.real.divisors}
         If the class $D$ is $\tau|_{\Sigma_t}$-anti-invariant, then its restriction $D|_{E,\Sigma_t}$ lies in $\R \Pic(E)$.
     \end{lemma}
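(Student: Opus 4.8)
The plan is to deduce the statement from the commutative square relating the restriction maps to $\mu_t$ and $\mu_E$, once the condition ``$D$ is $\tau|_{\Sigma_t}$-anti-invariant'' has been translated into the assertion that $D$ is \emph{fixed} by $\mu_t$.

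First I would record the following elementary fact: under the identification $Pic(\Sigma_t)\cong H_2(\Sigma_t;\Z)$ (legitimate because the del Pezzo surface $\Sigma_t$ is rational, so $Pic_0(\Sigma_t)=0$ and $c_1$ is injective), the involution $\mu_t$ coincides with $-\tau|_{\Sigma_t *}$. Indeed, for a complex curve $C\subset\Sigma_t$ the image $\tau|_{\Sigma_t}(C)$ is again a complex curve and $\tau|_{\Sigma_t}\colon C\to\tau|_{\Sigma_t}(C)$ is an anti-holomorphic diffeomorphism, hence reverses the orientations induced by the complex structures; consequently $\tau|_{\Sigma_t *}[C]=-[\tau|_{\Sigma_t}(C)]$ in $H_2(\Sigma_t;\Z)$, i.e. $\mu_t([C])=[\tau|_{\Sigma_t}(C)]=-\tau|_{\Sigma_t *}([C])$. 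It follows that $D$ is $\tau|_{\Sigma_t}$-anti-invariant, i.e. $\tau|_{\Sigma_t *}(D)=-D$, if and only if $\mu_t(D)=D$. The analogous compatibility on the elliptic curve $E$ is precisely what identifies the fixed locus $\mathrm{Fix}(\mu_E)$ with the real part $\R Pic(E)$ (non-empty since $\R E\neq\emptyset$).

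Granting this, the proof is a one-line diagram chase. Assume $D$ is $\tau|_{\Sigma_t}$-anti-invariant, so $\mu_t(D)=D$. Writing $r\colon Pic(\Sigma_t)\to Pic(E)$, $r(D)=D|_{E,\Sigma_t}$, for the restriction homomorphism, the commutative square introduced just before the lemma (namely $r\circ\mu_t=\mu_E\circ r$) yields
$$\mu_E\big(D|_{E,\Sigma_t}\big)=r\big(\mu_t(D)\big)=r(D)=D|_{E,\Sigma_t},$$
so $D|_{E,\Sigma_t}$ is fixed by $\mu_E$ and therefore lies in $\R Pic(E)$.

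The only delicate point is the sign bookkeeping in the first step: one must check that ``conjugating all the curves appearing in a divisor'' induces $-\tau|_{\Sigma_t *}$ and not $+\tau|_{\Sigma_t *}$ on $H_2(\Sigma_t;\Z)$ (this is the orientation reversal along $\tau|_{\Sigma_t}\colon C\to\tau|_{\Sigma_t}(C)$), and that the corresponding convention on $E$ makes the real structure on $Pic(E)$ coincide with $\mu_E$. Once these conventions are pinned down, the rest is purely formal.
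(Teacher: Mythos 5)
Your proof is correct and follows essentially the same route as the paper: translate anti-invariance into $\mu_t(D)=D$ via the identity $\mu_t=-\tau|_{\Sigma_t*}$, then chase the commutative square $r\circ\mu_t=\mu_E\circ r$ to get $\mu_E(D|_{E,\Sigma_t})=D|_{E,\Sigma_t}$. The extra care you take with the orientation-reversal sign is a welcome elaboration of a step the paper leaves implicit, but it is the same argument.
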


     \begin{proof}
   We show that if $D$ is 
$\tau|_{\Sigma_t}$-anti-invariant, then  $\mu_E(D|_{E,\Sigma_t})=D|_{E,\Sigma_t}$.  Indeed, since the involution $\tau$ reverses the orientation, meaning that $\mu_t=-\tau|_{\Sigma_t*}$, one has $D|_{E,\Sigma_t}=\mu_t(D)|_{E,\Sigma_t}$. The above commutative diagram implies that $\mu_E(D|_{E,\Sigma_t})=\mu_t(D)|_{E,\Sigma_t}$. Hence, the lemma follows.
\end{proof}

Given $D \in \Pic(\Sigma_t)$, we consider the following ramified double covering:
\[
\pi\colon  \Pic(E) \longrightarrow \Pic(E) \big/ \sim, \quad x \longmapsto \overline{x},
\]
where the equivalence relation is defined by $x \sim \overline{D}_{\phi} - x$. The critical points of the map $\pi$ are precisely the solutions to the equation $2x = \overline{D}_{\phi}$ in $\Pic(E)$. There exists a homeomorphism
\[
\mathbb{R} \left( \Pic(E) \big/ \sim \right) \simeq \mathbb{S}^1.
\]
Moreover, the preimage of the real locus under $\pi$ is given by
\[
\pi^{-1} \left( \mathbb{R} \left( \Pic(E) \big/ \sim \right) \right )
= \left\{\, (x,\, \overline{D}_{\phi} - x) \; : \; x \in \mathbb{R}\Pic(E) \, \right\}
\cup \left\{\, (y,\, \tau|_E(y)) \; : \; y \in \Pic(E) \setminus \mathbb{R}\Pic(E),\; y + \tau|_E(y) = \overline{D}_{\phi} \, \right\},
\]
where $\tau|_E$ denotes the real structure on $E$. The two types of real divisors on  $\mathbb{R}(\Pic(E)/_{x \sim \overline{D}_{\phi}-x})$ are as illustrated in  Figure~\ref{fig-RealElliptic}. 

\begin{figure}[ht!]
\centering
\includegraphics[scale=0.4]{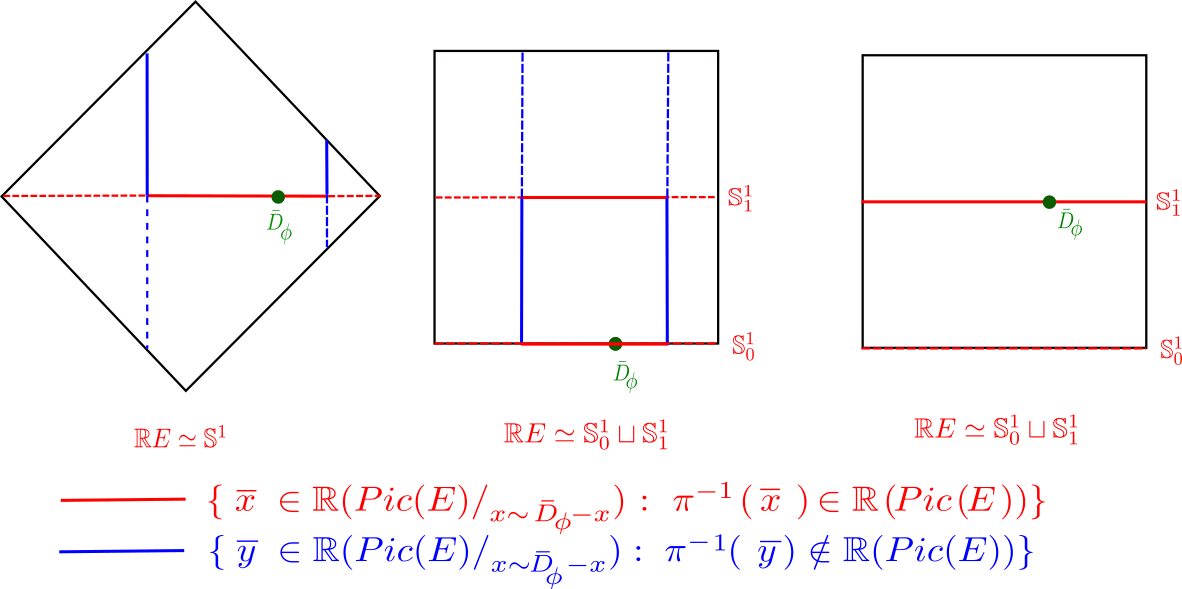}
\caption{Real divisors in the Picard group $\mathbb{R}(\Pic(E)/_{x \sim \overline{D}_{\phi}-x})$ of a real elliptic curve with non-empty real part.}
\label{fig-RealElliptic}
\end{figure}

\subsubsection{Enumerations on real pencils of real del Pezzo surfaces}
We keep the same notation as in  Section~\ref{Sect-Pencils}: the vanishing cycle  $S$ of the Lefschetz fibration induced from the pencil $\mathcal{Q}$, the constant map $\phi$ whose image is the divisor  $\overline{D}_{\phi}=(2D+(D.S)S)|_{E,\Sigma_t}$ in  $\Pic_{2D.E} (E)$, which is independent of $\Sigma_t \in \mathcal{Q}$, and the degree $(D.S)^2$ map $\phi_{\overline{D}}$ (see Proposition~\ref{Prop-degree.case.complex}). 

Assume that $\{L,T(L)\}\subset H_2(\Sigma_t;\Z)$ is as in  Lemma~\ref{lem-existence.of.L}. Recall, in particular, that $\overline{L}_{\phi}$ is  in $\Pic_{2L.E}(E)$ and $\phi_{\overline{L}}$ is an isomorphism (Corollary~\ref{coro-isomorphism.pencil.picard}). As previously shown, the divisor $\overline{D}_{\phi}$ is real, for every class $D \in H^{-\tau}(\Sigma_t;\Z)$ (see Lemma~\ref{lem-anti-invariant.and.real.divisors}). In particular, the divisor $\overline{L}_{\phi}$ is real, and both $\phi_{\overline{D}}$ and $\phi_{\overline{L}}$ are real maps.
Let $\mathcal{R}(\underline{x}_d)$ be a set of real irreducible rational curves in $X$ representing a given homology class $d\in H^{-\tau}(X;\Z)$ and passing through a real configuration $\underline{x}_d$ of $k_d=\frac{1}{2}c_1(X)\cdot d$ points on $E$.

\begin{lemma} \label{lem-constant.divisor.L.D}
The divisor $(D-|D.S|L)|_{E,\Sigma_t}$, which has degree $(D.E)-|D.S|(L.E)$ in $\Pic(E)$, is denoted by $\phi(\overline{L},\overline{D})$ and does not depend on the choice of the surface $\Sigma_t$ in the pencil $\mathcal{Q}$.
\end{lemma}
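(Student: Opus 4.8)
The plan is to reduce the statement to the principle already used in the proof of Lemma~\ref{lem-constant.map}: a holomorphic map from $\C P^1$ to a curve of genus at least $1$ is constant. The difficulty is that $\Sigma_t\mapsto D|_{E,\Sigma_t}$ is \emph{not} a single-valued function on $\mathcal{Q}$, because travelling around a critical value of the Lefschetz fibration replaces $D$ by its monodromy transform $T(D)=D+(D.S)S$. So the first step will be to check that, for the correct choice of $L$, the class $D-|D.S|L$ is monodromy-invariant, which is exactly what removes the multivaluedness.

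First I would fix, among the pair $\{L,T(L)\}$ provided by Lemma~\ref{lem-existence.of.L}, the representative $L$ with $(L.S)(D.S)\geq 0$; this is possible since $(L.S)^2=1$ while $T(L).S=L.S+(L.S)(S.S)=-L.S$, so exactly one of the two has the required sign, and then $|D.S|\,(L.S)=D.S$. Writing $D':=D-|D.S|L$, one computes
$$T(D')=\big(D+(D.S)S\big)-|D.S|\big(L+(L.S)S\big)=D'+\big((D.S)-|D.S|(L.S)\big)S=D',$$
so $D'$ is fixed by $T$, hence by the whole monodromy group of the Lefschetz fibration associated to $\mathcal{Q}$ (recall all four vanishing cycles are $\pm S$ by Lemma~\ref{Lemma-Vanishing.Cycle}).

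Consequently $\Sigma_t\mapsto D'|_{E,\Sigma_t}$ is a well-defined single-valued holomorphic map from $\mathcal{Q}\cong\C P^1$ to $Pic_{(D.E)-|D.S|(L.E)}(E)$. Since this Picard variety is isomorphic to the elliptic curve $E$ (Proposition~\ref{Prop-E.is.elliptic}), the map is constant, and its constant value is by definition $\phi(\overline{L},\overline{D})$; this is the assertion of the lemma. Two consistency checks, either of which also completes the argument independently: by Proposition~\ref{Prop-degree.case.complex} the map $\phi_{\overline{D'}}$ has degree $(D'.S)^2=\big((D.S)-|D.S|(L.S)\big)^2=0$; and by Lemma~\ref{lem-constant.map} the divisor $\phi(\overline{D'})=(D'+T(D'))|_{E,\Sigma_t}=2\,D'|_{E,\Sigma_t}$ is independent of $t$, so $D'|_{E,\Sigma_t}$ lies in the $4$-element set $\{x:2x=\phi(\overline{D'})\}$, and being a continuous function of $t$ on the connected space $\mathcal{Q}$ it must be constant.

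I expect the only genuinely delicate point to be getting the sign of $L$ right, so that $D-|D.S|L$ (rather than merely $2(D-|D.S|L)$, which is automatically constant by constancy of $\phi$) is monodromy-invariant; this is precisely where $(L.S)^2=1$ and $T(L).S=-L.S$ are used, and it is why the quantity that appears in the lemma is $D-|D.S|L$ with an absolute value. The remaining mild technical input -- that $\Sigma_t\mapsto D'|_{E,\Sigma_t}$ depends holomorphically (a fortiori continuously) on $t$ -- follows from the blow-up $\tilde X\to\mathcal{Q}$ being holomorphic with $E$ contained in every fibre, so that restriction to $E$ of the flat, monodromy-invariant family of divisor classes is a holomorphic section of the relative Picard scheme over $\mathcal{Q}$ minus the critical values, which extends over them since the target is compact.
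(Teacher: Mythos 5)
Your proof is correct, and it reaches the conclusion by a slightly different, arguably more conceptual, route than the paper. The paper's own proof never discusses single-valuedness: it doubles the class, writes $2\big(D-(D.S)L\big)|_{E,\Sigma_t}=\phi(\overline{D})-(D.S)\phi(\overline{L})$ (taking WLOG $D.S>0$ and $L.S=1$), invokes the constancy of $\phi(\overline{D})$ and $\phi(\overline{L})$ from Lemma \ref{lem-constant.map}, and then deduces constancy of the undoubled divisor by continuity, since it takes values in the discrete fibre of multiplication by $2$ on $Pic(E)$. This is exactly your second ``consistency check'', so your proposal in fact contains the paper's argument as one of its branches. Your primary argument instead isolates the real point: with $L$ normalised so that $(L.S)(D.S)\ge 0$, the class $D'=D-|D.S|L$ satisfies $T(D')=D'$, hence is invariant under the whole monodromy group (all vanishing cycles being $\pm S$ by Lemma \ref{Lemma-Vanishing.Cycle}), so $\Sigma_t\mapsto D'|_{E,\Sigma_t}$ is an honest single-valued holomorphic map $\C P^1\to Pic(E)$ and is constant for genus reasons, as in Lemma \ref{lem-constant.map}. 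What your version buys is an explanation of why this particular combination is distinguished --- the computation $T(D')=D'$ makes the roles of $(L.S)^2=1$ and $T(L).S=-L.S$ transparent, and the degree count $(D'.S)^2=0$ via Proposition \ref{Prop-degree.case.complex} confirms it; what the paper's version buys is that it piggybacks entirely on the already-proved constancy of $\phi$ and needs no separate discussion of well-definedness across the critical values. Both arguments are complete.
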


\begin{proof}
Clearly, the divisor  $(D-|D.S|L)|_{E,\Sigma_t}$ has degree $(D.E)-|D.S|(L.E)$ in $\Pic(E)$. Without loss of generality, suppose that $D.S> 0$. We have
  \begin{align*}
      2\left(D-(D.S)L\right)|_{E,\Sigma_t} &= \left(2D-2(D.S)L\right)|_{E,\Sigma_t}\\
      &= \left((2D+(D.S)S)-(D.S) (2L+S)\right) |_{E,\Sigma_t}.
  \end{align*}
   
As previously shown, both $(2L+S)|_{E,\Sigma_t}=\overline{L}_{\phi}$ and $(2D+(D.S)S)|_{E,\Sigma_t}=\overline{D}_{\phi}$ are constant with respect to $\Sigma_t$. Consequently, the divisor $ 2\left(D-(D.S)L\right)|_{E,\Sigma_t}$ does not depend on the choice of $\Sigma_t$. Therefore, by continuity, the divisor $ \left(D-(D.S)L\right)|_{E,\Sigma_t}$  is also independent of $\Sigma_t$. 
\end{proof}

The next proposition is the main ingredient to solving the \textit{$1$-dimensional real problem} of Theorem~\ref{Theorem2-W}.

\begin{proposition} \label{prop-corresponding.surface.picE}
Let $\Sigma \in \mathcal{Q}$ be a fixed real non-singular surface.
Let $d\in H^{-\tau}(X;\Z)$, and let $\alpha$ be an element of 
$\{D,T (D)\}\subset \psi^{-1}(d)$. Then, for any real surface $(\Sigma_t, \tau|_{\Sigma_t}) \in \mathbb{R} \mathcal{Q}$ containing a curve in the set $\mathcal{R}(\underline{x}_d)$, the corresponding vanishing cycle is $\tau|_{\Sigma_t}$-anti-invariant.

Furthermore, such real surfaces correspond to the real solutions $l\in \R \Pic(E)$ of the equation
\begin{equation}\label{eq-solutions.in.PicE}
    |D.S|l=[\underline{x}_d]-\phi\left(\overline{L},\overline{D}\right),
\end{equation}
where $[\underline{x}_d]$ and $\phi(\overline{L}, \overline{D})$  are as previously defined.
\end{proposition}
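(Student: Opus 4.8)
The plan is to derive the proposition entirely from the structural results about the pencil $\mathcal{Q}$ obtained above, proceeding in two stages: first I would pin down the reality of the vanishing cycle, and then I would translate the condition ``$\Sigma_t$ carries a suitable curve'' into Equation (\ref{eq-solutions.in.PicE}) by pushing it forward to $Pic(E)$ along the isomorphism $\phi_{\overline L}$ of Corollary \ref{coro-isomorphism.pencil.picard}.

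For the first assertion, suppose $C\in\mathcal{R}(\underline{x}_d)$ lies on a member $\Sigma_t$ of $\mathcal{Q}$. As $C$ is a real irreducible rational curve of $(X,\tau)$, Lemma \ref{lem-surfaces.in.real.pencil} gives $\Sigma_t\in\R\mathcal{Q}$, and since the real structure reverses the complex orientation of the normalisation $\C P^1$ of $C$, the class $[C]\in H_2(\Sigma_t;\Z)$ is $\tau|_{\Sigma_t}$-anti-invariant. This class lies in $\psi_t^{-1}(d)$ and, restricting as in the statement to the monodromy orbit of $\alpha$, we have $[C]\in\{D,T(D)\}$; since $D.S\neq0$, Proposition \ref{prop-tau.vs.mu} then shows that the vanishing cycle $S$ is $\tau|_{\Sigma_t}$-anti-invariant. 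Moreover, because $C$ meets the base locus $E$ in $[C].E=\alpha.E=D.E=k_d$ points, which is exactly the number of points of $\underline{x}_d$, and $C$ passes through $\underline{x}_d\subset E$, we get $C\cap E=\underline{x}_d$, hence $\alpha|_{E,\Sigma_t}=[\underline{x}_d]$ in $Pic_{D.E}(E)$.

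For the second assertion I would argue as follows. Assume $\alpha=D$ and $D.S>0$; the case $D.S<0$ is symmetric, and $\alpha=T(D)$ is handled the same way after replacing $L$ by $T(L)=L+S$ and using that $S|_{E,\Sigma_t}\in Pic_0(E)$. By Lemma \ref{lem-constant.divisor.L.D} we have $\phi(\overline L,\overline D)=(D-(D.S)L)|_{E,\Sigma_t}$, so
\[
[\underline{x}_d]-\phi(\overline L,\overline D)=D|_{E,\Sigma_t}-\big(D-(D.S)L\big)|_{E,\Sigma_t}=|D.S|\,L|_{E,\Sigma_t},
\]
that is, $l:=L|_{E,\Sigma_t}$ solves (\ref{eq-solutions.in.PicE}). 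It is real: applying Proposition \ref{prop-tau.vs.mu} to the pair $\{L,T(L)\}$, which is legitimate since $(L.S)^2=1$ and hence $L.S\neq0$, the $\tau|_{\Sigma_t}$-anti-invariance of $S$ forces that of $L$, and then Lemma \ref{lem-anti-invariant.and.real.divisors} gives $l\in\R Pic(E)$. Conversely, a real solution $l$ of (\ref{eq-solutions.in.PicE}) has degree $L.E$ (degrees being additive), hence determines a class $\overline l$ in $Pic_{L.E}(E)/_{x\sim\phi(\overline L)-x}$ which is real because both $l$ and $\phi(\overline L)$ are (Lemma \ref{lem-anti-invariant.and.real.divisors}); the real isomorphism $\phi_{\overline L}$ then recovers a real member $\Sigma_t:=\phi_{\overline L}^{-1}(\overline l)\in\R\mathcal{Q}$ with $\alpha|_{E,\Sigma_t}=[\underline{x}_d]$, i.e. one on which a curve of class $\alpha$ through $\underline{x}_d$ can occur (its actual existence being the two-dimensional problem of Section \ref{Section-ratcurve}). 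Matching the two constructions gives the desired correspondence.

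The step I expect to be the main obstacle is the bookkeeping forced by the two equivalence relations ($\sim$ on $H_2(\Sigma_t;\Z)$ and on $Pic(E)$). Because of monodromy, the class $L\in H_2(\Sigma_t;\Z)$, and therefore $L|_{E,\Sigma_t}$, is only well defined up to the involution $x\mapsto\phi(\overline L)-x$, so for the assignment $\Sigma_t\mapsto l$ to be a genuine bijection rather than an ambiguous or $2$-to-$1$ map one must verify that in each such pair exactly one representative solves (\ref{eq-solutions.in.PicE}); concretely this amounts to checking $|D.S|\cdot S|_{E,\Sigma_t}\neq0$ on the surfaces in play. The real-structure content, by contrast, is entirely concentrated in Proposition \ref{prop-tau.vs.mu}, Lemma \ref{lem-anti-invariant.and.real.divisors}, and the commutative square relating $\mu_t$ and $\mu_E$, so no further real-geometric input should be required there.
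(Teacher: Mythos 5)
Your argument is correct and follows essentially the same route as the paper: the first assertion via Lemma \ref{lem-surfaces.in.real.pencil} and Proposition \ref{prop-tau.vs.mu}, and the second via the identity $[\underline{x}_d]=D|_{E,\Sigma_t}$, the decomposition $D=(D.S)L+(D-(D.S)L)$ combined with Lemma \ref{lem-constant.divisor.L.D}, and the real isomorphism $\phi_{\overline{L}}$. The monodromy ambiguity you flag (the class $L|_{E,\Sigma_t}$ being defined only up to $x\mapsto\phi(\overline{L})-x$) is a genuine subtlety, but it is absorbed by allowing $\alpha$ to range over $\{D,T(D)\}$ and is resolved only at the counting stage in Corollary \ref{coro-degree.case.real}, exactly as in the paper.
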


\begin{proof}
  The first assertion follows directly from the fact that if the vanishing cycle is not $\tau|_{\Sigma_t}$-anti-invariant, then $D\notin  H_2^{-\tau}(\Sigma_t;\Z)$ for any $D\in H_2(\Sigma_t;\Z)$.
  
  We now address the second assertion. Since the equality $D.S=-T(D).S$ holds, we may assume without loss of generality that $D.S> 0$.
   Then, one has
  $$\phi\left(\overline{L},\overline{D}\right)=\left(D-(D.S)L\right)|_{E,\Sigma_t}=\left(D+(T(D).S)L\right)|_{E,\Sigma_t}.$$ 
 
  Let $\Sigma_t \in \mathcal{Q}$  be a surface containing a curve in  $\mathcal{R}(\underline{x}_d)$, and suppose that this curve represents the homology class $D\in H_2^{-\tau}(\Sigma_t;\Z)$. Then, $[\underline{x}_d]=D|_{E,\Sigma_t}$ is a real divisor in $\Pic_{D.E}(E)$.

We can write  $D=(D.S)L+D-(D.S)L$. Restricting to $E$, and noting that $L$ is also $\tau|_{\Sigma_t}$-anti-invariant, we obtain the following equation in $\Pic(E)$:
  $$\left[\underline{x}_d\right]=(D.S)L|_{E,\Sigma_t}+\left(D-(D.S)L\right)|_{E,\Sigma_t}.$$
 By Lemma~\ref{lem-constant.divisor.L.D}, the divisor $\left(D-(D.S)L\right)|_{E,\Sigma_t}=\phi(\overline{L},\overline{D})$ does not depend on the choice of the surface $\Sigma_t$ in the pencil. 
 Therefore, for every surface $\Sigma_t \in \mathcal{Q}$ containing a curve in $\mathcal{R}(\underline{x}_d)$ of homology class $D\in H_2^{-\tau}(\Sigma_t;\Z)$ where $D.S> 0$, the following equation holds in $\Pic(E)$:
 $$(D.S)L|_{E,\Sigma_t}=[\underline{x}_d]-\phi\left(\overline{L},\overline{D}\right).$$
 Consequently, for every real surface $\Sigma_t \in  \R \mathcal{Q}$, on which a curve in $\mathcal{R}(\underline{x}_d)$ represents a $\tau|_{\Sigma_t}$-anti-invariant class $\alpha\in \{D,T (D)\}$, we have
 \begin{equation} \label{eq-proof-eq-solutions.in.PicE}
     (|D.S|L)|_{E,\Sigma_t}=[\underline{x}_d]-\phi(\overline{L},\overline{D})
 \end{equation}
  in $ \Pic(E)$. Moreover, the map
  $$   \phi_{\overline{L}}\colon  \mathcal{Q} \overset{\lowcong}\lra \Pic_{L.E} (E)/_{x\sim \overline{L}_{\phi}-x}, \quad    \Sigma_t \longmapsto \overline{L|_{E,\Sigma_t}}$$
   is a real isomorphism. Together with the remarks on real structures on the pencil $\mathcal{Q}$ and on the quotient of the Picard group of its base locus, this implies that such real surfaces $\Sigma_t \in  \R \mathcal{Q}$ are precisely the real solutions in  $\R \Pic(E)$ to Equation (\ref{eq-proof-eq-solutions.in.PicE}). The proposition is thus established.
  \end{proof}

As a consequence of Proposition~\ref{prop-corresponding.surface.picE}, there is a bijection between the set of such real surfaces in the pencil~$\mathcal{Q}$ and the set of real solutions to Equation (\ref{eq-solutions.in.PicE}). Thus, the number of these real surfaces is determined by the number of real $|D.S|$-torsion points on $\R \Pic(E)$. In particular, this number depends on the real structure of  $E$ and, when $\R E$ is disconnected, on the position of the real divisors  $\phi(\overline{L},\overline{D})$ and $[\underline{x}_d]$ in $\R \Pic(E)$. More precisely, this number is described in the following corollary; see also Figure~\ref{fig-real.solutions.in.PicE}.

\begin{corollary}\label{coro-degree.case.real}
Let $d \in H^{-\tau}(X;\mathbb{Z})$ and $\alpha \in \{D, T(D)\} \subset \psi^{-1}(d)$. Consider the set of real surfaces $(\Sigma_t, \tau|_{\Sigma_t})$ in $\mathbb{R} \mathcal{Q}$ such that each $\Sigma_t$ contains a real curve of class $\alpha$ belonging to $\mathcal{R}(\underline{x}_d)$. Then, the cardinality of this set is given as follows:
\begin{enumerate}[label={\rm(\alph*)}, ref={\rm\alph*}]
         \item $|D.S|$ if either $\R E$ is connected, or $\R E$ is disconnected and $D.S$ is odd;
      \item $2|D.S|$ if\, $\R E$ is disconnected, $D.S$ is even, and      both $[\underline{x}_d]$ and $\phi(\overline{L},\overline{D})$ lie on the same component of\, $\R E$; 
      \item\label{c-d.c.r-c} $0$ if\, $\R E$ is disconnected, $D.S$ is even, and $[\underline{x}_d]$ and $\phi(\overline{L},\overline{D})$ lie on different components of\, $\R E$. 
  \end{enumerate}
  \end{corollary}

 \begin{proof}
We first enumerate the real $|D.S|$-torsion points, that is, the real solutions to the equation $|D.S|l=0$. The number and distribution of these points depend on the real structure of $E$, as follows:
  \begin{enumerate}[(1)]
 \item If $\R E$ is connected, there are exactly $|D.S|$  real $|D.S|$-torsion points.
 \item If $\R E=\mathbb{S}^1_0\sqcup \mathbb{S}^1_1$ is disconnected, then 
\begin{itemize}
\item if $|D.S|$ is odd, there are exactly $|D.S|$ real $|D.S|$-torsion points, all lying in the pointed component  $\mathbb{S}^1_0$; 
\item if $|D.S|$ is even, there are exactly $2|D.S|$  real $|D.S|$-torsion points, with half of them contained in each connected component of $\R E$.
\end{itemize} 
 \end{enumerate}
Next, we count the real solutions $l$ of Equation (\ref{eq-solutions.in.PicE}). When $\R E$ is disconnected, the number of solutions depends on the positions of the real divisors $\phi(\overline{L},\overline{D})$ and $[\underline{x}_d]$ on $\R E$. More precisely,
\begin{enumerate}[(1)]
\item if $\R E=\mathbb{S}^1$, there are exactly $|D.S|$ real solutions to Equation (\ref{eq-solutions.in.PicE}), see Figure~\ref{fig-real.solutions.in.PicE}-$a1)$;
\item if $\R E=\mathbb{S}^1_0\sqcup \mathbb{S}^1_1$, then
\begin{itemize}
\item  if $|D.S|$ is odd, there are exactly $|D.S|$ real solutions to Equation (\ref{eq-solutions.in.PicE}),
regardless of whether $\phi(\overline{L}, \overline{D}) - [\underline{x}_d]$ lies in $\mathbb{S}^1_0$ or $\mathbb{S}^1_1$, see Figure~\ref{fig-real.solutions.in.PicE}-$a2)$;
\item if $|D.S|$ is even, then 
\begin{itemize}
\item there are exactly $2|D.S|$ real solutions to Equation (\ref{eq-solutions.in.PicE}) if $[\underline{x}_d]-\phi(\overline{L},\overline{D})$ is in the pointed component $\mathbb{S}^1_0$ (equivalently, if $\phi(\overline{L},\overline{D})$ and $[\underline{x}_d]$ belong to the same connected component of $\mathbb{R} E$), see Figure~\ref{fig-real.solutions.in.PicE}-$b)$;
\item there are no real solutions to Equation (\ref{eq-solutions.in.PicE}) if  $[\underline{x}_d]-\phi(\overline{L},\overline{D})$ is in $\mathbb{S}^1_1$ (equivalently, if $\phi(\overline{L},\overline{D})$ and $[\underline{x}_d]$ are in different connected components of $\R E$), see Figure~\ref{fig-real.solutions.in.PicE}-$c)$.
\end{itemize}
\end{itemize}
\end{enumerate}

\begin{figure}[ht!]
\centering
\includegraphics[scale=0.5]{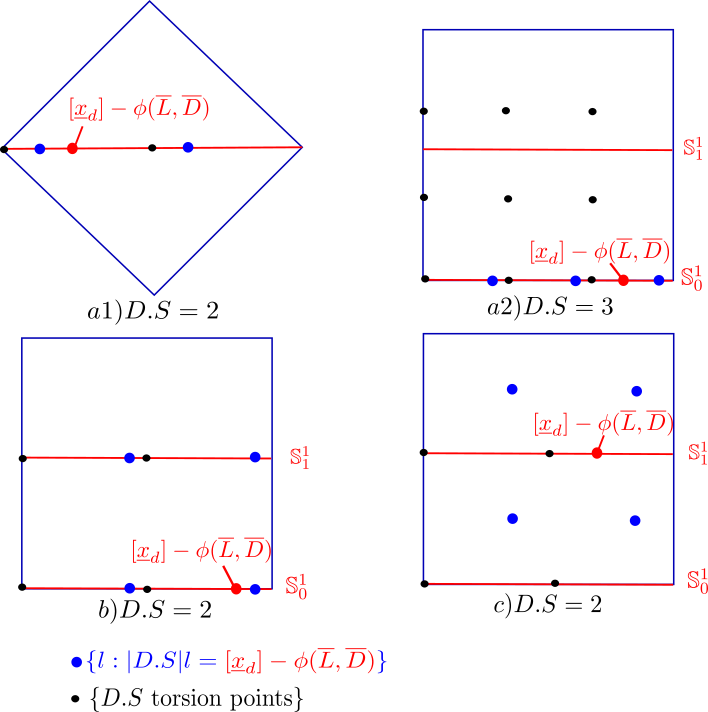}
\caption{The real solutions of Equation (\ref{eq-solutions.in.PicE}) corresponding to the three cases of Corollary~\ref{coro-degree.case.real}.}
\label{fig-real.solutions.in.PicE}
\end{figure} 
This completes the proof of the corollary.
\end{proof} 

\section{Rational curves on del Pezzo varieties of dimension 3 and 2: Relationship } \label{Section-ratcurve}
Recall from the last section that a non-singular element $\Sigma\in |-\frac{1}{2}K_X|$ is a non-singular del Pezzo surface, where $X$ is a del Pezzo variety of dimension~$3$. Also recall  that $\psi\colon  H_2(\Sigma;\mathbb{Z})\to H_2(X;\mathbb{Z})$ is the surjective map induced by the inclusion $\Sigma\hookrightarrow X$.

 In this section, we continue to use the notation $\mathcal{Q}$ to denote a pencil of surfaces in the linear system $|-\frac{1}{2}K_X|$, whose base locus is the elliptic curve $E$ representing the class $\frac{1}{4}K_X^2$.  Let $d\in H_2(X;\mathbb{Z})$ and $k_d=\frac{1}{2}c_1(X)\cdot d$. Let $D\in H_2(\Sigma;\mathbb{Z})$ and $k_D=c_1(\Sigma)\cdot D-1$. We proceed to analyze the case where the configuration points are constrained to lie on the elliptic curve $E$. As a first step, we  establish a generalization of a result due to J.~Koll\'ar (Proposition~\ref{Prop-Kollar}). Subsequently, we demonstrate that the selected specialized configuration gives us the genus $0$ Gromov--Witten invariants of the surface $\Sigma$ (see Proposition~\ref{Prop.Modulispace1}, Corollary~\ref{Corollary.Modulispace1}) and of the variety~$X$ (see Proposition~\ref{Prop.Modulispace2}). As a result, this allows us to relate $\GW_X(d)$ to $\GW_\Sigma(D)$.

\subsection{Specifications of configuration of points} \label{SectionSpecificationConfiguration}
 \begin{lemma} \label{lemkDandkd}
Given a $3$-dimensional del Pezzo variety $X$, if\,  $\Sigma\in |-\frac{1}{2}K_X|$ is a non-singular surface and $D\in \psi^{-1}(d)$, then one has $k_D=k_d-1$. 
\end{lemma}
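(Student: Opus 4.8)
The plan is to compute both $k_D$ and $k_d$ by relating the anticanonical degrees of $X$ and $\Sigma$ against the relevant curve classes, using the fact that $\Sigma\in|-\tfrac12 K_X|$ so that $[\Sigma]=-\tfrac12 K_X$ in $H_4(X;\Z)$, together with the adjunction formula $K_\Sigma=(K_X+[\Sigma])\cdot[\Sigma]$ already recorded in the proof of Proposition \ref{Prop-E.is.elliptic}, which here gives $K_\Sigma=\tfrac12 K_X|_\Sigma$ (equivalently $c_1(\Sigma)=-\tfrac12 c_1(X)|_\Sigma$ as classes in $H^2(\Sigma;\Z)$, or dually $-K_\Sigma=-\tfrac12 K_X$ restricted to $\Sigma$).

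First I would unwind the definitions. By Definition \ref{definitionGW} and the formula for $k_\beta$ with $n=3$, we have $k_d=\tfrac12\bigl(c_1(X)\cdot d + 3-3\bigr)=\tfrac12 c_1(X)\cdot d$, which is indeed the value stated in the text preceding the lemma. On the surface side, $\Sigma$ is a del Pezzo surface (Proposition \ref{Prop-delPezzo.surfaces}), so $n=2$ and $k_D=\tfrac12\bigl(c_1(\Sigma)\cdot D + 2-3\bigr)\cdot\tfrac{1}{1}$; more directly, the text says $k_D=c_1(\Sigma)\cdot D-1$. So the lemma reduces to the numerical identity
\[
c_1(\Sigma)\cdot D - 1 = \tfrac12\, c_1(X)\cdot d - 1,
\qquad\text{i.e.}\qquad
c_1(\Sigma)\cdot D = \tfrac12\, c_1(X)\cdot d.
\]

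The key step is then to show $c_1(\Sigma)\cdot D=\tfrac12 c_1(X)\cdot d$ whenever $\psi(D)=d$. Since $\psi$ is induced by the inclusion $\iota:\Sigma\hookrightarrow X$, the projection formula gives $c_1(X)\cdot d = c_1(X)\cdot \iota_*D = \iota^*c_1(X)\cdot D = \bigl(-K_X|_\Sigma\bigr)\cdot D$ (intersection product computed on $\Sigma$). By adjunction, $-K_X|_\Sigma = -K_\Sigma + [\Sigma]|_\Sigma$, and since $[\Sigma]=-\tfrac12 K_X$ we get $[\Sigma]|_\Sigma = \tfrac12(-K_X)|_\Sigma = -K_\Sigma$ again, so $-K_X|_\Sigma = -2K_\Sigma = 2c_1(\Sigma)$. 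Hence $c_1(X)\cdot d = 2\,c_1(\Sigma)\cdot D$, which is exactly the identity needed, and therefore $k_D = c_1(\Sigma)\cdot D - 1 = \tfrac12 c_1(X)\cdot d - 1 = k_d - 1$.

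I do not anticipate a serious obstacle here; the only thing to be careful about is bookkeeping the restriction $-K_X|_\Sigma$ versus $[\Sigma]$ and making sure one consistently uses the projection formula $c_1(X)\cdot\iota_*D=\iota^*c_1(X)\cdot D$ so that all intersection numbers are computed in the correct space. A cleaner one-line phrasing, if preferred, is: since $\Sigma\in|-\tfrac12 K_X|$ and $D\in\psi^{-1}(d)$, adjunction gives $c_1(\Sigma)\cdot D = \tfrac12 c_1(X)\cdot d$; subtracting $1$ from both sides yields $k_D=k_d-1$.
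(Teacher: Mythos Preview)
Your proof is correct and follows essentially the same route as the paper: both use the projection formula together with adjunction and $[\Sigma]=-\tfrac12 K_X$ to obtain $c_1(\Sigma)\cdot D=[\Sigma].d=\tfrac12 c_1(X)\cdot d$, from which $k_D=k_d-1$ follows immediately. The only slip is the sign in your parenthetical remark ``$c_1(\Sigma)=-\tfrac12 c_1(X)|_\Sigma$''; the correct relation is $c_1(\Sigma)=\tfrac12 c_1(X)|_\Sigma$, which is exactly what your subsequent computation $-K_X|_\Sigma=-2K_\Sigma=2c_1(\Sigma)$ actually yields.
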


 \begin{proof}
   By the Poincar\'e duality $\tfrac{1}{2}c_1(X)=\PD(-\tfrac{1}{2}K_X)$ and by the hypothesis that $\Sigma\in |-\tfrac{1}{2}K_X|$ is non-singular, we have
   $$k_d=\tfrac{1}{2}c_1(X)\cdot d=[\Sigma]. d .$$
By the adjunction formula, by the hypothesis $D\in \psi^{-1}(d)$ 
and by the Poincar\'e duality $c_1(\Sigma) =\PD(-K_\Sigma)$, we obtain
\begin{equation*}
[\Sigma].d=-K_\Sigma.D=k_D+1.
\end{equation*}
Hence, the lemma follows. 
\end{proof}

  The following proposition generalizes \cite[Proposition 3]{kollar2015examples} by J.~Koll\'ar for the case $\cpp$.

\begin{proposition} \label{Prop-Kollar}
Let $d\in H_2(X;\Z)$, and let  $\underline{x}_d$ be a configuration of\, $k_d=\tfrac{1}{2}c_1(X)\cdot d$ points in general position in the base locus $E$ of pencil $\mathcal{Q}$ of surfaces in the linear system $|-\frac{1}{2}K_X|$. Let $\mathcal{C}(\underline{x}_d)$ be the set of \textit{connected algebraic curves of arithmetic genus $0$ representing the homology class $d$ and passing through $\underline{x}_d$}. Then every curve in $\mathcal{C}(\underline{x}_d)$ is irreducible and is contained in some non-singular surface of\,  $\mathcal{Q}$.
 \end{proposition}
 
 \begin{remark}
     Such non-singular surfaces correspond to solutions $l\in \Pic(E)$ of Equation (\ref{eq-solutions.in.PicE}).
 \end{remark}

 \begin{proof}
Let $C_d$ be a curve in the set $\mathcal{C}(\underline{x}_d)$. We prove the following claims.

\begin{claim}{1}\label{claim3.2-1}
Suppose that $C_d$ is irreducible. Then,  the whole curve $C_d$ has to be contained in some non-singular surface of the pencil $\mathcal{Q}$.
\end{claim}

The first observation is that any point of $C_d\setminus\underline{x}_d$ is contained in some  surface, which is denoted by $\Sigma_t$, of the pencil $\mathcal{Q}$. Then, the number of intersection points between $C_d$ and  $\Sigma_t$ is at least $\frac{1}{2}c_1(X)\cdot d$. By B\'ezout's theorem, the whole curve $C_d$ has to be contained in this surface. We subsequently show that such a surface $\Sigma_t$ is non-singular provided that the chosen points of $\underline{x}_d$ are sufficiently generic. Indeed, suppose, toward a contradiction, that the curve $C_d$ (passing through the configuration $\underline{x}_d$ on $E$) is contained in a singular surface $\Sigma_{\sing}$ of the pencil $\mathcal{Q}$. Let $\rank(\Pic(\Sigma_{\sing}))=k-1$, where  $k\geq 2$. Note that  $\rank(\Pic(\Sigma_{\sing}))=\rank(\Pic(\Sigma))-1$, where $\Sigma$ is a non-singular surface in $\mathcal{Q}$. Let $(L_1,L_2\ldots,L_k)$ be a basis of $ H_2(\Sigma;\Z)$, such that the vanishing cycle $S$ satisfies $S=L_1-L_2\in H_2(\Sigma;\Z)$. Let $(\tilde{L}_{12},\tilde{L}_3\ldots,\tilde{L}_k)$ be a basis of $ H_2(\Sigma_{\sing};\Z)$. Consider the  homomorphism given by
\begin{align*}
    \gamma\colon H_2(\Sigma;\Z_2)&\lra H_2(\Sigma_{\sing};\Z)\\
    L_1-L_2& \longmapsto 0 \\
    L_1+L_2& \longmapsto \tilde{L}_{12}\\
    L_i & \longmapsto \tilde{L}_i,\quad \forall i \in \{3,\ldots,k\}.
\end{align*}

Suppose that the curve $C_d$ represents the homology class $[C_d]=\alpha_{1}\tilde{L}_{12}+\alpha_3 \tilde{L}_3+\cdots + \alpha_k \tilde{L}_k$ in $H_2(\Sigma_{\sing};\Z)$. By this morphism, such a curve realizes the homology class $D=\alpha_{1}L_{1}+\alpha_1 L_2+\alpha_3 L_3+\cdots + \alpha_k L_k$ in $H_2(\Sigma;\Z)$. By Equation (\ref{eq-solutions.in.PicE}) in Proposition~\ref{prop-corresponding.surface.picE}, and by the remark that $D.S=0$, we have
$[\underline{x}_d]=-\phi (\overline{L},\overline{D})$ in $\Pic_{k_d}(E)$. This contradicts the genericity of the configuration $\underline{x}_d$. 

To prove that $C_d$ is irreducible, we argue by contradiction, making use of the following three claims. Suppose that $C_d$ can be decomposed into $n$ ($n\geq 0$) irreducible rational curves $C_{d_i}$, that is, $C_d=\bigcup_{i=1}^{n} C_{d_i}$, where $C_{d_i}$ realizes a homology class $d_i\in H_2(X;\mathbb{Z})$ provided that $\sum_{i=1}^{n}d_i=d$, and  $C_d$  passes through $\underline{x}_{d_i}$ such that $\bigcup_{i=1}^{n}\underline{x}_{d_i}=\underline{x}_d$.

\begin{claim}{2A}\label{claim2}
  Each component curve $C_{d_i}$ passes though exactly $k_{d_i}=\frac{1}{2}c_1(X)\cdot d_i$ points of configuration $\underline{x}_d$.
  \end{claim} 
Indeed, suppose that there exists a component of $C_d$, denoted by $C_{d_1}$, passing through more than $\frac{1}{2}c_1(X)\cdot d_1$ points of $\underline{x}_d$. Then the curve $C_{d_1}$ intersects with every surface of the pencil $\mathcal{Q}$ at more than $\frac{1}{2}c_1(X)\cdot d_1$ points. By B\'ezout's theorem, this implies that $C_{d_1}$ must be contained in every surface of the pencil $\mathcal{Q}$, and hence $C_{d_1} = E$. This is impossible since $C_{d_1}$ is a genus $0$ curve. Hence, every component curve $C_{d_i}$ passes through a configuration $\underline{x}_{d_i}$ of exactly $k_{d_i}$
points of $\underline{x}_d$. As a consequence, the different $C_{d_i}$ passes though different points of $\underline{x}_d$ for all $i$; \textit{i.e.}, $\bigsqcup_{i=1}^{n}\underline{x}_{d_i}=\underline{x}_d$ and $\sum_{i=1}^{n}k_{d_i}=k_d$, where $k_d$ and $k_{d_i}$ are the corresponding integers of the classes $d$ and $d_i$, respectively.

\begin{claim}{2B}\label{claim2B}
  The reducible curve $C_d=\bigcup_{i=1}^{n} C_{d_i}$ is contained in one non-singular surface of the pencil $\mathcal{Q}$.
\end{claim}
Note that the curve $C_d$ is connected. Suppose, toward a contradiction, that two component curves $C_{d_i}$ and $C_{d_j}$, whose intersection is not empty, are contained in two different surfaces $\Sigma_i\neq \Sigma_j$ of the pencil $\mathcal{Q}$. It follows that $\emptyset \neq C_{d_i}\cap C_{d_j}\subset \Sigma_i\cap\Sigma_j=E$. This is in contradiction with Claim~\ref{claim2}. 
Hence the reducible curve $C_d$ is contained in only one  surface of the pencil $\mathcal{Q}$. Moreover, such a surface is non-singular by the genericity of the chosen configuration. One can suppose that the curve $C_d$ is contained in $\Sigma_t\in \mathcal{Q}$ and  present its homology class as $[C_d]=\sum_{i=1}^{n} [C_{d_i}]\in H_2(\Sigma_t;\Z)$.

\begin{claim}{2C}\label{lem44}
  If the configuration $\underline{x}_d$ is generic, then the curve $C_d$ is irreducible.
\end{claim}

We prove this claim  by contradiction as well. 
Fix an integer $k_{d_1}$ such that $0<k_{d_1}<k_d$. Let $\underline{x}_{d_1}\subsetneq \underline{x}_d$ be a strict subconfiguration consisting of points on $E$ such that its cardinality is equal to $k_{d_1}$. Suppose that the curve $C_{d_1}$ realizes a homology class $D_{1}$ in $H_2(\Sigma;\Z)$. By the proof of Proposition~\ref{prop-corresponding.surface.picE}, we have
$$\left[\underline{x}_{d_1}\right]=|D_1\cdot S|l+\phi\left(\overline{L},\overline{D}_1\right).$$
Together with Equation (\ref{eq-solutions.in.PicE}) in Proposition~\ref{prop-corresponding.surface.picE}, we get the following equation in $\Pic(E)$: 
\begin{equation}\label{eq3.2}
    |D\cdot S|\left[\underline{x}_{d_1}\right]=|D_1\cdot S|\left[\underline{x}_d\right]-|D_1\cdot S|\phi \left(\overline{L},\overline{D}\right)+|D\cdot S|\phi\left(\overline{L},\overline{D}_1\right).
\end{equation}
By Lemma~\ref{lem-constant.divisor.L.D}, both $\phi (\overline{L},\overline{D})$ and $\phi(\overline{L},\overline{D}_1)$ are fixed divisors in $\Pic(E)$. Hence, Equation \eqref{eq3.2} is possible only when $\underline{x}_{d_1}=\underline{x}_{d}$. Therefore, there are no reducible curves $C_d$ for a general choice of the configuration $\underline{x}_{d}$.  
This last claim finishes our proof of Proposition~\ref{Prop-Kollar}.
    \end{proof}

\subsection{Moduli spaces and enumeration of balanced rational  curves}
In this technical part, we show, by specializing our configuration of points, that we can recover the genus~$0$ Gromov--Witten invariants of the $3$-dimensional del Pezzo variety $X$ and of a non-singular del Pezzo surface in $|-\frac{1}{2}K_X|$; see \cite[Section 5]{brugalle2016pencils}. 

\subsubsection{Moduli spaces of stable maps and balanced rational curves} 

Consider the moduli space $\mathcal{M}_{0,k_d}(X,d)$  of irreducible genus $0$ $k_d$-pointed stable maps
$$f\colon \left(\mathbb{C}P^1;p_1,\ldots,p_{k_d}\right)\lra X$$
of homology class $d\in H_2(X;\Z)$. Let $\mathcal{M}^*_{0,k_d}(X,d)$ denote the subspace of $\mathcal{M}_{0,k_d}(X,d)$ consisting of  
stable maps  that are simple, \textit{i.e.}, without multiply covered components. Let $\overline{\mathcal{M}}_{0,k_d}(X,d)$ represent the compactification of  $\mathcal{M}_{0,k_d}(X,d)$ due to M.~Kontsevich, and let $\ev_X$ denote the corresponding total evaluation map. The configuration of points $\underline{x}_d$ in $X$ is said to be generic in the sense that every stable map $[(\C P^1;p_1,\ldots,p_{k_d});f]$ in $\overline{\mathcal{M}}_{0,k_d}(X,d)$ satisfying   $\underline{x}_d=f(\{p_1,\ldots,p_{k_d}\})$ is a regular point of  $\ev_{X}$. As a consequence, the value of the invariants $\GW_X(d)$ can be identified with the enumeration of regular elements of $\ev_{X}$.

\begin{definition}
A rational curve parametrized by $f\colon \C P^1\to X$  is \emph{balanced} if $f$ is balanced, \textit{i.e.}, $f$ is an immersion and the normal bundle  $f^*TX/T\C P^1$ is isomorphic to the direct sum of two holomorphic line bundles of the same degree.
\end{definition}

The following lemma is a consequence of \cite[Proposition 2.2]{welschinger2005stongly} with the observation that the complex structures on del Pezzo threefolds are generic enough for the regularity in this proposition. 

\begin{lemma}[\textit{cf.}~Welschinger \cite{welschinger2005stongly}] \label{Lemma-Regular.Balanced} 
An element $[(\C P^1;p_1,\ldots,p_{k_d});f]$ in $\overline{\mathcal{M}}_{0,k_d}(X,d)$ is a regular point of the map $\ev_{X}$ if and only if $f$ is balanced.
\end{lemma}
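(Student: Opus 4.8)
The statement to prove is Lemma~\ref{Lemma-Regular.Balanced}, attributed to Welschinger: an element $[(\C P^1;p_1,\ldots,p_{k_d});f]\in\overline{\mathcal M}_{0,k_d}(X,d)$ is a regular point of the evaluation map $ev_X$ if and only if $f$ is balanced.

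The plan is to reduce the regularity of $ev_X$ at a stable map to a cohomological condition on the normal bundle of the parametrized curve, and then to translate that condition into the balancedness of the splitting type. First I would recall that, since $X$ is del Pezzo of dimension three, the dimension count makes $k_d=\frac12 c_1(X)\cdot d$ exactly the number of point conditions needed so that $ev_X$ is a map between spaces of the same (real or complex) dimension; hence ``regular point'' means ``point where the differential of $ev_X$ is an isomorphism.'' Next I would use the standard deformation theory of stable maps: at $[(\C P^1;p_1,\ldots,p_{k_d});f]$ the differential of $ev_X$ fits into the long exact sequence associated with $0\to T\C P^1(-p_1-\cdots-p_{k_d})\to f^*TX(-p_1-\cdots-p_{k_d})\to N_f(-p_1-\cdots-p_{k_d})\to 0$, where $N_f=f^*TX/T\C P^1$ is the normal bundle (here one uses that $f$ is an immersion, which is part of being balanced, so that $N_f$ is locally free). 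Surjectivity of $d\,ev_X$ is then equivalent to $H^1(\C P^1, N_f(-p_1-\cdots-p_{k_d}))=0$, because $H^1$ of the twisted tangent bundle $T\C P^1(-p_1-\cdots-p_{k_d})=\mathcal O(2-k_d)$ contributes only the expected reparametrization/automorphism cokernel which is accounted for by the pointed structure, and the point constraints kill the $H^0$ of the line bundle summands at the marked points.

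The core computation is then purely about line bundles on $\C P^1$. The normal bundle $N_f$ has rank $2$ and degree $c_1(X)\cdot d-2=2k_d-2$ (by adjunction, $\deg N_f=(-K_X)\cdot d-2$), so write $N_f\cong\mathcal O(a)\oplus\mathcal O(b)$ with $a+b=2k_d-2$ and, say, $a\le b$. Twisting by $\mathcal O(-k_d)$ — since the $k_d$ marked points are generic, imposing them is the same as a general degree-$k_d$ twist after using the evaluation at distinct points — gives $\mathcal O(a-k_d)\oplus\mathcal O(b-k_d)$ with $a-k_d+b-k_d=-2$. This has vanishing $H^1$ if and only if both $a-k_d\ge -1$ and $b-k_d\ge -1$, i.e. $a\ge k_d-1$ and $b\ge k_d-1$; combined with $a+b=2k_d-2$ this forces $a=b=k_d-1$, which is exactly the balanced condition $N_f\cong\mathcal O(k_d-1)^{\oplus 2}$. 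Conversely if $f$ is balanced this splitting holds and $H^1$ vanishes. I would also note the boundary case: if $f$ is not an immersion then $N_f$ fails to be locally free and one checks directly (or cites Welschinger) that the torsion forces a nonzero $H^1$ after twisting, so non-immersions are never regular points, consistent with the ``only if'' direction.

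The main obstacle is making the passage from ``passing through $k_d$ generic points'' to ``twisting the normal bundle by $\mathcal O(-k_d)$'' rigorous: one must show that for a \emph{generic} configuration the vanishing $H^1(N_f(-p_1-\cdots-p_{k_d}))=0$ is equivalent to $H^1$ of the generic degree-$k_d$ twist, i.e. that the marked points can be taken in general position with respect to the splitting. For a decomposable bundle $\mathcal O(a)\oplus\mathcal O(b)$ this is elementary (the restriction maps $H^0(\mathcal O(a))\to\bigoplus\C$ and $H^0(\mathcal O(b))\to\bigoplus\C$ at generic points have maximal rank), but one should phrase it so that it is the \emph{same} genericity as the one defining $GW_X(d)$ via regular values of $ev_X$; this is where I would lean on the identification of $GW_X(d)$ with counting regular points of $ev_X$ recalled just before the lemma, and on Lemma~\ref{lemkDandkd}-type bookkeeping for the numerics. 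Everything else — the long exact sequence, Riemann–Roch on $\C P^1$, the arithmetic $a+b=2k_d-2$ — is routine.
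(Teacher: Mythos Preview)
The paper does not give its own proof of this lemma; it is simply quoted as Lemma~1.2 of \cite{welschinger2005spinor}. Your sketch is the standard deformation-theoretic argument and is correct in substance.

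The one point that deserves correction is your ``main obstacle'', which is not an obstacle at all. On $\C P^1$ every degree-$k_d$ effective divisor is linearly equivalent, so
\[
N_f(-p_1-\cdots-p_{k_d})\ \cong\ N_f\otimes\mathcal O_{\C P^1}(-k_d)
\]
as an abstract bundle for \emph{any} choice of $k_d$ distinct marked points, and hence $H^1\big(N_f(-p_1-\cdots-p_{k_d})\big)$ depends only on the splitting type of $N_f$, never on the configuration. Concretely, for $N_f=\mathcal O(a)\oplus\mathcal O(b)$ the evaluation map $H^0(\mathcal O(a))\to\C^{k_d}$ at $k_d$ distinct points has kernel $H^0(\mathcal O(a-k_d))$, so its rank is fixed by $a$ and $k_d$ alone. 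This is precisely why the lemma can be phrased as an intrinsic property of $f$ with no mention of the marked points, and why no genericity hypothesis on the $p_i$ is needed anywhere in the argument; your last paragraph should simply be dropped.
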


 \begin{lemma} \label{Lemma-not.intersection.balanced}
  Let $f\colon \mathbb{C}P^1\to X$ be an immersion such that $f(\C P^1)\subset \Sigma$. If $f(\C P^1)$ is not a complete intersection in $X$, then $f$ is balanced.
   \end{lemma}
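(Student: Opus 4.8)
\textbf{Proof proposal for Lemma \ref{Lemma-not.intersection.balanced}.}

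The plan is to analyze the normal bundle $N_f := f^*TX/T\C P^1$, which is a rank-$2$ holomorphic bundle over $\C P^1$, and therefore splits as $\mathcal{O}(a)\oplus\mathcal{O}(b)$ with $a\le b$ and $a+b = \deg N_f = c_1(X)\cdot d - 2$. Being balanced means exactly $a=b$, i.e. $b-a\le 0$; since $a+b$ has fixed parity (the same as $c_1(X)\cdot d$), it suffices to rule out $b-a\ge 2$ when $a+b$ is even (the odd case giving $b-a\ge 1$, which will be handled on the way or is not needed since ``balanced'' as used here presumably tolerates the minimal splitting type — but I should check the parity against $c_1(X)\cdot d$). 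First I would use the inclusion $\Sigma\subset X$ and the fact that $f(\C P^1)\subset\Sigma$ to fit $N_f$ into the short exact sequence
\begin{equation*}
0 \longrightarrow f^*T\Sigma / T\C P^1 \longrightarrow f^*TX/T\C P^1 \longrightarrow f^*N_{\Sigma/X} \longrightarrow 0,
\end{equation*}
where $N_{\Sigma/X}$ is the normal bundle of $\Sigma$ in $X$. Since $\Sigma\in|-\tfrac12 K_X|$, one has $N_{\Sigma/X} = \mathcal{O}_X(-\tfrac12 K_X)|_\Sigma$, so $f^*N_{\Sigma/X}$ is a line bundle of degree $(-\tfrac12 K_X)\cdot f_*[\C P^1] = (-\tfrac12 K_X\cdot d) \ge 1$, in particular of nonnegative (indeed positive) degree. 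Call this line bundle $\mathcal{O}(e)$ with $e = -\tfrac12 K_X\cdot d = \tfrac12 c_1(X)\cdot d > 0$.

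The next step is to bound the first summand. The quotient $f^*T\Sigma/T\C P^1$ is a line bundle on $\C P^1$; since $f$ is an immersion into the \emph{surface} $\Sigma$, this is the normal bundle of the curve $f(\C P^1)$ inside $\Sigma$, of degree $c_1(\Sigma)\cdot D - 2$ where $D = f_*[\C P^1]\in H_2(\Sigma;\Z)$; call it $\mathcal{O}(m)$. Then $m + e = \deg N_f = a+b$, consistent with the adjunction computations in the excerpt. Now I would extract the splitting type of $N_f$ from the extension
\begin{equation*}
0 \longrightarrow \mathcal{O}(m) \longrightarrow N_f \longrightarrow \mathcal{O}(e) \longrightarrow 0.
\end{equation*}
If $m\ge e-1$ — equivalently $\mathcal{O}(e)$ does not sit too positively relative to $\mathcal{O}(m)$ — then $N_f = \mathcal{O}(a)\oplus\mathcal{O}(b)$ is forced to have $|b-a|\le |m-e|\le 1$, which (after the parity check $a+b\equiv c_1(X)\cdot d$) gives $a=b$ and $f$ balanced. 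The hypothesis that $f(\C P^1)$ is \emph{not} a complete intersection in $X$ should be exactly what prevents the degenerate case $m\ll e$: if $m$ were very negative, the curve would be forced into a special position — it would have to be a complete intersection of $\Sigma$ with another member of $|-\tfrac12 K_X|$, or some analogous rigidity statement — which is excluded by assumption. So the logical core is: either $m$ is large enough that the extension cannot split too unevenly, or $m$ is small, in which case the curve is a complete intersection, contradicting the hypothesis.

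The main obstacle I expect is making precise the dichotomy ``small $m$ $\implies$ complete intersection.'' Concretely, I would argue that if $\deg(f^*T\Sigma/T\C P^1) = c_1(\Sigma)\cdot D - 2$ is below a threshold, then $h^0(\Sigma, \mathcal{O}_\Sigma(D))$ is small enough — by Riemann--Roch on the del Pezzo surface $\Sigma$, using $D^2 = -K_\Sigma\cdot D + 2g(D) - 2$ with $g(D)=0$ — that the linear system $|D|$ on $\Sigma$ consists of curves cut out by the restriction of $|-\tfrac12 K_X|$ (or by a related complete linear system on $X$), i.e. $C = f(\C P^1)$ is a complete intersection; this is the surface-level shadow of Proposition \ref{prop-elliptic.implies.pencil}. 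Conversely, when $m \ge e - 1$, a short cohomological computation on $\C P^1$ (showing $\operatorname{Ext}^1(\mathcal{O}(e),\mathcal{O}(m)) = H^1(\C P^1,\mathcal{O}(m-e))$ vanishes or is small, forcing the splitting type) finishes the argument. Throughout I would lean on Lemma \ref{Lemma-Regular.Balanced} only to motivate why ``balanced'' is the right target, and on the genus-$0$ and del Pezzo hypotheses to keep all the relevant $h^1$'s under control. The parity bookkeeping between $c_1(X)\cdot d$, $c_1(\Sigma)\cdot D$, and the degrees of the two summands is routine given Lemma \ref{lemkDandkd} and the adjunction formula, and I would not belabor it.
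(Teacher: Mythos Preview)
Your setup is correct: the short exact sequence
\[
0 \longrightarrow \mathcal{O}(m) \longrightarrow N_f \longrightarrow \mathcal{O}(e) \longrightarrow 0,
\qquad m = c_1(\Sigma)\cdot D - 2,\quad e = \tfrac12 c_1(X)\cdot d,
\]
is the right object, and the reduction ``$f$ balanced $\iff$ the sequence does not split'' is exactly the paper's Claim~1. But note that by Lemma~\ref{lemkDandkd} one has $m = k_d - 2$ and $e = k_d$, so \emph{always} $m = e - 2$. There is no dichotomy ``$m$ large vs.\ $m$ small'': the numerical invariants are fixed by the homology class $D$, and $\operatorname{Ext}^1(\mathcal{O}(e),\mathcal{O}(m)) = H^1(\C P^1,\mathcal{O}(-2)) \cong \C$ is always one-dimensional. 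Thus whether $N_f$ is balanced is a purely \emph{geometric} (not numerical) question --- it is the choice of the extension class in that one-dimensional space --- and your proposed threshold argument on $m$, or on $h^0(\Sigma,\mathcal{O}_\Sigma(D))$, cannot distinguish split from non-split.

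What is missing is a genuine link between ``$f(\C P^1)$ is a complete intersection'' and ``the sequence splits''. The paper supplies this via an infinitesimal-variation argument (following Griffiths): one places $\Sigma$ in a pencil $\mathcal{Q}$ with base locus $E$ and studies how the restricted divisor class $D|_{E,\Sigma_t}$ moves in $\operatorname{Pic}(E)$ as $\Sigma_t$ varies. If the sequence splits, a section of $N_f$ vanishing along $f(\C P^1)\cap E$ forces $D|_E$ to be constant; on the other hand, if $f(\C P^1)$ is not a complete intersection then $D.S \ne 0$ (with $S$ the vanishing cycle), and the identity $2D|_{E,\Sigma_t} = \phi(\overline{D}) - (D.S)\,S|_{E,\Sigma_t}$ together with the nonconstancy of $S|_{E,\Sigma_t}$ shows that $D|_E$ genuinely moves. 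This deformation-theoretic step is the content you are missing; no amount of degree bookkeeping on $\C P^1$ will replace it.
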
    

\begin{proof}
 Let $\mathcal{N}_f=f^*TX/T\C P^1$ and   $\mathcal{N}'_f=f^*T\Sigma/T\C P^1$ be normal bundles.
We consider the following short exact sequence of holomorphic vector bundles over $\C P^1$:
\begin{equation*}\tag{*}\label{*}
    0\longrightarrow \mathcal{N}'_f \longrightarrow \mathcal{N}_f  \longrightarrow f^*TX/T\Sigma \longrightarrow 0.
\end{equation*}

\begin{claim}{1}\label{claim3.5-1}
  The map $f$ is balanced if and only if the short exact sequence \eqref{*} does not split.
  \end{claim}

Suppose that $f_*[\C P^1]=D\in H_2(\Sigma;\Z)$.
We have observed that the rank $2$ normal bundle $\mathcal{N}_f$ has degree $c_1(X)\cdot d-2=2k_d-2$, where $d=\psi(D)\in H_2(X;\Z)$. By definition,  the map $f$ is said to be balanced if the normal bundle $\mathcal{N}_f$ is isomorphic to $\mathcal{O}_{\C P^1}(k_d-1)\oplus \mathcal{O}_{\C P^1}(k_d-1)$. Furthermore, the normal line bundle $\mathcal{N}'_f$ is isomorphic to $\mathcal{O}_{\C P^1}(c_1(\Sigma)\cdot D -2)$ and thus has degree $c_1(\Sigma)\cdot D -2=k_d-2$. It follows that the normal line bundle $f^*TX/T\Sigma$ has degree  $(2k_d-2)-(k_d-2)=k_d$. Therefore, if the map $f$ is balanced, the short exact sequence \eqref{*} does not split. Conversely, if the short exact sequence \eqref{*} does not split, then $f$ is balanced. Indeed, we suppose toward a contradiction that $\mathcal{N}_f\cong\mathcal{O}_{\C P^1}(k_1)\oplus \mathcal{O}_{\C P^1}(k_2)$ with $k_1>k_2$, \textit{i.e.}, $k_1>c_1(\mathcal{N}'_f)$. This implies that the morphism $\mathcal{O}_{\C P^1}(k_1)\to \mathcal{N}'_f$ is null. Then, there is an isomorphism $\mathcal{O}_{\C P^1}(k_1)\to \mathcal{N}_f/\mathcal{N}'_f$, where $\mathcal{N}_f/\mathcal{N}'_f\cong \mathcal{O}_{\C P^1}(k_d)$. Hence, the short exact sequence \eqref{*} splits, and we obtain a contradiction (see \cite[p.~252]{griffiths1983infinitesimal}).

\begin{claim}{2}\label{claim3.5-2}
  If the exact sequence \eqref{*}  splits, then $f(\C P^1)$ is a complete intersection in $X$.
  \end{claim}

The idea of a proof of this claim is to use the first-order deformation of the surface $\Sigma$ in a pencil $\mathcal{Q}$ generated by an elliptic curve $E$, which realizes the homology class $\frac{1}{4}K^2_X$, and to considering the divisor classes restricted on $E$ by the first-order deformation of $f(\mathbb{C}P^1)$ in $\Sigma$.  By extending a result in the proof of \cite[Theorem~(4.f.3)]{griffiths1983infinitesimal}, we see that the following two statements hold: 
\begin{enumerate}
\item If the exact sequence \eqref{*} splits, then the section of $\mathcal{N}_f$ that vanishes at $D.E$ points in the intersection $f(\C P^1)\cap E$ implies that the restricted divisor class  $[f(\mathbb{C}P^1)]|_E$ is constant when we deform $f(\C P^1)$ along the pencil. 
\item If $f(\C P^1)$ is not a complete intersection with $\Sigma$ in $X$, then the restricted divisor class $[f(\mathbb{C}P^1)]|_E$ in $\Pic(E)$ must vary as we deform $f(\C P^1)$. Indeed, let $f_\epsilon(\C P^1)$ be a first deformation of $f(\C P^1)$ in the pencil, \textit{i.e.}, $f_\epsilon(\C P^1)\subset \Sigma_\epsilon\in \mathcal{Q}$. One has
\begin{equation*}
\begin{aligned}
      &2(L.S)\left[f_\epsilon(\C P^1)\right]|_{E,\Sigma_\epsilon}\\
    &\sim (L.S) \overline{D}_{\phi}- (L.S)(D.S)S|_{E,\Sigma_\epsilon}\\
    &\sim (L.S)\overline{D}_{\phi}-(D.S)\overline{L}_{\phi}+ 2(D.S)L|_{E,\Sigma_\epsilon}.
\end{aligned}
\end{equation*}
  By Lemmas~\ref{lem-constant.map} and~\ref{lem-existence.of.L} and by the fact that $L|_{E,\Sigma_\epsilon}$ is not a constant as we deform $\Sigma_\epsilon$, the statement follows.
\end{enumerate}

By the sufficient condition of Claims~\ref{claim3.5-1} and~\ref{claim3.5-2}, if $f(\C P^1)$ is not a complete intersection, the exact sequence \eqref{*} does not split, and hence $f$ is balanced.
 \end{proof}

\begin{remark}
       If an irreducible curve $C$ is a complete intersection of a non-singular surface of the  pencil $\mathcal{Q}$ in~$X$, then $[C].S=0$.
    \end{remark}

\subsubsection{Enumeration of balanced rational curves}

 In this part, we demonstrate that it is possible to choose a configuration of points on $E$ such that every balanced, irreducible rational curve passing through this configuration is a regular point of the corresponding total evaluation map.  
 
     We denote by $V_{k_d}$ the set of configurations of $k_d$ distinct points on $E$. Recall that for each homology class $D\in H_2(\Sigma;\Z)$ satisfying $D\in \psi^{-1}(d)$, where $\psi\colon H_2(\Sigma;\Z)	\twoheadrightarrow H_2(X;\Z)$, we have $k_D=k_d-1$ (see Lemma~\ref{lemkDandkd}).
     Let $\underline{y}_D\subset E^{k_D}$ be a configuration of $k_D$ distinct points on $E$.  For every non-singular surface $\Sigma_t$ in the pencil~$\mathcal{Q}$, let $\mathcal{C}_{\Sigma_t,D}(\underline{y}_D)$ be the set of $k_D$-pointed stable maps $f\colon (C_0;p_1,\ldots,p_{k_D})\to \Sigma_t$, where $C_0$ is a connected nodal curve of arithmetic genus $0$,
   such that $f(C_0)$ represents the homology class $D\in H_2(\Sigma_t;\Z)$  
    and $f(\{p_1,\ldots,p_{k_D}\})=\underline{y}_D$.
    
\begin{proposition} \label{Prop.Modulispace1}
There exists a dense open subset $U_{k_D}$ of\, $V_{k_D}$ such that for every configuration $\underline{y}_D$ in $U_{k_D}$ and for every element $f$ in $\mathcal{C}_{\Sigma_t,D}(\underline{y}_D)$, one has $C_0= \C P^1$ and $f$ is an immersion.
\end{proposition}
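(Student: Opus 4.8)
The statement is a standard dimension-count / Bertini-type genericity argument, so the plan is to show that the ``bad'' configurations $\underline{y}_D$ — those through which passes a stable map $f$ that is either reducible (i.e. $C_0 \neq \C P^1$) or non-immersed — form a subvariety of positive codimension in $V_{k_D}$. First I would recall from Proposition \ref{Prop-Kollar} and its proof the key mechanism: for a curve contained in a non-singular surface $\Sigma_t \in \mathcal{Q}$ passing through a configuration of points on $E$, the restricted divisor class on $E$ is essentially pinned down, so varying the points on $E$ genuinely moves the divisor class in $Pic(E)$. This is what converts ``enough points in general position on $E$'' into ``the curve must be irreducible and the surface non-singular.''

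**Main steps.** (1) First reduce to a single non-singular surface: by Proposition \ref{Prop-Kollar} (applied with the class $d=\psi(D)$, after translating $k_D$-pointed data to $k_d$-pointed data via Lemma \ref{lemkDandkd}, or more directly by the B\'ezout argument in its proof), any curve in $\mathcal{C}_{\Sigma_t,D}(\underline{y}_D)$ for generic $\underline{y}_D$ is contained in a non-singular fibre of $\mathcal{Q}$, and we may treat each $\Sigma_t$ separately; since there are only countably many effective classes $D$ and the fibres form a $\C P^1$-family, a countable intersection of dense opens is still dense. (2) Rule out reducibility: if $C_0$ is reducible with components of classes $D_1,\dots,D_n$, then by the analogue of Claim 2A each component passes through exactly $k_{D_i}$ of the points and the sub-configurations are disjoint; but then $[\underline{y}_{D_i}]|_E$ would have to equal a fixed divisor class determined by $D_i$ and the surface, while we are free to move the $k_{D_i}$ points to change that class in $Pic_{k_{D_i}}(E)$ (using $k_{D_i}\geq 1$ and Riemann--Roch on $E$ as in Claim 2C). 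So reducible $C_0$ occurs only over a measure-zero locus. (3) Rule out non-immersions: once $C_0 = \C P^1$, the maps $f$ realizing $D$ with a cuspidal point (or a point where $df$ vanishes) sweep out a subvariety of $\mathcal{M}_{0,k_D}(\Sigma_t,D)$ of codimension at least one; composing with the evaluation map, their images in $V_{k_D}$ lie in a proper subvariety — here one uses that the evaluation map on the locus of immersions is submersive (Lemma \ref{Lemma-Regular.Balanced} combined with Lemma \ref{Lemma-not.intersection.balanced}, since such curves are not complete intersections when $D.S \neq 0$), so a generic point-configuration avoids the non-immersed locus. Intersecting the three dense opens (and taking the countable intersection over classes and the $\C P^1$ of fibres) gives $U_{k_D}$.

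**Main obstacle.** The delicate point is step (2)--(3) uniformity: I must ensure that the bad locus is a genuine \emph{subvariety} (not just a countable union that could be dense), which requires controlling the countably many effective decompositions $D = \sum D_i$ and the family of fibres simultaneously. The proof of Proposition \ref{Prop-Kollar} already handles the decomposition count via the Remark that an effective class on a del Pezzo surface has only countably many effective decompositions, and via the finiteness of singular fibres (Lemma \ref{lem-4.singulars}); the remaining care is to phrase everything over the total space $\tilde X$ of the Lefschetz fibration so that the divisor-class-on-$E$ obstruction is an algebraic condition, after which Baire/measure-zero arguments apply. I expect the immersion part of step (3) to be the most technical, since it requires knowing that cuspidal rational curves in a fixed class on a del Pezzo surface move in a family of the expected (smaller) dimension; this can be invoked from standard del Pezzo surface theory or deduced from the regularity of the moduli space of immersed rational curves together with a dimension count on the cuspidal boundary.
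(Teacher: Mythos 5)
The paper does not actually prove this proposition: it refers the reader to Proposition 5.2 of \cite{brugalle2016pencils} and Lemma 2.1 of \cite{zahariuc2018rational}. So your proposal has to stand on its own, and while the overall strategy (dimension counts plus the constraint that a curve of class $D_i$ cuts out the fixed divisor class $D_i|_E$ on $E$) is the right one, the assembly has genuine gaps. First, your step (1) is vacuous and slightly off target: $\mathcal{C}_{\Sigma_t,D}(\underline{y}_D)$ is by definition a set of stable maps into a \emph{fixed non-singular} surface $\Sigma_t$, so there is nothing to reduce to; moreover Proposition \ref{Prop-Kollar} cannot be invoked here since it requires $k_d$ points while only $k_D=k_d-1$ are available. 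Second, step (2) is internally inconsistent. If each component of class $D_i$ passed through exactly $k_{D_i}$ of the points and the sub-configurations were disjoint and covered $\underline{y}_D$, you would have $\sum_i k_{D_i}=k_D$; but on the surface $\sum_i k_{D_i}=-K_{\Sigma_t}\cdot D-n=k_D+1-n$, so this count alone already forces $n=1$ and you should stop there. The divisor-class contradiction you reach for instead does not exist in the form stated: $[\underline{y}_{D_i}]$ has degree $k_{D_i}$ while $D_i|_{E,\Sigma_t}$ has degree $D_i.E=k_{D_i}+1$, so no equality of classes is forced (the one unprescribed intersection point of $C_{D_i}$ with $E$ absorbs any class). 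The place where the $Pic(E)$ argument is genuinely needed is one step earlier: to show that a component of class $D_i$ passes through \emph{at most} $k_{D_i}$ generic points of $E$, i.e.\ to exclude the case where all $D_i.E$ of its intersection points with $E$ are prescribed, which is the codimension-one condition $[\underline{y}_{D_i}]=D_i|_{E,\Sigma_t}$.

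Third, step (3) invokes the wrong tools. Lemma \ref{Lemma-Regular.Balanced} and Lemma \ref{Lemma-not.intersection.balanced} concern the evaluation map $ev_X$ on $\overline{\mathcal{M}}_{0,k_d}(X,d)$ and the splitting of the rank-$2$ normal bundle $f^*TX/T\C P^1$; they say nothing about genericity of configurations constrained to $E$ for curves inside $\Sigma_t$. What is needed is: (i) the family of rational curves of class $D$ in the del Pezzo surface $\Sigma_t$ has dimension $k_D$ and its restriction to $E$ maps it to the linear system $|D|_{E,\Sigma_t}|\cong\C P^{k_D}$ (Riemann--Roch on $E$), generically finitely; (ii) the non-immersed (cuspidal or multiply-covered) locus has dimension at most $k_D-1$, so its image is a proper subvariety of $|D|_{E,\Sigma_t}|$; (iii) a generic $\underline{y}_D$ determines, again by Riemann--Roch, a unique divisor of $|D|_{E,\Sigma_t}|$ containing it, and this divisor is generic. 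You should also say a word about contracted components and multiple covers, which are allowed in the stable-map definition of $\mathcal{C}_{\Sigma_t,D}(\underline{y}_D)$ and must be excluded before concluding $C_0=\C P^1$.
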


 \begin{proof}
   This proof is inspired by the proof of \cite[Proposition~5.2]{brugalle2016pencils}.
   Let $\mathcal{V}_D$ denote the set of irreducible nodal rational curves of homology class $D\in H_2(\Sigma_t;\Z)$. The set $\mathcal{V}_D$ is a quasiprojective subvariety of dimension $k_D$ of the linear system $|D|$. 
    Let $\overline{\mathcal{V}_D}$ be the Zariski closure of $\mathcal{V}_D$.
    
   Let $\mathcal{U}_D:=|D|_E|$ denote the complete linear system on $E$ defined by the restriction of the divisor $D$. Since the degree of $\mathcal{U}_D$ is $D.E=k_D+1$, it follows from the Riemann--Roch theorem that each element of $\mathcal{U}_D$ is determined by $k_D$ of its points on $E$. Hence, every configuration $\underline{y}_D\in V_{k_D}$ induces an element $[ \underline{y}_D]\in \mathcal{U}_D$. Consider the following generically finite map:
   \begin{align*}
    h\colon \overline{\mathcal{V}_D} &\lra \mathcal{U}_D\\
    C_d&\longmapsto [C_d\cap E] .
\end{align*}

   Note that $\dim (h(\overline{\mathcal{V}_D}\setminus \mathcal{V}_D))\leq k_D-1$. It follows that if the configuration $\underline{y}_D\in V_{k_D}$ has the property $[{\underline{y}}_D] \in\mathcal{U}_D\setminus h(\overline{\mathcal{V}_D}\setminus \mathcal{V}_D)$, then 
    for every stable map $f\in \mathcal{C}_{\Sigma_t,D}(\underline{y}_D)$, the curve $f(C_0)$ must be an irreducible nodal rational curve in $\Sigma_t$. 
    In other words, the curve $C_0$ is non-singular. As a result, $C_0=\C P^1$ and $f$ is an immersion.
 \end{proof}
 
 \begin{corollary}\label{Corollary.Modulispace1}
 Let $\underline{y}_D\in U_{k_D}$ be a configuration of points as in Proposition~\ref{Prop.Modulispace1}. 
 Then, each non-singular surface $\Sigma_t$ in the pencil $\mathcal{Q}$ that contains a curve of class $D$ of the set $\mathcal{C}(\underline{x}_d)$  contains  precisely $\GW_{\Sigma_t}(D)$ such curves.
    \end{corollary}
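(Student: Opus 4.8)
The plan is, for a fixed non-singular $\Sigma_t\in\mathcal{Q}$ that contains a curve of $\mathcal{C}(\underline{x}_d)$ of class $D$, to set up a bijection between the curves of $\mathcal{C}(\underline{x}_d)$ lying in $\Sigma_t$ and representing $D$ on the one hand, and the moduli set $\mathcal{C}_{\Sigma_t,D}(\underline{y}_D)$ of Proposition \ref{Prop.Modulispace1} on the other, and then to evaluate the cardinality of the latter. Throughout I would write $\underline{x}_d=\underline{y}_D\sqcup\{x_0\}$, so that $\underline{y}_D$ consists of $k_D=k_d-1$ of the $k_d$ points of $\underline{x}_d$ (Lemma \ref{lemkDandkd}); recall also that the adjunction computation in the proof of Lemma \ref{lemkDandkd} gives $-K_\Sigma\cdot D=D.E=k_d$.

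First I would dispose of the easy direction. If $C\in\mathcal{C}(\underline{x}_d)$ is contained in $\Sigma_t$ and has class $D$, then $C$ passes through $\underline{y}_D\subset\underline{x}_d$, hence defines an element of $\mathcal{C}_{\Sigma_t,D}(\underline{y}_D)$; moreover, since $C\not\subset E$ and the $k_d=D.E$ points of $\underline{x}_d$ are distinct points of $E$ lying on $C$, the intersection divisor equals $C\cap E=\underline{x}_d$ on $E$, so $D|_{E,\Sigma_t}=[\underline{x}_d]$ in $Pic_{k_d}(E)$. This identity is the defining feature of the surfaces $\Sigma_t$ under consideration, and it holds as soon as $\Sigma_t$ contains at least one such curve.

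Next comes the crux, the reverse inclusion. Let $f\in\mathcal{C}_{\Sigma_t,D}(\underline{y}_D)$. Since $\underline{y}_D\in U_{k_D}$, Proposition \ref{Prop.Modulispace1} forces the domain to be $\C P^1$ and $f$ to be an immersion, so $C:=f(\C P^1)$ is an irreducible rational curve of class $D$ through $\underline{y}_D$. The divisor $C\cap E$ on $E$ has degree $D.E=k_d$ and contains the $k_d-1$ distinct points of $\underline{y}_D$; writing $C\cap E=\underline{y}_D+p$ for a single residual point $p\in E$ and using $\mathcal{O}_E(C)=D|_{E,\Sigma_t}=[\underline{x}_d]=[\underline{y}_D+x_0]$, we get $p\sim x_0$ in $Pic_1(E)$, hence $p=x_0$ (two linearly equivalent points on a curve of positive genus coincide). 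Therefore $C$ passes through all of $\underline{x}_d$, represents $d$ in $X$, and so belongs to $\mathcal{C}(\underline{x}_d)$. This yields the desired bijection between the class-$D$ curves of $\mathcal{C}(\underline{x}_d)$ lying in $\Sigma_t$ and the elements of $\mathcal{C}_{\Sigma_t,D}(\underline{y}_D)$.

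It then remains to show $\#\,\mathcal{C}_{\Sigma_t,D}(\underline{y}_D)=GW_{\Sigma_t}(D)$. By Proposition \ref{Prop.Modulispace1} every element of this set is an immersion $f:\C P^1\to\Sigma_t$ of class $D$, so there are no contributions from nodal domains, reducible curves, or multiple covers. The normal line bundle of such an $f$ has degree $c_1(\Sigma)\cdot D-2$, and twisting it down by the $k_D=c_1(\Sigma)\cdot D-1$ marked points produces $\mathcal{O}_{\C P^1}(-1)$, whose first cohomology vanishes; hence $f$ is a regular point of the evaluation map $ev_{\Sigma_t}$ attached to $\overline{\mathcal{M}}_{0,k_D}(\Sigma_t,D)$. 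Consequently $\underline{y}_D$ is a regular value of $ev_{\Sigma_t}$, the fibre over it has $\deg ev_{\Sigma_t}=GW_{\Sigma_t}(D)$ elements by Definition \ref{definitionGW}, and the corollary follows from the bijection. I expect the main obstacle to be exactly the reverse inclusion: one must check that forcing a class-$D$ curve inside one of these distinguished surfaces through the $k_D$ points of $\underline{y}_D$ automatically makes it pass through the remaining point $x_0$, which is precisely where the degree identity $D.E=k_d$ together with linear equivalence on the elliptic curve $E$ are used.
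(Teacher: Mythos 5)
Your proof is correct and follows essentially the same route as the paper: the count $\#\mathcal{C}_{\Sigma_t,D}(\underline{y}_D)=GW_{\Sigma_t}(D)$ via Proposition \ref{Prop.Modulispace1} and regularity of the evaluation map is exactly the paper's argument, with your normal-bundle twist computation playing the role of the citation of Lemma \ref{Lemma-Regular.Balanced}. The only difference is that you make explicit, inside the corollary, the residual-point step ($C\cap E=\underline{y}_D+p$ with $p\sim x_0$ forcing $p=x_0$ on the elliptic curve), which the paper states separately via Riemann--Roch at the start of the proof of Theorem \ref{Theorem1-GW}; this is a welcome clarification rather than a departure.
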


 \begin{proof}
We consider the moduli space of stable maps $\mathcal{M}^*_{0,k_D}(\Sigma_t,D)$ and its corresponding total evaluation map $\ev_{\Sigma_t}$. By Proposition~\ref{Prop.Modulispace1} and Lemma~\ref{Lemma-Regular.Balanced}, it follows that for every map $f\in\mathcal{C}_{\Sigma_t,D}(\underline{y}_D)$, where $\underline{y}_D\in U_{k_D}$, the point $[(\C P^1;p_1,\ldots,p_{k_D});f]$ is a regular point of  $\ev_{\Sigma_t}$ provided that $f$ is an immersion. Consequently, we obtain the equality $|\mathcal{C}_{\Sigma_t,D}(\underline{y}_D)|=\GW_{\Sigma_t}(D)$.
 \end{proof}
 
     \begin{proposition}\label{Prop.Modulispace2}
Regular values of the total evaluation map $\ev_{X}$ contained in $V_{k_d}$ form a dense open subset $U_{k_d}\subset V_{k_d}$.
In particular, if\, $\underline{x}_d\in U_{k_d}$, then the cardinality of the set $\mathcal{C}_{d}(\underline{x}_d)$ is equal to $\GW_X(d)$.
    \end{proposition}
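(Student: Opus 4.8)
The plan is to apply standard transversality arguments to the moduli space $\overline{\mathcal{M}}_{0,k_d}(X,d)$, adapting the proof of the analogous statement (Proposition 5.4) in \cite{brugalle2016pencils}. First I would recall that by Proposition \ref{Prop-Kollar}, whenever $\underline{x}_d\in V_{k_d}$ consists of $k_d$ points in general position in $E$, every element of $\mathcal{C}_d(\underline{x}_d)$ is an irreducible rational curve contained in some non-singular surface $\Sigma_t$ of the pencil $\mathcal{Q}$; in particular every such curve is the image of a simple stable map $f$ with smooth domain, so the relevant elements of $\mathcal{C}_d(\underline{x}_d)$ live in $\mathcal{M}^*_{0,k_d}(X,d)$. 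The claim then amounts to showing that for a generic choice of $\underline{x}_d$ in the (dense open) locus of configurations in general position, the preimage $ev_X^{-1}(\underline{x}_d)$ consists of regular points of $ev_X$, and that no elements of the boundary $\overline{\mathcal{M}}_{0,k_d}(X,d)\setminus \mathcal{M}^*_{0,k_d}(X,d)$ contribute.

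Concretely, I would proceed in the following steps. \textbf{Step 1:} By the Sard--Smale theorem applied to the smooth map $ev_X$ restricted to $\mathcal{M}^*_{0,k_d}(X,d)$, the set of regular values is a dense (Baire) subset of $X^{k_d}$; intersecting with the dense open subset of $V_{k_d}$ of configurations lying on $E$ in general position gives a dense subset of $V_{k_d}$. The content to be checked is that this subset is \emph{open} inside $V_{k_d}$ and that it in fact computes $GW_X(d)$. \textbf{Step 2:} For openness and finiteness, one uses properness of $ev_X$ on $\overline{\mathcal{M}}_{0,k_d}(X,d)$ together with the fact that $d$ is a del Pezzo class, so that over a configuration in general position on $E$ no stable map with reducible domain or multiple cover can pass through $\underline{x}_d$: any such degenerate curve would, by Proposition \ref{Prop-Kollar} and Claims 2A--2C in its proof, force the configuration into a measure-zero (indeed proper closed) subset of $V_{k_d}$. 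Hence, after removing this closed subset, $ev_X^{-1}(\underline{x}_d)$ is a finite set of points all lying in $\mathcal{M}^*_{0,k_d}(X,d)$, and it varies continuously, so being contained in the regular locus is an open condition. \textbf{Step 3:} For the identification with $GW_X(d)$, I would invoke Definition \ref{definitionGW} together with Lemma \ref{Lemma-Regular.Balanced}: at a regular value $\underline{x}_d\in U_{k_d}$ the fibre $ev_X^{-1}(\underline{x}_d)$ consists exactly of the balanced irreducible rational curves of class $d$ through $\underline{x}_d$, each counted once, and since all curves in $\mathcal{C}_d(\underline{x}_d)$ are then automatically balanced (Lemma \ref{Lemma-not.intersection.balanced}, noting that such curves are not complete intersections as they pass through a general configuration on $E$), we get $|\mathcal{C}_d(\underline{x}_d)| = GW_X(d)$.

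The main obstacle I expect is \textbf{Step 2}: controlling the boundary contributions, i.e. verifying that no stable map with nodal or reducible domain representing $d$ (or a sub-sum of $d$ with a multiple-cover component) can pass through a configuration lying in general position on $E$. This is exactly where the elliptic curve $E$ and its Picard group do the work --- the argument is that fixing $k_d-1$ of the points and moving the last one changes the divisor class $[\underline{x}_d]$ in $\operatorname{Pic}(E)$ while the restriction $[C]|_E$ of any reducible/degenerate configuration is constrained by countably many linear-equivalence conditions, so generic $\underline{x}_d$ avoids all of them. One has to be careful that the argument still applies to stable maps (where a component may be contracted or multiply covered) and not only to honest subcurves; this is handled as in \cite{brugalle2016pencils} by passing to the images and decomposing into irreducible rational curves as in the proof of Proposition \ref{Prop-Kollar}. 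Once this boundary-avoidance is in hand, the rest is a routine application of Sard--Smale and properness, and the proof follows the template of Proposition $5.4$ in \cite{brugalle2016pencils}.
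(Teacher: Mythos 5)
Your proposal is correct and follows essentially the same route as the paper's proof: reduce to irreducible curves in non-singular members of the pencil via Proposition \ref{Prop-Kollar}, obtain immersions from Proposition \ref{Prop.Modulispace1}, identify regular points of $ev_X$ with balanced immersions via Lemma \ref{Lemma-Regular.Balanced}, and conclude balancedness from Lemma \ref{Lemma-not.intersection.balanced} by choosing the configuration on $E$ away from the divisor classes cut out by complete intersections. The Sard--Smale and properness framing you add in Steps 1--2 only makes explicit what the paper treats as standard.
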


    \begin{proof}
    Let $\underline{x}_d$ be a configuration of $k_d$ distinct points on $E$ for which the conclusion of Proposition~\ref{Prop-Kollar} holds. 
    By Lemma~\ref{Lemma-Regular.Balanced}, an element $[(\C P^1;p_1,\ldots,p_{k_d});f]$ is a regular point of $\ev_X$ if and only if $f$ is a balanced immersion. It should be noted that if $\underline{x}_d$  is replaced  by another linearly equivalent configuration  $\tilde{\underline{x}}_d$ of points on $E$, then the set of surfaces of the pencil $\mathcal{Q}$ containing a curve in $\mathcal{C}(\tilde{\underline{x}}_d)$ coincides with the set of surfaces containing a curve in $\mathcal{C}(\underline{x}_d)$.

Furthermore, by the Riemann--Roch theorem and Proposition~\ref{Prop.Modulispace1}, we may choose ${\underline{x}_d}$ generically so that every curve in $\mathcal{C}({\underline{x}_d})$  is parametrized by an algebraic immersion (see also  \cite[Proof of Proposition 5.3]{brugalle2016pencils}). The property that $f$ is balanced follows from Lemma~\ref{Lemma-not.intersection.balanced}.
    \end{proof}

  \section{The sign problems}\label{Sect-Welschinger}
  
  \subsection{Welschinger's signs of  real curves on real del Pezzo surfaces} \label{subsect-Welschinger.surfaces}

Let $(\Sigma,\tau)$ be a real del Pezzo surface. 
Let $D\in H_2^{-\tau}(\Sigma;\Z)$, and let $k_D=c_1(X)\cdot D-1$ 
be the associated integer. 
Let $C$ be an irreducible real rational curve in $X$ that represents the homology class $D$ and is nodal, \textit{i.e.}, has only ordinary real nodes as singularities.
By the adjunction formula, the total number of complex nodes of $C$, which consist of real nodes and pairs of complex conjugate nodes, is equal to
$$g(D)=\tfrac{1}{2}(-c_1(\Sigma)\cdot D+D^2+2).$$  
 A real node of $C$ is either hyperbolic or elliptic in the following sense.

 \begin{definition}
   A \emph{hyperbolic real node} of a curve is  a real point where the 
   local equation of the curve is given by $x^2-y^2=0$ over $\R$ (\textit{i.e.}, it is the intersection of two real branches).
   An \emph{elliptic real node} (or an isolated real node) of a curve is a real point where the  local equation of the curve is given by $x^2+y^2=0$ over  $\R$ (\textit{i.e.}, it is the intersection of two complex conjugate branches).
\end{definition}

 Let  $\delta_E(C)$ denote the number of elliptic real nodes of the curve $C$, $\delta_H(C)$  the numbers of hyperbolic real nodes, and $\delta_{\C}(C)$ the numbers of pairs of complex conjugate nodes. The following equation holds:
 $$\delta_E(C)+\delta_H(C)+2\delta_{\C}(C)=g(D).$$

\begin{definition} \label{def-sign.curve.w.node.dim2}
The \emph{Welschinger's sign} of an irreducible real rational curve $C$ in $(\Sigma,\tau)$, denoted by $s_{\R \Sigma}(C)$, is defined as  
$$s_{\R \Sigma}(C):=(-1)^{\delta_E(C)}.$$
\end{definition}

Let $\underline{x}_D$ denote a real generic  configuration of $k_D$ points consisting of $r$ real points and $l$ pairs of complex conjugate points in $X$, where $r+2l=k_D$.  
Let $\mathcal{R}(\underline{x}_D)$ be a set of real  rational curves in $\Sigma$ representing the homology class $D$ and passing through $\underline{x}_D$.
It should be noted that the genericity of $\underline{x}_D$  implies that every real curve in $\mathcal{R}(\underline{x}_D)$ is nodal and irreducible. 

\begin{definition}\label{def-Welschinger.Inv.Nodes}
Let $(\Sigma,\tau)$ be a real del Pezzo surface and $D\in H_2^{-\tau}(\Sigma;\Z)$. We associate to each real curve $C\in \mathcal{R}(\underline{x}_D)$ its Welschinger's sign $s_{\R \Sigma}(C)$. The \emph{Welschinger invariant}  of $(\Sigma,\tau)$ of the homology class $D$ is defined as 
 $$W_{\R \Sigma}(D,l):=\sum_{C\in \mathcal{R}(\underline{x}_D)}s_{\R \Sigma}(C).$$
\end{definition}

\begin{remark}
If either $D\in H_2(\Sigma;\Z) \setminus H_2^{-\tau}(\Sigma;\Z)$ or $k_D\leq 0$, the corresponding Welschinger invariants $W_{\R \Sigma} (D,l)$ vanish.
\end{remark}

\begin{theorem}[\textit{cf.} Welschinger \cite{welschinger2005fourfolds}, Brugall\'e \cite{brugalle2021invariance}]
The Welschinger invariants in Definition~\ref{def-Welschinger.Inv.Nodes} do not depend on the choice of a real generic  configuration of points if the number of real points or, equivalently, the number of complex conjugate points is given.
\end{theorem}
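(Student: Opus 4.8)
The plan is to establish the invariance by a one-dimensional cobordism (wall-crossing) argument. Fix $D\in H_2^{-\tau}(\Sigma;\Z)$ and a splitting $k_D=r+2l$. Given two generic real configurations $\underline{x}_D^0$ and $\underline{x}_D^1$, each with $r$ real points and $l$ pairs of conjugate points, first connect them by a \emph{generic} real path $(\underline{x}_D^t)_{t\in[0,1]}$ of configurations of the same type: move each real point inside $\R\Sigma$ and each conjugate pair inside $\Sigma\setminus\R\Sigma$, choosing the path generically. Using Gromov compactness together with the ampleness of $-K_\Sigma$ (which forces every relevant space of genus-$0$ stable maps of class $D'\le D$ to be of expected dimension, and the loci of multiple covers and of curves through more points than expected to be of codimension $\ge 2$), one checks that for a generic path the total space
$$\mathcal{M}=\bigl\{(t,C)\ :\ C\ \text{real, irreducible, rational, of class}\ D,\ \underline{x}_D^t\subset C\bigr\}$$
is a compact real one-manifold whose genuine boundary is $\partial\mathcal{M}=\mathcal{R}(\underline{x}_D^0)\sqcup\mathcal{R}(\underline{x}_D^1)$, while away from $t\in\{0,1\}$ the member of the family degenerates only at isolated parameters $t_\ast$ (the \emph{walls}), near which $\mathcal{M}$ is described by the corresponding local model. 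It then suffices to show that the signed count $\chi(t):=\sum_{C\in\mathcal{R}(\underline{x}_D^t)}s_{\R\Sigma}(C)$ is unchanged when $t$ crosses a wall.

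Next one classifies the walls occurring along a generic real path: besides generic immersed nodal curves, the fiber over a wall $t_\ast$ contains exactly one special real curve of one of three types: (i) an immersed irreducible curve with one cusp, i.e.\ a ramification point of the parametrization, which by Lemma \ref{Lemma-Regular.Balanced} is a critical point of the evaluation map; (ii) an immersed irreducible curve whose singular locus, besides ordinary nodes, contains one extra coincidence of nodes (a tacnode, or an ordinary triple point); (iii) a reducible curve $C_1\cup C_2$ with $C_1,C_2$ irreducible rational, meeting transversally, each nodal, with either $\tau(C_i)=C_i$ for both or $C_1=\tau(C_2)$. Collisions of two points of the configuration and tangencies of a curve with a point of the configuration are of codimension $\ge 2$ and are avoided by genericity of the path.

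Walls of type (i) and (ii) are disposed of by a local computation. At a cusp the evaluation map has a fold by Lemma \ref{Lemma-Regular.Balanced}, so on one side of $t_\ast$ precisely two real curves of class $D$ merge and disappear; these two curves differ only through the two real resolutions $(s^2,s^3)\rightsquigarrow(s^2,s^3+\varepsilon s)$ of the cusp, which produce a hyperbolic node for one sign of $\varepsilon$ and a solitary (elliptic) node for the other, so their numbers $\delta_E$ differ by $1$ and their Welschinger signs are opposite; their contributions to $\chi$ cancel on the side where they exist and vanish on the other, hence $\chi$ does not jump. At a wall of type (ii) the member of the family stays immersed, so a single curve passes through $t_\ast$ continuously, and on the relevant local model (e.g.\ $x^2\mp y^4+ay^2=0$ for a tacnode, and the analogous model for a triple point) one checks that the two local branches reorganize, on the two sides of $t_\ast$, into configurations whose numbers of solitary nodes differ by an \emph{even} integer; therefore $(-1)^{\delta_E}$, and with it $\chi$, is unchanged.

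The remaining case, walls of type (iii), is the crux and the main obstacle. Here $C_{t_\ast}=C_1\cup C_2$ is a boundary point of the Kontsevich compactification, and the nearby irreducible rational curves of class $D$ are exactly those obtained by smoothing one of the $D_1\cdot D_2$ intersection points of $C_1$ and $C_2$ (equivariantly, keeping the remaining nodes). Since at a real intersection point $\R C_1$ and $\R C_2$ cross transversally, that node is hyperbolic and smoothing it (in either of its two real ways) creates no solitary node, and likewise the equivariant smoothing of a conjugate pair of intersection nodes creates none; hence every such smoothed curve has $\delta_E=\delta_E(C_1)+\delta_E(C_2)$, so they all carry the \emph{same} Welschinger sign, and the jump of $\chi$ across the wall equals this common sign times the difference between the numbers of smoothed curves on the two sides of $t_\ast$. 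It remains to show this difference vanishes: following Welschinger \cite{welschinger2005fourfolds}, one sets up the local model for smoothing a real nodal curve and verifies that the two real smoothings of each real intersection node occur on opposite sides of $t_\ast$, and that the two equivariant smoothings of each conjugate pair of intersection nodes split one to each side, so that the two sides carry equally many smoothed curves. Combining the three cases, $\chi$ is locally constant on $[0,1]$, whence $\chi(0)=\chi(1)$; since $r$ (equivalently $l$) was held fixed along the path, this is the claimed invariance. (An alternative route, yielding the same conclusion, identifies $W_{\R\Sigma}(D,l)$ with a tropical, or floor-diagram, count, for which independence of the configuration is manifest; this is the viewpoint of \cite{brugalle2021invariance}.)
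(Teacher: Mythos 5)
This statement is not proved in the paper at all: it is imported as a black box from Welschinger \cite{welschinger2005fourfolds} and Brugall\'e \cite{brugalle2021invariance}, so there is no in-paper argument to compare yours against. Your sketch is a faithful outline of the proof in those references: join two generic real configurations by a generic path, show the union of solution sets over the path is a compact one-manifold with boundary only over $t\in\{0,1\}$, classify the interior walls (cusps, tacnodes and triple points, reducible curves), and check that the $(-1)^{\delta_E}$-weighted count does not jump at any wall. Two remarks. First, your treatment of the reducible wall is off in one concrete detail when $C_1=\tau(C_2)$: there the real points of $C_1\cap C_2$ are intersections of a branch with its conjugate, hence \emph{elliptic} (solitary) nodes, not hyperbolic ones, and a nearby real irreducible curve is obtained by smoothing exactly one of them (the local model is $u^2+v^2=\eta$, giving an oval for $\eta>0$ and no real points for $\eta<0$), so its $\delta_E$ equals the number of real intersection points minus one rather than $\delta_E(C_1)+\delta_E(C_2)$, which does not even make sense since the $C_i$ are not real. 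The conclusion survives, because all nearby irreducible real curves still carry a common sign and the two smoothings of each such node land on opposite sides of $t_*$, but the claim as written is not literally correct. Second, all of the genuinely hard content --- transversality of the evaluation map along a generic path, the proof that your three wall types exhaust the real-codimension-one degenerations (in particular that non-immersed limits and multiply covered components only occur in codimension at least two), and the verification that the smoothings at a reducible wall split evenly between the two sides --- is asserted rather than proved; this occupies most of \cite{welschinger2005fourfolds}, and it is exactly at these points that the low-degree del Pezzo cases fail and require the corrections of \cite{brugalle2021invariance}. For the degrees $6$, $7$, $8$ used in this paper the classical argument you outline does go through, so as a citation-level justification your sketch is adequate; as a self-contained proof it is not.
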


\subsection{Welschinger's signs of  real curves on real del Pezzo varieties of dimension 3}\label{Sect-spinor.dim.3.with.normal.bundle}
According to J.-Y.~Welschinger, see for example \cite[Section 2.2]{welschinger2005spinor} or \cite[Section 1.2]{welschinger2005stongly}, given a $\Spin_n$- or $\Pin^\pm_n$-structure on the real part of real projective Fano varieties,\footnote{They are examples of strongly semipositive real symplectic manifolds as described in \cite{welschinger2005stongly}.} we may define a sign (also known as the spinor state) $\pm 1$ for each immersed balanced real rational curve on such varieties.

\subsubsection{Spinor states of balanced real rational curves on a real 3-dimensional del Pezzo variety} \label{subsect-Spinor.states}
Let $(X,\tau)$ be a real $3$-dimensional del Pezzo variety with $\R X\neq \emptyset$. We equip the real part $\R X$ with a Riemannian metric $g_X$ and consider the tangent vector bundle $T \R X$ over $\R X$. We restrict our attention to cases in which  $\R X$ is orientable and equip $\R X$ with a $\Spin_3$-structure, denoted by $\mathfrak{s}_X:=(\Spin(T \R X,\mathfrak{o}_X),q_X)$. Here, with a slight abuse of notation, we use  $\mathfrak{o}_X$ to denote an orientation on $T \R X$.

 Let $d \in H_2^{-\tau}(X;\Z)$, and let $k_d=\frac{1}{2}c_1(X)\cdot d$ be the associated integer. Let $f\colon (\C P^1,\tau_1)\to (X,\tau)$ be a real balanced immersion. Consider the associated short exact sequence of holomorphic vector bundles over $\C P^1$ 
 $$0\lra T \C P^1 \lra f^*TX\lra \mathcal{N}_f\lra 0, $$ 
 where $\mathcal{N}_f=f^*TX/T\C P^1$ is a rank $2$ holomorphic normal vector bundle  over $\C P^1$.
 Since both $X$ and $f$ are real, there is a corresponding short exact sequence of real vector bundles over $\R P^1$
$$0\lra T \R P^1 \lra f|_{\R P^1}^*T\R X \lra \R \mathcal{N}_f\lra 0. $$ 
 Up to homotopy,  we have the decomposition  $f|_{\R P^1}^*T\R X = T \R P^1 \oplus \R \mathcal{N}_f$. Assume that $f_*[\C P^1]=d$. 
 Let $C= \Im(f)$ with $\R C \neq \emptyset$. The image  $f(\R P^1) \subset \R X$ is an immersed knot. In order to define a spinor state for the curve $C$, we first establish a consistent method for decomposing the real vector bundle  $f|_{\R P^1}^*T\R X$ into a sum of real line subbundles.  We then proceed to define a loop of orthonormal frames for this bundle as follows. 

 It should be noticed that the balancing property  of $\mathcal{N}_f$ implies its projectivization  $\mathbb{P}(\mathcal{N}_f)$ is the $\supth{0}$ Hirzebruch surface $\C P^1 \times \C P^1$.  Up to real isotopy, there are two holomorphic real line subbundles of $ \mathcal{N}_f$ of degree $k_d-2$ whose projectivization in $ \mathbb{P}(\mathcal{N}_f)$ is a section of bidegree $(1,1)$ in the basis $(L_1,L_2)=([\C P^1\times\{p\}],[\{q\}\times \C P^1])$, where $p,q\in \C P^1$. Note that the orientation on $\R X$ and the orientation on $T \R P^1$ induce an orientation on  $\R \mathcal{N}_f$. Then, these two real isotopy classes are characterized by the direction in which the real part of a fiber rotates in $\R \mathcal{N}_f$. We denote by $E^+$ the isotopy class of the real part of the real line subbundle of  $\mathcal{N}_f$ whose real part of a fiber rotates positively  in $\R \mathcal{N}_f$, and by $E^-$ the isotopy class corresponding to the opposite orientation. We choose a real line subbundle  $L\subset \R \mathcal{N}_f$ as follows:  
\begin{itemize}
    \item If $k_d$ is even, $L$ is chosen in the isotopy class $E^+$. 
    \item If {$k_d$ is odd},  $L$ is chosen in the isotopy class $E^-$. 
\end{itemize}

In both cases, there exists a unique choice of a real line subbundle $\tilde{L}$ in $\R \mathcal{N}_f$ such that $$f|_{\R P^1}^*T \R X=T \R P^1 \oplus L \oplus \tilde{L}.$$

We choose some trivialization of each factor of the above decomposition of $f|_{\R P^1}^*T \R X$ given by non-vanishing sections $v_i\;(i \in \{1,2,3\})$ and then define a  loop of direct orthogonal frames of the vector bundle $ f|_{\R P^1}^*T \R X$. Assume that  $v_1(\xi)\in T\R P^1$ ($\xi\in \R P^1$)  and $(v_1,v_2,v_3)$ forms a direct basis. In the case where $k_d$ is even, we may choose\footnote{ This choice is not unique since we can also choose $-v_2(\xi)\in \R L$.} 
a non-vanishing section $v_2(\xi)\in L$ such that $(v_1(\xi),v_2(\xi))_{\xi\in \R P^1}$  is an orthogonal section of $T \R P^1 \oplus L$. Then, the unique non-vanishing section $v_3(\xi)\in \tilde{L}$ forms a loop of direct orthogonal frames $(v_1(\xi),v_2(\xi),v_3(p))_{\xi\in \R P^1}$ of the vector bundle $ f|_{\R P^1}^*T \R X$; see Figure~\ref{fig-spinor}. In the case where $k_d$ is odd, we first choose the non-vanishing sections over $[0,\pi]\subset \R P^1$ given by  $(v_1(\xi),v_2(\xi),v_3(\xi))_{\xi\in [0,\pi]\subset \R P^1}$. Then we apply to this basis the action of $\begin{psmallmatrix} 1& 0&0\\ 0&\cos(\xi) & \sin(\xi) \\ 0 & -\sin(\xi) & \cos(\xi) \end{psmallmatrix} \in \SO_3(\R)$.
We hence obtain
a new decomposition of $f|_{\R P^1}^*T \R X$ as a direct sum of orientable real line bundles.

\begin{figure}[ht!]
 \centering
\includegraphics[width=0.3\textwidth]{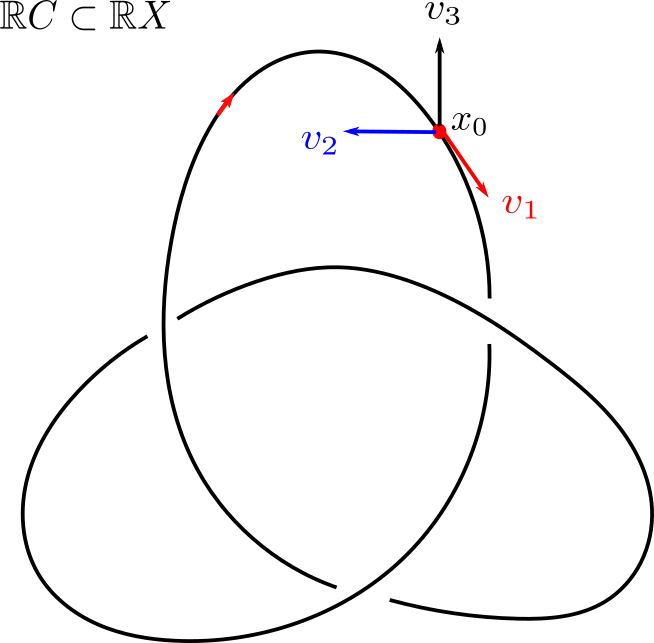}
\caption{A direct orthogonal frame $(v_1(x_0),v_2(x_0)),v_3(x_0)$ at $x_0=f(\xi)\in \R C$ of the vector bundle $T \R X|_{\R C}$.}
\label{fig-spinor}
\end{figure}

  Note that the pull-back under the immersion $f$ of a principal $\SO_3$-bundle $\SO(T\R X, \mathfrak{o}_{X})$ over $\R X$ is itself a principal $\SO_3$-bundle, denoted by $f^*\SO(T\R X,\mathfrak{o}_{X})$, over $\R P^1$. Moreover, a $\Spin_3$-structure on $\SO(T\R X,\mathfrak{o}_X)$ induces a corresponding $\Spin_3$-structure on this bundle. Accordingly, we refer to a loop in the above construction as a \textit{loop of the principal $\SO_3$-bundle} $f^*\SO(T\R X,\mathfrak{o}_X)$.  With these preliminaries in place, we are now prepared to define the spinor state of the curve  $C$.

  \begin{definition} \label{def-spinor3dim}
 Let  $((X,\tau),\mathfrak{s}_X,\mathfrak{o}_X))$ denote a real $3$-dimensional del Pezzo variety whose real part is orientable and equipped  with a $\Spin_3$-structure $\mathfrak{s}_X=(\Spin(T \R X,\mathfrak{o}_X),q_X)$. Let $C=\Im(f)$ be a real balanced irreducible rational curve immersed in $X$. Let $(v_1(\xi),v_2(\xi),v_3(\xi))_{\xi\in \R P^1}$ be a loop of the principal $\SO_3$-bundle $f^*\SO(T\R X,\mathfrak{o}_X)$ defined as above. The \emph{spinor state} of $C$, denoted by $\sp_{\mathfrak{s}_X,\mathfrak{o}_X}(C)$, is given by
  $$ \sp_{\mathfrak{s}_X,\mathfrak{o}_X}(C)=\begin{cases}
		+1 & \text{ if this loop lifts to a loop of } f^*\Spin(T\R X,\mathfrak{o}_X),\\
           -1 & \text{ otherwise}.
		 \end{cases}$$
   \end{definition}

\begin{remark}
The spinor state of a real rational curve in Definition~\ref{def-spinor3dim} does not depend on the choice of either the sections $v_i \;(i\in \{1,2,3\})$ or the metric $g_X$ or the parametrization $f$.  This spinor state depends on the choice of a $\Spin_3$-structure on $\R X$ as well as an orientation on $\R X$.
\end{remark}

Geometrically, reversing the orientation of $\R X$ interchanges the two isotopy classes $E^+$ and $E^-$. Since $E^+$ and $E^-$ differ by precisely one full rotation---that is, by a generator of $\pi_1(\SO_2(\R))$---the corresponding loops in $\pi_1(\SO_3(\R))$ differ by a non-trivial element. Moreover, recall that the set of $\Spin_3$-structures on the bundle $\SO(T\R X,\mathfrak{o}_X)$ is in one-to-one correspondence with the group $H^1(\R X;\Z_2)$, which is determined by mapping in circles.  We summarize these properties in the following proposition.

\begin{proposition}[\textit{cf.} {\cite[Lemma 1.7]{welschinger2005stongly}}]\label{Prop-SpinorStates1}
Let $\eta\in H^1(\R X;\Z_2)$. One has
 $$\sp_{\mathfrak{s}_X,\mathfrak{-o}_X}(C)=-\sp_{\mathfrak{s}_X,\mathfrak{o}_X}(C)$$
 and
 $$\sp_{\eta\mathfrak{s}_X,\mathfrak{o}_X}(C)=(-1)^{\eta\cdot (f_*[\R P^1])}\sp_{\mathfrak{s}_X,\mathfrak{o}_X}(C).$$
\end{proposition}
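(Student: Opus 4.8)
The plan is to trace through the construction of the spinor state in Definition \ref{def-spinor3dim} and see how each ingredient responds to the two operations: changing the orientation $\mathfrak{o}_X$, and twisting the $Spin_3$-structure $\mathfrak{s}_X$ by a class $\eta\in H^1(\R X;\Z_2)$. For the first identity, I would begin by recalling that reversing $\mathfrak{o}_X$ reverses the induced orientation on $\R\mathcal{N}_f$, which interchanges the rotation directions; hence the two isotopy classes $E^+$ and $E^-$ are swapped. According to the case distinction in the construction (we pick $L$ in the class $E^+$ when $k_d$ is even, in $E^-$ when $k_d$ is odd), the subbundle $L$ chosen for $\mathfrak{o}_X$ differs from the one chosen for $-\mathfrak{o}_X$ precisely by one full rotation of the fiber as we traverse $\R P^1$. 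Since a full rotation is a generator of $\pi_1(SO_2(\R))\hookrightarrow\pi_1(SO_3(\R))\cong\Z_2$, the two loops of orthogonal frames $(v_1,v_2,v_3)_{\xi\in\R P^1}$ in $f^*SO(T\R X)$ — one built from $\mathfrak{o}_X$, one from $-\mathfrak{o}_X$ — represent classes in $\pi_1(SO_3(\R))$ that differ by the nontrivial element. One of them lifts to a loop in $f^*Spin(T\R X)$ and the other does not, giving $sp_{\mathfrak{s}_X,-\mathfrak{o}_X}(C)=-sp_{\mathfrak{s}_X,\mathfrak{o}_X}(C)$.

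For the second identity, I would use the standard fact (cited in the excerpt just before the proposition) that the set of $Spin_3$-structures on $SO(T\R X,\mathfrak{o}_X)$ is a torsor over $H^1(\R X;\Z_2)$, where the action of $\eta$ modifies whether a given loop lifts according to the value of $\eta$ on that loop's homotopy class. Concretely, pulling back by $f$ and restricting to $\R P^1$: the loop $(v_1,v_2,v_3)_{\xi\in\R P^1}$ of the principal $SO_3$-bundle projects to the immersed knot $f(\R P^1)$ representing the class $f_*[\R P^1]\in H_1(\R X;\Z_2)$. Whether this loop lifts to $f^*Spin(T\R X)$ versus $f^*(\eta\mathfrak{s}_X)$ changes exactly by the sign $(-1)^{\eta\cdot(f_*[\R P^1])}$, since pulling back the twisting cocycle for $\eta\mathfrak{s}_X$ along $f|_{\R P^1}$ evaluates the $\Z_2$-cohomology pairing on $f_*[\R P^1]$. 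This gives $sp_{\eta\mathfrak{s}_X,\mathfrak{o}_X}(C)=(-1)^{\eta\cdot(f_*[\R P^1])}sp_{\mathfrak{s}_X,\mathfrak{o}_X}(C)$.

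The main obstacle I anticipate is the first identity: making rigorous the claim that the choice forced by the case distinction on the parity of $k_d$ produces loops differing by exactly one generator of $\pi_1(SO_3(\R))$, rather than by an even number of rotations (which would give no sign). This requires care because the frame $(v_1,v_2,v_3)$ is only well-defined up to homotopy and, in the odd case, is assembled by an explicit $SO_3(\R)$-valued extension over the second half of $\R P^1$; one must check that reversing the orientation flips the sense of that extension consistently with the flip of $E^\pm$. I would handle this by fixing a model computation in $\pi_1(SO_3(\R))\cong\Z_2$: writing the difference of the two frame loops as a product of a fiberwise rotation loop (coming from the $E^+\leftrightarrow E^-$ swap, contributing the generator) and a correction loop that is null-homotopic by the remark following Definition \ref{def-spinor3dim} (independence of the spinor state from the choice of sections $v_i$). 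Since Welschinger already proves this in Lemma 1.7 of \cite{welschinger2005stongly} in the general strongly-semipositive setting, the safest route is to observe that $(X,\tau)$ with $\R X$ orientable and $Spin_3$ is such a setting (cf. the footnote in Section \ref{Sect-spinor.dim.3.with.normal.bundle}) and invoke that lemma directly, supplementing only the identification of $f_*[\R P^1]$ as the class entering the pairing.
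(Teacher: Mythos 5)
Your proposal is correct and follows essentially the same route as the paper: the paper gives no separate proof but cites Welschinger's Lemma 1.7, preceded by exactly the two observations you make — that reversing $\mathfrak{o}_X$ interchanges $E^+$ and $E^-$, which differ by one full fiber rotation generating $\pi_1(SO_2(\R))$ and hence mapping to the non-trivial element of $\pi_1(SO_3(\R))$, and that $Spin_3$-structures form a torsor over $H^1(\R X;\Z_2)$ detected by evaluation on loops. Your added caution about verifying the rotation count in the odd-$k_d$ case is reasonable, and your fallback of invoking Welschinger's lemma directly is precisely what the paper does.
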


  \begin{example}
Consider $(\cpp,\tau_3)$, whose real part is $\R P^3$.    Since $H_1(\R P^3;\Z_2)= \Z_2$, there exist precisely two $\Spin_3$-structures on $\SO(T \R P^3,\mathfrak{o}_{\R P^3})$.   We may choose a $\Spin_3$-structure $\mathfrak{s}_{\R P^3}$ on $\SO(T \R P^3,\mathfrak{o}_{\R P^3})$ such that the spinor state of any line $L$ in $\cpp$ is $\sp_{\mathfrak{s}_{\R P^3},\mathfrak{o}_{\R P^3}}(L)=+1$. This choice coincides with the trivialization of  $T \R P^3$ considered by Brugall\'e--Georgieva in \cite{brugalle2016pencils}.
\end{example}

 Let $\underline{x}_d$ be a real generic configuration of $k_d$ points consisting of $r$ real points and $l$ pairs of complex conjugate points in $X$, where $r+2l=k_d$. 
  Denote by $\mathcal{R}(\underline{x}_d)$ the set of real balanced irreducible  rational curves in $X$ representing the homology class $d$ and passing through $\underline{x}_d$.

   \begin{definition} \label{def-W.inv.dim3}
   Let $((X,\tau),\mathfrak{s}_X,\mathfrak{o}_X))$ denote a real $3$-dimensional del Pezzo variety and $d\in H_2^{-\tau}(X;\Z)$. We associate each real curve $C\in \mathcal{R}(\underline{x}_d)$ to its spinor state $\sp_{\mathfrak{s}_X,\mathfrak{o}_X}(C)$ as in Definition~\ref{def-spinor3dim}.   The \emph{Welschinger invariant} of  $((X,\tau),\mathfrak{s}_X,\mathfrak{o}_X))$ of the homology class $d$ is defined as
   $$W_{\R X}^{\mathfrak{s}_X,\mathfrak{o}_X}(d,l):=\sum_{C\in \mathcal{R}(\underline{x}_d)}\sp_{\mathfrak{s}_X,\mathfrak{o}_X}(C).$$
   \end{definition}
   
   \begin{remark}
If either $d\in H_2(X;\Z)\setminus H_2^{-\tau}(X;\Z)$ or $k_d\leq 0$,  the corresponding Welschinger invariants $W_{\R X}^{\mathfrak{s}_X,\mathfrak{o}_X}(d,l)$ vanish.
\end{remark} 

   \begin{theorem}[Welschinger, \textit{cf.} \cite{welschinger2005spinor,welschinger2005stongly}]
The Welschinger invariant in Definition~\ref{def-W.inv.dim3} does not depend on the choice of a real generic  configuration $\underline{x}_d$  up to isotopy if the number of real points is given. This invariant depends on the $\Spin_3$-structure $\mathfrak{s}_X$ on $\SO(T\R X,\mathfrak{o}_X)$, 
the homology class $d$ and the number $l$.
\end{theorem}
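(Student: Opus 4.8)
The plan is to run the standard deformation/cobordism argument for invariance of Welschinger-type counts, carried out in the spinor-state formalism of Definition \ref{def-spinor3dim}. First I would fix the number $r$ of real points (equivalently the number $l$ with $r+2l=k_d$) and work over the space $\mathcal{P}_r$ of real configurations of $k_d$ points of $X$ with exactly $r$ real points and $l$ conjugate pairs; $\mathcal{P}_r$ is a smooth manifold, connected in all cases relevant here. Over $\mathcal{P}_r$ one has the restriction to the real locus of the total evaluation map $ev_X$ associated with $\mathcal{M}^*_{0,k_d}(X,d)$. The complex dimension of $\overline{\mathcal{M}}_{0,k_d}(X,d)$ equals $c_1(X)\cdot d+k_d=3k_d=\dim_{\C}X^{k_d}$, so the real count is balanced; combining this with Lemma \ref{Lemma-Regular.Balanced} one obtains a dense open subset $U\subset\mathcal{P}_r$ over which $ev_X^{-1}(\underline{x}_d)$ is finite and consists exactly of balanced immersed real rational curves, each carrying a well-defined spinor state, so that $W_{\R X}^{\mathfrak{s}_X,\mathfrak{o}_X}(d,l)=\sum_{C}sp_{\mathfrak{s}_X,\mathfrak{o}_X}(C)$ is defined on $U$.

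Second, I would join two configurations $\underline{x}_d^0,\underline{x}_d^1\in U$ by a generic real path $(\underline{x}_d^t)_{t\in[0,1]}$ inside $\mathcal{P}_r$ and consider $\mathcal{W}:=\bigcup_{t\in[0,1]}ev_X^{-1}(\underline{x}_d^t)$. For a generic path, $\mathcal{W}$ is a compact one-dimensional manifold with boundary away from finitely many exceptional values $t_1,\dots,t_N$, with $\partial\mathcal{W}=ev_X^{-1}(\underline{x}_d^0)\sqcup ev_X^{-1}(\underline{x}_d^1)$; since $sp_{\mathfrak{s}_X,\mathfrak{o}_X}$ is a continuous $\{\pm1\}$-valued function it is constant on every arc of $\mathcal{W}$ avoiding the $t_i$. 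Hence the signed counts at $\underline{x}_d^0$ and $\underline{x}_d^1$ differ only by the local contributions at the exceptional events, and it remains to classify those events and to check that each is sign-neutral.

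Third, and this is the heart of the argument, along a generic path the codimension-one degenerations are: (a) the curve acquires a reducible domain, splitting as a union of two genus-$0$ immersed components glued at a point (possibly together with a node of the image); (b) the curve stays irreducible but its parametrisation develops a ramification point, so it is no longer an immersion; (c) the curve remains an immersion but becomes unbalanced, with normal bundle $\mathcal{O}_{\C P^1}(k_d)\oplus\mathcal{O}_{\C P^1}(k_d-2)$. Event (c) is precisely a fold of the projection $\mathcal{W}\to[0,1]$: two balanced real curves come together and vanish, and one must show that crossing the unbalanced locus forces a half-twist in the decomposition $f|_{\R P^1}^*T\R X=T\R P^1\oplus L\oplus\tilde{L}$ which reverses the lift of the frame loop to $f^*Spin(T\R X,\mathfrak{o}_X)$, so that the two merging branches carry opposite spinor states. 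Event (b) is handled by a similar local model (a cusp opening into two real arcs). Event (a) requires the behaviour of $sp_{\mathfrak{s}_X,\mathfrak{o}_X}$ under gluing and un-gluing two real branches — the place where the $Spin_3$-structure genuinely enters — and here Proposition \ref{Prop-SpinorStates1} together with the compatibility of the normal-bundle splittings along the two components must be used to show either that the two real smoothings of the node cancel, or that a conjugate pair of curves (contributing $0$) is traded for two oppositely signed real curves. Finally, the only forbidden wall is a conjugate pair of configuration points colliding on $\R X$: staying inside $\mathcal{P}_r$ avoids it, which is exactly why the invariant depends on $r$; independence of the metric, orientation and parametrisation choices has already been recorded after Definition \ref{def-spinor3dim}. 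The main obstacle is the local sign analysis at events (a) and (b): correctly pinning down the half-twist of the direct orthogonal frame in $f^*SO(T\R X,\mathfrak{o}_X)$ and proving that the two real branches of $\mathcal{W}$ meeting at such a point lift incompatibly to the $Spin_3$-cover.
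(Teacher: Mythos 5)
You should first be aware that the paper does not prove this statement at all: it is quoted from Welschinger's work \cite{welschinger2005spinor}, so there is no internal proof to compare against. Your cobordism skeleton is indeed the strategy of Welschinger's original argument --- fix the number $r$ of real points, join two generic configurations by a generic path in $\mathcal{P}_r$, stratify the one-dimensional cobordism $\mathcal{W}$ by the codimension-one degenerations (reducible domains, non-immersions, non-balanced immersions), and check that the signed count is unchanged across each wall --- and your dimension count $\dim_{\C}\overline{\mathcal{M}}_{0,k_d}(X,d)=c_1(X)\cdot d+k_d=3k_d=\dim_{\C}X^{k_d}$ together with the identification of regular points of $ev_X$ with balanced immersions via Lemma \ref{Lemma-Regular.Balanced} is correct.

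However, the proposal has a genuine gap exactly where the theorem lives: each of the three wall-crossing verifications is announced rather than carried out (``one must show that crossing the unbalanced locus forces a half-twist\dots'', ``\dots must be used to show either that \dots cancel, or that \dots''). At a fold of $ev_X$ over the non-balanced wall you still need the local model describing how the splitting of $\mathcal{N}_f$ degenerates to $\mathcal{O}_{\C P^1}(k_d)\oplus\mathcal{O}_{\C P^1}(k_d-2)$ and why the distinguished subbundle $L$ (chosen in $E^{+}$ or $E^{-}$ according to the parity of $k_d$) picks up an extra half-turn on one side of the wall, forcing the two merging curves to lift incompatibly to $f^*Spin(T\R X,\mathfrak{o}_X)$. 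At the reducible wall you need a gluing/smoothing formula for spinor states at a real node; this does not follow from Proposition \ref{Prop-SpinorStates1}, which only records how the state transforms under a change of $Spin_3$-structure or orientation, not under degeneration of the curve. You should also justify, not merely assert, that non-immersed maps occur in sufficiently high codimension (or analyse the cuspidal wall explicitly). These local sign computations are the entire content of \cite{welschinger2005spinor}; until they are supplied, what you have is a correct road map, not a proof.
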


\subsubsection{Spinor states of rational curves: Revisited} \label{subsect-Spinorstates.revisited}
In this paragraph, we assume that $C$ is a real balanced rational curve parametrized by a real balanced immersion $f\colon (\C P^1,\tau_1)\to (X,\tau)$ such that $f_*[\C P^1]=d\in  H_2^{-\tau}(X;\Z)$ and $f (\C P^1)\subset \Sigma$, where $\Sigma\in |-\frac{1}{2}K_X|$ is a real non-singular surface. The curve $C$ also represents a homology class $D\in H_2^{-\tau}(\Sigma;\Z)$, where $D\in \psi^{-1}(d)$ and $\psi\colon  H_2(\Sigma;\Z)\twoheadrightarrow H_2(X;\Z)$ is the natural projection.  
We will examine certain properties  of the holomorphic line subbundle $\mathcal{N}'_f=f^*T\Sigma/T\C P^1$, which is a holomorphic line subbundle of the normal bundle $\mathcal{N}_f$,   and subsequently relate this subbundle to the definition of spinor states of curves.

\begin{remark}
  The isotopy class of the normal bundle of $C$ in $\Sigma$ depends only on the homology class of $C$ in $H_2(\Sigma;\mathbb{Z})$, and not on the particular choice of the curve $C$ itself. We therefore denote this class as $\mathcal{N}_{D/\Sigma}$ instead of $\mathcal{N}_{C/\Sigma}$. With this notation, we have $\mathcal{N}'_f=f^*\mathcal{N}_{D/\Sigma}$.
\end{remark}

By the adjunction formula, we have
$$\deg(\mathcal{N}_{D/\Sigma})=c_1(\Sigma)\cdot D-2=k_D-1.$$ 
It follows that $\deg(\mathcal{N}'_f)=k_d-2$. Consequently, the real line bundle $\R \mathcal{N}'_f$  is orientable if  $k_D$ is odd and non-orientable otherwise. Recall from Section~\ref{subsect-Spinor.states} that there are two isotopy classes, denoted by  $E^+$ and $E^-$, of the real part of a holomorphic real line subbundle  of degree $k_d-2$ of $\mathcal{N}_{f}$. These classes are characterized by the direction in which the real part of a fiber rotates within $\R \mathcal{N}_{f}$. Also note  that $k_D=k_{T(D)}$ for any pair $\{D, T(D)\}\subset H_2(\Sigma;\Z)$, where $T$ denotes the monodromy transformation  $T(D)=D+(D.S)S$.

\begin{definition} \label{def-epsilon.function}
Let $f\colon  \C P^1 \to \Sigma\subset X$ be a real immersion such that $f_*[\C P^1]=D \in H_2^{-\tau}(\Sigma;\Z)$, and let $\R \mathcal{N}'_f=\R f^*\mathcal{N}_{D/\Sigma}$ be as described above. We define the following  homomorphism:
\begin{align*}
    \epsilon\colon H_2(\Sigma;\mathbb{Z})&\lra \mathbb{Z}_2\\
    D&\longmapsto \begin{cases}
  k_D+1 \mod 2&\text{if } \R \mathcal{N}'_f \text{ realizes the isotopy class } E^+,\\
  k_D \mod 2& \text{if } \R \mathcal{N}'_f\text{ realizes the isotopy class } E^-.
  \end{cases}
\end{align*}
\end{definition}

\begin{lemma} \label{lem-epsilon.L.and.TL}
Let $\{L,T (L)\}\subset H_2(\Sigma;\Z)$ be such that $|L.S|=1$. 
One has
$$\epsilon(T(L))\neq \epsilon(L).$$
\end{lemma}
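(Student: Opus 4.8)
The plan is to show that passing from $L$ to $T(L)=L+(L.S)S$ does exactly one of two things: it either flips the parity of $k_D=c_1(\Sigma)\cdot D-1$, or it swaps the isotopy class $E^+\leftrightarrow E^-$ of the real part of the normal bundle $\R\mathcal{N}'_f$ — but in fact it flips both, and the two flips combine to change $\epsilon$. Let me unpack this. First I would record the parity behaviour: since $|L.S|=1$, we have $T(L)\cdot S = -L\cdot S$, and more importantly $c_1(\Sigma)\cdot T(L) = c_1(\Sigma)\cdot L + (L.S)(c_1(\Sigma)\cdot S)$. Because $K_\Sigma.S=0$ (noted just before Section \ref{Sect-Pencils}, since $[E]=-K_\Sigma$ and $S.E=0$), we get $c_1(\Sigma)\cdot T(L)=c_1(\Sigma)\cdot L$, hence $k_{T(L)}=k_L$. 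So the first line of the case distinction in Definition \ref{def-epsilon.function} contributes the \emph{same} value for $L$ and $T(L)$; the only way $\epsilon(T(L))$ can differ from $\epsilon(L)$ is through a change of isotopy class of the real normal bundle.

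So the heart of the matter is: $\R\mathcal{N}'_{f_L}=E^{\pm}$ forces $\R\mathcal{N}'_{f_{T(L)}}=E^{\mp}$, where $f_L,f_{T(L)}$ are real immersions realizing $L,T(L)$ respectively. Here is where I would use the geometry of the pencil. By Lemma \ref{lem-existence.of.L} we may take $L,T(L)$ to be effective, with $S=T(L)-L$, and by Lemma \ref{lem-degree.phi(S)} / Corollary \ref{coro-isomorphism.pencil.picard} the divisor classes $L|_{E,\Sigma_t}$ and $T(L)|_{E,\Sigma_t}$ together determine the fibre $\Sigma_t$; the surface becomes singular exactly when these two restricted divisors coincide. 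Geometrically, a real curve of class $L$ and a real curve of class $T(L)$ in the same real fibre $\Sigma_t$ are, after the surgery along the vanishing cycle relating adjacent real fibres (see the Remark before Proposition \ref{prop-tau.vs.mu} and \cite{bru16surgery}), exchanged; the surgery reverses the direction in which the real part of a fibre of the normal bundle rotates. Concretely: the normal bundle $\mathcal{N}_{D/\Sigma}$ of a curve of class $D$ in $\Sigma$, restricted over $\R$ with the orientation induced from $\mathfrak{o}_X$ and $T\R P^1$, has a rotation number whose sign distinguishes $E^+$ from $E^-$; crossing a singular fibre of the pencil (equivalently, applying the monodromy $T$) reverses this sign because the vanishing cycle contributes a single full twist. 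Thus $\R\mathcal{N}'_{f_L}$ and $\R\mathcal{N}'_{f_{T(L)}}$ lie in opposite classes.

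Combining the two steps: $k_{T(L)}=k_L$ but the normal-bundle isotopy class is swapped, so looking at Definition \ref{def-epsilon.function}, if $\R\mathcal{N}'_{f_L}=E^+$ then $\epsilon(L)=k_L+1$ and $\epsilon(T(L))=k_{T(L)}=k_L$, which differ mod $2$; symmetrically if $\R\mathcal{N}'_{f_L}=E^-$. Hence $\epsilon(T(L))\neq\epsilon(L)$. The main obstacle I anticipate is making the middle step — ``monodromy $T$ reverses the $E^+/E^-$ class of the real normal bundle'' — fully rigorous rather than heuristic: one must set up the comparison of $\R\mathcal{N}'_{f_L}$ in one real fibre with $\R\mathcal{N}'_{f_{T(L)}}$ either in the same fibre or across a surgery, track the induced orientations carefully (they depend on $\mathfrak{o}_X$ and on the chosen orientation of $T\R P^1$, with the odd/even $k_d$ dichotomy of Section \ref{subsect-Spinor.states} lurking), and confirm that the extra twist contributed by the $(-2)$-self-intersection vanishing cycle $S$ is exactly one full rotation. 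This is precisely the kind of bundle-level bookkeeping that the definitions in Section \ref{Sect-spinor.dim.3.with.normal.bundle} were designed to support, so I would phrase the argument in terms of the projectivization $\mathbb{P}(\mathcal{N}_f)\cong\C P^1\times\C P^1$ and bidegree-$(1,1)$ sections, comparing the two real sections corresponding to $L$ and $T(L)$.
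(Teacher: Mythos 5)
Your proposal is correct and takes essentially the same route as the paper: since $c_1(\Sigma)\cdot S=0$ gives $k_{T(L)}=k_L$, everything reduces to showing that the two isotopy classes $E^{\pm}$ of $\R\mathcal{N}'$ are exchanged between $L$ and $T(L)$, which is exactly what the paper's (equally brief) proof asserts via the ``opposite rotation'' of $L$ and $T(L)$ coming from the monodromy around a singular fibre. The bundle-level bookkeeping you flag as the main obstacle is carried out by the paper not in this lemma but in the proof of Proposition \ref{prop-function.epsilon1}, where the opposite rotation is pinned down by the constancy of $L|_{E,\Sigma_t}+T(L)|_{E,\Sigma_t}=\phi(\overline{L})$ together with the claim that the isotopy class of $\R f^*\mathcal{N}_{D/\Sigma_t}$ is determined by the direction of $\frac{d}{dt}D|_{E,\Sigma_t}$.
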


\begin{proof}
  As established in Corollary~\ref{coro-isomorphism.pencil.picard}, a pair of divisors $L_t|_E\neq T(L_t)|_E$ such that $|L_t.S|=1$ uniquely determines a non-singular surface $\Sigma_t$ in the pencil $\mathcal{Q}$.  Let $\{L_0,T(L_0)\}\subset H_2^{-\tau}(\Sigma_0;\Z)$.  Let $\R \mathcal{N}_a'=\R f^*\mathcal{N}_{L_0/\Sigma}$ and  $\R \mathcal{N}_b'=\R f^*\mathcal{N}_{T(L_0)/\Sigma}$ denote the real parts of the normal bundles of $L_0$ and $T(L_0)$, respectively. 
  Due to the monodromy phenomenon, the roles of $L_0$ and $T(L_0)$ in $H_2^{-\tau}(\Sigma_0;\Z)$ are interchanged when traversing a loop around a singular fiber. 
In particular, as one moves along the pencil from $\Sigma_0$ around a singular surface and returns to $\Sigma_0$, the directions of rotation of $L_0$ and $T(L_0)$  in $\Sigma_0$  are reversed. Consequently, the real line bundles $\R \mathcal{N}_a'$ and $\R \mathcal{N}_b'$ rotate in opposite directions in $\R X$; see Figure~\ref{fig-opposite.rotations}.
\begin{figure}[ht!]
\centering
\includegraphics[scale=0.4]{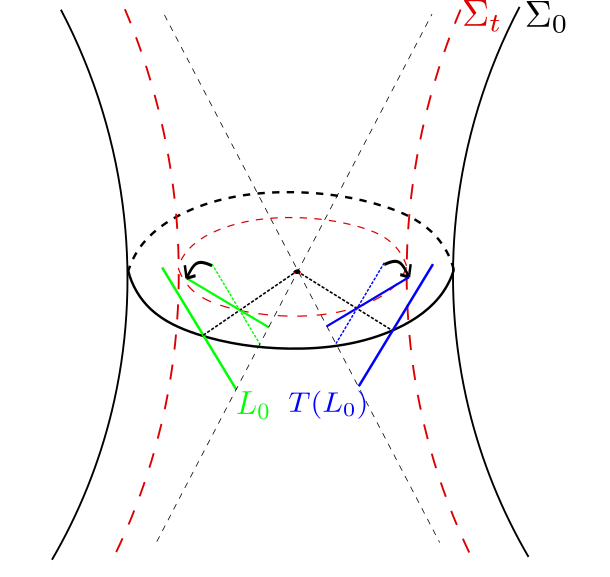}
\caption{$L_0$ and $T(L_0)$ rotate in opposite directions in $\R X$.}
\label{fig-opposite.rotations}
\end{figure}
\end{proof}

We may now assume that $L$ and $T(L)$ are chosen such that $L.S=-1$ and $\R \mathcal{N}'_{L/\Sigma}=E^+$. As a consequence, we have  $T(L).S=1$ and $\R \mathcal{N}'_{T(L)/\Sigma}=E^-$.

 \begin{proposition}\label{prop-function.epsilon1}
For any $\{D,T(D)\}\subset H_2^{-\tau}(\Sigma;\Z)$ such that $D.S\neq 0$, one has
$$\epsilon(T(D))\equiv \epsilon(D)+1 \mod 2.$$
\end{proposition}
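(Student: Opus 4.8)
The plan is to reduce the statement to the special case $D.S = \pm 1$ already handled in Lemma~\ref{lem-epsilon.L.and.TL}, by exploiting the additivity of the relevant quantities along the monodromy transformation $T$. Write $m = D.S$, so that $T(D) = D + mS$ and $T(D).S = -m$ (since $S^2 = -2$). Recall $k_{T(D)} = k_D$, so the two cases in Definition~\ref{def-epsilon.function} are distinguished purely by whether the real normal bundle $\R \mathcal{N}'_{D/\Sigma}$ lies in isotopy class $E^+$ or $E^-$. Hence the claim $\epsilon(T(D)) \equiv \epsilon(D) + 1$ is equivalent to the geometric assertion that $\R \mathcal{N}'_{D/\Sigma}$ and $\R \mathcal{N}'_{T(D)/\Sigma}$ lie in \emph{opposite} isotopy classes, i.e. the real part of a fiber rotates in opposite directions for $D$ versus $T(D)$ inside $\R \mathcal{N}_f$. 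This is the core thing to establish, and it is precisely the phenomenon already observed pictorially in the proof of Lemma~\ref{lem-epsilon.L.and.TL}.

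The key steps, in order: \emph{(1)} Fix the normalization stated just before the proposition: choose $L, T(L)$ with $L.S = -1$, $\R\mathcal{N}'_{L/\Sigma} = E^+$, hence $T(L).S = 1$, $\R\mathcal{N}'_{T(L)/\Sigma} = E^-$ (this uses Lemma~\ref{lem-epsilon.L.and.TL} and Lemma~\ref{lem-existence.of.L}). \emph{(2)} Using the decomposition $D = (D.S)L + \bigl(D - (D.S)L\bigr)$ from the proof of Proposition~\ref{prop-corresponding.surface.picE}, observe that $D - (D.S)L$ restricts to the constant divisor $\phi(\overline{L},\overline{D})$ on $E$ (Lemma~\ref{lem-constant.divisor.L.D}), so up to the fixed complete-intersection part, the variation of $\R\mathcal{N}'_{D/\Sigma}$ along the pencil is governed by $(D.S)$ copies of the line subbundle associated to $L$. \emph{(3)} Conclude that the rotation direction of $\R\mathcal{N}'_{D/\Sigma}$ inside $\R\mathcal{N}_f$ is that of $\R\mathcal{N}'_{L/\Sigma}$ when $D.S > 0$ and the reverse when $D.S < 0$ — equivalently, the sign of $D.S$ determines the isotopy class $E^{\pm}$. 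Do the same for $T(D)$: since $T(D).S = -(D.S)$ has the opposite sign, $\R\mathcal{N}'_{T(D)/\Sigma}$ lies in the class opposite to $\R\mathcal{N}'_{D/\Sigma}$. \emph{(4)} Feed this into Definition~\ref{def-epsilon.function}: with $k_{T(D)} = k_D$ and the isotopy classes swapped, the two branches of the definition get exchanged, so $\epsilon(T(D)) = \epsilon(D) + 1 \bmod 2$.

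The main obstacle is step~(3): making rigorous the claim that only the \emph{sign} of $D.S$, not its magnitude, controls the isotopy class $E^{\pm}$ of $\R\mathcal{N}'_{D/\Sigma}$. The subtlety is that $\R\mathcal{N}_f$ is a rank-$2$ real bundle over $\R P^1$ whose orientation is fixed by $\mathfrak{o}_X$ and by $T\R P^1$, and "direction of rotation of a fiber" is a $\Z$-valued homotopy invariant (a winding number) that a priori could depend on the full integer $D.S$; what must be checked is that this winding number, taken \emph{modulo the ambiguity of the complete-intersection part} (which contributes the $E^+$, null-rotation piece), depends only on whether $D.S$ is positive or negative. I expect this to follow from the behaviour of $\R L|_{E,\Sigma_t}$ as $\Sigma_t$ traverses the pencil: $L|_{E,\Sigma_t}$ is non-constant (Corollary~\ref{coro-isomorphism.pencil.picard}), and each unit of $|D.S|$ contributes the same elementary rotation, oriented by $\mathrm{sgn}(D.S)$, so that the parity/sign statement survives even though the total winding grows with $|D.S|$. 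A clean way to phrase this is to note that $E^+$ and $E^-$ differ by exactly one full rotation (a generator of $\pi_1(SO_2(\R))$, cf. the discussion after Proposition~\ref{Prop-SpinorStates1}), and that replacing $D$ by $D + S$ changes the relevant winding by exactly one such generator in a direction fixed by the sign of the intersection product — which is exactly the content of Lemma~\ref{lem-epsilon.L.and.TL} applied repeatedly.
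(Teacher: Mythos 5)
Your proposal is correct and follows essentially the same route as the paper's proof: the base case is Lemma \ref{lem-epsilon.L.and.TL}, and the general case is reduced to it via the constancy of $\phi(\overline{L},\overline{D})$ (Lemma \ref{lem-constant.divisor.L.D}), which forces the first-order variations of $D|_{E,\Sigma_t}$ and $T(D)|_{E,\Sigma_t}$ along the pencil to be opposite to one another and proportional to that of $L|_{E,\Sigma_t}$ with factor $|D.S|$. The obstacle you flag in step (3) is precisely what the paper's Claim 1 supplies — the isotopy class $E^{\pm}$ of $\R f^*\mathcal{N}_{D/\Sigma}$ is determined by the \emph{direction} (sign only) of $\frac{d}{dt}D|_{E,\Sigma_t}\big|_{t=0}$, and the $\Z$-valued winding ambiguity you worry about is absent because $\deg\mathcal{N}'_f=k_d-2$ is independent of $D.S$, so only the two classes $E^{\pm}$ are in play.
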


 \begin{proof}
 By Lemma~\ref{lem-epsilon.L.and.TL}, the proposition holds in the case where $D=L$. Let $\Sigma_0$ be a real non-singular surface in the pencil $\mathcal{Q}$, and let $\Sigma_t$ denote the first-order real deformation of $\Sigma_0$ within $\mathcal{Q}$.

\begin{claim}{1}\label{claim4.11-1}
The isotopy class of the real line subbundle $\R f^*\mathcal{N}_{D/\Sigma_0}$ in $\R \mathcal{N}_f$  is determined by the direction of the vector $\frac{dD|_{E,\Sigma_t}}{dt}\Big|_{t=0}$.
\end{claim}

A proof of this proposition proceeds analogously to the proof of \cite[Proposition 4.4]{brugalle2016pencils}. 
Specifically, fix a point  $x_0$ in the transverse intersection of $f(\C P^1)$ with the base locus $E$ of the pencil $\mathcal{Q}$ such that $p_0=f^{-1}(x_0) \in \R P^1$. 
Consider the first-order real deformation  $f_t$ of $f$ within the pencil $\mathcal{Q}$ such that for each $t$, $f_t(\C P^1)$ passes through all intersection  points of $f(\C P^1)\cap E$ except $x_0$. This allows us to define a holomorphic line subbundle $M$ of degree $k_d-1$ in $\mathcal{N}_{f}$ characterized by the property that its fiber at $p_0$ is $M|_{p_0}=\sigma(p_0)$, where $\sigma\colon \C P^1\to \mathcal{N}_f$ is a non-vanishing holomorphic section that vanishes precisely at $f^{-1}(E\setminus \{x_0\})$.  Since this deformation stays in the base locus $E$, it follows that  $\sigma(p_0)\in f^*\mathcal{N}_{D/\Sigma_0}$. We may then compare the relative positions of the fibers  $\R M|_{p_t}$ and $\R f^*\mathcal{N}_{D/\Sigma_0}|_{p_t}$, for $p_t\in \R P^1$ in a neighborhood of $p_0$, by demonstrating that the isotopy class of $\R f^*\mathcal{N}_{D/\Sigma_0}$ is determined by the direction of the vector $\sigma(p_0)$.  
 Moreover, the deformation $x_t=f(p_t)$ of $x_0$ as an intersection point of $f_t(\C P^1)\cap E$ is determined by the deformation of the divisor class $D|_{E,\Sigma_t}$ as $t$ approaches $0$. Hence, Claim~\ref{claim4.11-1} follows.

 We now consider the deformation of divisors $D|_{E,\Sigma_t}$ and $T(D)|_{E,\Sigma_t}$ on $\Pic_{D.E}(E)$ as $t$ approaches $0$.

\begin{claim}{2}\label{claim4.11-2}
The direction of vectors $\frac{dD|_{E,\Sigma_t}}{dt}\Big|_{t=0}$ and $\frac{dT(D)|_{E,\Sigma_t}}{dt}\Big|_{t=0}$  are opposite.
\end{claim}

Indeed, this claim follows directly from the fact that $D.S\neq 0$ and that $D|_{E,\Sigma_t}+T(D)|_{E,\Sigma_t}=\overline{D}_{\phi}$ is a constant divisor class for every $t$ (see Figure~\ref{fig-TwoIsotopyClasses}).
\begin{figure}[ht!]
\centering
\includegraphics[scale=0.4]{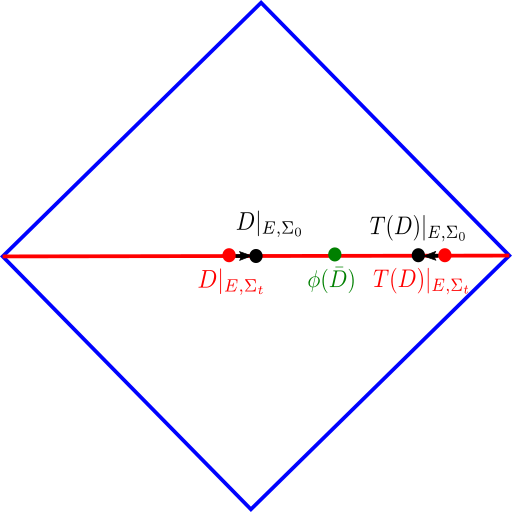}
\caption{Two isotopy classes of a real line subbundle of $\R \mathcal{N}_{f}$ are represented in  $\Pic(E)$ in the case where $\R E$ is connected.}
\label{fig-TwoIsotopyClasses}
\end{figure}

\begin{claim}{3}\label{claim4.11-3}
  The class $D$ belongs to the same isotopy class as $L$ if and only if\, $T(D)$ does not.
  \end{claim}

This claim follows from Lemma~\ref{lem-constant.divisor.L.D}. Indeed, we have 

$$\frac{dD|_{E,\Sigma_t}}{dt}\Big|_{t=0}=|D.S|\frac{dL|_{E,\Sigma_t}}{dt}\Big|_{t=0}.$$
Since $L$ and $T(L)$ belong to different isotopy classes, it follows that $D$ and $T(D)$ do as well. Therefore, the statement of the proposition holds.
\end{proof}

\begin{proposition}\label{prop-function.epsilon2} For any pair $D,\,D'\in H_2(\Sigma;\Z)$ such that $D.S\neq 0$ and $D'.S\neq 0$, one has
$$\epsilon(D)=\epsilon(D')\Longleftrightarrow (D.S)(D'.S)>0.$$
\end{proposition}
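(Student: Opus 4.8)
The plan is to reduce the general statement to the already-established facts about the reference classes $L$ and $T(L)$ and to Proposition \ref{prop-function.epsilon1}. Recall that we fixed $L$ and $T(L)$ with $L.S=-1$, $T(L).S=1$, $\R\mathcal{N}'_{L/\Sigma}=E^+$ and $\R\mathcal{N}'_{T(L)/\Sigma}=E^-$, so that $\epsilon(L)\neq\epsilon(T(L))$ by Lemma \ref{lem-epsilon.L.and.TL}. The key geometric input is Claim 3 in the proof of Proposition \ref{prop-function.epsilon1}: the isotopy class of $\R f^*\mathcal{N}_{D/\Sigma_0}$ inside $\R\mathcal{N}_f$ is determined by the direction of the deformation vector $\frac{dD|_{E,\Sigma_t}}{dt}\big|_{t=0}$, and by Lemma \ref{lem-constant.divisor.L.D} this deformation vector satisfies
$$\frac{dD|_{E,\Sigma_t}}{dt}\Big|_{t=0}=|D.S|\,\frac{dL|_{E,\Sigma_t}}{dt}\Big|_{t=0}.$$
Since $|D.S|>0$ is a positive scalar, the direction of $\frac{dD|_{E,\Sigma_t}}{dt}\big|_{t=0}$ agrees with that of $\frac{dL|_{E,\Sigma_t}}{dt}\big|_{t=0}$ whenever $D.S<0$ (same sign as $L.S=-1$), and is opposite to it whenever $D.S>0$.

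First I would formalize this as: for any $D$ with $D.S\neq 0$, the class $D$ lies in the isotopy class $E^+$ if $D.S<0$ and in the isotopy class $E^-$ if $D.S>0$; equivalently, the isotopy class of $\R\mathcal{N}'_{D/\Sigma}$ depends only on the sign of $D.S$. One direction — that $D$ and $D'$ with $(D.S)(D'.S)>0$ give the same deformation direction, hence the same isotopy class — is immediate from the displayed proportionality, since $\frac{dD|_{E,\Sigma_t}}{dt}\big|_{t=0}$ and $\frac{dD'|_{E,\Sigma_t}}{dt}\big|_{t=0}$ are both positive multiples of $\frac{dL|_{E,\Sigma_t}}{dt}\big|_{t=0}$ when $D.S,D'.S$ have the same sign as (say) $L.S$, and both negative multiples when they have the opposite sign. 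Conversely, if $(D.S)(D'.S)<0$ then the two deformation vectors point in opposite directions, so $D$ and $D'$ lie in different isotopy classes among $\{E^+,E^-\}$.

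Next I would combine this with the definition of $\epsilon$ (Definition \ref{def-epsilon.function}). Observe that $k_D=c_1(\Sigma)\cdot D-1$ depends only on the intersection number $c_1(\Sigma)\cdot D$, and that the monodromy $T$ does not change $k$ (noted before Definition \ref{def-epsilon.function}), but more importantly $\epsilon(D)$ is completely determined by the pair $\big(k_D \bmod 2,\ \text{isotopy class of }\R\mathcal{N}'_{D/\Sigma}\big)$. Switching the isotopy class between $E^+$ and $E^-$ flips $\epsilon$ by $1$ mod $2$ while leaving $k_D$ untouched. Hence $\epsilon(D)=\epsilon(D')$ forces that $D$ and $D'$ have the same isotopy class (given equal parity of $k$) — but here I must be careful: $\epsilon(D)=\epsilon(D')$ could also happen with different isotopy classes if $k_D\not\equiv k_{D'}$. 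This is exactly the subtlety the statement is guarding against, so the argument must show the parity of $k$ is irrelevant, i.e. that the proposition is really only about $D.S$; I would handle this by noting $\epsilon(D)-\epsilon(D')\equiv (k_D-k_{D'})+(\text{isotopy discrepancy})\pmod 2$ and invoking that $c_1(\Sigma)\cdot D\equiv D^2\pmod 2$ together with $D.S$ parity considerations, or more cleanly by reducing modulo the relation $D\sim D+(D.S)S$ and using Proposition \ref{prop-function.epsilon1} to pin down the dependence.

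The main obstacle I anticipate is precisely this last point: disentangling the contribution of $k_D \bmod 2$ from the contribution of the isotopy class in the formula for $\epsilon(D)-\epsilon(D')$, so as to conclude that the equivalence genuinely reduces to $(D.S)(D'.S)>0$ and not to some mixed condition. I expect this to come out by a clean case analysis on the four sign/parity combinations, using that $E^+$ and $E^-$ differ by a full rotation (so the isotopy discrepancy contributes exactly $1\bmod 2$ when the signs of $D.S$ and $D'.S$ differ, and $0$ otherwise), and that the definition of $\epsilon$ was engineered — via the $+1$ shift in the $E^+$ case — precisely so that the two effects reinforce rather than cancel. Once that bookkeeping is done, both implications of the iff follow at once.
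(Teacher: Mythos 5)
Your core argument is the same as the paper's. The paper's proof consists of two computations: from the constancy of $\phi(\overline{D})=(2D+(D.S)S)|_{E,\Sigma_t}$ one gets $\frac{dD|_{E,\Sigma_t}}{dt}\big|_{t=0}=\frac{-D.S}{2}\frac{dS|_{E,\Sigma_t}}{dt}\big|_{t=0}$, hence the product of the deformation vectors of $D$ and $D'$ equals $\frac{(D.S)(D'.S)}{4}\bigl(\frac{dS|_{E,\Sigma_t}}{dt}\big|_{t=0}\bigr)^{2}$, so the two directions agree exactly when $(D.S)(D'.S)>0$; combined with Claim~1 of Proposition~\ref{prop-function.epsilon1} (the isotopy class of $\R f^*\mathcal{N}_{D/\Sigma_0}$ is read off from that direction), the statement is declared to follow. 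You reach the same comparison by routing through $L$ instead of $S$. One small internal inconsistency: your displayed proportionality $\frac{dD|_{E,\Sigma_t}}{dt}\big|_{t=0}=|D.S|\,\frac{dL|_{E,\Sigma_t}}{dt}\big|_{t=0}$ cannot hold with a positive scalar for every $D$ (it would force every deformation direction to agree with that of $L$, contradicting your next sentence); the correct relation is $\frac{dD|_{E,\Sigma_t}}{dt}\big|_{t=0}=\frac{D.S}{L.S}\,\frac{dL|_{E,\Sigma_t}}{dt}\big|_{t=0}$, whose scalar has the sign of $(D.S)(L.S)$ --- which is what you actually use in words, so this is cosmetic.

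The genuine loose end is the one you flag and then defer. Definition~\ref{def-epsilon.function} makes $\epsilon(D)$ depend on $k_D\bmod 2$ in addition to the isotopy class of $\R\mathcal{N}'_f$, and $k_D=c_1(\Sigma)\cdot D-1$ does \emph{not} have constant parity on $H_2(\Sigma;\Z)$ in general (for $\Sigma=\pp\sharp\overline{\cp}$ the paper itself computes $k_{(a,b;k)}\equiv k+1 \bmod 2$). If the dictionary between the two deformation directions and the labels $E^{\pm}$ were independent of $k_D$, then for $D,D'$ with $k_D\not\equiv k_{D'}$ and $(D.S)(D'.S)>0$ (e.g.\ $(1,0;0)$ and $(1,0;1)$, both of intersection $-1$ with $S$) the definition would give $\epsilon(D)\neq\epsilon(D')$, contradicting the statement. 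So the full generality of the proposition requires showing that the direction-to-$E^{\pm}$ assignment itself shifts with the parity of $k_D$ (plausible from the degree-$(k_d-1)$ comparison bundle $M$ in the proof of Proposition~\ref{prop-function.epsilon1}, but nowhere made explicit). You do not carry out this bookkeeping --- you only expect it to ``come out''--- and, to be fair, the paper's own proof stops at the equivalence of deformation directions and does not address it either; that argument settles the proposition whenever $k_D\equiv k_{D'}\pmod 2$, in particular for $D,D'\in\psi^{-1}(d)$, which is the only case used in proving Theorem~\ref{Theorem2-W} and its applications. Your diagnosis of where the difficulty sits is exactly right, but the step remains open in your write-up just as it does in the source.
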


\begin{proof}
Consider the pencil $\mathcal{Q}$, and let $\Sigma_0$ denote the fiber over $t_0 \in \mathbb{C}P^1$, while $\Sigma_t$ denotes the fiber over $t \in \mathbb{C}P^1$ in a neighborhood of $t_0$. We establish the following two claims.

\begin{claim}{1}\label{claim4.12-1}
We have  $\frac{dD|_{E,\Sigma_t}}{dt}\Big|_{t=0}=\frac{-D.S}{2}\frac{dS|_{E,\Sigma_t}}{dt}\Big|_{t=0}.$
  \end{claim}

Indeed, this claim is directly implied from the fact that 
$$2D|_{E,\Sigma_0}+(D.S)S|_{E,\Sigma_0}=2D|_{E,\Sigma_t}+(D.S)S|_{E,\Sigma_t}(=\overline{D}_{\phi}).$$

\begin{claim}{2}\label{claim4.12-2}
We have $\frac{dD|_{E,\Sigma_t}}{dt}\Big|_{t=0}\times \frac{dD'|_{E,\Sigma_t}}{dt}\Big|_{t=0}>0$ if and only if $(D.S)(D'.S)>0$.
\end{claim}
  
Indeed, Claim~\ref{claim4.12-1} implies that the  equality 
$$\frac{dD|_{E,\Sigma_t}}{dt}\Big|_{t=0}\times \frac{dD'|_{E,\Sigma_t}}{dt}\Big|_{t=0}= \left( \frac{-D.S}{2}\frac{dS|_{E,\Sigma_t}}{dt}\Big|_{t=0}\right)\times \left( \frac{-D'.S}{2}\frac{dS|_{E,\Sigma_t}}{dt}\Big|_{t=0}\right)$$
holds. In other words, we obtain
$$\frac{dD|_{E,\Sigma_t}}{dt}\Big|_{t=0}\times \frac{dD'|_{E,\Sigma_t}}{dt}\Big|_{t=0}= \frac{(D.S)(D'.S)}{4}\left(\frac{dS|_{E,\Sigma_t}}{dt}\Big|_{t=0}\right)^2.$$
As a consequence, the statement follows.
\end{proof}

\subsection{Welschinger's signs of real rational curves on real del Pezzo varieties of dimension 2 and 3: Comparison} \label{Sect-signs.in.comparison}
We adopt the following notational conventions throughout this section. Let $V$ be a rank $n$ vector bundle equipped with a Riemannian metric over a topological space $B$. We denote by $O(V)$ a principal $O_n$-bundle over $B$. If $V$ is orientable with orientation $\mathfrak{o}_V$, we denote by  $\SO(V,\mathfrak{o}_{V})$  a principal $\SO_n$-bundle over $B$. The set of $\Pin^\pm_n$-structures on $O(V)$ is denoted by $\mathcal{P}_n^\pm(V)$, and  the set of $\Spin_n$-structures on $\SO(V,\mathfrak{o}_V)$ is denoted by $\mathcal{SP}_n(V,\mathfrak{o}_V)$. In the special case where $V = TB$ is the tangent bundle, we may, for brevity, refer to $\Pin^\pm$- (respectively, $\Spin$-)structures over $B$ instead of on $O(TB)$ (respectively, $\SO(TB, \mathfrak{o}_{TB})$).

In this subsection, we investigate the properties of the $\Pin_2^-$-structure over $\R \Sigma$, where $\Sigma\in |-\frac{1}{2}K_X|$ is a real non-singular surface, which is restricted from a $\Spin_3$-structure over $\R X$  (see Proposition~\ref{Prop-Spin3.Pin2}). The associated quasi-quadratic enhancement enables us to relate the spinor states of rational curves in $(X,\tau)$ to the Welschinger's signs of these curves in  $(\Sigma,\tau|_\Sigma)$ (see Proposition~\ref{Prop-RelationW2W3}).

\subsubsection{Construction of the quasi-quadratic enhancement corresponding to a $\boldsymbol{\Pin^-_2}$-structure on a  topological surface}\label{subsect-construction_quasiquad_enhancement}

\begin{definition} \label{def-quasiquadratic.enhancement}
  A function $s\colon  H_1(\R \Sigma;\Z_2) \to \Z_2$ is called \emph{a quasi-quadratic enhancement} if it satisfies
  $$s(x+y)=s(x)+s(y)+x.y+(w_1(\R \Sigma)\cdot x)(w_1(\R \Sigma)\cdot y).$$
\end{definition}

It is well known that every non-singular topological surface admits a $\Pin_2^-$-structure on its tangent bundle. We begin by equipping  $T\R \Sigma$ with a $\Pin_2^-$-structure, denoted by $\mathfrak{p}_{T\R \Sigma}:=(\Pin^-(T\R \Sigma),q_{T\R \Sigma})$.  We then proceed to construct a quasi-quadratic enhancement corresponding to such a $\Pin^-_2$-structure.

Let  $\alpha\colon \mathbb{S}^1\to \R \Sigma$ be an immersion, and let $\mathcal{N}_\alpha:=
\alpha^*T \R \Sigma / T \mathbb{S}^1$ denote the normal bundle of this map. 
We consider the short exact sequence of real vector bundles over $\mathbb{S}^1$ determined by $\alpha$
\begin{equation*}
\tag{${\rm ses}_\alpha$}\label{ses_alpha}
    0\lra T \mathbb{S}^1 \lra \alpha^* T \R \Sigma \lra \mathcal{N}_\alpha \lra 0.
\end{equation*}
Up to homotopy, there is a splitting $\alpha^*T \R \Sigma=T \mathbb{S}^1\oplus \mathcal{N}_\alpha$.  We begin by choosing the bounding $\Spin_1$-structure on $T \mathbb{S}^1$, denoted by $\mathfrak{s}_0$, which corresponds to the disconnected double cover of $\mathbb{S}^1$. On the normal line bundle $\mathcal{N}_\alpha$, there exists a canonical  $\Pin^-_1$-structure, denoted by $\mathfrak{p}_0$, which corresponds to $0 \in \mathbb{Z}_2$ under the natural bijection $\mathcal{P}_1^-(\mathcal{N}_\alpha)\to \Z_2$ (see \cite[Lemma 8.2]{solomon2006intersection} or \cite[Section 11]{chen2024spin}). We say that a given $\Pin^-_2$-structure on $\R \Sigma$ is \textit{compatible} with the short exact sequence \eqref{ses_alpha}  if $\alpha^*\mathfrak{p}_{\R \Sigma}$ is mapped from $<\mathfrak{s}_0,\mathfrak{p}_0>_\alpha$ by the following natural $H^1(\mathbb{S}^1;\Z_2)$-biequivariant map:
$$<\cdot,\cdot>_\alpha\colon  \mathcal{SP}_1(T \mathbb{S}^1)\times \mathcal{P}^-_1(\mathcal{N}_\alpha)\lra \mathcal{P}_2^-(\alpha^*\R \Sigma).$$
 We are then prepared to define the following function:
\begin{align*}
  s_{\mathfrak{p}_{\R \Sigma}}\colon \{ \text{immersed circles in }\R \Sigma \} &\lra \mathbb{Z}_2\\
     \alpha&\longmapsto \begin{cases}
  1& \text{if } \alpha^*\mathfrak{p}_{\R \Sigma}=<\mathfrak{s}_0,\mathfrak{p}_0>_\alpha, \\
  0& \text{otherwise. } 
  \end{cases}
\end{align*}

\begin{remark}
  Since  a loop $\tilde{\alpha}$ that lifts $\alpha$ in the principal $O_2$-bundle $\alpha^*(O(T\R \Sigma))$ 
  can be written as  $r$ ($r\in \Z$) times a generator of $\pi_1(O_2(\R))\cong \Z$, we can interpret $s_{\mathfrak{p}_{\R \Sigma}}(\alpha)\in \{0,1\}$ as 
\begin{align*}
     s_{\mathfrak{p}_{\R \Sigma}}(\alpha)= 1+r \mod 2.
\end{align*}
It can be shown that the liftings of $\alpha$ in the principal $\Pin^-_2$-bundle $\alpha^*(\Pin^-(T\R \Sigma))$ are either all loops or all non-closed paths.
\end{remark}

By definition, the function $s_{\mathfrak{p}_{\R \Sigma}}$ is additive on disjoint unions of embedded circles and does not depend on the orientation of $\alpha$.
A union of immersed circles in $\R \Sigma$ can be deformed such that all their intersections and self-intersections are transverse. The rule of smoothing a transverse intersection is shown in Figure~\ref{fig-smoothing}.

\begin{figure}[ht!]
 \centering
\includegraphics[width=0.4\textwidth]{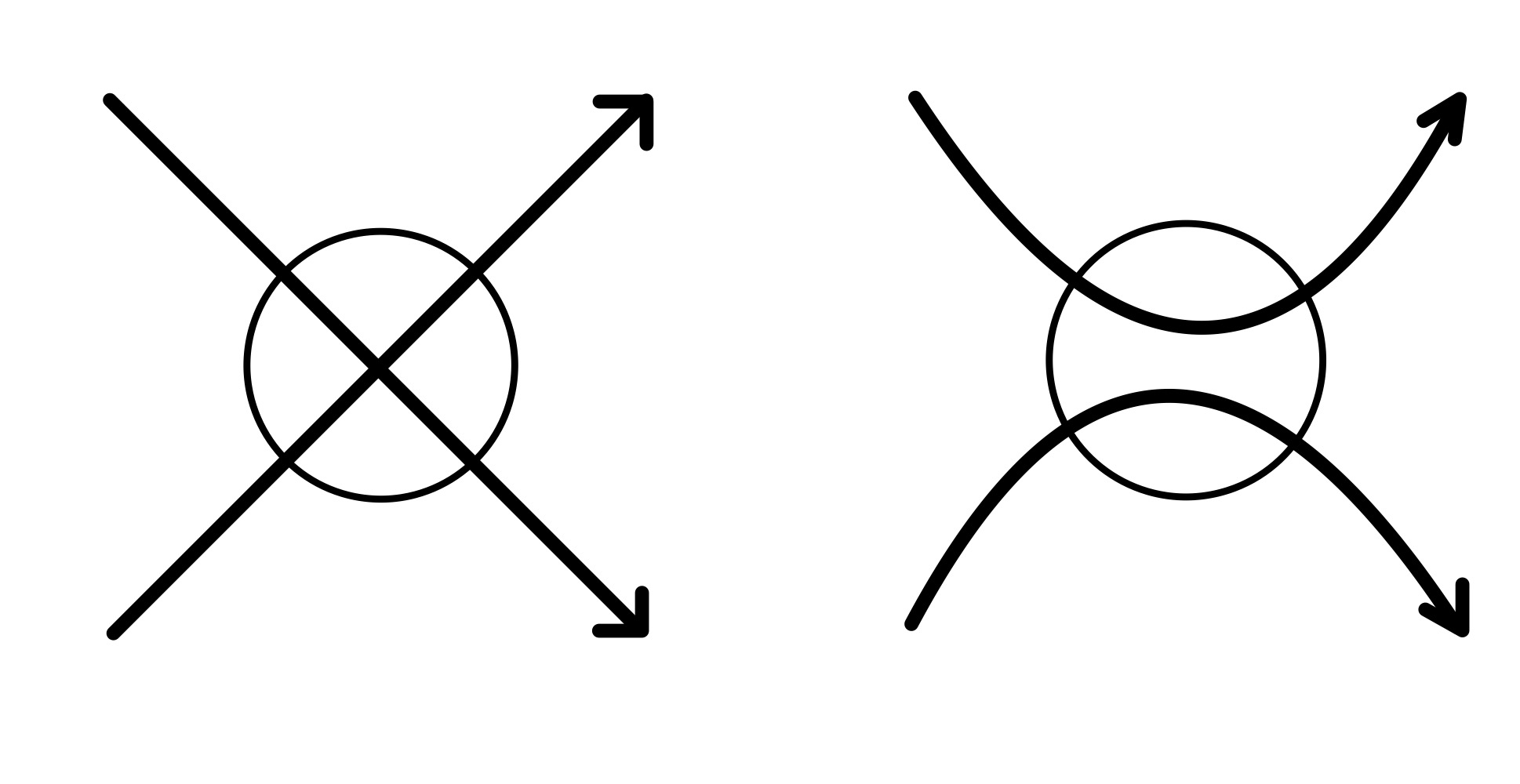}
\caption{Smoothing a transverse intersection.}
\label{fig-smoothing}
\end{figure}

 After smoothing a union $\bigcup_{i=1}^{k} C_{i}$ of $k$ immersed circles in $\R \Sigma$, we obtain a disjoint union $\bigsqcup_{i=1}^{k'} C'_{i}$ of $k'$ embedded circles in $\R \Sigma$.  
 In particular,
 $$\sum\limits_{i=1}^k[C_i]=\sum\limits_{i=1}^{k'}[C_i']\in H_1(\R \Sigma;\Z_2).$$ 
 Let $\delta (\alpha)$ denote the number of self-intersection points of $C=\Im(\alpha)$. It follows that
$$k' \equiv k+\sum_{i=1}^k \delta(\alpha_i)+\sum_{i<j}[C_i].[C_j] \mod 2.$$
We have the following proposition.

\begin{proposition}[\textit{cf.} {\cite[Theorem 11.1]{chen2024spin}}] \label{prop-quasi-quadratic.enhancement.def.prop}
The function $s_{\mathfrak{p}_{\R \Sigma}}$ induces a quasi-quadratic enhancement  $$s^{\mathfrak{p}_{\R \Sigma}}\colon H_1(\R \Sigma;\Z_2)\lra \mathbb{Z}_2$$ given by
$$s^{\mathfrak{p}_{\R \Sigma}}([\alpha])\equiv s_{\mathfrak{p}_{\R \Sigma}}(\alpha)+\delta(\alpha) \mod 2.$$
\end{proposition}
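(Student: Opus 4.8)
The plan is to write $\bar s(\alpha):=s_{\mathfrak{p}_{\R\Sigma}}(\alpha)+\delta(\alpha)\bmod 2$ for a generic immersed circle $\alpha$ in $\R\Sigma$, to show that $\bar s$ descends to a well-defined function on $H_1(\R\Sigma;\Z_2)$, and that the resulting function is the claimed quasi-quadratic enhancement. Two facts recalled just before the statement will be used throughout: $s_{\mathfrak{p}_{\R\Sigma}}$ is additive on disjoint unions of embedded circles and independent of orientation, and any lift $\tilde\alpha$ of $\alpha$ to the principal $O_2$-bundle $\alpha^*(O(T\R\Sigma))$ winds $r$ times around $\pi_1(O_2(\R))\cong\Z$ with $s_{\mathfrak{p}_{\R\Sigma}}(\alpha)\equiv 1+r\bmod 2$.

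First I would check that $\bar s(\alpha)$ depends only on the free homotopy class of $\alpha$, by verifying invariance under the local moves — triple-point, self-tangency (finger), and curl — which, together with ambient isotopy, generate all homotopies of loops through generic immersions. A triple-point move changes neither $\delta(\alpha)$ nor the winding $r$. A self-tangency move changes $\delta(\alpha)$ by $2$ and leaves $r$ unchanged. A curl move changes $\delta(\alpha)$ by $1$ and adds $\pm1$ to $r$, hence also changes $s_{\mathfrak{p}_{\R\Sigma}}(\alpha)$ by $1$. In each case $\bar s(\alpha)$ is unchanged mod $2$, so $\bar s$ is a free-homotopy invariant of loops in $\R\Sigma$; moreover $\bar s(\alpha^{-1})=\bar s(\alpha)$ by orientation-independence of $s_{\mathfrak{p}_{\R\Sigma}}$ and of $\delta$.

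The heart of the argument is the identity
$$\bar s(\alpha\ast\beta)\equiv\bar s(\alpha)+\bar s(\beta)+[\alpha]\cdot[\beta]+\big(w_1(\R\Sigma)\cdot[\alpha]\big)\big(w_1(\R\Sigma)\cdot[\beta]\big)\pmod 2$$
for a band-sum $\alpha\ast\beta$ of two generic immersed loops meeting transversally. Here $\delta(\alpha\ast\beta)$ exceeds $\delta(\alpha)+\delta(\beta)$ by the number of $\alpha\beta$-crossings, which is $\equiv[\alpha]\cdot[\beta]\bmod 2$; and the winding of a lift of $\alpha\ast\beta$ differs from the sum of the windings of lifts of $\alpha$ and of $\beta$ by a parity that one must compute in a chart around the connecting band, by comparing the reference structures $\langle\mathfrak{s}_0,\mathfrak{p}_0\rangle$ attached to $\alpha$, to $\beta$, and to $\alpha\ast\beta$. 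The expected outcome — and the main technical obstacle — is that this extra parity is nonzero exactly when both normal bundles $\mathcal{N}_\alpha$ and $\mathcal{N}_\beta$ are non-orientable, i.e. when $w_1(\R\Sigma)\cdot[\alpha]=w_1(\R\Sigma)\cdot[\beta]=1$; this $w_1$-twist is precisely what forces the enhancement to be attached to a $Pin^-_2$-structure rather than a $Spin_2$-structure, and verifying it carefully is exactly the content of Theorem 11.1 in \cite{chenzinger}, whose argument I would follow.

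Granting the displayed identity, the remaining steps are formal. Taking $\beta=\alpha$ and using the Wu relation $x\cdot x=w_1(\R\Sigma)\cdot x$ on $H_1(\R\Sigma;\Z_2)$ gives $\bar s(\alpha\ast\alpha)\equiv(w_1(\R\Sigma)\cdot[\alpha])+(w_1(\R\Sigma)\cdot[\alpha])\equiv 0$; iterating the identity on $\alpha\ast\beta\ast\alpha^{-1}\ast\beta^{-1}$ and using $\bar s(\gamma^{-1})=\bar s(\gamma)$ gives $\bar s\equiv 0$ on commutators. Hence $\bar s$, viewed as a function on $\pi_1(\R\Sigma)$, kills squares and commutators and therefore factors through $\pi_1(\R\Sigma)^{\mathrm{ab}}\otimes\Z_2=H_1(\R\Sigma;\Z)\otimes\Z_2=H_1(\R\Sigma;\Z_2)$, producing a well-defined $s^{\mathfrak{p}_{\R\Sigma}}$ with $s^{\mathfrak{p}_{\R\Sigma}}([\alpha])=\bar s(\alpha)=s_{\mathfrak{p}_{\R\Sigma}}(\alpha)+\delta(\alpha)$. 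Finally, the displayed identity read on $H_1(\R\Sigma;\Z_2)$ is exactly the relation of Definition \ref{def-quasiquadratic.enhancement}, so $s^{\mathfrak{p}_{\R\Sigma}}$ is a quasi-quadratic enhancement, as required.
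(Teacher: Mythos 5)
The paper offers no proof of this proposition at all: it is imported verbatim as Theorem 11.1 of \cite{chenzinger}, so there is no argument of the paper's to compare yours against. Your outline is a faithful reconstruction of how such a proof goes, and the routine parts are correct: the invariance of $\bar s=s_{\mathfrak{p}_{\R\Sigma}}+\delta$ under triple-point, self-tangency and curl moves (using $s_{\mathfrak{p}_{\R\Sigma}}(\alpha)\equiv 1+r$ and the Whitney index count) does show $\bar s$ is a homotopy invariant of generic loops; granting the band-sum identity, the vanishing on squares via the Wu relation $x.x=w_1(\R\Sigma)\cdot x$ and on commutators, together with $\bar s(\gamma\ast c)=\bar s(\gamma)$ for $c$ null-homologous, correctly yields descent to $H_1(\R\Sigma;\Z_2)$ and identifies the result with the relation of Definition \ref{def-quasiquadratic.enhancement}. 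Your $\delta$-count for the band-sum is also consistent with the paper's smoothing formula $k'\equiv k+\sum\delta(\alpha_i)+\sum_{i<j}[C_i].[C_j]$.

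The one point to flag is that the sole substantive computation --- that the parity defect between the reference structures $\langle\mathfrak{s}_0,\mathfrak{p}_0\rangle_{\alpha}$, $\langle\mathfrak{s}_0,\mathfrak{p}_0\rangle_{\beta}$ and $\langle\mathfrak{s}_0,\mathfrak{p}_0\rangle_{\alpha\ast\beta}$ at the connecting band equals $(w_1(\R\Sigma)\cdot[\alpha])(w_1(\R\Sigma)\cdot[\beta])$, which is precisely where the $Pin^-_2$ (rather than $Spin_2$) nature of $\mathfrak{p}_{\R\Sigma}$ enters --- is not carried out but explicitly deferred to the same Theorem 11.1 you are asked to prove. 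You have correctly isolated where the difficulty lives, and since the paper itself only cites that theorem this is defensible, but as a self-contained argument the proposal is an outline with that step left as a black box.
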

 
\begin{remark}
Given any basis of $H_1(\R \Sigma;\Z_2)$, it is possible to make $s^{\mathfrak{p}_{\R \Sigma}}$ take any specified values on the basis elements by choosing an
appropriate structure $\mathfrak{p}_{\R \Sigma}$.
\end{remark}

\subsubsection{Quasi-quadratic enhancements and restricted $\boldsymbol{\Pin^-_2}$-structures} \label{subsect-quasi.quad-pin2}

The restriction from $\Spin_3$- to $\Pin_2^-$-structures is described in the  following proposition.

\begin{proposition}\label{Prop-Spin3.Pin2}
  A $\Spin_3$-structure on $\SO(T \R X,\mathfrak{o}_{T\R X})$ can be restricted naturally to a $\Pin^-_2$-structure on $O(T \R \Sigma)$.
 
 In particular, if both $\R X$ and $\R \Sigma$ are orientable, then $\SO(T\R \Sigma,\mathfrak{o}_{T \R \Sigma})$ inherits a $\Spin_2$-structure from a $\Spin_3$-structure on $\SO(T\R X,\mathfrak{o}_{T \R X})$.
\end{proposition}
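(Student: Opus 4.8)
The plan is to exhibit $T\R X|_{\R\Sigma}$ as the stabilisation $T\R\Sigma\oplus\det(T\R\Sigma)$ of the tangent bundle of $\R\Sigma$ by its determinant line, and then to invoke the classical identification of $Pin^-_2$-structures on a rank-$2$ bundle $W$ with $Spin_3$-structures on $W\oplus\det W$.

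First I would use a tubular neighbourhood of the (smooth, codimension-one) submanifold $\R\Sigma\subset\R X$ to split the short exact sequence of real vector bundles over $\R\Sigma$
\[
0\longrightarrow T\R\Sigma\longrightarrow T\R X|_{\R\Sigma}\longrightarrow\nu\longrightarrow 0,
\]
where $\nu$ is the normal line bundle; the metric $g_X$ gives the orthogonal splitting $T\R X|_{\R\Sigma}=T\R\Sigma\oplus\nu$. Since $\R X$ carries the orientation $\mathfrak{o}_X$, one has $w_1(T\R X|_{\R\Sigma})=0$, hence $w_1(\nu)=w_1(T\R\Sigma)$; as real line bundles over $\R\Sigma$ are classified by their first Stiefel--Whitney class, this yields $\nu\cong\det(T\R\Sigma)$. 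Moreover the isomorphism is canonical once $(g_X,\mathfrak{o}_X)$ is fixed, because $\det(T\R X|_{\R\Sigma})=\det(T\R\Sigma)\otimes\nu$ is trivialised by $\mathfrak{o}_X$ and $L\otimes L$ is canonically trivial for a metrised real line bundle $L$. This also records the Whitney-class identity $w_2(T\R\Sigma)+w_1(T\R\Sigma)^2=w_2(T\R X)|_{\R\Sigma}=0$, i.e.\ the obstruction to a $Pin^-_2$-structure on $O(T\R\Sigma)$ vanishes.

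Next I would restrict the spin structure: a $Spin_3$-structure on $SO(T\R X,\mathfrak{o}_X)$ is a principal $Spin_3$-bundle double-covering it compatibly with $Spin_3\to SO_3$, and pulling this datum back along $\R\Sigma\hookrightarrow\R X$ produces a $Spin_3$-structure on the restricted frame bundle, which under the identification above is a $Spin_3$-structure on $SO(T\R\Sigma\oplus\det(T\R\Sigma))$. To conclude, I would apply the natural $H^1(\R\Sigma;\Z_2)$-equivariant bijection $\mathcal{SP}_3(W\oplus\det W)\cong\mathcal{P}_2^-(W)$ valid for any rank-$2$ bundle $W$ over $\R\Sigma$ — both sides being torsors over $H^1(\R\Sigma;\Z_2)$ and nonempty exactly when $w_2(W)+w_1(W)^2=0$ (cf.\ the treatment of $Pin$-structures on line and surface bundles in \cite{solomon2006intersection, chenzinger}). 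With $W=T\R\Sigma$ this produces the desired $Pin^-_2$-structure $\mathfrak{s}_X|_\Sigma$ on $O(T\R\Sigma)$, depending only on $\mathfrak{s}_X$ (and the already-fixed $g_X,\mathfrak{o}_X$). For the final assertion, if $\R\Sigma$ is moreover orientable then $\det(T\R\Sigma)$ is trivial, and $\mathfrak{o}_X$ together with $\mathfrak{o}_{T\R\Sigma}$ co-orient $\nu$, so $T\R X|_{\R\Sigma}\cong T\R\Sigma\oplus\underline{\R}$ canonically; under the usual stabilisation isomorphism $\mathcal{SP}_3(W\oplus\underline{\R})\cong\mathcal{SP}_2(W)$ the restricted $Spin_3$-structure becomes a $Spin_2$-structure on $SO(T\R\Sigma,\mathfrak{o}_{T\R\Sigma})$.

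Beyond the bookkeeping, the main point to be careful about is the word ``naturally'': one must verify that each of the three identifications ($\nu\cong\det T\R\Sigma$, the $Pin^-$/$Spin$ correspondence, and the stabilisation) is canonical for the data already in play and compatible with the others, which comes down to tracking the $H^1(\R\Sigma;\Z_2)$-actions through each bijection. No genuinely hard step is involved, but this compatibility is precisely what makes the construction well defined and usable in the sign comparison of Proposition \ref{Prop-RelationW2W3}.
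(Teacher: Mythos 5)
Your proof is correct and follows essentially the same route as the paper: split $T\R X|_{\R\Sigma}$ as $T\R\Sigma\oplus\det(T\R\Sigma)$ and apply the Kirby--Taylor bijection $\mathcal{P}_2^-(T\R\Sigma)\cong\mathcal{SP}_3(T\R\Sigma\oplus\det T\R\Sigma)$. The only (harmless) divergence is that you identify the normal line bundle with $\det(T\R\Sigma)$ purely topologically via $w_1(\nu)=w_1(T\R\Sigma)$, whereas the paper deduces $\mathcal{N}_{\R\Sigma/\R X}\cong\det T\R\Sigma$ from the adjunction formula for $\Sigma\in|-\tfrac12 K_X|$ and then exhibits the frame embedding $(v_1,v_2)\mapsto(v_1,v_2,v_1\wedge v_2)$ explicitly.
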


\begin{proof}
It is sufficient to prove that $O(T\R \Sigma)$ can be embedded as a subspace of $\SO(T \R X,\mathfrak{o}_{T \R X})|_{\R \Sigma}$. A proposition of Kirby--Taylor \cite{kirby1990p} establishes the existence of a natural bijection
$$\mathcal{P}_2^-(T \R \Sigma)\cong \mathcal{SP}_3(T \R \Sigma \oplus \det T \R \Sigma,\mathfrak{o}_{T \R \Sigma \oplus \det T \R \Sigma}).$$
Let $ \mathcal{N}_{\Sigma/X}$ be the holomorphic normal line bundle of $\Sigma$ in  $X$. 
The Riemannian metric on $\R X$ allows us to identify the real bundles $\R \mathcal{N}_{\Sigma/X}$ and $ \mathcal{N}_{\R \Sigma/\R X}$.
By the adjunction formula, the bundle isomorphism  $\mathcal{N}_{\R \Sigma / \R X} \cong \det T\R \Sigma$ holds. Up to homotopy, this implies the decomposition
$$T\R X= T \R \Sigma \oplus \det T \R \Sigma.$$
Let $(x,(v_1,v_2))\in O(T\R \Sigma)$.  
There exists a unique vector $v_3 := v_1 \wedge v_2$ in $\det T \R \Sigma$ such that $(x,(v_1,v_2,v_3))$ defines an element of  $\SO(T \R X,\mathfrak{o}_{T \R X})|_{\R \Sigma}$, with  orientation consistent with that induced by the $\Spin_3$-structure on $\SO(T \R X,\mathfrak{o}_{T \R X})$.
\end{proof}

  Let $\mathfrak{s}_{X}:=(\Spin (T\mathbb{R} X,\mathfrak{o}_{T\mathbb{R} X}), q_{T \R X})$ denote a $\Spin_3$-structure on $\SO(T \R X,\mathfrak{o}_{T \R X})$.
 We denote by $\mathfrak{s}_{X|\Sigma}=(\Pin^-(X|\Sigma),q_{X|\Sigma})$ the restricted  $\Pin^-_2$-structure to $O(T \R \Sigma)$ induced from the $\Spin_3$-structure $\mathfrak{s}_{X}$, as described in Proposition~\ref{Prop-Spin3.Pin2}. Recall that there is a natural homomorphism
$$\rho\colon  H_2^{-\tau}(\Sigma;\Z) \lra H_1(\R \Sigma;\Z_2), \quad D\longmapsto \rho D.$$
 Therefore, for each $D \in H_2^{-\tau}(\Sigma;\Z)$ and each $\Pin^-_2$-structure ${\mathfrak{p}_{T\R \Sigma}}$ on $O(T \R \Sigma)$, we may associate a quasi-quadratic enhancement $s^{\mathfrak{p}_{T\R \Sigma}}$  evaluated at  $\rho D$, as described in the previous subsection. In particular, the $\Pin^-_2$-structure $\mathfrak{s}_{X|\Sigma}$ on $O(T \R \Sigma)$ implies the following properties on $s^{\mathfrak{s}_{X|\Sigma}}$.
 
\begin{proposition} \label{prop-quasi-quad.and.monodromy}
Assume that $\ker (\psi\colon  H_2(\Sigma;\mathbb{Z})\to H_2(X;\mathbb{Z}))\cong \Z$. Let $S\in H_2^{-\tau}(\Sigma;\Z)$ be a generator of $\ker (\psi)$. 
One has
\begin{enumerate}
    \item\label{pqq.a.m-1} $s^{\mathfrak{s}_{X|\Sigma}}(\rho S)=0$;
    \item\label{pqq.a.m-2} $s^{\mathfrak{s}_{X|\Sigma}}(\rho ({D+(D.S)S})) \equiv s^{\mathfrak{s}_{X|\Sigma}}(\rho D) +(D.S) \mod 2$ 
      for all $D\in H_2^{-\tau }(\Sigma;\Z)$. 
\end{enumerate}
\end{proposition}

\begin{proof}
\eqref{pqq.a.m-1}~
 The first observation is that $\psi  (S)=0\in H_2^{-\tau}(X;\Z)$ and there is a natural homomorphism 
$$\rho\colon  H_2^{-\tau}(X;\Z) \lra H_1(\R X;\Z_2), \quad d\longmapsto \rho d.$$
Consequently,  $\rho \psi (S)=0\in H_1(\R X;\Z_2)$.    Since $S$ is also the vanishing cycle of a Lefschetz fibration, we have $\rho \psi (S)=\alpha_*[\mathbb{S}^1]$, where $\alpha\colon  \mathbb{S}^1\to \R X$ is an immersion and $\Im(\alpha)$ bounds a $2$-dimensional disk $\mathbb{D}^2$ in $\R X$. Let $\mathfrak{s}_{\mathbb{D}^2}=(\Spin(T \mathbb{D}^2,\mathfrak{o}_{T \mathbb{D}^2}),q_{T \mathbb{D}^2})$ be the  unique $\Spin_2$-structure on $\mathbb{D}^2$. According to \cite[Example 11.3]{chen2024spin},  $\mathfrak{s}_{\mathbb{D}^2}$ is not compatible with the short exact sequence of real vector bundles over $\mathbb{S}^1$ 
$$0\lra T \mathbb{S}^1 \lra \alpha^* T \mathbb{D}^2 \lra \mathcal{N}_\alpha \lra 0.$$  
It follows that  $s_{\mathfrak{s}_{\mathbb{D}^2}}(\alpha)=0$, and thus $s^{\mathfrak{s}_{\mathbb{D}^2}}([\alpha])=0$. The statement then follows, noting that the  $\Spin_3$-structure over $\mathbb{D}^2$ is canonically induced from the  $\Spin_2$-structure.

\eqref{pqq.a.m-2}~
With abuse of notation, in the remainder of this proof,  the notation $\omega_1(\rho D)$ refers to the dual pairing $\omega_1(\R \Sigma)\cdot (\rho D)$. By  claim~\eqref{pqq.a.m-1} and by properties of a quasi-quadratic enhancement, we have
\begin{align*}
    s^{\mathfrak{s}_{X|\Sigma}}(\rho (D+(D.S)S)&
      = \; s^{\mathfrak{s}_{X|\Sigma}}(\rho D+(D.S)\rho S)\\
     & =\; s^{\mathfrak{s}_{X|\Sigma}}(\rho D)+(D.S)s^{\mathfrak{s}_{X|\Sigma}}(\rho S)+(D.S)(\rho D. \rho S)+(D.S)\omega_1(\rho D)\omega_1(\rho S)\\
    &\equiv  \; s^{\mathfrak{s}_{X|\Sigma}}(\rho D)+0+(D.S)^2+(D.S)(D)^2(-2) \mod 2\\
    &\equiv  \;s^{\mathfrak{s}_{X|\Sigma}}(\rho D)+(D.S)^2 \mod 2\\
    &\equiv  \; s^{\mathfrak{s}_{X|\Sigma}}(\rho D)+(D.S) \mod 2.
\end{align*} 
As a result, the proposition follows.
\end{proof}

\subsubsection{Quasi-quadratic enhancements and spinor states}
As in Section~\ref{subsect-Spinorstates.revisited}, we assume that~$C$ is a real balanced rational curve parametrized by a real balanced immersion $f\colon (\C P^1,\tau_1)\to (X,\tau)$  such that $f_*[\C P^1]=d\in  H_2^{-\tau}(X;\Z)$. Furthermore, we assume that $f (\C P^1)\subset \Sigma$, where $\Sigma\in |-\frac{1}{2}K_X|$ is a real non-singular surface, and that $C$ represents the homology class $D\in H_2^{-\tau}(\Sigma;\Z)$. Since $f$ is real, the restriction $f|_{\R P^1}\colon  \R P^1 \to \R X$ is an immersion such that $f|_{\R P^1}(\R P^1)\subset \R \Sigma$.

Let $k_d$ and $k_D$ be the corresponding integers of the classes $d$ and $D$, respectively. Recall that $k_D=k_d-1$. For each $\Pin_2^-$-structure $\mathfrak{p}_{T\R \Sigma}$ on $O(T \R \Sigma)$, we define 
$$s_{\mathfrak{p}_{T\R \Sigma}}(C):=s_{\mathfrak{p}_{T\R \Sigma}}\left(f|_{\R P^1}\right) \in \{0,1\}.$$ 
The number of hyperbolic real nodes  $\delta_H(C)$ of $C$ is precisely equal to $\delta(f|_{\R P^1})$. From Section~\ref{subsect-Welschinger.surfaces}, we deduce that
$$\delta_H(C)= g(D)+\delta_E(C) \mod 2,$$
 where $\delta_E(C)$ denotes the number of elliptic real nodes of $C$ and $g(D)=\frac{1}{2}(K_\Sigma.D+D^2)+1$.
 It then follows from  Proposition~\ref{prop-quasi-quadratic.enhancement.def.prop}  that there is a quasi-quadratic enhancement
 $$s^{\mathfrak{p}_{T\R \Sigma}}\colon H_1(\R \Sigma;\Z_2)\lra \Z_2$$
 satisfying
 \begin{equation} \label{eq-RelationW2W2}
     s^{\mathfrak{p}_{T\R \Sigma}}(\rho D)= g(D)+\delta_E(C)+s_{\mathfrak{p}_{T\R \Sigma}}(C) \mod 2.
 \end{equation}

The quasi-quadratic enhancement $s^{\mathfrak{p}_{T\R \Sigma}}(\rho D)$ allows us to relate the spinor state $\sp_{\mathfrak{s}_X,\mathfrak{o}_X}(C)$ in  Definition~\ref{def-spinor3dim} to Welschinger's sign $s_{\R \Sigma}(C)$ in Definition~\ref{def-Welschinger.Inv.Nodes}, as shown in the following proposition.

\begin{proposition}\label{Prop-RelationW2W3}
Given  a $\Spin_3$-structure $\mathfrak{s}_X$  on $\SO(T\R X,\mathfrak{o}_X)$ and the restricted $\Pin_2^-$-structure  $\mathfrak{s}_{X|\Sigma}$ over $\R \Sigma$, one has
 \begin{equation} \label{eq-Prop-RelationW2W3}
     \sp_{\mathfrak{s}_X,\mathfrak{o}_X}(C)=(-1)^{\epsilon(D)+g(D)+s^{\mathfrak{s}_{X|\Sigma}}(\rho D)}s_{\R \Sigma}(C), 
 \end{equation}
 where  $\epsilon$ is the homomorphism defined in Definition~\ref{def-epsilon.function}.
 \end{proposition}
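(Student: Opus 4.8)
The plan is to relate the two spinor-type invariants through the comparison of line-subbundle decompositions of $f|_{\R P^1}^*T\R X$ coming from the two constructions. On the one hand, the spinor state $sp_{\mathfrak{s}_X,\mathfrak{o}_X}(C)$ of Definition \ref{def-spinor3dim} is computed from the loop of direct orthogonal frames adapted to the decomposition $f|_{\R P^1}^*T\R X = T\R P^1 \oplus L \oplus \tilde L$, where $L$ is chosen in the isotopy class $E^+$ if $k_d$ is even and $E^-$ if $k_d$ is odd. On the other hand, the curve $C$ also lives in $\Sigma$, so the restricted $Pin_2^-$-structure $\mathfrak{s}_{X|\Sigma}$ on $O(T\R\Sigma)$ and its quasi-quadratic enhancement give a number $s_{\mathfrak{s}_{X|\Sigma}}(C)$ attached to the subbundle $\R\mathcal{N}'_f = \R f^*\mathcal{N}_{D/\Sigma}$ inside $\R\mathcal{N}_f$. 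First I would unwind both definitions in terms of the same loop in $f^*SO(T\R X,\mathfrak{o}_X)$: the $Pin_2^-$ recipe in Section \ref{Sect-signs.in.comparison} compares $\alpha^*\mathfrak{p}_{\R\Sigma}$ to $\langle\mathfrak{s}_0,\mathfrak{p}_0\rangle_\alpha$ built from the bounding $Spin_1$-structure on $T\mathbb{S}^1$ and the canonical $Pin_1^-$-structure on the normal line, and by Proposition \ref{Prop-Spin3.Pin2} this restriction sits inside the $Spin_3$-structure on $T\R X|_{\R\Sigma}$; so $s_{\mathfrak{s}_{X|\Sigma}}(C)$ is computed by exactly the frame loop adapted to $T\R P^1 \oplus \R\mathcal{N}'_f \oplus (\det)$, which differs from the one defining $sp$ only in the choice of the middle line subbundle — $\R\mathcal{N}'_f$ versus the canonical $L\in E^\pm$.

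The key step is then a bookkeeping of how the frame loop changes when one replaces $\R\mathcal{N}'_f$ by the canonical line subbundle $L$ of the prescribed isotopy class. Since $E^+$ and $E^-$ differ by exactly one full rotation (a generator of $\pi_1(SO_2(\R))$), as already noted in the discussion preceding Proposition \ref{Prop-SpinorStates1}, swapping the isotopy class of the chosen line subbundle toggles the lift-to-a-loop property, i.e. changes the spinor sign by $-1$. The function $\epsilon(D)$ of Definition \ref{def-epsilon.function} is engineered precisely to record the parity discrepancy between the isotopy class of $\R\mathcal{N}'_f$ and the class $E^+$ or $E^-$ dictated by the parity of $k_d=k_D+1$: unwinding Definition \ref{def-epsilon.function}, $\epsilon(D)$ equals $k_D+1 \bmod 2 = k_d \bmod 2$ when $\R\mathcal{N}'_f = E^+$ and $k_D\bmod 2 = k_d+1\bmod 2$ when $\R\mathcal{N}'_f = E^-$, which is exactly the indicator of "the line bundle $\R\mathcal{N}'_f$ is not in the class required by the $sp$-construction for the given parity of $k_d$." Hence the factor $(-1)^{\epsilon(D)}$ accounts for the difference between the $sp$-frame loop and the $\mathfrak{s}_{X|\Sigma}$-frame loop. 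This gives an identity of the shape $sp_{\mathfrak{s}_X,\mathfrak{o}_X}(C) = (-1)^{\epsilon(D)}(-1)^{s_{\mathfrak{s}_{X|\Sigma}}(C)}$, reading the $Pin_2^-$ output as a sign.

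Finally I would feed in the relation between $s_{\mathfrak{s}_{X|\Sigma}}(C)$ and the node count. Equation (\ref{eq-RelationW2W2}) (equivalently the relation derived just above Proposition \ref{Prop-RelationW2W3}) gives $s^{\mathfrak{s}_{X|\Sigma}}(\rho D) = g(D) + \delta_E(C) + s_{\mathfrak{s}_{X|\Sigma}}(C) \bmod 2$, so $s_{\mathfrak{s}_{X|\Sigma}}(C) \equiv g(D) + s^{\mathfrak{s}_{X|\Sigma}}(\rho D) + \delta_E(C) \bmod 2$. Substituting and recalling $s_{\R\Sigma}(C) = (-1)^{\delta_E(C)}$ from Definition \ref{def-sign.curve.w.node.dim2} yields
$$sp_{\mathfrak{s}_X,\mathfrak{o}_X}(C) = (-1)^{\epsilon(D) + g(D) + s^{\mathfrak{s}_{X|\Sigma}}(\rho D) + \delta_E(C)} = (-1)^{\epsilon(D)+g(D)+s^{\mathfrak{s}_{X|\Sigma}}(\rho D)} s_{\R\Sigma}(C),$$
which is (\ref{eq-Prop-RelationW2W3}). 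The main obstacle I anticipate is the first step: making rigorous the claim that the $Pin_2^-$ frame-loop computation for $C$ inside $\R\Sigma$ and the $Spin_3$ frame-loop computation for $C$ inside $\R X$ are carried out by the same loop in $f^*SO(T\R X,\mathfrak{o}_X)$ up to the controlled discrepancy measured by $\epsilon$ — this requires carefully invoking the Kirby–Taylor bijection and the compatibility of $\langle\cdot,\cdot\rangle_\alpha$ with the short exact sequence, checking that the bounding $Spin_1$-structure on $T\mathbb{S}^1 \cong T\R P^1$ and the canonical $Pin_1^-$-structure on the normal line are precisely the data implicit in Welschinger's $Spin_3$ frame construction, so that no extra sign sneaks in from the orientation conventions on $\det T\R\Sigma \cong \mathcal{N}_{\R\Sigma/\R X}$.
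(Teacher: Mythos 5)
Your proposal is correct and follows essentially the same route as the paper's proof: both arguments split the sign discrepancy into the factor $(-1)^{\epsilon(D)}$ measuring the isotopy-class mismatch between $\R\mathcal{N}'_f$ and the canonical line subbundle $L$ used in the spinor-state construction, and the factor $(-1)^{g(D)+s^{\mathfrak{s}_{X|\Sigma}}(\rho D)}$ coming from Equation (\ref{eq-RelationW2W2}) together with $s_{\R\Sigma}(C)=(-1)^{\delta_E(C)}$. The difficulty you flag at the end (rigorously identifying the $Pin_2^-$ frame-loop computation in $\R\Sigma$ with the $Spin_3$ one in $\R X$ via Proposition \ref{Prop-Spin3.Pin2}) is also left implicit in the paper's proof, so your write-up is, if anything, more candid about where the real work lies.
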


 \begin{proof}
On the left-hand side of (\ref{eq-Prop-RelationW2W3}), the spinor state $\sp_{\mathfrak{s}_X,\mathfrak{o}_X}(C)$ is determined by two parameters: The isotopy class of the real line subbundle  $L \subset \R \mathcal{N}_f$ and the $\Spin_3$-structure $\mathfrak{s}_X$. 
 
On the one hand, recall, moreover, that the holomorphic normal line bundle $\mathcal{N}'_f$  has the same degree as the bundle whose real part is $L$, and that $\mathcal{N}'_f$ is a subbundle of the holomorphic normal plane bundle $\mathcal{N}_f$.  
The map $\epsilon$ (see Definition~\ref{def-epsilon.function}) allows us to compare the isotopy class of two real line subbundles $\R \mathcal{N}_f'$ and~$L$ in $\R \mathcal{N}_f$. As a consequence, the sign difference of $(-1)^{\epsilon(D)}$ in Equation (\ref{eq-Prop-RelationW2W3}) is deduced.

On the other hand, the $\Spin_3$-structure $\mathfrak{s}_X$ induces a canonical $\Pin^-_2$-structure $\mathfrak{s}_{X|\Sigma}$ on $O(T \R \Sigma)$, and consequently its corresponding quasi-quadratic enhancement $s^{\mathfrak{s}_{X|\Sigma}}$. It follows that
\begin{align*}
    \sp_{\mathfrak{s}_X,\mathfrak{o}_X}(C)&=(-1)^{\epsilon(D)}\times (-1)^{s_{\mathfrak{s}_{X|\Sigma}}(C)}\\
    &=(-1)^{\epsilon(D)}\times (-1)^{g(D)+s^{\mathfrak{s}_{X|\Sigma}}}\times (-1)^{\delta_E(C)}.
\end{align*}
 This enhancement then contributes the difference of $(-1)^{g(D)+s^{\mathfrak{s}_{X|\Sigma}}(\rho D)}$ in Equation (\ref{eq-Prop-RelationW2W3}). As a result, Equation~(\ref{eq-Prop-RelationW2W3}) holds.
 \end{proof}

\section{Proofs of main results}
\label{Sect-proofs}

\subsection{Proof of Theorem~\ref{Theorem1-GW}}\label{Sect-Proof.GW}
The main idea of the proof is to reduce the computation of genus $0$ Gromov--Witten invariants for del Pezzo threefolds to the analogous problem for del Pezzo surfaces. This reduction relies primarily on Propositions~\ref{Prop-why.degree.is.678} and~\ref{Prop-Kollar}. We begin by recalling the following hypotheses and notational conventions:
\begin{itemize}
    \item $X$ is a del Pezzo variety of dimension~$3$  such that $\ker(\psi) \cong \Z$, where $\Sigma\in |-\frac{1}{2}K_X|$ is any non-singular surface and $\psi\colon H_2(\Sigma;\Z)\twoheadrightarrow H_2(X;\Z)$ is induced by the inclusion $\Sigma\hookrightarrow X$.
    \item $\mathcal{Q}$ is a pencil of surfaces  in the linear system $|-\frac{1}{2}K_X|$; its base locus is an elliptic curve denoted by~$E$.
    \item $E\subset X$ realizes the homology class $\frac{1}{4}K^2_X$.
\end{itemize}

Let $d\in H_2(X;\Z)$, and let $k_d$ be the associated integer.  Recall that $\mathcal{C}(\underline{x}_d)$ denotes the set of connected algebraic curves of arithmetic genus $0$ representing the given homology class $d$ and passing through a generic configuration $\underline{x}_d$ of $k_d$ points on $E$ (see Proposition~\ref{Prop-Kollar}).  Let ${U}_{k_d}\subset E^{k_d}$ be as in Proposition~\ref{Prop.Modulispace2}. Choose a configuration $\underline{x}_d\in {U}_{k_d}$ and a subconfiguration $\underline{y}_D\subset \underline{x}_d$ consisting of $k_D=k_d-1$ points.  By Proposition~\ref{Prop-Kollar} and by the Riemann--Roch theorem, every irreducible rational curve contained in a non-singular surface of the pencil~$\mathcal{Q}$ and passing through $\underline{y}_D$ intersects the elliptic curve $E$ at the $\supth{k_d}$ point $x_{k_d}$, which is precisely the point $\underline{x}_d\setminus \underline{y}_D$.  Let $\Sigma$ be a surface of $\mathcal{Q}$ that contains a curve in $\mathcal{C}(\underline{x}_d)$.  As discussed in Section~\ref{Sect-Monodromy}, up to monodromy, the second homology group $H_2(\Sigma_t;\Z)$ can be canonically identified with $H_2(\Sigma;\Z)$ for every non-singular surface $\Sigma_t$ in the pencil $\mathcal{Q}$.  Let $S$ be a generator of $\ker(\psi)$, which can also be identified with the vanishing cycle in the monodromy transformation $T(D)=D+(D.S)S$.  Let $\overline{D}\in H_2(\Sigma;\Z)/_ {D\sim T(D)}$.  The surjectivity of $\psi$ induces a surjective map $\overline{\psi}\colon H_2(\Sigma;\Z)/_ {D\sim T(D)}\twoheadrightarrow H_2(X;\Z)$. By Proposition~\ref{Prop-Kollar}, we obtain the following equality:
    \begin{equation*}
        \left|\mathcal{C}\left(\underline{x}_d\right)\right|=\sum_{\overline{D}\in \overline{\psi}^{-1}(d)}\;\sum_{\substack{\Sigma_t\in\mathcal{Q}:\\
                 \overline{D}|_{E,\Sigma_t}\sim [\underline{y}_D\sqcup \{x_{k_d}\}]
                 }}\left|\left\{C\in \mathcal{C}\left(\underline{x}_d\right): C \subset \Sigma_t \text{ and } \overline{[C]}=\overline{D} \right\}\right|.
    \end{equation*}
    By Proposition~\ref{Prop.Modulispace2}, we have
    $$\left|\mathcal{C}(\underline{x}_d)\right|=\GW_X(d).$$

 Moreover, the total evaluation map  $\ev_{X}$ is regular at every point $[(\C P^1;p_1,\ldots,p_{k_d});f]$ such that $f(\{p_1,\ldots,p_{k_d}\})=\underline{x}_d$. Consequently, every curve in $\mathcal{C}({\underline{x}_d})$  is parametrized by an algebraic immersion $f\colon \mathbb{C}P^1\to X$. It is worth noting that the genus $0$ Gromov--Witten invariants are symmetric under mono\-dromy transformations. That is, for every $D\in H_2(\Sigma;\Z)$, we have
  $$\GW_{\Sigma}(D)=\GW_{\Sigma}(D+(D.S)S).$$
 We set
 $$\GW_{\Sigma_t}(\overline{D}):= \GW_{\Sigma_t}(D)=\GW_{\Sigma_t}(T(D)).$$
 Therefore, the cardinality of the set  $\{C\in \mathcal{C}(\underline{x}_d): C \subset \Sigma_t \text{ and } \overline{[C]}=\overline{D} \}$  is exactly $\GW_{\Sigma_t}({D})$, and thus also $\GW_{\Sigma_t}(\overline{D})$  (see Proposition~\ref{Prop.Modulispace1} and Corollary~\ref{Corollary.Modulispace1}).  Moreover, the equivalence class $\overline{D}|_{E,\Sigma_t}=\overline{D|_{E,\Sigma_t}}$ lies in $\Pic_{D.E} (E)/_{x\sim \overline{D}_{\phi}-x}$. Accordingly, the number of surfaces $\Sigma_t\in\mathcal{Q}$ such that $\overline{D}|_{E,\Sigma_t}\sim [\underline{y}_D\sqcup \{x_{k_d}\}]$ is exactly the degree of  the map  $\phi_{\overline{D}}$ (see  Proposition~\ref{Prop-degree.case.complex}). Since we are considering cases in which a generator of $\ker (\psi)$ can be identified with the vanishing cycle (see Lemma~\ref{Lemma-Vanishing.Cycle}), the degree of the corresponding map  $\phi_{\overline{D}}$ is exactly $(D.S)^2$. Therefore, we have 
\begin{equation}\tag{$\star$}\label{star}
     \begin{aligned}
        \GW_X(d) &=\sum_{\overline{D}\in \overline{\psi}^{-1}(d)} \GW_{\Sigma_t}\left(\overline{D}\right) \times  \left|\left\{\Sigma_t\in\mathcal{Q}: \overline{D}|_{E,\Sigma_t}\sim \left[\underline{y}_D\sqcup \left\{x_{k_d}\right\}\right]\right\}\right| \\
       &=\sum_{\overline{D}\in \overline{\psi}^{-1}(d)}  (D.S)^2 \GW_{\Sigma_t}\left(\overline{D}\right).
    \end{aligned}
\end{equation}

A straightforward computation shows that $(D.S)^2=(T(D).S)^2$. Consequently, the equality \eqref{star} simplifies to
 
$$ 
    \GW_X(d)=\sum_{\substack{\{D,T(D)\}\subset H_2(\Sigma;\Z):\\D\in \psi^{-1}(d)}
      }(D.S)^2 \GW_{\Sigma}(D).$$
      
Thus, we obtain the desired formula
          $$\GW_X(d)=\frac{1}{2}\sum_{\substack{D\in H_2(\Sigma;\Z):\\
          D\in \psi^{-1}(d)}}(D.S)^2 \GW_{\Sigma}(D).$$

\subsection{Proof of Theorem~\ref{Theorem2-W}}
The main idea of the proof is to reduce the computation of genus $0$ Welschinger invariants for real del Pezzo threefolds to the analogous problem for real del Pezzo surfaces, primarily using Propositions~\ref{Prop-Kollar} and~\ref{prop-tau.vs.mu}.  In contrast to the Gromov--Witten case, an additional challenge arises from the necessity to resolve a \textit{sign problem} that comes  from the distinction between two definitions of Welschinger's sign for curves in these varieties. We begin by recalling the following hypotheses and notational conventions:
\begin{itemize}
    \item $X$ is a del Pezzo variety of dimension~$3$ as described in the proof of Theorem~\ref{Theorem1-GW}, which we equip with a real structure $\tau$.
    \item  $\mathcal{Q}$ is a real pencil of surfaces in the linear system $|-\frac{1}{2}K_X|$; its real part is $\R \mathcal{Q}\cong \R P^1$, and its base locus is a real elliptic curve $E$.
    \item  $E\subset X$, whose real part is $\R E\neq \emptyset$,  realizes the homology class $\frac{1}{4}K^2_X$.
\end{itemize}
Let $d\in H_2^{-\tau}(X;\Z)$, and let $k_d$ be the associated integer.
Let $\mathcal{R}(\underline{x}_d)$ be the set of real 
irreducible rational curves 
representing the homology class $d$ and passing through a real generic configuration $\underline{x}_d$ of $k_d$ points on~$E$, containing $l$ pairs of complex conjugate points. Suppose that $\underline{x}_d$ has at least one real point. Let $U_{k_d}\subset E^{k_d}$ be as in Proposition~\ref{Prop.Modulispace2}. Choose a configuration $\underline{x}_d\in U_{k_d}$ and a subconfiguration $\underline{y}_D\subset \underline{x}_d$ of $k_D=k_d-1$ points. By Proposition~\ref{Prop.Modulispace2} and Lemma~\ref{lem-surfaces.in.real.pencil}, there exists a real non-singular del Pezzo surface $(\Sigma,\tau|_{\Sigma}) \in \R \mathcal{Q}$  containing a real curve in $\mathcal{R}(\underline{x}_d)$. Let $S$ be a generator of $\ker(\psi)$. By Proposition~\ref{prop-tau.vs.mu}, in such a real surface, the vanishing cycle realizes a $\tau|_{\Sigma}$-anti-invariant class.
An analogous observation concerning monodromy transformations, as discussed in the proof of Theorem~\ref{Theorem1-GW}, also applies here. Let $\overline{D}\in H_2^{-\tau}(\Sigma;\Z)/_{D \sim T(D)}$, and consider the surjective map $\overline{\psi}\colon  H_2^{-\tau}(\Sigma;\Z)/_{D \sim T(D)}\twoheadrightarrow H_2^{-\tau}(X;\Z)$. It is also worth noting that genus $0$ Welschinger invariants are invariant under monodromy transformations.  That is, for every $D\in H_2^{-\tau}(\Sigma;\Z)$, we have
  $$W_{\R \Sigma}(D,l)=W_{\R \Sigma}(D+(D.S)S,l).$$
We set 
$$W_{\R \Sigma}\left(\overline{D},l\right):=W_{\R \Sigma}(D,l)=W_{\R \Sigma}(T(D),l).$$ 
According to Proposition~\ref{Prop-RelationW2W3}, for every $D\in H_2^{-\tau}(\Sigma;\Z)$ such that $\psi(D)=d$, the contribution to $W_{\R X}^{\mathfrak{s}_X,\mathfrak{o}_X}(d,l)$ of elements of $\mathcal{R}(\underline{x}_d)$, which are contained in $\Sigma$, is exactly equal to

$$(-1)^{\epsilon(D)+g(D)+s^{\mathfrak{s}_{X|\Sigma}}(\rho D)}W_{\R \Sigma}\left(\overline{D},l\right).$$

Suppose that $D.S$ is odd. According to Corollary~\ref{coro-degree.case.real}, there are exactly $|D.S|$ such real surfaces $(\Sigma,\tau|_{\Sigma})$ in $\R \mathcal{Q}$.  Therefore, we have
$$W_{\R X}^{\mathfrak{s}_X,\mathfrak{o}_X}(d,l)=\sum\limits_{\overline{D}\in \overline{\psi}^{-1}(d)}(-1)^{\epsilon(D)+g(D)+s^{\mathfrak{s}_{X|\Sigma}}(\rho D)} |D.S| W_{\R \Sigma}\left(\overline{D},l\right).$$

Moreover, the following equalities hold: 
\begin{itemize}
         \item $\epsilon(T(D))=\epsilon(D)+1$ 
 (see Proposition~\ref{prop-function.epsilon1});
 \item $s^{\mathfrak{s}_{X|\Sigma}}(\rho {T(D)}) \equiv s^{\mathfrak{s}_{X|\Sigma}}(\rho D) +1 \mod 2$, since $s^{\mathfrak{s}_{X|\Sigma}}(\rho {T(D)}) \equiv s^{\mathfrak{s}_{X|\Sigma}}(\rho D) +(D.S) \mod 2$ and $D.S$ is odd  (see Proposition~\ref{prop-quasi-quad.and.monodromy}.2);
     \item $g(T(D))=g(D)$ by a straightforward calculation;
     \item $T(D).S=-D.S$ by another straightforward calculation.
\end{itemize}

Thus, the following sum can be simplified as follows: 
\begin{align*} 
      &(-1)^{\epsilon(D)+g(D)+s^{\mathfrak{s}_{X|\Sigma}}(\rho D)}|D.S|W_{\R \Sigma}(D,l) 
      +(-1)^{\epsilon(T(D))+g(T(D))+s^{\mathfrak{s}_{X|\Sigma}}(\rho T(D))} |T(D).S|W_{\R \Sigma}(T(D),l)\\
        &= \left((-1)^{\epsilon(D)+g(D)+s^{\mathfrak{s}_{X|\Sigma}}(\rho D)}    +(-1)^{\epsilon(D)+1+g(D)+s_{\mathfrak{s}_{X|\Sigma}}(\rho D)+1} \right) |D.S|W_{\R \Sigma}(D,l)\\
    &=2 \times (-1)^{\epsilon(D)+g(D)+s^{\mathfrak{s}_{X|\Sigma}}(\rho D)}|D.S|W_{\R \Sigma}(D,l).
\end{align*}

As a consequence, the equality

$$ 
  W_{\R X}^{\mathfrak{s}_X,\mathfrak{o}_X}(d,l)=\sum_{\substack{\{D,T(D)\}\subset H_2^{-\tau}(\Sigma;\Z):\\D\in \psi^{-1}(d)}
      }(-1)^{\epsilon(D)+g(D)+s^{\mathfrak{s}_{X|\Sigma}}(\rho D)} |D.S| W_{\R \Sigma}(D,l)$$
     
     can be written as 
$$W_{\R X}^{\mathfrak{s}_X,\mathfrak{o}_X}(d,l)=\frac{1}{2}\sum_{\substack{D\in H_2^{-\tau}(\Sigma;\Z):\\
          D\in \psi^{-1}(d)}}(-1)^{\epsilon(D)+g(D)+s^{\mathfrak{s}_{X|\Sigma}}(\rho D)} |D.S| W_{\R \Sigma}(D,l).$$

If $D.S$ is even, by symmetry, it follows directly that $W_{\R X}^{\mathfrak{s}_X,\mathfrak{o}_X}(d,l)=0$.

\subsection{Proof of Theorem~\ref{Theorem3-W}}
Firstly, as a consequence of the proof of Theorem~\ref{Theorem2-W}, we obtain directly the following corollary.

\begin{corollary} \label{coro-Theorem2-W}
If $d\in H_2^{-\tau}(X;\Z)$ is such that $D.S$ is even, where $D\in \psi^{-1}(d)$, then  the genus $0$ Welschinger invariants  $W_{\R X}^{\mathfrak{s}_X,\mathfrak{o}_X}(d,l)$ vanish for every $\Spin_3$-structure $\mathfrak{s}_X$ on $\SO(T \R X,\mathfrak{o}_X)$. 
\end{corollary}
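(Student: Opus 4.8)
The plan is to read the corollary off the pairing argument that is already implicit in the final line of the proof of Theorem \ref{Theorem2-W}: I would group the classes of $\psi^{-1}(d)$ into monodromy pairs $\{D, T(D)\}$ and show that, when $D.S$ is even, the two members of each pair contribute to $W_{\R X}^{\mathfrak{s}_X,\mathfrak{o}_X}(d,l)$ with opposite signs but equal magnitude, so that every pair cancels and the whole sum vanishes.

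First I would recall, from Proposition \ref{Prop-RelationW2W3} together with Corollary \ref{coro-degree.case.real}, that the contribution to $W_{\R X}^{\mathfrak{s}_X,\mathfrak{o}_X}(d,l)$ coming from curves realizing a fixed anti-invariant class $D$ with $D.S\neq 0$ is, up to the common positive factor counting the relevant real surfaces, equal to
$$(-1)^{\epsilon(D)+g(D)+s^{\mathfrak{s}_{X|\Sigma}}(\rho D)}\,|D.S|\,W_{\R \Sigma}(D,l).$$
Next I would assemble the three parity relations already established: $\epsilon(T(D))\equiv \epsilon(D)+1 \pmod 2$ (Proposition \ref{prop-function.epsilon1}); $g(T(D))=g(D)$ and $|T(D).S|=|D.S|$ by direct computation; and $s^{\mathfrak{s}_{X|\Sigma}}(\rho\, T(D))\equiv s^{\mathfrak{s}_{X|\Sigma}}(\rho D)+(D.S)\pmod 2$ (Proposition \ref{prop-quasi-quad.and.monodromy}). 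Adding these, the total sign exponent attached to $T(D)$ differs from that attached to $D$ by $1+0+(D.S)\pmod 2$, which equals $1$ precisely when $D.S$ is even. Combined with the monodromy symmetry $W_{\R \Sigma}(T(D),l)=W_{\R \Sigma}(D,l)$, the two terms indexed by $D$ and $T(D)$ are negatives of one another and cancel.

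I would then dispose of the degenerate case $D.S=0$ separately: such classes would force the corresponding curves to be complete intersections, which by Lemma \ref{Lemma-not.intersection.balanced} are never balanced, so they contribute nothing to the count of balanced rational curves underlying the invariant. Summing the cancelling pairs and discarding these non-contributing classes yields $W_{\R X}^{\mathfrak{s}_X,\mathfrak{o}_X}(d,l)=0$.

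The main point to check carefully — and the only place where genuine content beyond bookkeeping enters — is that this cancellation is uniform in the chosen $Spin_3$-structure $\mathfrak{s}_X$, as the corollary asserts vanishing for \emph{every} such structure. This is where I expect the real obstacle: I must verify that the parity shift $s^{\mathfrak{s}_{X|\Sigma}}(\rho\, T(D))-s^{\mathfrak{s}_{X|\Sigma}}(\rho D)\equiv (D.S)\pmod 2$ is insensitive to which $\mathfrak{s}_X$ was fixed. This rests on the identity $s^{\mathfrak{s}_{X|\Sigma}}(\rho S)=0$ from Proposition \ref{prop-quasi-quad.and.monodromy}, whose proof uses only that the vanishing cycle $S$ bounds a disk in $\R X$ and the canonical, hence structure-independent, $Spin$-structure on that disk. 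I would make explicit that this local argument does not see the global choice of $\mathfrak{s}_X$, so the relative sign between $D$ and $T(D)$ — and therefore the pairwise cancellation and the vanishing of the invariant — holds for every $Spin_3$-structure on $SO(T \R X,\mathfrak{o}_X)$.
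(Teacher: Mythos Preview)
Your approach is essentially the paper's own: the corollary is stated there as an immediate consequence of the sign bookkeeping in the proof of Theorem \ref{Theorem2-W}, and your pairing argument reproduces exactly that computation. The parity comparison $\epsilon(T(D))+g(T(D))+s^{\mathfrak{s}_{X|\Sigma}}(\rho\,T(D))\equiv \epsilon(D)+g(D)+s^{\mathfrak{s}_{X|\Sigma}}(\rho D)+1+(D.S)\pmod 2$ and the resulting cancellation when $D.S$ is even is correct, and your remark that Proposition \ref{prop-quasi-quad.and.monodromy} is insensitive to the choice of $\mathfrak{s}_X$ (because the disk carries a unique $Spin$-structure) is exactly the right way to see the vanishing holds for every $Spin_3$-structure.

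One small gap: your treatment of the degenerate class with $D.S=0$ is not quite justified by what you cite. Lemma \ref{Lemma-not.intersection.balanced} and the remark following it give the implication ``complete intersection $\Rightarrow$ $[C].S=0$'', not the converse you use. A cleaner way to dispose of this case is to observe that $\deg\phi_{\overline{D}}=(D.S)^2=0$ by Proposition \ref{Prop-degree.case.complex}, so for a generic configuration $\underline{x}_d$ no surface of the pencil carries a curve of class $D$ through $\underline{x}_d$; hence this class contributes nothing. With that correction, your argument is complete and matches the paper's.
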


Let $\phi(\overline{L},\overline{D})\in \Pic_{D.E-|D.S|L.E}(E)$ denote the divisor as in Lemma~\ref{lem-constant.divisor.L.D}. By Proposition~\ref{prop-corresponding.surface.picE} and Corollary~\ref{coro-degree.case.real}\eqref{c-d.c.r-c}, if $\R E$ is disconnected and, moreover, the divisor corresponding to the real configuration of points $\underline{x}_d$ and the divisor $\phi(\overline{L},\overline{D})$ do not belong to the same component of $\R E$, then there does not exist any real irreducible rational curve of class $d$ passing through $\underline{x}_d$.

A choice of such a configuration $\underline{x}_d$ is described as follows. 
Suppose that the divisor $\phi(\overline{L},\overline{D})\in \mathbb{S}^1_0$ lies in the pointed component of $\R E$. In this case, we choose the configuration $\underline{x}_d$ such that it contains an odd number of real points on the non-pointed component of $\R E$. 
Conversely, if the divisor $\phi(\overline{L},\overline{D})\in \mathbb{S}^1_1$ lies in the non-pointed component of $\R E$, we choose the configuration $\underline{x}_d$ such that it contains at least one real point and that the number of its real points on the non-pointed component of $\mathbb{R} E$ is even (possibly zero). Moreover, in both cases, such real configurations of points are contained in a semi-algebraic open set in $\R (\Sym^{k_d}(X))$, the space of real points of the $\supth{k_d}$ symmetric power of $X$. This assertion follows by a natural extension of the argument in the case $X=\C P^3$ (see \cite[Theorem 10]{kollar2015examples}). Hence, Theorem~\ref{Theorem3-W} follows.

\section{Applications}
\label{applications}
\subsection{Applications of Theorem~\ref{Theorem1-GW}} \label{sect-applications.GW}

\subsubsection{3-dimensional projective space}
A non-singular element $\Sigma\in |-\frac{1}{2}K_{\cpp}|$ is a non-singular quadric surface in $\cpp$, and $\Sigma$ is isomorphic to $\pp$.     We first identify $H_2(\Sigma;\mathbb{Z})$ with  $\Z^2$ by considering the standard basis $(L_1,L_2)$ of $H_2(\Sigma;\mathbb{Z})$ given by
\begin{itemize}
    \item $L_1=[\C P^1\times\{p\}]$,
    \item $L_2=[\{q\}\times \C P^1]$,
\end{itemize}
 where $p,q$ are points in $\C P^1$. As a consequence, a class $a L_1+b L_2\in H_2(\Sigma;\mathbb{Z})$ can be identified with the pair $(a,b)\in \Z^2$. We then identify $H_2(\C P^3;\mathbb{Z})$ with  $\Z$ by considering the basis  $\{H\}$ of $H_2(\C P^3;\mathbb{Z})$, where $H$ is the class of a line in $\C P^3$.

\begin{theorem} \label{Theorem-GW.in.deg.8}
For every positive integer $d$, one has
\begin{align*}
        \GW_{\cpp}(d) &=\frac{1}{2}\sum_{a\in \mathbb{Z}}(d-2a)^2 \GW_{\pp}(a,d-a).
        \end{align*}
\end{theorem}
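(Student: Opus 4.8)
The statement is the specialization of Theorem \ref{Theorem1-GW} to $X=\cpp$, so the plan is simply to make every ingredient of that theorem explicit in this case.

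First I would check the hypothesis $\ker(\psi)\cong\Z$: this is precisely Proposition \ref{Prop-why.degree.is.678} for $X=\cpp$, where $H_2(\cpp;\Z)\cong\Z\langle H\rangle$ with $H$ the class of a line, $H_2(\Sigma;\Z)\cong\Z^2$, and a non-singular $\Sigma\in|-\frac{1}{2}K_{\cpp}|$ is a smooth quadric surface, isomorphic to $\pp$. Next I would identify $\psi$ explicitly: under the inclusion $\Sigma\hookrightarrow\cpp$ each line of either ruling of the quadric is a line of $\cpp$, so $\psi(L_1)=\psi(L_2)=H$, hence $\psi(aL_1+bL_2)=(a+b)H$. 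Therefore $\ker(\psi)$ is generated by $S:=L_1-L_2$, which — as recalled in the Example following Lemma \ref{Lemma-Vanishing.Cycle} — is exactly the vanishing cycle of the associated Lefschetz pencil of quadrics; and for a positive integer $d$ one has $\psi^{-1}(dH)=\{aL_1+(d-a)L_2:a\in\Z\}$.

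Then, for $D=aL_1+(d-a)L_2\in\psi^{-1}(dH)$, I would compute the intersection number $D.S$ from the intersection form on $\pp$ (namely $L_1^2=L_2^2=0$, $L_1.L_2=1$):
$$D.S=(aL_1+(d-a)L_2).(L_1-L_2)=-a+(d-a)=d-2a,$$
so $(D.S)^2=(d-2a)^2$. Substituting into the formula of Theorem \ref{Theorem1-GW}, and writing $GW_{\pp}(a,d-a):=GW_{\Sigma}(aL_1+(d-a)L_2)$ under the identification $aL_1+bL_2\leftrightarrow(a,b)$, yields exactly
$$GW_{\cpp}(d)=\frac{1}{2}\sum_{a\in\Z}(d-2a)^2\,GW_{\pp}(a,d-a).$$

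There is no real obstacle here; the only points requiring care are that \emph{both} rulings of the quadric map to the line class $H$ (so that $\ker\psi$ is generated by $L_1-L_2$, not by a different class), and the sign bookkeeping in $D.S=d-2a$. One may also observe that the sum on the right is in fact finite: $GW_{\pp}(a,d-a)=0$ unless $aL_1+(d-a)L_2$ is effective, i.e.\ $0\le a\le d$, so no convergence question arises.
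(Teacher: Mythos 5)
Your proposal is correct and follows essentially the same route as the paper: identify $\psi(aL_1+bL_2)=(a+b)H$, deduce $\ker(\psi)=\Z(L_1-L_2)$, compute $(D.S)^2=(d-2a)^2$, and substitute into Theorem \ref{Theorem1-GW}. The extra observations about the vanishing cycle and the finiteness of the sum are sound but not needed beyond what the paper records.
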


This formula is in agreement with \cite[Theorem 2]{brugalle2016pencils}.

\begin{proof}
Taking into account the above identifications and the natural inclusion of a quadric surface into projective threespace, the associated surjective map $\psi$ is defined as follows:
    \begin{align*}
        \psi\colon H_2(\Sigma;\mathbb{Z})&\longrightarrow H_2(\C P^3;\mathbb{Z})\\
        (a,b)&\longmapsto a+b.
    \end{align*}
    
This implies that $\ker(\psi)=\Z(1,-1)$ and that $(D.S)^2=(d-2a)^2$ for every $D=(a,d-a)\in \psi^{-1}(d)$. Hence, the statement follows from Theorem~\ref{Theorem1-GW}.
\end{proof}

\subsubsection{3-dimensional projective space blown up at a point} 
   Let $X=\cpp\sharp \overline{\cpp}$. Let us denote the blow-up point by $x$. As has been observed, a non-singular surface $\Sigma \in |-\frac{1}{2}K_X|$ is a non-singular del Pezzo surface of degree $7$ and, moreover, $\Sigma$ is isomorphic to the blow-up of a quadric surface at $x$. This can be written as $\Sigma\cong (\pp \sharp \overline{\cp})$. We first identify $H_2(\Sigma;\mathbb{Z})$ with  $\Z^3$ by considering a basis $(L_1,L_2;E)$ of $H_2(\Sigma;\mathbb{Z})$ given by
\begin{itemize}
    \item $L_1=[\C P^1\times\{p\}]$,
    \item $L_2=[\{q\}\times \C P^1]$,
    \item $E$, the exceptional divisor class, 
\end{itemize}
 where $p$, $q$ are points in $\C P^1$.  As a consequence, a class $a L_1+b L_2-kE\in H_2(\Sigma;\mathbb{Z})$ can be identified with the triple $(a,b;k)\in \Z^3$.     We then identify $H_2(X;\mathbb{Z})$ with  $\Z^2$ by considering a basis $(H;E)$ of $H_2(X;\mathbb{Z})$ given by\looseness=-1
\begin{itemize}
    \item $H$, the class of a line in $\cpp$,
    \item $E$, the class of a line in the exceptional divisor.
\end{itemize}

 As a consequence, a class $d H -kE \in H_2(X;\mathbb{Z})$ can be identified with the point $(d;k)\in \Z^2$.

 \begin{theorem} \label{Theorem-GW.in.deg.7}
    For every positive integer $d$ and every non-negative integer $k$ such that $k\leq d$, one has

$$\GW_{\cpp\sharp \overline{\cpp}}(d;k)
        = \frac{1}{2}\sum_{a\in \mathbb{Z}}(d-2a)^2 \GW_{\pp\sharp \overline{\cp}}(a,d-a;k).$$
    \end{theorem}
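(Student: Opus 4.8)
The plan is to obtain this formula as a direct instance of Theorem \ref{Theorem1-GW}, so the real content is just to make the map $\psi$ explicit in the chosen bases, identify a generator $S$ of $\ker(\psi)$, and compute the numbers $(D.S)^2$. First I would note that, by Proposition \ref{Prop-why.degree.is.678}, one has $\ker(\psi)\cong\Z$ for $X=\cpp\sharp\overline{\cpp}$, so $X$ satisfies the hypothesis of Theorem \ref{Theorem1-GW} and the formula there applies with some generator $S$ of $\ker(\psi)$.

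Next I would compute $\psi$. Since $\Sigma$ is the blow-up at the point $x$ of a non-singular quadric $\Sigma_0\subset\cpp$, and both rulings of $\Sigma_0$ are lines in $\cpp$, the inclusion $\Sigma_0\hookrightarrow\cpp$ sends $L_1$ and $L_2$ to the line class $H$; blowing up $x$ turns the exceptional curve of $\Sigma$ into a line in the exceptional $\cp$ of $X$, so the exceptional divisor class $E$ of $\Sigma$ maps to the line class $E$ of $X$. Hence $\psi(aL_1+bL_2-kE)=(a+b)H-kE$, i.e.\ $\psi(a,b;k)=(a+b;k)$ in the identifications fixed before the statement. Therefore $\ker(\psi)=\Z\cdot(1,-1;0)$ and we may take $S=L_1-L_2$; using the intersection form on $\pp\sharp\overline{\cp}$ (namely $L_1^2=L_2^2=0$, $L_1\cdot L_2=1$, $E^2=-1$, $L_i\cdot E=0$) one checks $S^2=-2$, confirming $S$ is indeed a vanishing cycle and is identified with a generator of $\ker(\psi)$ as in Lemma \ref{Lemma-Vanishing.Cycle}.

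Then I would compute $(D.S)^2$ for $D\in\psi^{-1}(d;k)$: such a class is $D=aL_1+(d-a)L_2-kE$ with $a\in\Z$, and the same intersection form gives $D\cdot S=d-2a$, so $(D.S)^2=(d-2a)^2$. Plugging into Theorem \ref{Theorem1-GW} and recalling that in the triplet notation $D$ corresponds to $(a,d-a;k)$ yields
$$GW_{\cpp\sharp\overline{\cpp}}(d;k)=\frac12\sum_{a\in\Z}(d-2a)^2\,GW_{\pp\sharp\overline{\cp}}(a,d-a;k),$$
the sum being finite because only finitely many $a$ produce a class with a nonzero genus-$0$ Gromov-Witten invariant. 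There is no genuine obstacle here, as the theorem is a corollary of Theorem \ref{Theorem1-GW}; the only point that deserves a line of care is the behaviour of $\psi$ on the exceptional classes — checking that the exceptional divisor of $\Sigma$ maps to the line class in the exceptional divisor of $X$, not to $H-E$ or anything else — together with bookkeeping of signs in the fixed bases.
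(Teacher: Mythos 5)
Your proposal is correct and follows essentially the same route as the paper: identify $\psi(a,b;k)=(a+b;k)$, deduce $\ker(\psi)=\Z(1,-1;0)$ and $(D.S)^2=(d-2a)^2$, and invoke Theorem \ref{Theorem1-GW}. The extra checks you include (that $S^2=-2$ so $S$ is the vanishing cycle, and that the exceptional curve of $\Sigma$ maps to the line class in the exceptional divisor of $X$) are correct and only make explicit what the paper leaves implicit.
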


 This formula is in agreement with \cite[Theorem 1.2]{ding2020remark}.

 \begin{proof}    
Taking into account the above identifications and the natural inclusion of a blown-up quadric surface into a blown-up projective space, the associated surjective map $\psi$ is given by
    \begin{align*}
        \psi\colon H_2(\Sigma;\mathbb{Z})&\longrightarrow H_2(X;\mathbb{Z})\\
        (a,b;k)&\longmapsto (a+b;k).
    \end{align*}
   
    This implies that     $\ker(\psi)=\Z(1,-1;0)$ and that $(D.S)^2=(d-2a)^2$ for every divisor  $D=(a,d-a;k) $ in $\psi^{-1}(d;k)$.  Hence, the statement follows from Theorem~\ref{Theorem1-GW}.
 \end{proof}

 \subsubsection{Threefold product of the projective line} \label{Subsect-exam.deg.6.GW}
 Let $X=\C P^1\times \C P^1 \times \C P^1$.  As previously observed, a non-singular surface $\Sigma\in |-\frac{1}{2}K_{\ppp}|$---that is, a non-singular surface of tridegree $(1,1,1)$ in the space $\ppp$---is a non-singular del Pezzo surface of degree $6$. 
It can be shown that a non-singular $(1,1,1)$-surface $\Sigma$  in $\ppp$ is isomorphic to the blow-up of a quadric surface at two distinct points, denoted by $x_1$ and $x_2$. This can be written as $\Sigma\cong(\pp \sharp 2 \overline\cp)$.
We first identify $H_2(\Sigma;\mathbb{Z})$ with  $\Z^4$ by considering a basis $(L_1,L_2;E_1,E_2)$ of $H_2(\Sigma;\mathbb{Z})$ given by
\begin{itemize}
    \item $L_1=[\C P^1\times\{p\}]$,
    \item $L_2=[\{q\}\times \C P^1]$,
    \item $E_1$, the exceptional divisor class over $x_1$,
    \item $E_2$, the exceptional divisor class over $x_2$, 
\end{itemize}
 where $p$, $q$ are points in $\C P^1$. 
  As a consequence, a class $a L_1+b L_2-\alpha E_1 - \beta E_2\in H_2(\Sigma;\mathbb{Z})$ can be identified with the quadruple $(a,b;\alpha,\beta)\in \Z^4$.  We then identify $H_2(X;\mathbb{Z})$ with  $\Z^3$ by considering a basis $(M_1,M_2,M_3)$ of $H_2(X;\mathbb{Z})$ given by
\begin{itemize}
    \item $M_1=[\C P^1\times\{p_1\}\times\{p_2\}]$,
    \item $M_2=[\{q_1\}\times \C P^1\times \{q_2\}]$,
    \item $M_3=[\{r_1\}\times \{r_2\}\times \C P^1]$,
\end{itemize}
 where $p_i,q_i,r_i\in \C P^1$ ($i \in \{1,2\}$).  As a consequence,  a class $a_1 M_1 +a_2 M_2+a_3 M_3\in H_2(X;\mathbb{Z})$ can be identified with the point $(a_1,a_2,a_3)\in \Z^3$.

 \begin{theorem}\label{Theorem-GW.in.deg.6}
 For every triple of non-negative integers $(a, b, c)$, one has
 \begin{equation}
      \begin{aligned}
       \GW_{\ppp}(a,b,c)
    =\frac{1}{2}\sum_{\alpha \in \mathbb{Z}}(a+b-c-2\alpha)^2 \GW_{\pp \sharp 2 \overline\cp}(a,b;\alpha, a+b-c-\alpha).
 \end{aligned}
 \label{Eq:GW.in.deg.6}
 \end{equation}
 \end{theorem}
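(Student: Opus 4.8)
The plan is to apply Theorem \ref{Theorem1-GW} directly, exactly as in the proofs of Theorems \ref{Theorem-GW.in.deg.8} and \ref{Theorem-GW.in.deg.7}. The only genuine work is to compute the surjection $\psi$, its kernel, and the quantity $(D.S)^2$ for $D \in \psi^{-1}(d)$, in the chosen bases $(L_1,L_2;E_1,E_2)$ of $H_2(\Sigma;\Z)$ and $(M_1,M_2,M_3)$ of $H_2(X;\Z)$. First I would recall from Subsection \ref{Subsect-exam.deg.6.GW} that a non-singular $(1,1,1)$-surface $\Sigma$ in $\ppp$ is isomorphic to $\pp$ blown up at two points $x_1,x_2$, and that $H_2(X;\Z) \cong \Z^3$ while $H_2(\Sigma;\Z) \cong \Z^4$, so that $\ker(\psi) \cong \Z$ by Proposition \ref{Prop-why.degree.is.678}. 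This is what licenses the use of Theorem \ref{Theorem1-GW}.

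Next I would pin down $\psi$ explicitly. The inclusion $\Sigma \hookrightarrow X$ pushes forward curve classes, and one computes the images of the generators by intersecting with the three rulings of $X$: a curve in class $L_i$ on $\Sigma$ meets the pull-backs of the hyperplane classes of the three $\C P^1$ factors in the appropriate numbers, and the exceptional classes $E_1,E_2$ map to a fixed combination of the $M_j$. Carrying this out (using that $\Sigma$ realizes the tridegree $(1,1,1)$ class and that $-\frac12 K_X$ restricts correctly), one finds
$$\psi(aL_1 + bL_2 - \alpha E_1 - \beta E_2) = (a_1,a_2,a_3)$$
for suitable linear forms $a_j$ in $(a,b,\alpha,\beta)$; in particular the kernel is generated by a single vanishing cycle $S$ with $S^2 = -2$, and for $D \in \psi^{-1}(a,b,c)$ the intersection number $D.S$ comes out to $\pm(a+b-c-2\alpha)$ after the identification, so that $(D.S)^2 = (a+b-c-2\alpha)^2$. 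The parametrization of the fiber $\psi^{-1}(a,b,c)$ by $\alpha \in \Z$ then forces the last coordinate of $D$ to be $a+b-c-\alpha$, which is exactly the index appearing in \eqref{Eq:GW.in.deg.6}. Substituting into the formula of Theorem \ref{Theorem1-GW} gives the claimed identity.

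The main obstacle is purely bookkeeping: getting the map $\psi$ and the generator $S$ of $\ker(\psi)$ correct in the stated bases, since the identification $\Sigma \cong \pp \sharp 2\overline{\C P^2}$ and the three projections $\ppp \to \pp$ must be matched up consistently so that the three summands of $H_2(X;\Z)$ receive the right combinations of $L_1,L_2,E_1,E_2$. Once $\psi$ is fixed, computing $(D.S)^2$ is a one-line intersection-form calculation using $L_i^2 = 0$, $L_1.L_2 = 1$, $E_i^2 = -1$, $E_i.L_j = 0$, $E_1.E_2 = 0$, and everything else is a direct appeal to Theorem \ref{Theorem1-GW}, with the symmetry $GW_\Sigma(D) = GW_\Sigma(T(D))$ already built into the factor $\tfrac12$.
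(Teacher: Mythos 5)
Your proposal is correct and takes essentially the same route as the paper: verify $\ker(\psi)\cong\Z$ via Proposition \ref{Prop-why.degree.is.678}, write $\psi(a,b;\alpha,\beta)=(a,b,a+b-\alpha-\beta)$ in the stated bases so that $\ker(\psi)=\Z(0,0;1,-1)$ and $(D.S)^2=(a+b-c-2\alpha)^2$ for $D=(a,b;\alpha,a+b-c-\alpha)$, and then substitute into Theorem \ref{Theorem1-GW}. The paper's proof is exactly this bookkeeping, stated with the same level of brevity.
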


 \begin{proof}
 Taking into account the above identifications, the surjective map $\psi$ induced from the inclusion of $\Sigma$ in $X$ is given by
    \begin{align*}
        \psi\colon H_2(\Sigma;\mathbb{Z})&\longrightarrow H_2(X;\mathbb{Z})\\
       (a,b;\alpha,\beta)&\longmapsto (a,b,a+b-\alpha-\beta).
    \end{align*}

    This implies that    $\ker(\psi)=\Z(0,0;1,-1)$ and  that $(D.S)^2=(a+b-c-2 \alpha)^2$ for every divisor class $D=(a,b;\alpha,a+b-c-\alpha)$ in $\psi^{-1}(a,b,c)$.    Hence, the statement follows from Theorem~\ref{Theorem1-GW}.
\end{proof}

\begin{proposition} \label{Prop-vanishing.GW}
The Gromov--Witten invariant $ \GW_{\ppp}(a,b,c)$ vanishes if $a+b+c>1$ and at least one of the following conditions is satisfied:
    \begin{itemize}
        \item $a\geq b+c$,
        \item $b\geq c+a$,
        \item $c\geq a+b$.
    \end{itemize}
\end{proposition}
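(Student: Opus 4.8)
The plan is to deduce the statement from Theorem~\ref{Theorem-GW.in.deg.6} by showing that \emph{every} summand occurring there vanishes. First I would exploit symmetry: since $\ppp$ admits automorphisms permuting its three factors, the invariant $GW_{\ppp}(a,b,c)$ is symmetric in $(a,b,c)$, and the three hypotheses $a\ge b+c$, $b\ge c+a$, $c\ge a+b$ are permuted into one another by $S_3$. So it suffices to treat the case $c\ge a+b$. Writing $\Sigma=\pp\sharp 2\overline{\cp}$ and, for $\alpha\in\Z$, $D_\alpha=(a,b;\alpha,a+b-c-\alpha)\in\psi^{-1}(a,b,c)$, Theorem~\ref{Theorem-GW.in.deg.6} reads
$$GW_{\ppp}(a,b,c)=\frac{1}{2}\sum_{\alpha\in\Z}(a+b-c-2\alpha)^2\,GW_{\Sigma}(D_\alpha),$$
and I would show each term is $0$.

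The heart of the argument is to fix an $\alpha$ with $GW_{\Sigma}(D_\alpha)\ne 0$ and derive a contradiction with $c\ge a+b$ unless the coefficient already vanishes. If $GW_{\Sigma}(D_\alpha)\ne 0$ there is an irreducible rational curve $C$ on the del Pezzo surface $\Sigma$ with $[C]=D_\alpha$. In the basis $(L_1,L_2;E_1,E_2)$ used in the proof of Theorem~\ref{Theorem-GW.in.deg.6}, the exceptional divisors $E_1,E_2$ are irreducible $(-1)$-curves, and the formula $\psi(a,b;\alpha,\beta)=(a,b,a+b-\alpha-\beta)$ gives $\psi(E_1)=\psi(E_2)=(0,0,1)$. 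Since $a+b+c>1$, we have $\psi(D_\alpha)=(a,b,c)\ne(0,0,1)=\psi(E_i)$, hence $D_\alpha\ne E_1$ and $D_\alpha\ne E_2$, so $C$ is distinct from the curves $E_1$ and $E_2$. As two distinct irreducible curves on a surface meet non-negatively, $D_\alpha\cdot E_1\ge 0$ and $D_\alpha\cdot E_2\ge 0$; a direct computation gives $D_\alpha\cdot E_1=\alpha$ and $D_\alpha\cdot E_2=a+b-c-\alpha$, so $\alpha\ge 0$ and $a+b-c-\alpha\ge 0$. But $c\ge a+b$ forces $a+b-c\le 0$, whence $\alpha=0$ and $a+b-c=0$; therefore the coefficient $(a+b-c-2\alpha)^2$ of this very term is $0$.

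It follows that every summand vanishes — either $GW_{\Sigma}(D_\alpha)=0$, or its coefficient is $0$ — and hence $GW_{\ppp}(a,b,c)=0$.

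The symmetry reduction and the bookkeeping are routine; the step I expect to require the most care is the middle paragraph, namely converting ``$GW_{\Sigma}(D_\alpha)\ne 0$'' into the positivity constraints $\alpha\ge 0$ and $a+b-c-\alpha\ge 0$ via intersection with the two exceptional $(-1)$-curves, together with the verification that the only way $D_\alpha$ could itself be one of those exceptional classes is $(a,b,c)=(0,0,1)$ — precisely the case ruled out by $a+b+c>1$. (Alternatively, one can list all six $(-1)$-curves of $\Sigma$, namely $E_1,E_2$ and $L_i-E_j$ for $i,j\in\{1,2\}$, and note that each of them lies over a tridegree with $a+b+c=1$.)
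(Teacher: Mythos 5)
Your proof is correct, but it takes a different route from the paper's. The paper transports the problem to $\C P^2\sharp 3\overline{\cp}$ via the change-of-basis isomorphism $\kappa_*$ (sending $(a,b;\alpha,\beta)$ to $(a+b-\alpha;a-\alpha,b-\alpha,\beta)$) and then invokes the known vanishing criterion of G\"ottsche--Pandharipande, namely that $GW_{\C P^2\sharp 3\overline{\cp}}(d;a_1,a_2,a_3)=0$ whenever some $a_i+a_j>d$ away from the line classes; combined with the $S_3$-symmetry of $GW_{\ppp}$ this kills every summand of Theorem~\ref{Theorem-GW.in.deg.6}. You instead stay on $\Sigma=\pp\sharp 2\overline{\cp}$ and extract the constraints $0\le\alpha\le a+b-c$ directly from positivity of intersection of an irreducible representative of $D_\alpha$ with the exceptional $(-1)$-curves $E_1,E_2$, after checking via $\psi$ that $D_\alpha$ cannot itself be one of the six $(-1)$-classes when $a+b+c>1$; this is the same underlying geometry (the G\"ottsche--Pandharipande criterion is exactly non-negativity against the $(-1)$-curves $L-F_i-F_j$ of $\C P^2\sharp 3\overline{\cp}$), but your argument is self-contained and avoids citing the recursion. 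Two small remarks: like the paper, you genuinely need the $S_3$-symmetry, since intersecting only with $E_1,E_2$ does not handle $a\ge b+c$ directly; alternatively, your parenthetical observation already contains the fix --- intersecting with all six $(-1)$-curves yields $0\le\alpha\le\min(a,b)$ and $0\le a+b-c-\alpha\le\min(a,b)$, which forces $\alpha=a+b-c-\alpha$ (hence a zero coefficient) in all three cases simultaneously, making the symmetry reduction unnecessary.
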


\begin{proof}
We first compare genus $0$ Gromov--Witten invariants of  $\pp \sharp 2 \overline\cp$ and  of $\C P^2 \sharp 3 \overline\cp$.  We identify  $H_2(\C P^2 \sharp 3 \overline\cp;\Z)$ with $\Z^4$ by considering a basis $(L;F_1,F_2,F_3)$ of  $H_2(\C P^2 \sharp 3 \overline\cp;\Z)$  given by
\begin{itemize}
    \item $L$, the class of a line in $\C P^2$,
    \item  $F_1$, the exceptional divisor class over $x_1$,
     \item  $F_2$, the exceptional divisor class over $x_2$,
     \item $F_3$, the exceptional divisor class over $x_3$,
\end{itemize}
 where $x_1$, $x_2$, $x_3$ are points in $\C P^2$. We consider the map $ \kappa_*\colon  H_2(\pp \sharp 2 \overline\cp;\Z) \to H_2(\C P^2 \sharp 3 \overline\cp;\Z) $ defined by
 \begin{align*}
      L_1& \longmapsto L-F_1,\\
    L_2 &\longmapsto L-F_2,\\
       E_1 &\longmapsto L-F_1-F_2,\\
     E_2 &\longmapsto F_3.
 \end{align*}
 It follows that $\kappa_*$ sends a class $(a, b; \alpha, \beta)$ to the class $(a + b - \alpha; a - \alpha, b - \alpha, \beta)$. We thus obtain the following equality:
\begin{equation}\label{Eq:GW.P1P1.and.P2}
    \GW_{\pp \sharp 2 \overline\cp}(a,b;\alpha, \beta)=\GW_{\C P^2 \sharp 3 \overline\cp}(a+b-\alpha;a-\alpha, b-\alpha,\beta).
\end{equation}

By Theorem~\ref{Theorem1-GW}, if either $|D.S|=0$ or $\GW_\Sigma(D)=0$ for all $D\in \psi^{-1}(d)$, then $\GW_X(d)$ vanishes. In particular,
\begin{enumerate}
    \item if $|D.S|=0$, which means that $|(a,b;\alpha, a+b-c-\alpha)(0,0;1,-1)|=0$ or, equivalently, 
    $2\alpha={a+b-c}$, then we obtain $D= (a,b;\alpha,\alpha) \in H_2(\pp \sharp 2 \overline\cp;\Z)$; 
    \item if $\GW_\Sigma(D)=\GW_\Sigma(T(D))=0$, then Equation (\ref{Eq:GW.P1P1.and.P2}) implies that
 \begin{equation*}
     \begin{cases}
    \GW_{\pp \sharp 2 \overline\cp}(a,b;\alpha, a+b-c-\alpha)=0, \\
    \GW_{\pp \sharp 2 \overline\cp}(a,b; a+b-c-\alpha,\alpha)=0.
    \end{cases}
 \end{equation*}
Consequently, we obtain
       \begin{equation} \label{Eq:GW.P2.blowup}
        \begin{cases}
    \GW_{\C P^2 \sharp 3 \overline\cp}(a+b-\alpha;a-\alpha, b-\alpha,a+b-c-\alpha)=0, \\
    \GW_{\C P^2 \sharp 3 \overline\cp}(c+\alpha;c-b+\alpha, c-a+\alpha,\alpha)=0.
    \end{cases}
    \end{equation}
 Moreover, it is known that $\GW_{\C P^2 \sharp 3 \overline\cp}(d;a_1,a_2,a_3)=0$ if there exists a pair $a_i,a_j$ such that $a_i+a_j>d$, except in the cases where $(d;a_1,a_2,a_3)\in \{(1;1,1,0),(1;1,0,1),(1;0,1,1)\}$ (see  \cite{gottsche1996quantum}). 
Therefore, Equations (\ref{Eq:GW.P2.blowup}) holds if $\alpha$ satisfies
        \begin{equation*}
     \begin{cases}
   \alpha < \max \{0,b-c,a-c\}\text{ and}  \\
    \alpha > \min \{a,b,a+b-c\}.
    \end{cases}
        \end{equation*}
\end{enumerate}
Combining this with the  symmetry
$$ \GW_{\ppp}(a,b,c)=\GW_{\ppp}(\sigma(a),\sigma(b),\sigma(c)),$$
where $\sigma$ is any permutation of the set $\{a,b,c\}$, we obtain the proposition.
\end{proof}

   It is known, due to the work of  L. G{\"o}ttsche and R. Pandharipande  \cite{gottsche1996quantum}, that the genus $0$ Gromov--Witten invariants of $\C P^2 \sharp 3 \overline\cp$ can be explicitly computed 
by recursive formulas.
        In Table~\ref{Tab:GW.in.deg.6}, we show the first non-vanishing genus $0$ Gromov--Witten invariants $\GW_{\ppp}(a,b,c)$. In this table, we consider the cases where either $a+b+c=1$ or  $a<b+c$, 
    for  $4\geq a\geq b\geq c$ and $a+b+c$ increasing up to $12$.
\begin{table}[ht!]
\centering 
\begin{tabular}{ |c|c||c|c|c|} 
 \hline
$d$ & $\GW_{\ppp}(d)$ & $D$ & $|D.S|$ & $ \GW_{\pp \sharp 2 \overline\cp}(D)$\\
$(a,b,c)$ &  & $(a,b;\alpha, a+b-c-\alpha)$ &  & \\
 \hline
  \hline
     $(1,0,0)^*$ & 1 & (1,0;0,1)&1&1 \\ 
 \hline
    (1,1,1) & 1 & (1,1;0,1)&1&1 \\ 

 \hline
     (2,2,1) & 1 & (2,2;0,3)&3&0 \\ 
      &  & (2,2;1,2)&1&1 \\ 

 \hline
     (2,2,2) & 4 & (2,2;0,2)&2&1 \\ 
 
 \hline
     (3,2,2) & 12 & (3,2;0,3)&3&0 \\ 
      &  & (3,2;1,2)&1&12 \\ 
 \hline
     (3,3,1) & 1 & (3,3;0,5)&5& 0\\ 
       &  & (3,3;1,4)&3& 0\\ 
       &  & (3,3;2,3)&1& 1\\ 
 \hline
     (3,3,2) & 48 & (3,3;0,4)& 4&0\\ 
       &  & (3,3;1,3)&2& 12\\ 
 \hline
     (3,3,3) & 728 & (3,3;0,3)&3& 12\\ 
       &  & (3,3;1,2)&1& 620\\ 

  \hline
     (4,3,2) & 96 & (4,3;0,5)&5& 0\\ 
       &  & (4,3;1,4)&3& 0\\ 
        &  & (4,3;2,3)&1& 96\\ 
 \hline
      (4,3,3) & 2480 & (4,3;0,4)&4& 0\\ 
       &  & (4,3;1,3)&2& 620\\
 \hline
      (4,4,1) & 1 & (4,4;0,7)&7& 0\\ 
       &  & (4,4;1,6)&5& 0\\ 
        &  & (4,4;2,5)&3& 0\\ 
        &  & (4,4;3,4)&1& 1\\ 
 \hline
    (4,4,2) & 384 & (4,4;0,6)&6& 0\\ 
       &  & (4,4;1,5)&4& 0\\ 
        &  & (4,4;2,4)&2& 96\\ 
 \hline
     (4,4,3) & 23712 & (4,4;0,5)&5& 0\\ 
       &  & (4,4;1,4)&3& 620\\ 
        &  & (4,4;2,3)&1& 18132\\ 
 \hline
     (4,4,4) & 359136 & (4,4;0,4)&4& 620\\ 
       &  & (4,4;1,3)&2& 87304\\
 \hline
\end{tabular}
\caption{Genus $0$ Gromov--Witten invariants of $\C P^1 \times \C P^1 \times \C P^1$ using  Formula (\ref{Eq:GW.in.deg.6}) for $a+b+c \leq 12$.} 
\label{Tab:GW.in.deg.6}
\end{table}

\begin{remark}
  The values of $ \GW_{\ppp}(a,b,c)$ in this computation are compatible with those given by X.~Chen and A.~Zinger using Solomon's WDVV-type relations in work in progress.
\end{remark}

\subsection{Applications of Theorems~\ref{Theorem2-W} and~\ref{Theorem3-W}} \label{sect-applications.W}
In this subsection, we use the notation
$$\tau_n \colon  \C P^n \lra \C P^n , \quad \tau_n ([z_0:z_1:\dots :z_n])=[\overline{z_0}:\overline{z_1}:\dots :\overline{z_n}]$$
for the standard conjugation on the projective space $\C P^n$ ($n\geq 1$).

\subsubsection{Real 3-dimensional projective space $\boldsymbol{X=(\C P^3,\tau_3}$)}

 We consider the non-singular real quadric surface $\Sigma$ in $X$ with non-empty real part and the real Lagrangian sphere realizing a $\tau_3|_{\Sigma}$-anti-invariant class. In this case,  $\Sigma$  is isomorphic to $(\pp, \tau_3|_{\Sigma})$, where $$\tau_3|_{\Sigma}(p,q)= (\tau_1(p),\tau_1(q))$$ for all $(p,q)\in \pp$. Hence, the fixed loci of $\tau_3$ and $\tau_3|_{\Sigma}$ are the real projective space $\R P^3$ and the $2$-torus $\R P^1\times \R P^1$, respectively. The group $H_2^{-\tau_3}(\C P^3;\Z)$ is freely generated by $L_{\C P^3}$, the homology class of a line embedded in $\C P^3$, while the group $H_1(\R P^3;\Z_2)$ is generated by $\rho L_{\C P^3}$. We choose a basis for the group $H_2^{-\tau_3 }(\pp;\Z)$ as
$$(L_1,L_2)=\left(\left[\C P^1\times \{p\}\right],\left[\{q\}\times \C P^1\right]\right),\quad \text{where } p,q \in \C P^1,$$
and a basis for the group  $H_1(\R P^1 \times \R P^1;\Z_2)$ as
$$(\rho L_1, \rho L_2)=\left(\left[\R P^1\times \{p_\R\}\right],\left[\{ q_\R \}\times \R P^1\right]\right), \quad \text{where } p_\R , q_\R \in \R P^1.$$
The $\Spin_3$-structure $\mathfrak{s}_{\R P^3}$ over $\R P^3$ is chosen such that the spinor state of a real line in $\C P^3$ is $+1$. In other words, the structure $\mathfrak{s}_{\R P^3}$ is chosen such that the spinor state satisfies $\sp_{\mathfrak{s}_{\R P^3},\mathfrak{o}_{\R P^3}}(f(\C P^1))=+1$, where the map $f\colon  (\C P^1,\tau_1) \to (\C P^3,\tau_3)$ is a real immersion and $f_*[\C P^1]=L_{\C P^3}$.

\begin{lemma} \label{lemma1}
Let $L= (1,0) \in H_2^{-\tau_3}(\pp;\Z)$. Assume that $\R f^*\mathcal{N}_{L/\pp}=E^+$. 
Then, the function $\epsilon\colon  H_2(\pp;\Z)\to \Z_2$ $($see Definition~\ref{def-epsilon.function}\,$)$ satisfies $\epsilon(a,b)=0$ if and only if $a>b$.
\end{lemma}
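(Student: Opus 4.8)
The plan is to deduce the lemma almost immediately from Proposition \ref{prop-function.epsilon2}, once the relevant sign conventions on $\pp$ are pinned down. First I would observe that on $\pp$ the anticanonical class is $c_1(\pp) = 2L_1 + 2L_2$, so that for $D = aL_1 + bL_2 = (a,b)$ the integer $c_1(\pp)\cdot D = 2(a+b)$ is even, equivalently $k_D = c_1(\pp)\cdot D - 1$ is odd. Feeding this into Definition \ref{def-epsilon.function} shows that on $\pp$ one has $\epsilon(a,b) = 0$ precisely when $\R f^*\mathcal{N}_{(a,b)/\pp} = E^+$ and $\epsilon(a,b) = 1$ precisely when it equals $E^-$. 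In particular the hypothesis of the lemma, $\R f^*\mathcal{N}_{L/\pp} = E^+$ with $L = (1,0)$, says exactly that $\epsilon(L) = 0$.

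Next I would compute the intersection numbers that enter Proposition \ref{prop-function.epsilon2}. With the form $L_1^2 = L_2^2 = 0$, $L_1 \cdot L_2 = 1$ on $H_2(\pp;\Z)$ and the vanishing cycle $S = L_1 - L_2$, one gets $(a,b)\cdot S = b - a$, and in particular $L \cdot S = (1,0)\cdot S = -1$, which is consistent with the normalization ``$L\cdot S = -1$ and $\R\mathcal{N}'_{L/\Sigma} = E^+$'' fixed just before Proposition \ref{prop-function.epsilon1}. For any class $D = (a,b)$ with $a \neq b$ we then have $D \cdot S = b - a \neq 0$ and $L \cdot S = -1 \neq 0$, so Proposition \ref{prop-function.epsilon2} applies to the pair $(D, L)$ and gives $\epsilon(D) = \epsilon(L) \Leftrightarrow (D\cdot S)(L\cdot S) > 0$. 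Since $\epsilon(L) = 0$ and $(D\cdot S)(L\cdot S) = (b-a)(-1) = a-b$, this is exactly $\epsilon(a,b) = 0 \Leftrightarrow a > b$ (and, symmetrically, $\epsilon(a,b) = 1 \Leftrightarrow a < b$).

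The only remaining case is the diagonal $a = b$, where $D \cdot S = 0$ and every irreducible rational curve of class $D$ is a complete intersection of $\Sigma$ with another member of the pencil in $\C P^3$ (see the remark after Lemma \ref{Lemma-not.intersection.balanced}); there the construction of $E^\pm$, which requires the normal bundle of a balanced curve, degenerates, so I would simply note that the statement is understood over classes with $D \cdot S \neq 0$ --- in particular this covers all classes $D \in \psi^{-1}(d)$ occurring in Theorem \ref{Theorem-W.in.deg.8}, where $d$ is odd and hence $D \cdot S = d - 2a$ is odd. I do not expect any serious obstacle: the lemma is essentially bookkeeping of the intersection form together with a single application of Proposition \ref{prop-function.epsilon2}; the one point meriting care is checking that the orientation of $S$ and the normalization $L \cdot S = -1$ are compatible, so that $\epsilon(L) = 0$ really does correspond to $\R f^*\mathcal{N}_{L/\pp} = E^+$ and no extra sign sneaks in.
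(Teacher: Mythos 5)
Your proposal is correct and follows essentially the same route as the paper: the hypothesis $\R f^*\mathcal{N}_{L/\pp}=E^+$ together with the parity of $k_L$ gives $\epsilon(L)=0$, the intersection form gives $(D.S)(L.S)=a-b$, and Proposition \ref{prop-function.epsilon2} then yields $\epsilon(a,b)=0\Leftrightarrow a>b$. Your extra remarks on the degenerate diagonal case $a=b$ and on the sign conventions are sensible but not needed beyond what the paper's own three-line argument already contains.
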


\begin{proof}
Recall that $S\in \Z(1,-1)=\ker (\psi) $. By a straightforward computation, we have $(D.S)(L.S)=a-b$. By definition, $\epsilon(L)=k_L +1 \mod 2$. Thus, $\epsilon(L)=0 \mod 2$. The lemma then follows from Proposition~\ref{prop-function.epsilon2}.
\end{proof}

We obtain the following theorem.

\begin{theorem} \label{theo-application.degree.8} \label{Theorem-W.in.deg.8}
For every odd positive integer $d$ and every integer $l$ such that $0 \leq l \leq d-1$, one has
\begin{equation} \label{eq-Welschinger.formula.dim.8}
    W_{\R P^3}^{\mathfrak{s}_{\R P^3},\mathfrak{o}_{\R P^3}}(d,l)=\frac{1}{2}\sum_{a\in \Z}(-1)^{a} (d-2a) W_{\R P^1\times \R P^1} ((a,d-a),l).
\end{equation}
\end{theorem} 

Formula (\ref{eq-Welschinger.formula.dim.8}) can be reduced to \cite[Theorem 1]{brugalle2016pencils}.

\begin{proof}
  For every $D=(a,b)\in H_2^{-\tau_3}(\pp;\Z_2)$, it is obvious that
$$k_D=(2,2).(a,b)-1\equiv 1 \mod 2$$
and 
$$g(D)=\tfrac{1}{2}((2,2).(a,b)+(a,b)^2+2)\equiv (a+1)(b+1) \mod 2.$$
We consider a real balanced immersion $f\colon (\C P^1,\tau_1) \to (\C P^3,\tau_3)$ such that 
$$\Im(f)\subset \pp \quad \text{and}\quad f_*[\C P^1]\in H_2^{-\tau_3}(\pp;\Z).$$

It should be noted that the homology class $L_{\C P^3} \in H_2^{-\tau_3}(\C P^3;\Z)$ has two effective classes\footnote{These are homology classes that can be realized by algebraic curves.} in its preimages under the surjective homomorphism $\psi\colon H_2(\pp;\Z)\to H_2(\C P^3;\Z)$. These correspond to the classes $(1,0)$ and $(0,1)$ in $H_2^{-\tau_3}(\pp;\Z)$. Moreover, the restricted $\Pin^-_2$-structure  on $O(T (\R P^1\times \R P^1))$  satisfies  $s^{\mathfrak{s}_{\R P^3|\R P^1\times \R P^1}}(1,0)=0$ and  $s^{\mathfrak{s}_{\R P^3|\R P^1\times \R P^1}}(0,1)=1$. Hence,
$$s^{\mathfrak{s}_{\R P^3|\R P^1\times \R P^1}}(a,b) \equiv b+ab \mod 2.$$
Consequently, the two possibilities for class $D$ can be established as follows: 
\begin{itemize}
    \item If $D=(a,b)$ with $a>b$, then
$$(-1)^{\epsilon(D)+g(D)+s^{\mathfrak{s}_{\R P^3|\R P^1\times \R P^1}}(\rho D)}=(-1)^{0+(a+1)(b+1)+b+ab}.$$
Upon simplification, this sign is equal to $(-1)^{a+1}$.
    \item If $D=(a,b)$ with $a<b$, then
$$(-1)^{\epsilon(D)+g(D)+s^{\mathfrak{s}_{\R P^3|\R P^1\times \R P^1}}(\rho D)}=(-1)^{1+(a+1)(b+1)+b+ab}.$$
Upon simplification, this sign is equal to $(-1)^{a}$.
\end{itemize}
As a result, if $a+b \equiv 1 \mod 2$, then we have
\begin{align*}
     W_{\R P^3}^{\mathfrak{s}_{\R P^3},\mathfrak{o}_{\R P^3}}(d,l)
    = & \frac{1}{2}\left(\sum_{0\leq a<\frac{d}{2}}(-1)^{a} (d-2a) W_{\R P^1\times \R P^1} ((a,d-a),l)\right.\\
    &\hphantom{\frac{1}{2}\Big(}\left. + \sum_{d\geq a>\frac{d}{2}}(-1)^{a+1} (-(d-2a)) W_{\R P^1\times \R P^1} ((a,d-a),l) \right)\\
    = & \sum_{0\leq a<\frac{d}{2}}(-1)^{a} (d-2a) W_{\R P^1\times \R P^1} ((a,d-a),l).
\end{align*}

In addition, the genus $0$ Welschinger invariants $ W_{\R P^1\times \R P^1} ((a,d-a),l)$ vanish when $a \notin [0,d]$; Formula~(\ref{eq-Welschinger.formula.dim.8}) then holds.
\end{proof}

\begin{remark}
Directly from Corollary~\ref{coro-Theorem2-W} and the fact that $$|D.S|=|(a,d-a).(1,-1)|=| d-2a|\equiv d\mod 2,$$
it follows that the invariants $ W_{\R P^3}^{\mathfrak{s}_{\R P^3},\mathfrak{o}_{\R P^3}}(d,l)$ vanish if $d$ is even, for every $\Spin_3$-structure $\mathfrak{s}_{\R P^3}$ over $\R P^3$. This result was obtained by G.~Mikhalkin, see \cite{brugalle2007enumeration}, for symmetry reasons. 
\end{remark}

\begin{proposition} \label{prop-application.deg8}
Assume that $\R E$ is disconnected and the hyperplane class is on its pointed component. If $d$ is even and, moreover, the configuration $\underline{x}_d$ has an odd number of real points on each component of\, $\R E$, then there does not exist any real irreducible rational degree $d$ curve in  $\C P^3$ passing through $\underline{x}_d$. 
\end{proposition}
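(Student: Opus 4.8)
The plan is to read Proposition \ref{prop-application.deg8} off from Theorem \ref{Theorem3-W}, specialised to $X=(\C P^3,\tau_3)$, $\Sigma\cong(\pp,\tau_3|_\Sigma)$, $\psi(a,b)=a+b$ and $S=(1,-1)$, together with the detailed mechanism of its proof. First I would record that the hypothesis of Theorem \ref{Theorem3-W} is satisfied: for an effective class $D=(a,d-a)\in\psi^{-1}(d)$ one has $D.S=d-2a$, and this is even for every $a$ precisely because $d$ is even. So in Corollary \ref{coro-degree.case.real} it is exactly cases $b)$ and $c)$ that can occur, and the proof of Theorem \ref{Theorem3-W} (via Proposition \ref{prop-corresponding.surface.picE} and Corollary \ref{coro-degree.case.real}.$c)$) shows that a generic real configuration $\underline{x}_d$ on $E$ carries no real irreducible rational curve of class $d$ as soon as, for \emph{every} effective $D\in\psi^{-1}(d)$, the real divisors $[\underline{x}_d]$ and $\phi(\overline{L},\overline{D})$ lie on different connected components of $\R E\simeq\R Pic(E)$. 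It therefore suffices to locate $\phi(\overline{L},\overline{D})$ and then to choose $\underline{x}_d$ on the opposite component.

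For the first task I would compute $\phi(\overline{L},\overline{D})$ explicitly. Pairing each effective class with its monodromy image we may assume $D=aL_1+(d-a)L_2$ with $0\le a\le \tfrac{d}{2}$, so that $D.S=d-2a\ge 0$, and take for $L$ the effective class of Lemma \ref{lem-existence.of.L} with $|L.S|=1$ adapted to this sign, i.e. $L=L_2$. Then $D-(D.S)L_2=a(L_1+L_2)$, whose restriction to $E$ is $a$ times the hyperplane section class $\mathcal{O}_{\C P^3}(1)|_E$; crucially, this divisor class does not depend on the surface $\Sigma_t\in\mathcal{Q}$, since it is cut on $E\subset\C P^3$ by the ambient hyperplane, which is the content of Lemma \ref{lem-constant.divisor.L.D} in this case. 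Hence $\phi(\overline{L},\overline{D})=a\,\mathcal{O}_{\C P^3}(1)|_E$. By hypothesis $\mathcal{O}_{\C P^3}(1)|_E$ lies on the pointed component of $\R Pic(E)$, and since "lying on the pointed component'' is stable under addition of real divisor classes (it is the image of divisors with an even number of points off the pointed circle), $a\,\mathcal{O}_{\C P^3}(1)|_E$ — and therefore $\phi(\overline{L},\overline{D})$ — lies on the pointed component for every effective $D\in\psi^{-1}(d)$.

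It then remains to choose $\underline{x}_d$ so that $[\underline{x}_d]$ lies on the non-pointed component of $\R Pic_{2d}(E)$. Here I would use the standard description of $\pi_0(\R Pic(E))$ for $\R E=\mathbb{S}^1_0\sqcup\mathbb{S}^1_1$: the class of a real configuration lies on the pointed component if and only if the number of its real points lying on the non-pointed circle $\mathbb{S}^1_1$ is even (real points on $\mathbb{S}^1_0$ and pairs of complex-conjugate points contribute classes on the pointed component). Because $d$ is even, $k_d=2d$ is even and the number $r$ of real points of $\underline{x}_d$ is even; hence prescribing an odd number of real points on \emph{each} component of $\R E$ is consistent with $r$ being even, it forces an odd number of them on $\mathbb{S}^1_1$, and in particular forces $r\ge 2$, so $\underline{x}_d$ has at least one real point. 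For such a configuration $[\underline{x}_d]$ sits on the non-pointed component, opposite to $\phi(\overline{L},\overline{D})$ for all effective $D\in\psi^{-1}(d)$. Feeding this into Corollary \ref{coro-degree.case.real}.$c)$ and Proposition \ref{prop-corresponding.surface.picE}, no real surface of $\R\mathcal{Q}$ contains a real curve of any effective class of $\psi^{-1}(d)$ passing through $\underline{x}_d$; combined with Lemma \ref{lem-surfaces.in.real.pencil} and Proposition \ref{Prop-Kollar} (valid for $\underline{x}_d$ generic on $E$), this gives the statement.

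The main obstacle I expect is the bookkeeping of connected components of the real Picard groups $\R Pic_k(E)$ as $k$ varies. One must pin down $\phi(\overline{L},\overline{D})$ on the nose (hence the care with the choice of $L$ adapted to the sign of $D.S$ and with the fact that the hyperplane section is constant along the pencil), verify that "being on the pointed component'' is compatible with addition across Picard groups of different degrees, and translate the parity of the real points of $\underline{x}_d$ on $\mathbb{S}^1_1$ into the component of $[\underline{x}_d]$ — including the standard but worth-stating fact that classes of complex-conjugate pairs of points always lie on the pointed component. Everything else is a direct specialisation of Theorem \ref{Theorem3-W} and of its proof.
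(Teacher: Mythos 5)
Your proposal is correct and follows essentially the same route as the paper: compute $\phi(\overline{L},\overline{D})$ as an integer multiple of the hyperplane section class $h=H|_E$ (you get $a\,h$ with $L=L_2$, the paper gets $(d-a)h$ with $L=L_1$ — both lie on the pointed component by hypothesis), observe that an odd number of real points of $\underline{x}_d$ on the non-pointed circle forces $[\underline{x}_d]$ onto the non-pointed component, and conclude via Corollary \ref{coro-degree.case.real}.$c)$ and Theorem \ref{Theorem3-W}. Your extra bookkeeping (parity of $D.S$, stability of the pointed component under addition, $r\geq 2$) only makes explicit what the paper leaves implicit.
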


\begin{proof}
This proposition is a consequence of Theorem~\ref{Theorem3-W}     and the following remarks: 
    \begin{enumerate}
      \item The base locus of the pencil of surfaces in the linear system $|-\frac{1}{2}K_{\C P^3}|$ is an elliptic curve $E$ of degree~$4$ in $\C P^3$. Moreover, the curve $E$ is of homology class 
        $$(2,2)\in  H_2(\C P^1\times \C P^1;\Z).$$
        \item By the intersection form on $\C P^3$ and by the surjectivity of $\psi$, the hyperplane section is of class $$H=(1,1) \in H_2(\C P^1\times \C P^1;\Z) \cong \Pic(\C P^1\times \C P^1),$$ Moreover, the class $H$ can be written as $H=L+T(L)=L_1+L_2$.
        \item  Let $h:=H|_E \in \Pic_4(E)$ be the hyperplane class on $E$. The constant divisor $ \phi(\overline{L},\overline{D})$, which has degree $D.E-|D.S|(L.E)=4(d-a)$,  can be described as
        \begin{align*}
            \phi(\overline{L},\overline{D})&=(a,d-a)|_E-(d-2a) (1,0)|_E\\
            &=(d-a)h.
        \end{align*}
         Hence, the hyperplane class on $E$ and the divisor         $\phi(\overline{L},\overline{D})$ lie in the same connected component of $\R E$, that is, in the pointed component $\mathbb{S}^1_0$ of  $\R E$ by hypothesis.         If the configuration $\underline{x}_d$ has an odd number of real points on each component of $\R E$, then its divisor class $[\underline{x}_d]$ lies in the non-pointed component $\mathbb{S}^1_1$ of $\R E$. By  Corollary~\ref{coro-degree.case.real}, the statement then follows.\qedhere  
               \end{enumerate} 
\end{proof}

This vanishing result was obtained by J. Koll\'ar in \cite{kollar2015examples}.
\subsubsection{Real 3-dimensional projective space blown up at a real point}

Let $(\cpp\sharp\overline{\cpp},\tau)$ denote  the real $3$-dimensional projective space blown up at a real point, denoted by $x$. Here, the real structure $\tau$ is chosen such that its fixed locus is the blow-up at a real point of the real projective space, that is, $\R P^3\sharp{\overline{\R P^3}}$. We consider the immersed non-singular real del Pezzo surfaces $(\Sigma, \tau|_\Sigma)$ of degree $7$ in $\cpp\sharp\overline{\cpp}$,  which are isomorphic to the blow-up at a real point of $\pp$. Moreover, the fixed locus of $\tau|_\Sigma$ is the blow-up at a real point of a $2$-torus $\R P^1\times \R P^1 \sharp {\R P^2}$. We first choose a basis for the group $H_2^{-\tau }(\C P^3\sharp\overline{\C P^3};\Z)$ as
\begin{itemize}
    \item  $L_{\C P^3}, \text{ the homology class of a line embedded in } \C P^3$,
      \item $F$, the class of a line in the exceptional divisor,
\end{itemize}
as well as a basis for the group  $H_1(\R P^3  \sharp \overline{\R P^3};\Z_2)$ as
\begin{itemize}
       \item  $\rho L_{\C P^3},\text{ the real part of the homology class of a line embedded in } \C P^3$,
      \item $\rho F$, the real part of the homology class of a line in the exceptional divisor.
\end{itemize}
We then choose a basis for the group $H_2^{-\tau }(\pp\sharp\overline{\C P^2};\Z)$ as
\begin{itemize}
    \item  $L_1=[\C P^1\times \{p\}]$,
     \item $L_2=[\{q\}\times \C P^1]$,
      \item $\hat{E}$, the exceptional divisor class,
\end{itemize}
where $p,q \in \C P^1$, as well as a basis for the group  $H_1(\R P^1 \times \R P^1\sharp {\R P^2};\Z_2)$ as
\begin{itemize}
    \item  $\rho L_1=[\R P^1\times \{p_\R\}]$,
     \item $\rho L_2=[\{q_\R\}\times \R P^1]$,
      \item $\rho \hat{E}$, the real part of the exceptional divisor class,
\end{itemize}
where $p_\R , q_\R \in \R P^1$. The $\Spin_3$-structure $\mathfrak{s}_{\R P^3\sharp \overline{\R P^3}}$ over $\R P^3\sharp \overline{\R P^3}$ is chosen such that the spinor state satisfies 
$$\sp_{\mathfrak{s}_{\R P^3\sharp \overline{\R P^3}},\mathfrak{o}_{\R P^3\sharp \overline{\R P^3}}}\left(f(\C P^1)\right)=+1,
$$
where $f\colon  (\C P^1,\tau_1) \to (\cpp\sharp\overline{\cpp},\tau)$ is a real immersion and $f_*[\C P^1]=L_{\C P^3}$. 
To simplify the notation, in the following we set  
$$(\R P^3)_1:=\R P^3\sharp \overline{\R P^3}$$ 
and 
$$(\R P^1)^2_1:=\R P^1\times \R P^1 \sharp {\R P^2}.$$
As a direct consequence of Lemma~\ref{lemma1}, the following lemma holds.

\begin{lemma} 
Let  $L= (1,0;0)\in H_2^{-\tau}(\pp\sharp\overline{\C P^2};\Z)$. Assume that $\R f^*\mathcal{N}_{L/\pp\sharp \overline{\C P^2}}=E^+$.
Then, the function
$$\epsilon\colon  H_2\left(\pp\sharp \overline{\C P^2};\Z\right)\lra \Z_2$$
$($see Definition~\ref{def-epsilon.function}\,$)$ satisfies
$\epsilon(a,b;k)=0$ if and only if $a>b$.
\end{lemma}

We are now prepared to state the following theorem.

\begin{theorem} \label{Theorem-W.in.deg.7}
 For every odd positive integer $d$, every non-negative integer $k \leq d$ and every integer $l$ such that $0\leq l \leq \frac{1}{2}(2d-1-k)$, one has
\begin{equation}
    \label{eq-Welschinger.formula.dim.7}
    W_{(\R P^3)_1}^{\mathfrak{s}_{(\R P^3)_1},\mathfrak{o}_{(\R P^3)_1}}((d;k),l)=\frac{1}{2}\sum_{a\in \Z}(-1)^{a+\frac{1}{2}(k+k^2)} (d-2a) W_{(\R P^1)^2_1} ((a,d-a;k),l).
\end{equation}
\end{theorem}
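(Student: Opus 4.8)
The plan is to specialize the general formula of Theorem \ref{Theorem2-W} to the case $X = \C P^3\sharp\overline{\C P^3}$ with the chosen real structure, exactly as was done for $\C P^3$ in Theorem \ref{Theorem-W.in.deg.8}. The first step is to identify all the ingredients appearing in Theorem \ref{Theorem2-W} for this variety: the surjection $\psi\colon H_2(\Sigma;\Z)\to H_2(X;\Z)$, its kernel generator $S$, the preimages $\psi^{-1}(d;k)$, and the intersection numbers $D.S$. From the description in the Gromov-Witten application (Theorem \ref{Theorem-GW.in.deg.7}) we already know $\psi$ sends $(a,b;k)\mapsto(a+b;k)$, so $\ker\psi = \Z(1,-1;0)$, and taking $S=(1,-1;0)$ we get $|D.S| = |d-2a|$ for $D=(a,d-a;k)\in\psi^{-1}(d;k)$. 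One then checks, as in the remark following Theorem \ref{Theorem-W.in.deg.8}, that $|D.S|\equiv d\pmod 2$, so the formula is nontrivial precisely when $d$ is odd, matching the hypothesis.

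The second step is to compute the sign $(-1)^{\epsilon(D)+g(D)+s^{\mathfrak{s}_{X|\Sigma}}(\rho D)}$ for each $D=(a,b;k)$ with $a+b=d$. This breaks into three pieces. For $g(D)$, use $g(D)=\tfrac12(K_\Sigma\cdot D + D^2 + 2)$ with $\Sigma\cong\pp\sharp\overline{\C P^2}$, so $-K_\Sigma = (2,2;-1)$ in the basis $(L_1,L_2;\hat E)$; a direct computation gives $g(a,b;k)\pmod 2$ as a polynomial in $a,b,k$ (expected to be something like $(a+1)(b+1)+\binom{k}{2}$, to be simplified). For $\epsilon(D)$, invoke Proposition \ref{prop-function.epsilon2}: fixing $L=(1,0;0)$ with $\R\mathcal{N}'_{L/\Sigma}=E^+$ and normalizing $\epsilon(L)=k_L+1\equiv 0$, one gets $\epsilon(a,b;k)=0\iff a>b$, since $\epsilon$ depends only on the sign of $(D.S)(L.S)=a-b$ (the blow-up class $\hat E$ has $\hat E.S=0$, so it does not affect $\epsilon$). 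For $s^{\mathfrak{s}_{X|\Sigma}}(\rho D)$, use the quasi-quadratic enhancement property together with the normalization: the $Spin_3$-structure is chosen so the real line in $\C P^3$ has spinor state $+1$, which forces particular values of $s^{\mathfrak{s}_{X|\Sigma}}$ on the basis classes $\rho L_1,\rho L_2,\rho\hat E$; expanding $s^{\mathfrak{s}_{X|\Sigma}}(\rho(aL_1+bL_2-k\hat E))$ via Definition \ref{def-quasiquadratic.enhancement} then gives a closed formula mod $2$. The appearance of $\tfrac12(k+k^2)=\binom{k+1}{2}$ in the stated answer should emerge from the $(\rho\hat E)^2$ and $w_1\cdot\rho\hat E$ contributions when summing $-k$ copies of $\hat E$.

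The third step is bookkeeping: substitute into Theorem \ref{Theorem2-W}, pair up $D=(a,d-a;k)$ with $T(D)=(d-a,a;k)$ using $\epsilon(T(D))=\epsilon(D)+1$, $g(T(D))=g(D)$, $s^{\mathfrak{s}_{X|\Sigma}}(\rho T(D))\equiv s^{\mathfrak{s}_{X|\Sigma}}(\rho D)+(D.S)\equiv s^{\mathfrak{s}_{X|\Sigma}}(\rho D)+1$ (since $d$ odd), and $T(D).S=-D.S$, exactly as in the proof of Theorem \ref{Theorem-W.in.deg.8}. This collapses the factor of $\tfrac12$ and the split sum into a single sum over $a\in\Z$, and the cases $a>b$ versus $a<b$ are reconciled because the two sign prefactors turn out equal after accounting for $(d-2a)$ changing sign. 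Finally, observe that $W_{(\R P^1)^2_1}((a,d-a;k),l)$ vanishes for $a\notin[0,d]$, so the sum is effectively finite, and the range $0\le l\le\tfrac12(2d-1-k)$ comes from $k_D = c_1(\Sigma)\cdot D - 1 = 2d-1-k$.

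The main obstacle I expect is the sign computation in step two — specifically pinning down $s^{\mathfrak{s}_{X|\Sigma}}$ on the exceptional class $\rho\hat E$ and verifying that the quadratic-enhancement contributions assemble into precisely $(-1)^{a+\frac12(k+k^2)}$ rather than an off-by-sign or off-by-$k$ variant. This requires care about which $Pin^-_2$-structure is restricted from $\mathfrak{s}_{(\R P^3)_1}$ and how the normalization "$sp = +1$ on a real line" translates through Proposition \ref{Prop-RelationW2W3}; the blow-up direction introduces a new basis class absent in the $\C P^3$ case, and getting its enhancement value right (likely $s^{\mathfrak{s}_{X|\Sigma}}(\rho\hat E)=1$, since $\hat E$ is an exceptional $(-1)$-curve with nonorientable real locus) is the delicate point. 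Everything else is a routine adaptation of the already-proved degree-$8$ case.
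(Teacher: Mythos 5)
Your approach is exactly the paper's: specialize Theorem \ref{Theorem2-W}, read off $\psi$, $S=(1,-1;0)$ and $|D.S|=|d-2a|$ from the Gromov--Witten case, compute the three sign ingredients, and collapse the $\{D,T(D)\}$ pairs as in the degree-$8$ proof. The structure and all the reductions in your first and third steps match the paper's proof verbatim.

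The one place you diverge is the sign bookkeeping you flagged as delicate, and there the paper resolves your two open guesses differently from what you predict. The paper finds $s^{\mathfrak{s}_{X|\Sigma}}(0,0;1)=0$ (not $1$), so the restricted enhancement is $s^{\mathfrak{s}_{X|\Sigma}}(a,b;k)\equiv b+ab$, completely independent of $k$; the entire $\tfrac12(k+k^2)$ in the exponent is produced by the genus term, which the paper computes as $g(D)\equiv a+b+ab+\tfrac12(k+k^2)+1$ (using the pairing with $(2,2;1)=-K_\Sigma$), not by the $(\rho\hat E)^2$ and $w_1$ contributions as you expect. Note that since $\rho\hat E$ is a one-sided circle, the quasi-quadratic relation gives $s(k\,\rho\hat E)=k\,s(\rho\hat E)$, so your guess $s(\rho\hat E)=1$ would contribute $k$, which combined with your $g(D)\equiv(a+1)(b+1)+\binom{k}{2}$ yields the same total exponent $\binom{k}{2}+k=\tfrac12(k+k^2)$ modulo $2$ -- so your version of the computation would land on the same final formula, but with the $k$-dependence attributed to the enhancement rather than to $g(D)$. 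To actually close the argument you must pin down $s^{\mathfrak{s}_{X|\Sigma}}(\rho\hat E)$ from the chosen $Spin_3$-structure (the paper asserts it is $0$ by properties of quasi-quadratic enhancements) rather than leave it as a plausibility guess, since an error there is not automatically compensated.
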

If $k$ is \textit{even}, then Formula (\ref{eq-Welschinger.formula.dim.7}) can be reduced to \cite[Theorem 1.1]{ding2020remark}.

\begin{proof}
A proof analogous to that of Theorem~\ref{theo-application.degree.8} can be employed. We highlight the main points as follows.  For every divisor $D=(a,b;k)\in H_2(\pp\sharp\overline{\C P^2};\mathbb{Z})$, it is obvious that $k_D=(2,2;1).(a,b;k)-1\equiv k+1 \mod 2$. The topological invariant $g(D)$ can be expressed as 
    \begin{align*}
        g(D)&=\tfrac{1}{2}\left((2,2;1).(a,b;k)+(a,b;k)^2+2\right)\\
        &\equiv a+b+ab+ \tfrac{1}{2}(k + k^2 )+1 \mod 2.
    \end{align*}

    Recall that $S\in \Z(1,-1;0)$. We consider a real balanced immersion $f\colon (\C P^1,\tau_1) \to (\C P^3 \sharp \overline{\C P^3},\tau)$ such that $\Im(f)\subset \pp \sharp \overline{\C P^2}$ and $f_*[\C P^1]\in H_2^{-\tau}(\pp\sharp\overline{\C P^2};\Z)$. It should be noted that the homology class $L_{\C P^3}=(1;0)\in H_2^{-\tau}(\C P^3 \sharp \overline{\C P^3};\Z)$ has two effective classes in its preimages under the surjective homomorphism
    $$\psi\colon H_2\left(\pp\sharp\overline{\C P^2};\Z\right)\lra H_2\left(\C P^3 \sharp \overline{\C P^3};\Z\right).$$
These correspond to the classes $(1,0;0)$ and $(0,1;0)$ in $H_2^{-\tau}(\pp\sharp\overline{\C P^2};\Z)$. It is also worth  noting that $\epsilon(L)=k_L+1 \equiv 0\mod 2$. Moreover, the restricted $\Pin^-_2$-structure  on $O(T ((\R P^1)^2_1))$ satisfies $s^{\mathfrak{s}_{(\R P^3)_1|(\R P^1)^2_1}}(1,0;0)=0$ and  $s^{\mathfrak{s}_{(\R P^3)_1|(\R P^1)^2_1}}(0,1;0)=1$. By the properties of quasi-quadratic enhancements, we obtain $$s^{\mathfrak{s}_{(\R P^3)_1|(\R P^1)^2_1}}(0,0;1)=0.$$
Hence, for all $a,b,k \in \Z$, we have
$$s^{\mathfrak{s}_{(\R P^3)_1|(\R P^1)^2_1}}(a,b;k) 
\equiv b+ab\mod 2.$$
Consequently, the two possibilities for the class $D$ can be established as follows: 
\begin{enumerate}
    \item If $D=(a,b;k)$ with $a>b$, then
$$(-1)^{\epsilon(D)+g(D)+s^{\mathfrak{s}_{(\R P^3)_1|(\R P^1)^2_1}}(\rho D)}=(-1)^{0+(a+b+ab+\frac{1}{2}(k+k^2)+1)+(b+ab)}.$$
Upon simplification, this sign is equal to $(-1)^{a+1+\frac{1}{2}(k+k^2)}$.
    \item If $D=(a,b;k)$ with $a<b$, then
$$(-1)^{\epsilon(D)+g(D)+s^{\mathfrak{s}_{(\R P^3)_1|(\R P^1)^2_1}}(\rho D)}=(-1)^{1+(a+b+ab+\frac{1}{2}(k+k^2)+1)+(b+ab)}.$$
Upon simplification, this sign is equal to $(-1)^{a+\frac{1}{2}(k+k^2)}$.
\end{enumerate}

As a result, if $a+b = 1 \mod 2$, then we have 
\begin{align*}
    W_{(\R P^3)_1}^{\mathfrak{s}_{(\R P^3)_1},\mathfrak{o}_{(\R P^3)_1}}((d;k),l)
    = & \frac{1}{2}\left(\sum_{0\leq a<\frac{d}{2}}(-1)^{a+\frac{1}{2}(k+k^2)} (d-2a) W_{(\R P^1)^2_1} ((a,d-a;k),l)\right.\\
    & \left.\hphantom{\frac{1}{2}\Big(}+ \sum_{d\geq a>\frac{d}{2}}(-1)^{a+1+\frac{1}{2}(k+k^2)} (-(d-2a)) W_{(\R P^1)^2_1} ((a,d-a;k),l) \right)\\
    = & \sum_{0\leq a<\frac{d}{2}}(-1)^{a+\frac{1}{2}(k+k^2)} (d-2a) W_{(\R P^1)^2_1} ((a,d-a;k),l).
\end{align*}
In addition, the genus $0$ Welschinger invariants $ W_{(\R P^1)^2_1} ((a,d-a;k),l)$ vanish when $a \notin [0,d]$; Formula (\ref{eq-Welschinger.formula.dim.7}) then holds.
\end{proof}

\begin{remark}  Directly from Corollary~\ref{coro-Theorem2-W} and the fact that $$|D.S|=|(a,d-a;k).(1,-1;0)|=| d-2a|\equiv d \mod 2,$$
it follows that the invariants $W_{(\R P^3)_1}^{\mathfrak{s}_{(\R P^3)_1},\mathfrak{o}_{(\R P^3)_1}}((d;k),l)$ vanish  for every $\Spin_3$-structure $\mathfrak{s}_{(\R P^3)_1}$ over $(\R P^3)_1$ if $d$ is even.
\end{remark}

\begin{proposition} \label{prop-application.deg7}
 Assume that $\R E$ is disconnected and that both the hyperplane class and the exceptional divisor class are on its pointed component. If $d$ is even and, moreover, the configuration $\underline{x}_d$ has an odd number of real points in the non-pointed component of\, $\R E$, then there does not exist any real irreducible rational curve of class $(d;k)$  in  $\C P^3 \sharp \overline{\C P^3}$ passing through $\underline{x}_d$. 
\end{proposition}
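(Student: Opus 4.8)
The plan is to follow the proof of Proposition \ref{prop-application.deg8}. By Theorem \ref{Theorem3-W}, whose proof reduces the question to an application of Proposition \ref{prop-corresponding.surface.picE} together with part $c)$ of Corollary \ref{coro-degree.case.real}, it is enough to check, for every class $D\in\psi^{-1}(d;k)$, that $D.S$ is even and that $[\underline{x}_d]$ and $\phi(\overline{L},\overline{D})$ lie on different connected components of $\R E$.

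First I would collect the intersection data on $\Sigma\cong\pp\sharp\overline{\C P^2}$. With $\psi(a,b;k)=(a+b;k)$ one has $\ker\psi=\Z(1,-1;0)$, so $S=(1,-1;0)$, and for $D=(a,d-a;k)\in\psi^{-1}(d;k)$ one gets $D.S=d-2a$, hence $|D.S|\equiv d\pmod 2$; thus $D.S$ is even as soon as $d$ is. The base locus $E$ realises $-K_\Sigma=(2,2;1)$; the hyperplane class of $\C P^3$ restricts to $\Sigma$ as $H|_\Sigma=(1,1;0)=L_1+L_2=L+T(L)$ with $L=(1,0;0)$ and $T(L)=(0,1;0)$, and the exceptional divisor restricts to $\hat E$. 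Set $h:=H|_E\in Pic_4(E)$ and $e:=\hat E|_E\in Pic_1(E)$ (the degrees being $H.E=4$ and $\hat E.E=1$). Choosing in each monodromy pair $\{D,T(D)\}$ the representative with $D.S\le 0$, i.e. $a\ge d/2$, Lemma \ref{lem-constant.divisor.L.D} gives
\[
\phi(\overline{L},\overline{D})=\big(D-|D.S|L\big)\big|_{E}=\big((d-a)(L_1+L_2)-k\hat E\big)\big|_{E}=(d-a)\,h-k\,e .
\]

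Since $\R E$ is disconnected, the pointed component $\mathbb{S}^1_0$ of $\R Pic(E)$ is a subgroup and translation by it preserves components; as $h$ and $e$ both lie on $\mathbb{S}^1_0$ by hypothesis, so does $(d-a)h-ke$, whence $\phi(\overline{L},\overline{D})$ lies on the pointed component for every $D\in\psi^{-1}(d;k)$. On the other hand, real points of $\underline{x}_d$ on the pointed component and pairs of complex conjugated points contribute divisor classes on $\mathbb{S}^1_0$, while each real point on the non-pointed component contributes a class on $\mathbb{S}^1_1$; hence an odd number of real points of $\underline{x}_d$ on the non-pointed component forces $[\underline{x}_d]$ onto $\mathbb{S}^1_1$. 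So $[\underline{x}_d]$ and $\phi(\overline{L},\overline{D})$ lie on different components, and since $D.S$ is even, Corollary \ref{coro-degree.case.real}.$c)$ shows that no real surface of $\R\mathcal{Q}$ contains a real curve of class $D$ or $T(D)$ through $\underline{x}_d$. Running over all pairs $\{D,T(D)\}\subset\psi^{-1}(d;k)$ and using Proposition \ref{prop-corresponding.surface.picE} (every real irreducible rational curve of class $(d;k)$ through $\underline{x}_d$ is contained in such a surface) gives the statement; as in Theorem \ref{Theorem3-W} such configurations fill a semi-algebraic open subset of $\R (Sym^{k_d}(X))$.

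The delicate points are the two places where a choice intervenes. One must pick, inside each monodromy pair, the representative with $D.S\le 0$: only then does the expression for $\phi(\overline{L},\overline{D})$ collapse to an integer combination of $h$ and $e$, whereas for the other representative it involves $L_1|_E$ and $L_2|_E$ individually, whose components are not controlled by the hypotheses. And one must justify that a pair of complex conjugated points of $E$ contributes a divisor class on the pointed component $\mathbb{S}^1_0$ --- the standard fact about real elliptic curves with disconnected real part that also underlies the corresponding step in the proof of Proposition \ref{prop-application.deg8}.
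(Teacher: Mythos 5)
Your proof is correct and follows essentially the same route as the paper's: reduce via Theorem \ref{Theorem3-W} and Proposition \ref{prop-corresponding.surface.picE} to Corollary \ref{coro-degree.case.real}.c), note that $D.S=d-2a$ is even, compute $\phi(\overline{L},\overline{D})=(d-a)\tilde{h}-k\hat{E}|_E$, and conclude that the component hypotheses place $[\underline{x}_d]$ and $\phi(\overline{L},\overline{D})$ on different components of $\R E$. You are in fact slightly more careful than the paper in two places: fixing the representative of the monodromy pair with $D.S\le 0$ (without which the displayed formula for $\phi(\overline{L},\overline{D})$ does not literally hold), and justifying that pairs of complex conjugated points contribute divisor classes on the pointed component.
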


\begin{proof}
  This proposition is a consequence of Theorem~\ref{Theorem3-W}
    and the following remarks: 
    \begin{enumerate}
        \item The base locus of the pencil of surfaces in the linear system $|-\frac{1}{2}K_{\C P^3\sharp \overline{\C P^3}}|$ is an elliptic curve $E$ of homology class 
        $$\tfrac{1}{4}K_{\C P^3\sharp \overline{\C P^3}}^2=(4;1)\in H_2\left(\C P^3\sharp \overline{\C P^3};\Z\right).$$
        Moreover, the curve $E$ is also of homology class 
        $$(2,2;1)\in  H_2\left(\C P^1\times \C P^1 \sharp \overline{\C P^2};\Z\right).$$
        \item By the intersection form on $\C P^3 \sharp \overline{\C P^3}$ and the surjectivity of $\psi$, the hyperplane section $\tilde{H}$ is of class 
        $$\tilde{H}=(1,1;0) \in H_2\left(\C P^1\times \C P^1 \sharp \overline{\C P^2};\Z\right) \cong \Pic\left(\C P^1\times \C P^1 \sharp \overline{\C P^2}\right).$$
        Moreover, the class $\tilde{H}$ can be written as
        $\tilde{H}=L+T(L)=L_1+L_2$.
        \item  Let $\tilde{h}:=\tilde{H}|_E \in \Pic_4(E)$ be the hyperplane class on $E$. The constant divisor $ \phi(\overline{L},\overline{D})$, which has degree $D.E+(D.S)(L.E)=4(d-a)-k$, can be described  as 
        \begin{align*}
           \phi\left(\overline{L},\overline{D}\right)&=(a,d-a;k)|_E-(d-2a) (1,0;0)|_E\\
           &=(d-a)\tilde{h}-k\hat{E}|_E.
        \end{align*}
        Hence, the position of the divisor $\phi(\overline{L},\overline{D})$ on $\R E$ depends on the positions of the hyperplane class~$\tilde{h}$ and the exceptional divisor class $\hat{E}|_E$ on $E$. Moreover, if $\underline{x}_d$ has an odd number of real points on the non-pointed component $\mathbb{S}^1_1$ of $\R E$, then $[\underline{x}_d]\in \mathbb{S}^1_1$. By Corollary~\ref{coro-degree.case.real}\eqref{c-d.c.r-c}, the statement then follows.\hfill\qed
    \end{enumerate}
    \renewcommand{\qed}{}
\end{proof}

  \subsubsection{Real threefold product of the projective line} \label{Subsect-exam.deg.6.W}
  
   As has been observed in Section~\ref{sect-applications.GW}, non-singular surfaces $\Sigma\in |-\frac{1}{2}K_{\ppp}|$ are non-singular  del Pezzo surfaces of degree $6$. In particular, they are  non-singular tridegree $(1,1,1)$-surfaces  in $\ppp$. Moreover, we have the isomorphism $\Sigma\cong (\pp \sharp 2 \overline\cp)$.
   
   For the sake of notational simplicity, in this paragraph, we set 
   $$\left(\C P^1\right)^3:=\ppp$$
   and  
   $$\left(\C P^1\right)^2_2:=\pp\sharp 2\overline{\C P^2}.$$
   We recall the following identifications (see Section~\ref{Subsect-exam.deg.6.GW}): 
\begin{itemize}
    \item The homology group $H_2((\C P^1)^2_2;\mathbb{Z})=\,<(L_1,L_2;E_1,E_2)>$ is identified with  $\Z^4$, meaning that we identify a homology class of the form $a L_1+b L_2-\alpha E_1 - \beta E_2$  in $H_2((\C P^1)^2_2;\mathbb{Z})$  with $(a,b;\alpha,\beta)$. Note that $(a,b;\alpha,\beta)$ is an effective class if $(a,b)\in \mathbb{N}^2$ and $(\alpha,\beta)\in \mathbb{Z}^2$ are such that if $a=b=0$, then both~$\alpha$ and $\beta$ are negative; otherwise, both $\alpha$ and $\beta$ are non-negative.
    \item The homology group   $H_2((\C P^1)^3;\mathbb{Z})=\,<(M_1,M_2,M_3)>$ is identified  with  $\Z^3$, meaning that we identify a homology class of the form $a_1 M_1 +a_2 M_2+a_3 M_3$ in $ H_2((\C P^1)^3;\mathbb{Z})$ with $(a_1,a_2,a_3)$. Note that $(a_1,a_2,a_3)$ is an effective class if $a_i\in \mathbb{N}$ for all $i\in \{1,2,3\}$ and there exists at least one $a_i$ such that $a_i>0$.
\end{itemize}
Moreover, the inclusion of $(\C P^1)^2_2$ in $(\C P^1)^3$ implies the surjective map
    \begin{align*}
        \psi\colon H_2\left(\left(\C P^1\right)^2_2;\mathbb{Z}\right)&\longrightarrow H_2\left(\left(\C P^1\right)^3;\mathbb{Z}\right)\\
       (a,b;\alpha,\beta)&\longmapsto (a,b,a+b-\alpha-\beta).
    \end{align*}
Since $\psi (0,0;\alpha,-\alpha)=(0,0,0)$, we have $\ker(\psi)=\Z(0,0;1,-1)$. 
      For every $D=(a,b;\alpha,\beta)\in H_2((\C P^1)^2_2;\mathbb{Z})$, we have

    $$k_D=(2,2;1,1).(a,b;\alpha,\beta)-1\equiv  \alpha + \beta +1 \mod 2$$
    and
    \begin{align*}
        g(D)&=\tfrac{1}{2}\left((2,2;1,1).(a,b;\alpha,\beta)+\left(a,b;\alpha,\beta\right)^2+2\right)\\
        &=a+b+ab+ \tfrac{1}{2}\left(\alpha + \alpha^2 \right) + \tfrac{1}{2}\left(\beta+\beta^2\right)+1 \mod 2.
    \end{align*}

 We consider two real structures on $(\C P^1)^3$: the \textit{standard} one, denoted by $\tau_{1,1}$, whose fixed locus is $\rprprp$, and the \textit{twisted} one, denoted  by $\tilde{\tau}_{1,1}$, whose fixed locus is $\mathbb{S}^2\times \R P^1$.

 \begin{remark}
    Since we are considering real  $(1,1,1)$-surfaces on $(\C P^1)^3$ for which  $S$ is $\tau_{1,1}$-anti-invariant and $\tilde{\tau}_{1,1}$-anti-invariant, the two points at which the non-singular quadric is blown up are real points.
\end{remark}   

 Thus, the restrictions of these two real structures to $(\C P^1)_2^2$  yield, as their respective real loci, the surfaces
$\R P^1\times \R P^1 \sharp 2 {\R P^2}$ and $\mathbb{S}^2 \sharp 2 {\R P^2}$.

 \subsubsection*{Case 1: The standard real structure}

For convenience of notation, we set 
 $$\left(\R P^1\right)^3:=\R P^1\times \R P^1 \times \R P^1$$
and  
$$\left(\R P^1\right)^2_2:=\R P^1\times \R P^1 \sharp 2 {\R P^2}.$$
We may choose a basis for the group $H_2^{-\tau_{1,1} }((\C P^1)^3;\Z)$ as the basis for the group $H_2((\C P^1)^3;\Z)$, that is,
    $$(M_1,M_2,M_3)=\left(\left[\C P^1\times \{p_1\}\times \{p_2\}\right],\left[\{q_1\}\times \C P^1\times \{q_2\}\right],\left[\{r_1\}\times \{r_2\}\times \C P^1\right]\right),$$
    where $p_i,q_i,r_i \in \C P^1$,
and choose a basis for the group  $H_1((\R P^1)^3;\Z_2)$ as
 $$(\rho M_1,\rho M_2,\rho M_3)=\left(\left[\R P^1\times \{p_{1\R}\}\times \{p_{2\R}\}\right],\left[\{q_{1\R}\}\times \R P^1\times \{q_{2\R}\}\right],\left[\{r_{1\R}\}\times \{r_{2\R}\}\times \R P^1\right]\right),$$
    where $p_{i,\R},q_{i,\R},r_{i,\R} \in \R P^1$ ($i\in \{1,2\}$).
Suppose that both distinct blow-up points $x_1\neq x_2$  are in $\R P^1\times \R P^1$. We may choose a basis for the group $H_2^{-\tau_{1,1}}((\C P^1)_2^2;\Z)$ as the basis for the group $H_2((\C P^1)_2^2;\Z)$, that is,
\begin{itemize}
    \item  $L_1=[\C P^1\times \{p\}]$,
     \item $L_2=[\{q\}\times \C P^1]$,
      \item $E_1$, the exceptional divisor class over $x_1$,
       \item $E_2$, the exceptional divisor class over $x_2$,
\end{itemize}
where $p,q \in \C P^1$,
and choose a basis for the group  $H_1((\R P^1)_2^2;\Z_2)$ as
\begin{itemize}
    \item  $\rho L_1=[\R P^1\times \{p_\R\}]$,
     \item $\rho L_2=[\{q_\R\}\times \R P^1]$,
      \item $\rho E_1$,  the real part of the exceptional divisor class over $x_1$,
      \item $\rho E_2$,  the real part of the exceptional divisor class over $x_2$,
\end{itemize}
where $p_\R , q_\R \in \R P^1.$
The $\Spin_3$-structure $\mathfrak{s}_{(\R P^1)^3}$ over $(\R P^1)^3$ is chosen such that the spinor state satisfies $\sp_{\mathfrak{s}_{(\R P^1)^3},\mathfrak{o}_{(\R P^1)^3}}(f(\C P^1))=+1$, where $f\colon (\C P^1,\tau_1)\to ((\C P^1)^3,\tau_{1,1})$ is a real immersion and
$$f_*\left[\C P^1\right]\in \{(0,0,1),(0,1,0),(0,0,1)\}\in H_2^{-\tau_{1,1}}\left(\left(\C P^1\right)^3;\Z\right).$$ 

\begin{lemma} \label{lemma2}
Let $L= (0,0;0,-1)\in H_2^{-\tau_{1,1}}( (\C P^1)^2_2;\Z)$. Assume that $\R f^*\mathcal{N}_{L/(\C P^1)^2_2}=E^+$. Then, the function $\epsilon\colon  H_2((\C P^1)^2_2;\Z)\to \Z_2$ $($see Definition~\ref{def-epsilon.function}\,$)$ satisfies $\epsilon(a,b;\alpha,\beta)=0$ if and only if $\alpha > \beta$.
\end{lemma}

\begin{proof}
  By definition, it is obvious that $\epsilon(L)=k_L+1 \equiv 0+0+1+1\equiv 0 \mod 2$. Recall that the vanishing class~$S$ is an element of  $\Z(0,0;1,-1)=\ker(H_2((\C P^1)_2^2;\Z)\to H_2((\C P^1)^3;\Z))$. By a straightforward computation, we have $(D.S)(L.S) = \alpha - \beta.$
Thus, the lemma follows from Proposition~\ref{prop-function.epsilon2}.
\end{proof}

The formula induced from Theorem~\ref{Theorem2-W} in this case is given in the following theorem.

 \begin{theorem} \label{Theorem-W.in.deg.6.1}
For every triple of non-negative integers $(a,b,c)$ such that $a+b+c$ is odd and every integer $l$ such that $0\leq l \leq \tfrac{1}{2}( a+b+c-1)$, one has
\begin{equation} \label{Eq:W.in.deg.6.1}
    W_{(\R P^1)^3}^{\mathfrak{s}_{(\R P^1)^3},\mathfrak{o}_{(\R P^1)^3}}((a,b,c),l)
    =\sum_{0\leq\alpha<\frac{|a+b-c|}{2}}(-1)^{ \alpha +\frac{a+b-c-1}{2}} (a+b-c-2\alpha) W_{(\R P^1)^2_2} ((a,b;\alpha,a+b-c-\alpha),l). 
\end{equation}
\end{theorem}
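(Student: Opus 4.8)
The plan is to derive Theorem \ref{Theorem-W.in.deg.6.1} as a direct specialization of Theorem \ref{Theorem2-W}, exactly in the style of the proofs of Theorems \ref{theo-application.degree.8} and \ref{Theorem-W.in.deg.7}. First I would record the arithmetic data attached to a class $D=(a,b;\alpha,\beta)\in H_2((\C P^1)^2_2;\Z)$, which are already computed in the excerpt: $k_D\equiv \alpha+\beta+1 \bmod 2$ and $g(D)\equiv a+b+ab+\tfrac12(\alpha+\alpha^2)+\tfrac12(\beta+\beta^2)+1 \bmod 2$. The surjection $\psi$ sends $(a,b;\alpha,\beta)\mapsto(a,b,a+b-\alpha-\beta)$, so the preimage of $d=(a,b,c)$ consists of the classes $D=(a,b;\alpha,a+b-c-\alpha)$ for $\alpha\in\Z$, and $\ker\psi=\Z(0,0;1,-1)=\Z S$. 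A short computation gives $D.S=\alpha-(a+b-c-\alpha)=2\alpha-(a+b-c)$, hence $|D.S|=|a+b-c-2\alpha|$, which is odd iff $a+b-c$ is odd iff $a+b+c$ is odd; this is what makes Theorem \ref{Theorem2-W} non-trivial here and forces the parity hypothesis in the statement.

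Next I would pin down the two sign contributions $\epsilon(D)$ and $s^{\mathfrak{s}_{X|\Sigma}}(\rho D)$. For $\epsilon$, choose $L=(0,1;0,0)$ (or whichever of the effective preimages of a generator $M_i$ of $H_2((\C P^1)^3;\Z)$ is convenient, e.g. a class with $|L.S|=1$) and normalize so that $\R f^*\mathcal N_{L/\Sigma}=E^+$, giving $\epsilon(L)=k_L+1\equiv 0$. Then Proposition \ref{prop-function.epsilon2} yields $\epsilon(D)=0$ iff $(D.S)(L.S)>0$, i.e. $\epsilon(a,b;\alpha,\beta)$ is controlled by the sign of $D.S$, just as in Lemma \ref{lemma1}. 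For the quasi-quadratic enhancement, use the chosen $Spin_3$-structure so that $s^{\mathfrak{s}_{(\R P^1)^3|(\R P^1)^2_2}}$ vanishes on the basis classes $(1,0;0,0)$, $(0,0;1,0)$, $(0,0;0,1)$ (the latter two being torsion-like exceptional classes on which the enhancement is $0$ by the same argument as in Proposition \ref{prop-quasi-quad.and.monodromy}.(1), while $s$ on $(0,1;0,0)$ is then forced to be $1$), and then use the defining quadratic-type relation in Definition \ref{def-quasiquadratic.enhancement} to expand $s^{\mathfrak{s}_{X|\Sigma}}(\rho(a,b;\alpha,\beta))$ on the general class. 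I would then assemble $\epsilon(D)+g(D)+s^{\mathfrak{s}_{X|\Sigma}}(\rho D)\bmod 2$ and simplify; the outcome should be that for a preimage $D=(a,b;\alpha,a+b-c-\alpha)$ with $D.S>0$ the total sign equals $(-1)^{\alpha+\frac{a+b-c-1}{2}}$, and for $D.S<0$ it picks up an extra $(-1)$, matching the symmetry $\epsilon(T(D))=\epsilon(D)+1$, $g(T(D))=g(D)$, $s^{\mathfrak{s}_{X|\Sigma}}(\rho T(D))\equiv s^{\mathfrak{s}_{X|\Sigma}}(\rho D)+|D.S|$ already used in the proof of Theorem \ref{Theorem2-W}.

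Finally I would plug these into the formula of Theorem \ref{Theorem2-W}:
$$W^{\mathfrak{s}_X,\mathfrak{o}_X}_{\R X}(d,l)=\frac12\sum_{D\in\psi^{-1}(d)}(-1)^{\epsilon(D)+g(D)+s^{\mathfrak{s}_X|_\Sigma}(\rho D)}\,|D.S|\,W_{\R\Sigma}(D,l),$$
split the sum over $\psi^{-1}(a,b,c)=\{(a,b;\alpha,a+b-c-\alpha):\alpha\in\Z\}$ into the ranges $\alpha<\tfrac{a+b-c}{2}$ (where $D.S<0$) and $\alpha>\tfrac{a+b-c}{2}$ (where $D.S>0$), use the monodromy symmetry $W_{\R\Sigma}(D,l)=W_{\R\Sigma}(T(D),l)$ together with the fact that $T$ exchanges the two ranges and flips the sign, so that the two halves coincide and the $\tfrac12$ disappears. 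The surviving sum runs over $0\le \alpha<\tfrac{|a+b-c|}{2}$ with sign $(-1)^{\alpha+\frac{a+b-c-1}{2}}$ and weight $(a+b-c-2\alpha)=|D.S|$, which is precisely Equation (\ref{Eq:W.in.deg.6.1}); the Welschinger invariants of $(\C P^1)^2_2$ vanish outside this range, so no terms are lost. The main obstacle, as in the degree $7$ case, is the careful bookkeeping of the mod-$2$ sign: correctly normalizing $E^\pm$ and the $Spin_3$-structure, evaluating $s^{\mathfrak{s}_{X|\Sigma}}$ on the two exceptional classes (which requires the vanishing-disk argument of Proposition \ref{prop-quasi-quad.and.monodromy}.(1) applied to each blow-up point), and checking that the $\tfrac12(\alpha+\alpha^2)+\tfrac12(\beta+\beta^2)$ terms in $g(D)$ combine with $\epsilon$ and $s$ to leave exactly $(-1)^{\alpha+\frac{a+b-c-1}{2}}$; everything else is formal.
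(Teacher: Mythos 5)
Your overall strategy coincides with the paper's: specialize Theorem \ref{Theorem2-W}, compute $k_D$, $g(D)$, $\epsilon(D)$ and $s^{\mathfrak{s}_{X|\Sigma}}(\rho D)$ for $D=(a,b;\alpha,a+b-c-\alpha)$, and fold the sum over $\psi^{-1}(d)$ using the monodromy symmetry. However, the sign bookkeeping --- which is the entire content of the theorem beyond Theorem \ref{Theorem2-W} --- is set up incorrectly and the final sign is only asserted. First, your named reference class $L=(0,1;0,0)=L_2$ satisfies $L.S=0$ (it meets neither exceptional class) and $\psi(0,1;0,0)=(0,1,1)$ is not a generator of $H_2((\C P^1)^3;\Z)$; with $L.S=0$, Proposition \ref{prop-function.epsilon2} gives no normalization of $\epsilon$. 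The usable classes are the effective preimages of $(0,0,1)$, i.e.\ the exceptional classes $(0,0;0,-1)$ and $(0,0;-1,0)$, which do have $|L.S|=1$ (this is the content of Lemma \ref{lemma2}).

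Second, your proposed enhancement values are inconsistent: $s^{\mathfrak{s}_{X|\Sigma}}$ cannot vanish on both exceptional generators. Since $w_1(\R \Sigma)\cdot \rho E_i\equiv E_i^2\equiv 1$ and $\rho E_1.\rho E_2=0$, quasi-quadraticity gives $s(\rho S)=s(\rho E_1)+s(\rho E_2)+1$, while Proposition \ref{prop-quasi-quad.and.monodromy}(1) forces $s(\rho S)=0$; hence the two exceptional classes take opposite values. Your justification --- that each $E_i$ has $s=0$ by the vanishing-disk argument --- fails because $\psi(E_i)=(0,0,1)\neq 0$, so $\rho E_i$ does not bound a disk in $\R X$; only the difference $S=E_2-E_1$ lies in $\ker\psi$. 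Likewise the values on $(1,0;0,0)$ and $(0,1;0,0)$ are not free: they are forced by the normalization that all three ruling classes of $(\C P^1)^3$ have spinor state $+1$, which yields $s(1,0;0,0)\equiv s(0,1;0,0)\equiv 1$ (your asymmetric assignment $0$ and $1$ breaks the $L_1\leftrightarrow L_2$ symmetry this normalization imposes). With the correct inputs one obtains $s(a,b;\alpha,\beta)\equiv a+b+ab+\alpha$, which combined with $g(D)$ and $\epsilon(D)$ produces the sign $(-1)^{\alpha+\frac{a+b-c-1}{2}}$; since you assert this outcome rather than derive it, and your stated data would not produce it, the proof as written does not establish Formula (\ref{Eq:W.in.deg.6.1}).
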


 \begin{proof}
Let $D=(a,b;\alpha,a+b-c-\alpha)\in H_2^{-\tau_{1,1}}((\C P^1)^2_2;\Z)$. 
It is obvious that $k_D\equiv a+b+c+1 \mod 2$. The topological invariant $g(D)$ can be expressed as
        \begin{align*}
        g(D)&=a+b+ab+
    \alpha^2+ \alpha(a+b-c)+\tfrac{1}{2}(a+b-c)(a+b-c-1)+1\\
        & \equiv    a+b+1+ab+\tfrac{1}{2}(a+b-c-1)\mod 2 \quad \text{if } a+b+c=1 \mod 2. 
    \end{align*}

We consider a real  balanced immersion $f\colon (\C P^1,\tau_1) \to ((\C P^1)^3,\tau_{1,1})$ such that $\Im(f)\subset (\C P^1)_2^2$ and $f_*[\C P^1]\in H_2^{-\tau_{1,1}}((\C P^1)^2_2;\Z)$. It should be noted that the homology class $(0,0,1)\in H_2^{-\tau_{1,1}}((\C P^1)^3;\Z)$ has two effective classes in its preimages under the surjective homomorphism $\psi\colon H_2((\C P^1)^2_2;\Z)\to H_2((\C P^1)^3;\Z)$. These correspond to the classes $(0,0;0,-1)$ and $(0,0;-1,0)$ in $H_2^{-\tau_{1,1}}( (\C P^1)^2_2;\Z)$.

Moreover, the restricted $\Pin^-_2$-structure  on $O(T (\R P^1)^2_2))$ satisfies  $s^{\mathfrak{s}_{(\R P^1)^3|(\R P^1)^2_2}}(0,0;0,-1)=0$ and $s^{\mathfrak{s}_{(\R P^1)^3|(\R P^1)^2_2}}(0,0;-1,0)=1$. 
The spinor states of a  curve of homology class  $(1,0,0)$, $(0,1,0)$ or $(0,0,1)$ in $(\C P^1)^3$ are identified, meaning that
$$\sp_{\mathfrak{s}_{(\R P^1)^3},\mathfrak{o}_{(\R P^1)^3}}(0,0,1)=\sp_{\mathfrak{s}_{(\R P^1)^3},\mathfrak{o}_{(\R P^1)^3}}(1,0,0)=\sp_{\mathfrak{s}_{(\R P^1)^3},\mathfrak{o}_{(\R P^1)^3}}(0,1,0).$$ It follows that

$$s^{\mathfrak{s}_{(\R P^1)^3|(\R P^1)^2_2}}(1,0;1,0)\equiv s^{\mathfrak{s}_{(\R P^1)^3|(\R P^1)^2_2}}(0,1;1,0)\equiv 0 \mod 2.$$
Consequently, we have
$$s^{\mathfrak{s}_{(\R P^1)^3|(\R P^1)^2_2}}(1,0;0,0)\equiv s^{\mathfrak{s}_{(\R P^1)^3|(\R P^1)^2_2}}(1,0;1,0) +s^{\mathfrak{s}_{(\R P^1)^3|(\R P^1)^2_2}}(0,0;-1,0) \equiv 1 \mod 2$$
and
$$s^{\mathfrak{s}_{(\R P^1)^3|(\R P^1)^2_2}}(0,1;0,0) \equiv s^{\mathfrak{s}_{(\R P^1)^3|(\R P^1)^2_2}}(0,1;1,0) +s^{\mathfrak{s}_{(\R P^1)^3|(\R P^1)^2_2}}(0,0;-1,0)\equiv 1 \mod 2.$$
As a result, we obtain
\begin{align*}
    &s^{\mathfrak{s}_{(\R P^1)^3|(\R P^1)^2_2}}(a,b;\alpha,\beta)\\
    &\equiv  \; a s^{\mathfrak{s}_{(\R P^1)^3|(\R P^1)^2_2}}(1,0;0,0)+bs^{\mathfrak{s}_{(\R P^1)^3|(\R P^1)^2_2}}(0,1;0,0) \\
    &\hphantom{\equiv}\;\; -\alpha s^{\mathfrak{s}_{(\R P^1)^3|(\R P^1)^2_2}}(0,0;-1,0)-\beta  s^{\mathfrak{s}_{(\R P^1)^3|(\R P^1)^2_2}}(0,0;0,-1)+ab \mod 2\\
    & \equiv  \; a+b+ab+\alpha \mod 2.
\end{align*}
We have the following two possibilities for the homology class $D$:
\begin{enumerate}
    \item If $D=(a,b;\alpha,\beta)$ with $\alpha>\beta$ and  $\alpha + \beta= a+b-c \equiv 1 \mod 2$, then
$$(-1)^{\epsilon(D)+g(D)+s^{\mathfrak{s}_{(\R P^1)^3|(\R P^1)^2_2}}(\rho D)}=(-1)^{0+(a+b+1+ab+\frac{1}{2}(a+b-c-1))+(a+b+ab+\alpha)}.$$
Upon simplification, this sign becomes $(-1)^{\alpha+1+\frac{1}{2}(a+b-c-1)}$.
    \item  If $D=(a,b;\alpha,\beta)$ with $\alpha<\beta$ and $\alpha + \beta= a+b-c \equiv 1 \mod 2$, then
$$(-1)^{\epsilon(D)+g(D)+s^{\mathfrak{s}_{(\R P^1)^3|(\R P^1)^2_2}}(\rho D)}=(-1)^{1+(a+b+1+ab+\frac{1}{2}(a+b-c-1))+(a+b+ab+\alpha)}.$$
Upon simplification,  this sign becomes $(-1)^{\alpha+\frac{1}{2}(a+b-c-1)}$.
\end{enumerate}
In conclusion, if $a+b +c= 1 \mod 2$, then we have
\begin{align*}
    & W_{(\R P^1)^3}^{\mathfrak{s}_{(\R P^1)^3},\mathfrak{o}_{(\R P^1)^3}}((a,b,c),l)\\
  &=\frac{1}{2}\left(\sum_{0\leq \alpha<\frac{|a+b-c|}{2}}(-1)^{\alpha +\frac{a+b-c-1}{2}} (a+b-c-2\alpha) W_{(\R P^1)^2_2} ((a,b;\alpha,a+b-c-\alpha),l)\right.\\
  &\hphantom{=\frac{1}{2}\Big(}\left.\; +\sum_{|a+b-c|\geq \alpha>\frac{|a+b-c|}{2}}(-1)^{\alpha +1+\frac{a+b-c-1}{2}} (-(a+b-c-2\alpha)) W_{(\R P^1)^2_2} ((a,b;\alpha,a+b-c-\alpha),l) \right)\\
    &=\sum_{0\leq \alpha<\frac{|a+b-c|}{2}}(-1)^{\alpha +\frac{a+b-c-1}{2}} (a+b-c-2\alpha) W_{(\R P^1)^2_2} ((a,b;\alpha,a+b-c-\alpha),l).
\end{align*}
In addition, the genus $0$ Welschinger invariants $W_{(\R P^1)^2_2} ((a,b;\alpha,a+b-c-\alpha),l)$ vanish when $\alpha \notin [0,|a+b-c|]$;  
Formula (\ref{Eq:W.in.deg.6.1}) then holds.
\end{proof}

We call a \textit{$(x,y,z)$-surface class on an elliptic curve $E$} the restriction of  a surface in $(\C P^1)^3$ of \textit{tridegree} $(x,y,z)$ to $E$.
For example, the $(0,0,1)$-surface class on $E$ is $h_3:=(1,1;1,1)|_E\in \Pic_2(E)$. In other words, we can write
$$h_3=L_1{|_E}+L_2{|_E}-E_1{|_E}-E_2{|_E}.$$
It should be noted that
$$L_1{|_E},L_2{|_E} \in \Pic_2(E), \quad E_1{|_E},E_2{|_E}\in \Pic_1(E),$$ 
and 
$$E_1{|_E}+E_2{|_E}=h_{1,1}-h_3,$$
where $h_{a,b}$ is the $(a,b,0)$-surface class on $E$. 

\begin{remark}\label{remark12}
Directly from Corollary~\ref{coro-Theorem2-W} and the fact that $$|D.S|=|(a,b;\alpha,a+b-c-\alpha).(0,0;1,-1)|=|a+b-c-2\alpha|\equiv a+b+c \mod 2,$$
it follows that the invariants $W_{(\R P^1)^3}^{\mathfrak{s}_{(\R P^1)^3},\mathfrak{o}_{(\R P^1)^3}}((a,b,c),l)$ vanish if $a+b+c$ is even for every $\Spin_3$-structure $\mathfrak{s}_{(\R P^1)^3}$ over $(\R P^1)^3$.
\end{remark}

\begin{proposition} \label{prop-application.deg6.1}
Assume that $\R E$ is disconnected and that the  $(0,0,1)$-surface class,  the  $(1,1,0)$-surface class and the  $(b,a,0)$-surface class $(a,b\in \mathbb{N}^*)$ are on the pointed component of\, $\R E$.
If $a+b+c$ is even and, moreover, the real configuration $\underline{x}_d$ of\, $k_d=a+b+c$ points on $E$ 
    has an odd number of real points on each component of\, $\R E$, then there does not exist any real irreducible rational curve of class $(a,b,c)\in H_2^{-\tau_{1,1}}((\C P^1)^3;\Z)$ passing through~$\underline{x}_d$. 
\end{proposition}

\begin{proof}
This is a consequence of Theorem~\ref{Theorem3-W}  
    and the following remarks.
    \begin{enumerate}
        \item The base locus of the pencil of surfaces in the linear system $|-\frac{1}{2}K_{(\C P^1)^3}|$ is an elliptic curve $E$ of homology class 
        $$\tfrac{1}{4}K_{(\C P^1)^3}^2=(2,2,2)\in H_2^{-{\tau}_{1,1}}\left(\left(\C P^1\right)^3;\Z\right).$$
        Moreover, the curve $E$ is also of homology class 
        $$(2,2;1,1)\in  H_2^{-{\tau}_{1,1}}\left(\left(\C P^1\right)^2_2;\Z\right).$$
      \item By the intersection forms of $(\C P^1)^3$ and of $(\C P^1)^2_2$ as previously described,  the divisor $L+T(L)$ can be written as
        $$L+T(L)=E_1+E_2=(0,0;-1,-1)\in H_2\left(\left(\C P^1\right)^2_2;\Z\right)\cong \Pic\left(\left(\C P^1\right)^2_2\right).$$ 
        Taking its restriction to $E$,  we obtain the divisor
        $$\overline{L}_{\phi}=h_{1,1}-h_3 \in \Pic_2(E),$$
        which is determined by the $(0,0,1)$-surface class  and the  $(1,1,0)$-surface class on $E$.
        \item Moreover, if $0\leq \alpha\leq \frac{1}{2}(a+b-c)$, then the divisor $\phi(\overline{L},\overline{D})$, which has degree $D.E-|D.S|(L.E)= 2(c+\alpha)$, can be described as
        \begin{align*}
            \phi\left(\overline{L},\overline{D}\right)&=(a,b;a+b-c-\alpha,\alpha)|_E-(a+b-c-2\alpha)(0,0;0,-1)|_E\\
           & =(a,b;0,0)|_E-(a+b-c-\alpha) (0,0;-1,-1)|_E\\
           & = h_{b,a}-(a+b-c-\alpha)(h_{1,1}-h_3).
        \end{align*} 
            Consequently, Equation (\ref{eq-solutions.in.PicE}) becomes
\begin{equation}
     (a+b-c-2\alpha)l=\underline{x}_{d}-h_{b,a}+(a+b-c-\alpha)(h_{1,1}-h_3).
\end{equation}
    \end{enumerate}

    By hypothesis and by Corollary~\ref{coro-degree.case.real}\eqref{c-d.c.r-c}, the statement follows.
\end{proof}

\subsubsection*{Case 2: The twisted real structure}
  
    We may choose a basis for the group $H_2^{-\tilde{\tau}_{1,1} }((\C P^1)^3;\Z)$ as
    $$(M_1+M_2,M_3)=\left(\left[\C P^1\times \{p_1\}\times \{p_2\}\right]+\left[\{q_1\}\times \C P^1\times \{q_2\}\right],\left[\{r_1\}\times \{r_2\}\times \C P^1\right]\right),$$
    where $p_i,q_i,r_i \in \C P^1$. Hence, the group $H_2^{-\tilde{\tau}_{1,1} }((\C P^1)^3;\Z)$  can be identified with $\Z^2$. Note that the group $H_1(\mathbb{S}^2\times \R P^1;\Z_2)$ has a generator $\rho M_3:=[\{r_{\R}\}\times \R P^1]$,
    where $r_{\R} \in \mathbb{S}^2$. Also note that the two blow-up points are distinct; they are denoted by $x_1\neq x_2\in \mathbb{S}^2$. We may choose a basis for the group $H_2^{-\tilde{\tau}_{1,1}}((\C P^1)_2^2;\Z)$ as
\begin{itemize}
    \item  $L_1+L_2=[\C P^1\times \{p\}]+[\{q\}\times \C P^1]$,
      \item $E_1$, the exceptional divisor class over $x_1$,
       \item $E_2$, the exceptional divisor class over $x_2$,
\end{itemize}
where $p,q \in \C P^1$. Hence, the group $H_2^{-\tilde{\tau}_{1,1}}((\C P^1)^2_2 ;\Z)$  can be identified with $\Z^3$. Note that the  two generators of the group $H_1(\mathbb{S}^2\sharp 2 {\R P^2};\Z_2) $  are
\begin{itemize}
      \item $\rho E_1$, the real part of the exceptional divisor class over $x_1$, and
      \item $\rho E_2$, the real part of the exceptional divisor class over $x_2$.
\end{itemize}

We can write $H_1(\mathbb{S}^2\sharp 2 {\R P^2};\Z_2) =\, <(0;-1,0),(0;0,-1)> $.  The $\Spin_3$-structure $\mathfrak{s}_{\mathbb{S}^2 \times \R P^1}$ over $\mathbb{S}^2 \times \R P^1$ is chosen such that
$$\sp_{\mathfrak{s}_{\mathbb{S}^2 \times \R P^1},\mathfrak{o}_{\mathbb{S}^2 \times \R P^1}}\left(f\left(\C P^1\right)\right)=+1,$$
where $f\colon (\C P^1,\tau_1)\to ((\C P^1)^3,\tilde{\tau}_{1,1})$ is a real immersion and $f_*[\C P^1]=(0;1)\in H_2^{-\tilde{\tau}_{1,1}}((\C P^1)^3;\Z)$.

Lemma~\ref{lemma2} directly implies  the following lemma.

\begin{lemma}
Let $L= (0;0,-1)\in H_2^{-\tilde{\tau}_{1,1}}( (\C P^1)^2_2;\Z)$. Assume that $\R f^*\mathcal{N}_{L/(\C P^1)^2_2}=E^+$. Then, the function $\epsilon\colon  H_2((\C P^1)^2_2;\Z)\to \Z_2$ $($see Definition~\ref{def-epsilon.function}\,$)$ satisfies $$\epsilon(a;\alpha,\beta)=0 \quad\text{if and only if}\quad \alpha > \beta.$$
\end{lemma}

The formula induced from Theorem~\ref{Theorem2-W} in this case is given in the following theorem.

\begin{theorem} \label{Theorem-W.in.deg.6.2}
For every non-negative integer $a$, every odd positive integer $c$ and every integer $l$ such that $0\leq l \leq \frac{1}{2}(2a+c-1)$, one has 
\begin{equation}\label{Eq:W.in.deg.6.2}
    W_{\mathbb{S}^2 \times \R P^1}^{\mathfrak{s}_{\mathbb{S}^2 \times \R P^1},\mathfrak{o}_{\mathbb{S}^2 \times \R P^1}}((a;c),l)=\sum_{0\leq\alpha<\frac{|2a-c|}{2}}(-1)^{ \alpha +\frac{c+1}{2}} (2a-c-2\alpha) W_{\mathbb{S}^2\sharp 2 {\R P^2}} ((a;\alpha,2a-c-\alpha),l).
\end{equation}
\end{theorem}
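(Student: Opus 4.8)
The plan is to follow exactly the same pattern used for Theorem \ref{Theorem-W.in.deg.8} (real $\C P^3$) and Theorem \ref{Theorem-W.in.deg.6.1} (the standard real structure on $(\C P^1)^3$), since the twisted real structure is handled by the same machinery once one records the correct topological invariants and the correct restricted $Pin^-_2$-structure. First I would apply Theorem \ref{Theorem2-W} to $X=((\C P^1)^3,\tilde\tau_{1,1})$ with the chosen orientation $\mathfrak{o}_{\mathbb{S}^2\times\R P^1}$ and $Spin_3$-structure $\mathfrak{s}_{\mathbb{S}^2\times\R P^1}$, and with $\Sigma\cong (\C P^1)^2_2$ equipped with $\tilde\tau_{1,1}|_\Sigma$ whose real part is $\mathbb{S}^2\sharp 2\overline{\R P^2}$. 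Recalling the surjection $\psi:H_2(\Sigma;\Z)\to H_2(X;\Z)$, $(a,b;\alpha,\beta)\mapsto(a,b,a+b-\alpha-\beta)$, its restriction to $H_2^{-\tilde\tau_{1,1}}$ sends $(a;\alpha,\beta)\mapsto(a;2a-\alpha-\beta)$ with kernel $\Z(0;1,-1)$, so a class $d=(a;c)$ has preimage classes $D=(a;\alpha,2a-c-\alpha)$, and $D.S=2a-c-2\alpha$, whence $|D.S|\equiv c\pmod 2$; this already gives (via Corollary \ref{coro-Theorem2-W}) the vanishing when $c$ is even, and explains the hypothesis that $c$ is odd.

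Next I would compute the three sign-contributions appearing in the formula of Theorem \ref{Theorem2-W}. For $g(D)$ one uses $g(D)=\tfrac12(K_\Sigma.D+D^2+2)$ with $-K_\Sigma=(2,2;1,1)$ and the intersection form on $(\C P^1)^2_2$; substituting $\beta=2a-c-\alpha$ and reducing mod $2$ gives an explicit expression of the form $g(D)\equiv \tfrac{c+1}{2}+(\text{terms depending on }\alpha)\pmod 2$ when $2a+c$ is odd — exactly as in the proof of Theorem \ref{Theorem-W.in.deg.6.1}, but now without the $a,b$-cross terms because only $M_1+M_2$ survives in $H_2^{-\tilde\tau_{1,1}}$. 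For $\epsilon$ I would invoke Proposition \ref{prop-function.epsilon2}: choosing $L=(0;0,-1)$ in the preimage of $(0;1)$ with $\R f^*\mathcal N_{L/\Sigma}=E^+$, one has $\epsilon(L)=k_L+1\equiv 0$, and $(D.S)(L.S)=\alpha-(2a-c-\alpha)=2\alpha-(2a-c)$, so $\epsilon(D)=0$ iff $\alpha>a-\tfrac c2$, i.e. iff $D.S<0$ (with the opposite convention the two halves of the sum swap). For the quasi-quadratic enhancement $s^{\mathfrak{s}_{X|\Sigma}}$ I would determine its values on the generators $\rho E_1,\rho E_2$ of $H_1(\mathbb{S}^2\sharp 2\overline{\R P^2};\Z_2)$ from the normalization $sp_{\mathfrak{s}}(f(\C P^1))=+1$ for $f_*[\C P^1]=(0;1)$: as in Lemma \ref{lemma2}'s surrounding argument, $s^{\mathfrak{s}_{X|\Sigma}}(0;0,-1)=0$ forces $s^{\mathfrak{s}_{X|\Sigma}}(0;-1,0)=1$, and then use Proposition \ref{prop-quasi-quad.and.monodromy} together with the generator $(1,1;1,1)=E\cdot$-type relations to pin down $s^{\mathfrak{s}_{X|\Sigma}}(a;\alpha,\beta)$ mod $2$; here I expect $s^{\mathfrak{s}_{X|\Sigma}}(a;\alpha,\beta)\equiv\alpha\pmod 2$ or a similarly short expression, since in the twisted case $\rho(M_1+M_2)$ restricts trivially onto $\R\Sigma=\mathbb{S}^2\sharp2\overline{\R P^2}$'s first homology up to the $E_i$.

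Assembling, I would plug these three quantities into $(-1)^{\epsilon(D)+g(D)+s^{\mathfrak{s}_{X|\Sigma}}(\rho D)}$, observe that the $\alpha$-independent part is $(-1)^{\frac{c+1}{2}}$ and the $\alpha$-dependent part is $(-1)^\alpha$ on the half $\{\alpha<\frac{|2a-c|}{2}\}$ and $(-1)^{\alpha+1}$ on the complementary half (because $\epsilon$ flips there), and then pair up each $D$ with $T(D)$ using $W_{\R\Sigma}(D,l)=W_{\R\Sigma}(T(D),l)$, $|T(D).S|=|D.S|$, $g(T(D))=g(D)$, $\epsilon(T(D))=\epsilon(D)+1$ (Proposition \ref{prop-function.epsilon1}), and $s^{\mathfrak{s}_{X|\Sigma}}(\rho T(D))\equiv s^{\mathfrak{s}_{X|\Sigma}}(\rho D)+(D.S)\equiv s^{\mathfrak{s}_{X|\Sigma}}(\rho D)+1$ since $D.S$ is odd (Proposition \ref{prop-quasi-quad.and.monodromy}(2)). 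This collapses the factor $\tfrac12$ and halves the range of summation exactly as in the two earlier proofs, and then the vanishing of $W_{\mathbb{S}^2\sharp2\overline{\R P^2}}((a;\alpha,2a-c-\alpha),l)$ for $\alpha\notin[0,|2a-c|]$ lets me replace the sum over $\Z$ by $0\le\alpha<\tfrac{|2a-c|}{2}$, giving Formula (\ref{Eq:W.in.deg.6.2}). The main obstacle, and the only genuinely new computation, is getting the restricted enhancement $s^{\mathfrak{s}_{X|\Sigma}}$ right: the generators of $H_1(\mathbb{S}^2\sharp2\overline{\R P^2};\Z_2)$ are the two exceptional real curves, and one must carefully propagate the normalization at the single $\tilde\tau_{1,1}$-anti-invariant line class $(0;1)$ — whose real part is an embedded circle in $\mathbb{S}^2\times\R P^1$ — through Proposition \ref{prop-quasi-quad.and.monodromy} and the quasi-quadratic identity of Definition \ref{def-quasiquadratic.enhancement}, keeping track of the $w_1$-correction term which is now nonzero on $\mathbb{S}^2\sharp2\overline{\R P^2}$; everything else is a routine transcription of the proofs of Theorems \ref{Theorem-W.in.deg.8} and \ref{Theorem-W.in.deg.6.1}.
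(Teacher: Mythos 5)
Your proposal follows essentially the same route as the paper's own proof: apply Theorem \ref{Theorem2-W} with $\Sigma\cong(\C P^1)^2_2$, compute $D.S=2a-c-2\alpha$ (hence the vanishing for $c$ even), determine $\epsilon$ via Proposition \ref{prop-function.epsilon2} from $L=(0;0,-1)$, pin down $s^{\mathfrak{s}_{X|\Sigma}}(a;\alpha,\beta)\equiv\alpha$ from the normalization on the exceptional classes, and pair $D$ with $T(D)$ to halve the sum; your anticipated value of the restricted enhancement and the final sign $(-1)^{\alpha+\frac{c+1}{2}}$ (coming from $g(D)\equiv\frac{c-1}{2}$ together with the extra $1$ from $\epsilon$ on the half $\alpha<\beta$) are exactly what the paper obtains. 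This is the same argument, so nothing further is needed.
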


\begin{proof}
  A proof analogous to that of Theorem~\ref{Theorem-W.in.deg.6.1}  can be employed. We emphasize some distinguishing points 
  that come from  changing real structures on $(\C P^1)^3$ as follows.

Let $D=(a;\alpha,2a-c-\alpha)\in H_2^{-\tilde{\tau}_{1,1}}((\C P^1)^2_2;\Z)$. 
It is obvious that $k_D\equiv c+1 \mod 2$. The topological invariant $g(D)$ can be expressed as
\begin{align*}
    g(D)&= 2a+a^2+\alpha^2+(2a-c)\alpha +\tfrac{1}{2}(2a-c)(2a-c-1)+1\\
    &\equiv \alpha^2+ c\alpha+1+\tfrac{1}{2}c(c+1) \mod 2\\
    & \equiv \tfrac{1}{2}(c-1)\mod 2 \quad \text{if } c\equiv 1 \mod 2.
\end{align*}
We consider a real balanced immersion $f\colon (\C P^1,\tau_1) \to ((\C P^1)^3,\tilde{\tau}_{1,1})$ such that $\Im(f)\subset (\C P^1)^2_2$ and $f_*[\C P^1]\in H_2^{-\tilde{\tau}_{1,1}}( (\C P^1)^2_2;\Z)$. It is to be noted that the homology class $(0;1)\in H_2^{-\tilde{\tau}_{1,1}}((\C P^1)^3;\Z)$ has two effective classes in its preimages under the surjective homomorphism $\psi\colon H_2((\C P^1)^2_2;\Z)\to H_2((\C P^1)^3;\Z)$. These correspond to the classes $(0;0,-1)$ and $(0;-1,0)$ in $H_2^{-\tilde{\tau}_{1,1}}( (\C P^1)^2_2;\Z)$. 

Moreover, the restricted $\Pin^-_2$-structure  on $O(T \mathbb{S}^2 \times \R P^1)$ satisfies $$s^{\mathfrak{s}_{\mathbb{S}^2 \times \R P^1|\mathbb{S}^2 \times \R P^1}}(0;0,-1)=0 \quad\text{and}\quad  s^{\mathfrak{s}_{\mathbb{S}^2 \times \R P^1|\mathbb{S}^2 \times \R P^1}}(0;-1,0)=1.$$ Since $H_1(\mathbb{S}^2\sharp 2 {\R P^2};\Z_2)=<(0;-1,0),(0;0,-1)>$, one has
\begin{align*}
    s^{\mathfrak{s}_{\mathbb{S}^2 \times \R P^1|\mathbb{S}^2 \times \R P^1}}(0;-\alpha,-\beta)
    &\equiv  \alpha s^{\mathfrak{s}_{\mathbb{S}^2 \times \R P^1|\mathbb{S}^2 \times \R P^1}}(0;-1,0)+\beta  s^{\mathfrak{s}_{\mathbb{S}^2 \times \R P^1|\mathbb{S}^2 \times \R P^1}}(0;0,-1) \mod 2\\
   & \equiv \alpha  \mod 2.
\end{align*}

As a consequence, the two possibilities for the class $D$ are established as follows:
\begin{enumerate}
    \item If $D=(a;\alpha,\beta)$ with $\alpha>\beta$ and $\alpha +\beta =2a-c \equiv 1 \mod 2$, then we have
$$(-1)^{\epsilon(D)+g(D)+s^{\mathfrak{s}_{\mathbb{S}^2 \times \R P^1|\mathbb{S}^2 \times \R P^1}}(\rho D)}=(-1)^{0+\frac{1}{2}(c-1)+\alpha}.$$
    \item  If $D=(a;\alpha,\beta)$ with $\alpha<\beta$ and $\alpha +\beta =2a-c \equiv 1 \mod 2$, then we have
$$(-1)^{\epsilon(D)+g(D)+s^{\mathfrak{s}_{\mathbb{S}^2 \times \R P^1|\mathbb{S}^2 \times \R P^1}}(\rho D)}=(-1)^{1+\frac{1}{2}(c-1)+\alpha }.$$
\end{enumerate}

In conclusion, if $c= 1 \mod 2$, then we obtain
\begin{align*}
     W_{\mathbb{S}^2 \times \R P^1}^{\mathfrak{s}_{\mathbb{S}^2 \times \R P^1},\mathfrak{o}_{\mathbb{S}^2 \times \R P^1}}((a;c),l)&=\frac{1}{2}\left(\sum_{0\leq \alpha<\frac{|2a-c|}{2}}(-1)^{\alpha +\frac{c+1}{2}} (2a-c-2\alpha) W_{\mathbb{S}^2\sharp 2 {\R P^2}} ((a;\alpha,2a-c-\alpha),l)\right.\\
  &\hphantom{=\frac{1}{2}\Big(}\left.\; + \sum_{|2a-c|\geq \alpha>\frac{|2a-c|}{2}}(-1)^{\alpha +1+\frac{c+1}{2}} (-(2a-c-2\alpha)) W_{\mathbb{S}^2\sharp 2 {\R P^2}} ((a;\alpha,2a-c-\alpha),l) \right)\\
    &=\sum_{0\leq \alpha<\frac{|2a-c|}{2}}(-1)^{\alpha +\frac{c+1}{2}} (2a-c-2\alpha) W_{\mathbb{S}^2\sharp 2 {\R P^2}} ((a;\alpha,2a-c-\alpha),l).
\end{align*}

In addition, the genus $0$ Welschinger invariants $W_{\mathbb{S}^2\sharp 2 {\R P^2}} ((a;\alpha,2a-c-\alpha),l)$ vanish when $\alpha \notin [0,|2a-c|]$;  
Formula (\ref{Eq:W.in.deg.6.2}) then holds.
\end{proof}

It should be noted that, in the present case, there are two specific types of classes:
\begin{enumerate}
    \item the $(0,0,1)$-surface class on $E$, that is, the class $$h_3:=(1;1,1)|_E=(L_1+L_2){|_E}-E_1{|_E}-E_2{|_E}\in \Pic_2(E);$$
\item the $(a,a,0)$-surface class on $E$, that is, the class
$$h_{a,a}:=(a;0,0)|_E=a(L_1+L_2){|_E}\in \Pic_{4a}(E).$$
\end{enumerate}

\begin{remark}\label{remark13}
Directly from Corollary~\ref{coro-Theorem2-W} and the fact that $$|D.S|=|(a;\alpha,2a-c-\alpha).(0;1,-1)|=|2a-c-2\alpha|\equiv  c \mod 2,$$
it follows that the Welschinger invariants $W_{\mathbb{S}^2 \times \R P^1}^{\mathfrak{s}_{\mathbb{S}^2 \times \R P^1},\mathfrak{o}_{\mathbb{S}^2 \times \R P^1}}((a;c),l)$ vanish if $c$ is even for every $\Spin_3$-structure $\mathfrak{s}_{\mathbb{S}^2 \times \R P^1}$ over $\mathbb{S}^2 \times \R P^1$.
\end{remark}

\begin{proposition} \label{prop-application.deg6.2}
Assume that $\R E$ is disconnected and that both the $(0,0,1)$-surface class and the $(a,a,0)$-surface class $(a\in \mathbb{N}^*)$  are on the pointed component of\, $\R E$.     If $c$ is even and, moreover, the real configuration $\underline{x}_d$ of $k_d=2a+c$ points on $E$ has an odd number of real points on each component of\, $\R E$, then there does not exist any real irreducible rational curve of class $(a;c)$ in $H_2^{-\tilde{\tau}_{1,1}}((\C P^1)^3;\Z)$  passing through $\underline{x}_d$. 
\end{proposition}

\begin{proof}
This is a consequence of Theorem~\ref{Theorem3-W}, Proposition~\ref{prop-application.deg6.1}  and the following remarks:
    \begin{enumerate}
        \item The base locus of the pencil of surfaces in the linear system $|-\frac{1}{2}K_{(\C P^1)^3}|$ is an elliptic curve $E$ of homology class 
        $$\tfrac{1}{4}K_{(\C P^1)^3}^2=(2;2)\in H_2^{-\tilde{\tau}_{1,1}}\left(\left(\C P^1\right)^3;\Z\right).$$
        Moreover, the curve $E$ is also of homology class 
        $$(2;1,1)\in  H_2^{-\tilde{\tau}_{1,1}}\left(\left(\C P^1\right)^2_2;\Z\right).$$
        
      \item By the intersection form on $H_4((\C P^1)^3;\mathbb{Z})$, the class $L+T(L)$ can be written as
        $$L+T(L)=E_1+E_2=(0;-1,-1)\in H_2^{-\tilde{\tau}_{1,1}}\left(\left(\C P^1\right)^2_2;\Z\right)\cong \Pic\left(\left(\C P^1\right)^2_2\right).$$ 
        Taking its restriction to $E$,  we obtain the divisor
        $$\overline{L}_{\phi}=h_{1,1}-h_3 \in \Pic_2(E),$$
        which is formed by the $(0,0,1)$-surface class  and the  $(1,1,0)$-surface class on $E$.
        \item Moreover, if $0\leq \alpha\leq \frac{1}{2}(2a-c)$, then the divisor $\phi(\overline{L},\overline{D})$, which has degree $D.E-|D.S|(L.E)= 2(c+\alpha)$,  can be described as
        \begin{align*}
            \phi\left(\overline{L},\overline{D}\right)&=(a;2a-c-\alpha,\alpha)|_E-(2a-c-2\alpha)(0;0,-1)|_E\\
           & =(a;0,0)|_E-(2a-c-\alpha) (0;-1,-1)|_E\\
           & = h_{a,a}-(2a-c-\alpha)(h_{1,1}-h_3)
        \end{align*} 
        Consequently, Equation (\ref{eq-solutions.in.PicE}) becomes
\begin{equation}
     (2a-c-2\alpha)l=\underline{x}_{d}-h_{a,a}+(2a-c-\alpha)(h_{1,1}-h_3).
\end{equation}
    \end{enumerate}
By hypothesis and by Corollary~\ref{coro-degree.case.real}\eqref{c-d.c.r-c}, the statement follows.
\end{proof}

In the following subsection, we present explicit values of Formulas (\ref{eq-Welschinger.formula.dim.7}),   (\ref{Eq:W.in.deg.6.1}) and~(\ref{Eq:W.in.deg.6.2}), which are listed in Tables~\ref{Tab:W.in.deg.7},~\ref{Tab:W.in.deg.6.1} and~\ref{Tab:W.in.deg.6.2}, respectively.

\subsection{Computations and further comments}

As mentioned in the introduction, explicit computations of genus $0$ Welschinger invariants on real del Pezzo surfaces have been carried out by numerous mathematicians using a variety of techniques, while the methods available for computing Welschinger invariants of threefolds are still relatively limited. Here, we list some first values of the genus $0$ Welschinger invariants of certain threefolds, in particular, 
$$W_{(\R P^3)_1}^{\mathfrak{s}_{(\R P^3)_1}, \mathfrak{o}_{(\R P^3)_1}}((d;k),l), \; W_{(\R P^1)^3}^{\mathfrak{s}_{(\R P^1)^3}, \mathfrak{o}_{(\R P^1)^3}}((a,b,c),l) \quad\text{and}\quad W_{\mathbb{S}^2\times \R P^1}^{\mathfrak{s}_{\mathbb{S}^2\times \R P^1},\mathfrak{o}_{\mathbb{S}^2\times \R P^1}}((a;c),l)$$  in the Tables~\ref{Tab:W.in.deg.7},~\ref{Tab:W.in.deg.6.1} and~\ref{Tab:W.in.deg.6.2}, respectively. These computations have been made using a Maple program.

\subsubsection{Real 3-dimensional del Pezzo varieties of degree 7}
We provide in Table~\ref{Tab:W.in.deg.7} the values of the genus $0$ Welschinger invariants $W_{(\R P^3)_1}^{\mathfrak{s}_{(\R P^3)_1},\mathfrak{o}_{(\R P^3)_1}}((d;k),l)$ for only small values of $d$. We  are under the following condition: $$0\leq l \leq d-\tfrac{1}{2}(2d-1-k) \quad\text{and}\quad k\leq d.$$ 
It is important to note that 
$W_{(\R P^3)_1}^{\mathfrak{s}_{(\R P^3)_1},\mathfrak{o}_{(\R P^3)_1}}((d;k),l)$ vanishes if $d$ is even. Table~\ref{Tab:W.in.deg.7} displays the computations of  genus $0$ Welschinger invariants $W_{(\R P^3)_1}^{\mathfrak{s}_{(\R P^3)_1},\mathfrak{o}_{(\R P^3)_1}}((d;k),l)$ for the $\Spin_3$-structure $\mathfrak{s}_{(\R P^3)_1}$ over $(\R P^3)_1$  such that the spinor state satisfies $\sp_{\mathfrak{s}_{(\R P^3)_1},\mathfrak{o}_{(\R P^3)_1}}(f(\C P^1))=+1$, where $$f_*[\C P^1]=(1;0)\in H_2^{-\tau}(\C P^3\sharp \overline{\C P^3};\Z).$$ 
 Also note that, in Table~\ref{Tab:W.in.deg.7}, we have $0 \leq k\leq d\leq 9$.

The following observations may be established through geometric arguments or deduced directly from Theorem~\ref{Theorem-W.in.deg.7}:
\begin{enumerate}
    \item For every $\Spin_3$-structure on $(\R P^3)_1$, every $d\in \mathbb{N}^*$ and every integer $l$ in $\{0,\ldots, \tfrac{1}{2}(2d-1-k)\}$, one has
    $$W_{(\R P^3)_1}^{\mathfrak{s}_{(\R P^3)_1},\mathfrak{o}_{(\R P^3)_1}}((d;0),l)=-W_{(\R P^3)_1}^{\mathfrak{s}_{(\R P^3)_1},\mathfrak{o}_{(\R P^3)_1}}((d;1),l).$$
    \item  For every $\Spin_3$-structure on $(\R P^3)_1$,  every $d\in \mathbb{N}^*$, every $k\in \{\tfrac{1}{2}(d+1),\ldots,d\}$ and  every integer
$l$ in  $\{0,\ldots, \tfrac{1}{2}(3k+1)\}$, one has
    $$W_{(\R P^3)_1}^{\mathfrak{s}_{(\R P^3)_1},\mathfrak{o}_{(\R P^3)_1}}((d;k),l)=0.$$
    \item  For every $\Spin_3$-structure on $(\R P^3)_1$, every $k\in \mathbb{N}$ and every integer $l$ in  $ \{0,\ldots, \tfrac{1}{2}(3k+1)\}$, one has
    $$W_{(\R P^3)_1}^{\mathfrak{s}_{(\R P^3)_1},\mathfrak{o}_{(\R P^3)_1}}((2k+1;k),l)=(-1)^{\frac{1}{2}(k^2-k)}W_{\R P^2}(k+1,l).$$
\end{enumerate}


\subsubsection{Real threefold product of the projective line with the standard real structure}


One has
$$ W_{(\R P^1)^3}^{\mathfrak{s}_{(\R P^1)^3},\mathfrak{o}_{(\R P^1)^3}}((a,b,c),l)= W_{(\R P^1)^3}^{\mathfrak{s}_{(\R P^1)^3},\mathfrak{o}_{(\R P^1)^3}}((\sigma(a),\sigma(b),\sigma(c)),l),$$ where $\sigma$ is a permutation of the set $\{a,b,c\}$.  Without loss of generality, we may assume that $a \geq b \geq c$ when computing $W_{(\R P^1)^3}^{\mathfrak{s}_{(\R P^1)^3},\mathfrak{o}_{(\R P^1)^3}}((a,b,c),l)$. As a direct consequence of Proposition~\ref{Prop-vanishing.GW} and Remark~\ref{remark12}, we obtain the following proposition.
 
\begin{proposition}\label{Prop-vanishing.W.6.1}
Assume that $a\geq b \geq c >1$. Then, for every $\Spin_3$-structure $\mathfrak{s}_{(\R P^1)^3}$ over $(\R P^1)^3$, the Welschinger invariant
$W_{(\R P^1)^3}^{\mathfrak{s}_{(\R P^1)^3},\mathfrak{o}_{(\R P^1)^3}}((a,b,c),l)$ vanishes if at least one of the following conditions holds:
    \begin{enumerate}
        \item $a> b+c$,
        \item $a+b+c$ is even.
    \end{enumerate}
\end{proposition}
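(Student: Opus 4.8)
The plan is to deduce Proposition~\ref{Prop-vanishing.W.6.1} directly from Theorem~\ref{Theorem-W.in.deg.6.1}, which expresses $W_{(\R P^1)^3}^{\mathfrak{s}_{(\R P^1)^3},\mathfrak{o}_{(\R P^1)^3}}((a,b,c),l)$ as a signed sum of Welschinger invariants $W_{(\R P^1)^2_2}((a,b;\alpha,a+b-c-\alpha),l)$ over $0\le \alpha<\tfrac{|a+b-c|}{2}$, together with the vanishing of the relevant invariants of the surface $(\R P^1)^2_2$. The second bullet (the case $a+b+c$ even) is immediate: it is precisely the content of the remark following Theorem~\ref{Theorem-W.in.deg.6.1}, which records that $|D.S|=|a+b-c-2\alpha|\equiv a+b+c\bmod 2$, so by Corollary~\ref{coro-Theorem2-W} the three-dimensional invariant vanishes when $a+b+c$ is even, for any $Spin_3$-structure. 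So the only real work is the first bullet.

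For the case $a>b+c$ (with $a\ge b\ge c>1$), the strategy is to show every summand on the right-hand side of Formula~(\ref{Eq:W.in.deg.6.1}) vanishes, i.e. $W_{(\R P^1)^2_2}((a,b;\alpha,a+b-c-\alpha),l)=0$ for all relevant $\alpha$. I would transport this to a question on $\C P^2\sharp 3\overline{\C P^2}$ using the isomorphism $\kappa_*$ recorded in the proof of Proposition~\ref{Prop-vanishing.GW}, under which $(a,b;\alpha,\beta)\mapsto (a+b-\alpha;a-\alpha,b-\alpha,\beta)$ and, crucially, the analogous identity
$$W_{(\R P^1)^2_2}((a,b;\alpha,\beta),l)=W_{\R(\C P^2\sharp 3\overline{\C P^2})}((a+b-\alpha;a-\alpha,b-\alpha,\beta),l)$$
holds (the Welschinger analogue of Equation~(\ref{Eq:GW.P1P1.and.P2}); this follows because $\kappa_*$ is realized by a real deformation/isomorphism of the underlying real surfaces, so it preserves Welschinger invariants, not just Gromov-Witten ones). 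Then I would invoke the vanishing criterion for Welschinger invariants of $\R(\C P^2\sharp 3\overline{\C P^2})$: $W_{\R(\C P^2\sharp 3\overline{\C P^2})}((D;a_1,a_2,a_3),l)=0$ whenever some pair $a_i+a_j>D$, apart from the exceptional small classes $(1;1,1,0)$ and its permutations — this is the real analogue of the Gottsche-Pandharipande criterion used in Proposition~\ref{Prop-vanishing.GW}, and in the real case it follows from Welschinger's invariance together with a choice of configuration forcing a reducible curve, or can simply be cited. Finally I would check the arithmetic: for $\beta=a+b-c-\alpha$ with $0\le\alpha<\tfrac{a+b-c}{2}$, the image class has components $D=a+b-\alpha$, $a_1=a-\alpha$, $a_2=b-\alpha$, $a_3=a+b-c-\alpha$, and one verifies $a_1+a_3=(a-\alpha)+(a+b-c-\alpha)=2a-c+b-2\alpha$; since $a>b+c$ gives $2a-c>a+b>b$, this exceeds $D=a+b-\alpha$ precisely when $a>c+\alpha$, which holds because $\alpha<\tfrac{a+b-c}{2}\le a-c$ (using $b\le a$, $c>1\Rightarrow$ in fact $\alpha< a-c$). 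One must also rule out the exceptional classes: since $c>1$ forces $a_3=a+b-c-\alpha\ge 2$ (or a similar bound), none of the summands land on $(1;1,1,0)$-type classes. Hence every term vanishes.

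The main obstacle I anticipate is establishing the Welschinger version of the transfer identity under $\kappa_*$ and the vanishing criterion on $\R(\C P^2\sharp 3\overline{\C P^2})$ with the correct treatment of the exceptional small classes. In the complex (Gromov-Witten) setting the paper simply cites Gottsche-Pandharipande; in the real setting I would either cite the corresponding result of Itenberg-Kharlamov-Shustin or Brugall\'e-Mikhalkin on Welschinger invariants of real del Pezzo surfaces of degree $\ge 1$ (where these vanishings are well documented), or give a short argument: choosing a real configuration with a point on a suitable real exceptional curve or line forces every real rational curve in such a class to be reducible, so $\mathcal{R}(\underline{x}_D)=\emptyset$ and the invariant is zero. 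A secondary, purely bookkeeping obstacle is making the index inequalities tight enough — in particular verifying that the bound $\alpha<\tfrac{a+b-c}{2}$ genuinely forces $a>c+\alpha$ and that the exceptional-class exclusion uses the hypothesis $c>1$ rather than merely $c\ge 1$.

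Assembling these, the proof reads: if $a+b+c$ is even, apply Corollary~\ref{coro-Theorem2-W} directly. If $a>b+c$, apply Formula~(\ref{Eq:W.in.deg.6.1}), rewrite each summand via $\kappa_*$ as a Welschinger invariant of $\R(\C P^2\sharp 3\overline{\C P^2})$, check that the resulting class violates the degree inequality $a_i+a_j\le D$ while avoiding the exceptional classes (using $a>b+c$, $b\ge c$, $c>1$), conclude each term is $0$, and hence the whole sum vanishes. Together with the symmetry $W_{(\R P^1)^3}^{\mathfrak{s},\mathfrak{o}}((a,b,c),l)=W_{(\R P^1)^3}^{\mathfrak{s},\mathfrak{o}}((\sigma a,\sigma b,\sigma c),l)$ this gives the stated vanishing for all orderings of $(a,b,c)$, which is exactly Proposition~\ref{Prop-vanishing.W.6.1}.
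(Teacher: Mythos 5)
Your argument is correct in substance, but for the first bullet it takes a noticeably longer route than the paper. The paper gives no separate proof: it states the proposition as a \emph{direct consequence} of Proposition~\ref{Prop-vanishing.GW} and of the remark following Theorem~\ref{Theorem-W.in.deg.6.1}. For $a+b+c$ even your proof coincides with the paper's (Corollary~\ref{coro-Theorem2-W}). For $a>b+c$ the paper's intended argument is simply that $GW_{\ppp}(a,b,c)=0$ by Proposition~\ref{Prop-vanishing.GW}, and since the Welschinger invariant is a signed count of the real curves among the finitely many complex curves counted by the (enumerative) Gromov--Witten invariant, $|W|\leq GW$ forces $W=0$. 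You instead expand via Formula~(\ref{Eq:W.in.deg.6.1}) and kill each summand $W_{(\R P^1)^2_2}((a,b;\alpha,a+b-c-\alpha),l)$ by transporting it through $\kappa_*$ to $\C P^2\sharp 3\overline{\C P^2}$ and invoking a real analogue of the G\"ottsche--Pandharipande vanishing. Your index arithmetic is right ($\alpha<\tfrac{a+b-c}{2}\leq a-c$ does give $a_1+a_3>D$, and $c>1$ excludes the exceptional classes), but the two ingredients you flag as obstacles --- the Welschinger version of the transfer identity~(\ref{Eq:GW.P1P1.and.P2}) under $\kappa_*$ (which requires matching the real structures under the birational identification, and is nowhere established in the paper) and the real vanishing criterion on $\C P^2\sharp 3\overline{\C P^2}$ --- are both avoidable: each summand already vanishes because the corresponding complex invariant $GW_{\pp\sharp 2\overline\cp}(a,b;\alpha,a+b-c-\alpha)$ vanishes and a Welschinger invariant is dominated in absolute value by its Gromov--Witten counterpart; better still, one can apply that same domination once, at the level of the threefold, and skip the surface-level decomposition entirely. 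So what your approach buys is a term-by-term confirmation inside Formula~(\ref{Eq:W.in.deg.6.1}); what it costs is two auxiliary real-geometry facts that the shorter argument never needs.
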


Table~\ref{Tab:W.in.deg.6.1} shows computations of  genus $0$ Welschinger invariants $W_{(\R P^1)^3}^{\mathfrak{s}_{(\R P^1)^3},\mathfrak{o}_{(\R P^1)^3}}((a,b,c),l)$ for the $\Spin_3$-structure $\mathfrak{s}_{(\R P^1)^3}$ over $(\R P^1)^3$  such that the spinor state satisfies $\sp_{\mathfrak{s}_{(\R P^1)^3},\mathfrak{o}_{(\R P^1)^3}}(f(\C P^1))=+1$, where
$$f_*\left[\C P^1\right]\in \{(0,0,1),(0,1,0),(0,0,1)\}\in H_2^{-\tau_{1,1}}\left(\left(\C P^1\right)^3;\Z\right).$$ 
         It should be noted that, in this table, we consider only those integers $a$, $b$ and $c$ satisfying $$7 \geq a \geq b \geq c \geq 0 \quad\text{and}\quad a + b + c \leq 15.$$ Furthermore, we only display the non-vanishing genus $0$ Welschinger invariants $W_{(\R P^1)^3}^{\mathfrak{s}_{(\R P^1)^3},\mathfrak{o}_{(\R P^1)^3}}((a,b,c),l)$. This means that  we display only the cases where $a\leq b+c$ and $a+b+c$ is odd. 
       The computations in Table~\ref{Tab:W.in.deg.6.1}  lead us to propose the following conjecture regarding the positivity of the genus $0$ Welschinger invariants.

       \begin{conjecture}
     Let $\mathfrak{s}_{(\R P^1)^3}$ denote the $\Spin_3$-structure over $(\R P^1)^3$ as in Theorem~\ref{Theorem2-W}.
Then, for all triples $(a,b,c)\in \mathbb{N}^*\times  \mathbb{N}\times \mathbb{N}$ and all integer $l \in \{0,\ldots, \frac{1}{2}(a+b+c-1)\}$, one has
     $$W_{(\R P^1)^3}^{\mathfrak{s}_{(\R P^1)^3},\mathfrak{o}_{(\R P^1)^3}}((a,b,c),l)\geq  0.$$ 
     Moreover,  equality holds if either $a+b+c$ is even or $a>b+c$.
 \end{conjecture}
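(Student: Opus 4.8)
The conjecture is the three‑dimensional analogue, for $(\C P^1)^3$, of the positivity of the genus‑$0$ Welschinger invariants of $\R P^3$, and the plan is to adapt to Formula~(\ref{Eq:W.in.deg.6.1}) the circle of ideas attached to the $\R P^3$‑formula of \cite{brugalle2016pencils}. By Proposition~\ref{Prop-vanishing.W.6.1} and the invariance of $W_{(\R P^1)^3}^{\mathfrak{s},\mathfrak{o}}$ under permutations of $a,b,c$, the inequality is trivial (both sides vanish) unless $a\geq b\geq c\geq 0$, $a\leq b+c$ and $a+b+c$ is odd, so I would first restrict to this range; there $m:=a+b-c$ is odd with $1\leq m\leq 2b$, and after factoring the constant sign out of Formula~(\ref{Eq:W.in.deg.6.1}),
\[
W_{(\R P^1)^3}^{\mathfrak{s},\mathfrak{o}}((a,b,c),l)=(-1)^{\frac{m-1}{2}}\sum_{\alpha=0}^{\frac{m-1}{2}}(-1)^{\alpha}\,(m-2\alpha)\,W_{(\R P^1)^2_2}\big((a,b;\alpha,m-\alpha),l\big).
\]
The conjecture thus reduces to a sign‑control statement for the genus‑$0$ Welschinger invariants of the real degree‑$6$ del Pezzo surface $(\R P^1)^2_2=\R P^1\times\R P^1\sharp 2\R P^2$. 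It is essential to observe that these surface invariants are not all non‑negative — already $W_{\R P^1\times\R P^1}((1,2),l)<0$ — so the task is genuinely to show that the alternating sum conspires to be non‑negative, not that its terms are.

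Next I would pass to a more convenient model of the surface. Exactly as for the Gromov--Witten invariants (Equation~(\ref{Eq:GW.P1P1.and.P2})), a real isomorphism $\kappa$ identifies $(\R P^1)^2_2$ with $\C P^2$ blown up at three real points, carrying $(a,b;\alpha,m-\alpha)$ to $(a+b-\alpha;\,a-\alpha,\,b-\alpha,\,m-\alpha)$; one only has to verify that $\kappa$ intertwines the real structure on $(\R P^1)^2_2$ with the ``totally real'' blow‑up structure on $\C P^2\sharp 3\overline{\C P^2}$, after which $W_{(\R P^1)^2_2}((a,b;\alpha,m-\alpha),l)=W_{\C P^2\sharp 3\overline{\C P^2}}((a+b-\alpha;a-\alpha,b-\alpha,m-\alpha),l)$. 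As $\alpha$ runs from $0$ to $\tfrac{m-1}{2}$ these classes form a ``diagonal'' family in which the degree and all three exceptional multiplicities drop by one at each step while the associated integer $k_D=a+b+c-1$ stays fixed, so all the invariants are taken in the same admissible range of $l$. The core of the argument would then be a comparison estimate along this family — a step‑monotonicity or log‑concavity precise enough to group the summands $\alpha=2j,\,2j+1$ (plus the last unpaired term) into non‑negative blocks, matching the factored sign $(-1)^{(m-1)/2}$. Such estimates are what Caporaso--Harris / Vakil‑type recursions for Welschinger invariants of real toric del Pezzo surfaces, and the WDVV‑type relations of \cite{chen2021wdvv}, are built to produce; I would run an induction on $a+b+c$, using those recursions for the inductive step, the explicit values in Table~\ref{Tab:W.in.deg.6.1} for small $a+b+c$, and — in the case $l=0$, where the comparison is cleanest — an identification of $W_{(\R P^1)^3}((a,b,c),0)$ with $\R P^2$ Welschinger invariants analogous to observation~(3) following Table~\ref{Tab:W.in.deg.7}, combined with the positivity of $W_{\R P^2}$.

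The hard part will be the comparison estimate itself. Positivity of the individual $W_{\C P^2\sharp 3\overline{\C P^2}}$ in the admissible range of $l$ — which is available — does not suffice, because of the sign alternation, and the ``diagonal'' direction (simultaneous decrease of the degree and of all three multiplicities) is not one along which the Caporaso--Harris recursion imposes a single extra constraint, so one first has to combine several instances of the recursion into a single diagonal relation, which is where the combinatorics become delicate and the signs hardest to track. A further subtlety, easy to overlook, is that a fixed complex del Pezzo surface carries several inequivalent real forms, so the estimate must be proved for the specific real form occurring here (both blown‑up points of the quadric real, real part $\R P^1\times\R P^1\sharp 2\R P^2$) and $\kappa$ must be checked to preserve it. If a uniform estimate in $l$ resists, reasonable partial targets are the case $l=0$ and the cases of small $a+b+c$ already settled by Table~\ref{Tab:W.in.deg.6.1}; and, independently of the recursion route, one could attempt a direct tropical approach via floor diagrams in the spirit of \cite{brugalle2007enumeration}, or a Welschinger‑style geometric construction of enough real rational curves of the correct sign, though both look harder to push through in the threefold setting.
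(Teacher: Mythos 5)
This statement is stated in the paper as a \emph{conjecture}: the paper offers no proof of it, only the numerical evidence of Table~\ref{Tab:W.in.deg.6.1} produced from Formula~(\ref{Eq:W.in.deg.6.1}). Your proposal likewise does not prove it. The reductions you carry out are sound — the restriction to $a\geq b\geq c$, $a\leq b+c$, $a+b+c$ odd via Proposition~\ref{Prop-vanishing.W.6.1} and permutation symmetry, the rewriting of Formula~(\ref{Eq:W.in.deg.6.1}) with $m=a+b-c$, the observation that $k_D=a+b+c-1$ is constant along the family $D_\alpha=(a,b;\alpha,m-\alpha)$, and the correct warning that termwise positivity of the surface invariants cannot suffice because of the alternating signs. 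But the entire content of the conjecture is then concentrated in what you call the ``comparison estimate'' (step-monotonicity or log-concavity along the diagonal family of classes in $\C P^2\sharp 3\overline{\C P^2}$, strong enough to make the paired blocks $\alpha=2j,\,2j+1$ non-negative), and you do not state this estimate precisely, let alone prove it. As you yourself note, the Caporaso--Harris/Vakil-type recursions and the WDVV relations of \cite{chen2021wdvv} do not directly produce a relation in the diagonal direction, so no existing tool is identified that would deliver the estimate. The proposal is therefore a reasonable research plan, not a proof; the gap is the unproved central inequality on which everything rests.

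Two smaller points to keep in mind if you pursue this. First, the transfer to $\C P^2\sharp 3\overline{\C P^2}$ via $\kappa_*$ is stated in the paper only at the level of Gromov--Witten invariants (Equation~(\ref{Eq:GW.P1P1.and.P2})); for Welschinger invariants you must, as you say, verify that $\kappa$ matches the specific real form used here (both blown-up points real, real part $\R P^1\times\R P^1\sharp 2\R P^2$), and this verification is itself nontrivial since inequivalent real forms of the same complex surface have different Welschinger invariants. Second, the paper's own discussion after the conjecture claims more than non-negativity — it predicts exactly when equality holds — so any proof strategy should also be tested against the vanishing pattern, which your block-pairing argument would need to reproduce.
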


       As a consequence, Proposition~\ref{Prop-vanishing.W.6.1} can be reformulated as follows:
  $$W_{(\R P^1)^3}^{\mathfrak{s}_{(\R P^1)^3},\mathfrak{o}_{(\R P^1)^3}}((a,b,c),l) = 0 \quad\text{if and only if}\quad\text{either } a+b+c \equiv 0 \mod 2 \text{ or } a>b+c.$$

\subsubsection{Real threefold product of the projective line with the twisted real structure}
The following proposition is a direct consequence of Proposition~\ref{Prop-vanishing.GW} and Remark~\ref{remark13}.  

\begin{proposition}\label{Prop-vanishing.W.6.2}
 For every $\Spin_3$-structure $\mathfrak{s}_{\mathbb{S}^2\times\R P^1}$ over $\mathbb{S}^2\times \R P^1$, the genus $0$ Welschinger invariant\\
$ W_{\mathbb{S}^2 \times \R P^1}^{\mathfrak{s}_{\mathbb{S}^2 \times \R P^1},\mathfrak{o}_{\mathbb{S}^2 \times \R P^1}}((a;c),l)$ vanishes if at least one of the following conditions holds:
    \begin{enumerate}
        \item $c>2a$,
        \item $c$ is even.
            \end{enumerate}
\end{proposition}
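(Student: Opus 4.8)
\textbf{Proof of Proposition \ref{Prop-vanishing.W.6.2}.}

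The plan is to mirror exactly the structure used for Proposition \ref{Prop-vanishing.W.6.1}, combining the vanishing input on the complex side (here, the vanishing of the relevant Gromov--Witten invariants of $(\C P^1)^2_2 \cong \pp \sharp 2\overline{\C P^2}$, which by Equation (\ref{Eq:GW.P1P1.and.P2}) reduces to Gottsche--Pandharipande vanishing for $\C P^2 \sharp 3 \overline\cp$) with the remark following Theorem \ref{Theorem-W.in.deg.6.2} that $W_{\mathbb{S}^2 \times \R P^1}^{\mathfrak{s}_{\mathbb{S}^2 \times \R P^1},\mathfrak{o}_{\mathbb{S}^2 \times \R P^1}}((a;c),l)=0$ whenever $c$ is even. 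The second bullet is therefore immediate from that remark (equivalently, from Corollary \ref{coro-Theorem2-W} together with $|D.S|=|2a-c-2\alpha|\equiv c \bmod 2$), so the only real content is the first bullet, the case $c>2a$.

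For the first bullet, the approach is to invoke Formula (\ref{Eq:W.in.deg.6.2}): each Welschinger invariant $W_{\mathbb{S}^2 \times \R P^1}^{\mathfrak{s}_{\mathbb{S}^2 \times \R P^1},\mathfrak{o}_{\mathbb{S}^2 \times \R P^1}}((a;c),l)$ is, up to signs, a sum of terms $(2a-c-2\alpha)\,W_{\mathbb{S}^2\sharp 2{\R P^2}}((a;\alpha,2a-c-\alpha),l)$ over $0\leq \alpha < \tfrac{|2a-c|}{2}$, and moreover these surface Welschinger invariants already vanish outside $\alpha\in[0,|2a-c|]$. So it suffices to show $W_{\mathbb{S}^2\sharp 2{\R P^2}}((a;\alpha,2a-c-\alpha),l)=0$ for all admissible $\alpha$ when $c>2a$. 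First I would pass from the real invariant to its complex bound: by the general inequality $|W_{\R\Sigma}(D,l)|\leq GW_\Sigma(D)$ (equivalently, if $GW_\Sigma(D)=GW_\Sigma(T(D))=0$ then the corresponding summand contributes nothing), it is enough to check that the Gromov--Witten invariant $GW_{(\C P^1)^2_2}((a;\alpha,2a-c-\alpha))$ and its monodromy image vanish. Then I would translate through $\kappa_*$ and Equation (\ref{Eq:GW.P1P1.and.P2}) into invariants of $\C P^2\sharp 3\overline\cp$, exactly as in the proof of Proposition \ref{Prop-vanishing.GW}, and apply the Gottsche--Pandharipande criterion: $GW_{\C P^2\sharp 3\overline\cp}(d;a_1,a_2,a_3)=0$ as soon as some pair $a_i+a_j>d$ (outside the three exceptional tuples of degree $1$), and one verifies that the hypothesis $c>2a$ forces precisely such an inequality for every $\alpha$ in range; a short check handles the low-degree exceptional tuples, which are excluded by $c>2a$ together with $a+b+c$-type parity as in Proposition \ref{Prop-vanishing.GW}.

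Concretely, the bookkeeping step is: under $\psi$ the class $(a;\alpha,2a-c-\alpha)\in H_2((\C P^1)^2_2;\Z)$ maps to $(a;c)\in H_2((\C P^1)^3;\Z)$ (using the tridegree identifications of Section \ref{Subsect-exam.deg.6.W}, with the first two $\C P^1$-factors merged by $\tilde\tau_{1,1}$), and the effectivity constraints from Section \ref{Subsect-exam.deg.6.W} plus $c>2a$ are exactly what one needs to land in the vanishing region of $\C P^2\sharp 3\overline\cp$. I expect the main obstacle to be precisely this translation of inequalities — verifying that $c>2a$ (rather than the asymmetric conditions $a>b+c$ etc.\ appearing in Proposition \ref{Prop-vanishing.W.6.1}) implies, for \emph{every} $\alpha$ with $0\le\alpha\le|2a-c|$, that one of the blown-up multiplicities exceeds the degree in the Gottsche--Pandharipande sense, and confirming that the finitely many boundary/low-degree exceptions cause no trouble. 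Once that inequality chase is done, the two bullets assemble, and Proposition \ref{Prop-vanishing.W.6.2} follows. $\qed$
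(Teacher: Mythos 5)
Your overall strategy coincides with the paper's: the second bullet is exactly the remark following Theorem \ref{Theorem-W.in.deg.6.2} (Corollary \ref{coro-Theorem2-W} plus $|D.S|\equiv c \bmod 2$), and the first bullet is reduced to vanishing of the underlying complex counts. The paper simply cites Proposition \ref{Prop-vanishing.GW}: under $\tilde\tau_{1,1}$ the class $(a;c)$ is $(a,a,c)$, and $c>2a=a+b$ puts it in the case $c\geq a+b$ of that proposition, so $GW_{(\C P^1)^3}(a,a,c)=0$ and hence the signed real count vanishes as well.

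The step you flagged as the expected main obstacle is, however, where your argument actually breaks. For $D=(a,a;\alpha,2a-c-\alpha)$ one has $\kappa_*D=(2a-\alpha;\,a-\alpha,\,a-\alpha,\,2a-c-\alpha)$, and for $\alpha\geq 0$ neither pairwise sum exceeds the degree: $a_1+a_2=2a-2\alpha\leq 2a-\alpha$ and $a_1+a_3=3a-c-2\alpha>2a-\alpha$ would require $\alpha<a-c<0$. So the G\"ottsche--Pandharipande criterion $a_i+a_j>d$, as quoted in the proof of Proposition \ref{Prop-vanishing.GW}, never fires on the summation range $0\leq\alpha\leq|2a-c|$, and your inequality chase cannot be completed in the form you describe. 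The correct mechanism is more elementary: since $\alpha+\beta=2a-c<0$, at least one of $\alpha,\beta$ is negative on that range (in fact $\beta<0$ for all $\alpha\geq0$), so $D\cdot E_2=-\beta\,E_2^2<0$ while $D\neq E_2$; hence no irreducible rational curve represents $D$ (equivalently, $\kappa_*D$ has a negative exceptional multiplicity and is not an exceptional class), and every term of Formula (\ref{Eq:W.in.deg.6.2}) vanishes by non-effectivity rather than by the pairwise-sum criterion. With that substitution — or by citing Proposition \ref{Prop-vanishing.GW} directly, as the paper does — your proof closes; as written, the key verification you defer would fail.
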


\begin{remark}
The values of the genus $0$ Welschinger invariant $W_{(\R P^3)_1}^{\mathfrak{s}_{(\R P^3)_1},\mathfrak{o}_{(\R P^3)_1}}((d;0),l)$, which is presented in Table~\ref{Tab:W.in.deg.7}, coincide with those given by E.~Brugall\'e and P.~Georgieva \cite{brugalle2016pencils}.  
\end{remark}

Table~\ref{Tab:W.in.deg.6.2} displays the computations of  genus $0$ Welschinger invariants $ W_{\mathbb{S}^2 \times \R P^1}^{\mathfrak{s}_{\mathbb{S}^2 \times \R P^1},\mathfrak{o}_{\mathbb{S}^2 \times \R P^1}}((a;c),l)$ for the $\Spin_3$-structure $\mathfrak{s}_{\mathbb{S}^2 \times \R P^1}$ over $\mathbb{S}^2\times \R P^1$   such that the spinor state satisfies 
$$\sp_{\mathfrak{s}_{\mathbb{S}^2 \times \R P^1},\mathfrak{o}_{\mathbb{S}^2 \times \R P^1}}\left(f\left(\C P^1\right)\right)=+1,$$
where $f_*[\C P^1]=(0;1)$ in $H_2^{-\tilde{\tau}_{1,1}}((\C P^1)^3;\Z)$.  It is worth noting that, in this table, we consider only those integers $a$ and $c$ satisfying $a\leq 5$ and $2a+c \leq 19$. Furthermore, we display only the genus $0$ Welschinger invariants $ W_{\mathbb{S}^2 \times \R P^1}^{\mathfrak{s}_{\mathbb{S}^2 \times \R P^1},\mathfrak{o}_{\mathbb{S}^2 \times \R P^1}}((a;c),l)$ in the cases where $c<2a$ and $c$ is odd. As previously discussed,  in these cases, the Welschinger invariants are ``almost'' non-zero by Proposition~\ref{Prop-vanishing.W.6.2}.

 \begin{remark}
The values of the genus $0$ Welschinger invariants
$$W_{(\R P^1)^3}^{\mathfrak{s}_{(\R P^1)^3},\mathfrak{o}_{(\R P^1)^3}}((a,b,c),l) \quad\text{and}\quad   W_{\mathbb{S}^2\times \R P^1}^{\mathfrak{s}_{\mathbb{S}^2\times \R P^1},\mathfrak{o}_{\mathbb{S}^2\times \R P^1}}((a;c),l),$$ which are presented in Tables~\ref{Tab:W.in.deg.6.1} and~\ref{Tab:W.in.deg.6.2}, respectively, coincide with those given by X.~Chen and A.~Zinger (see \cite{chen2021wdvv}).
\end{remark}
\newpage

\begin{table}[H]
\centering
     \begin{tabular}{|l||c|c||c|c|c|c||}\hline
\diagbox[innerwidth=1.5cm]{$l$}{$(d;k)$}&
{$(1;0)$}&{$(1;1)$}&{$(3;0)$}&{$(3;1)$}&{$(3;2)$}&
{$(3;3)$}\\
\hline
\hline
0&1 & $-1$&$-1$&$1$&$0$&$0$\\
\hline
1&&&$-1$&1&0&0\\
\hline
2&&&$-1$&1&&\\

\hline
\hline
\end{tabular}

\vspace{1cm}

\centering
     \begin{tabular}{|l||c|c|c|c|c|c||}\hline
\diagbox[innerwidth=1.5cm]{$l$}{$(d;k)$}&{$(5;0)$}&{$(5;1)$}&{$(5;2)$}&{$(5;3)$}&{$(5;4)$}&{$(5;5)$}\\
\hline
\hline
0&45&$-45$&$-8$&0&0&0\\
\hline
1&$29$ &$-29$&$-6$&0&0&0\\
\hline
2&$17$ &$-17$&$-4$&0&0&0\\
\hline
3&$9$ &$-9$&$-2$&0&&\\
\hline
4&$5$ &$-5$&&&&\\
\hline
\hline
\end{tabular}

\vspace{1cm}

\centering
     \begin{tabular}{|l||c|c|c|c|c|c|c|c||}\hline
\diagbox[innerwidth=1.5cm]{$l$}{$(d;k)$}&
{$(7;0)$}&{$(7;1)$}&{$(7;2)$}&{$(7;3)$}&{$(7;4)$}&{$(7;5)$}&{$(7;6)$}&{$(7;7)$}\\
\hline
\hline
0&$-14589$& $14589$&$3816$&$-240$&$0$ &$0$&0&0\\
\hline
1& $-6957$ & $6957$ & $1932$ &$-144$&0&$0$&$0$ &0\\
\hline
2& $-3093$ & $3093$ & $912$ &$-80$&0&$0$&$0$ &0\\
\hline
3& $-1269$ & $1269$ & $396$ &$-40$&0&$0$&$0$ &0\\
\hline
4& $-477$  & $477$ & $152$ &$-16$&0&$0$& &\\
\hline
5& $-173$ & $173$ & $44$ &$0$&&& &\\
\hline
6& $-85$ & $85$ & &&&& &\\
\hline
\hline
\end{tabular}

\vspace{1cm}
\centering
     \begin{tabular}{|l||c|c|c|c|c|c|c|c|c|c||}\hline
\diagbox[innerwidth=10.3mm]{$l$}{$\mkern8mu(d;k)\mkern-8mu$}&
{$(9;0)$}&{$(9;1)$}&{$(9;2)$}&{$(9;3)$}&{$(9;4)$}&{$(9;5)$}&{$(9;6)$}&{$(9;7)$}&{$(9;8)$}&{$(9;9)$}\\
\hline
\hline
0&$17756793$& $-17756793$&$-5519664$&$603840$&$18264$ &$0$&0&0&0&0\\
\hline
1& $6717465$ & $-6717465$ & $-2155050$ &$256080$&9096&$0$&$0$ &0&0&0\\
\hline
2& $2407365$ & $-2407365$ & $-797604$ &$102912$&4272&$0$&$0$ &0&0&0\\
\hline
3& $812157$ & $-812157$ & $-278046$ &$38880$&1872&$0$&$0$ &0&0&0\\
\hline
4& $256065$  & $-256065$ & $-90392$ &$13568$&744&$0$&0 &0&0&0\\
\hline
5& $75281$ & $-75281$ & $-27058$ &$4208$&248&0&0 &0&&\\
\hline
6& $21165$ & $-21165$ &$-7500$ &1088&64&0& &&&\\
\hline
7& $6165$ & $-6165$ & $-2086$&256&&& &&&\\
\hline
8& $1993$ & $-1993$ & &&&& &&&\\
\hline
\hline
\end{tabular}
\caption{Genus $0$ Welschinger invariants $W_{(\R P^3)_1}^{\mathfrak{s}_{(\R P^3)_1},\mathfrak{o}_{(\R P^3)_1}}((d;k),l)$  of real $3$-dimensional del Pezzo varieties of degree $7$  using Formula (\ref{eq-Welschinger.formula.dim.7})  with $d\leq 9$.} %
\label{Tab:W.in.deg.7}
\end{table}

\begin{table}[H]
\centering
     \begin{tabular}{|l||c||c||c||c|c||}\hline
\diagbox[innerwidth=2cm]{$l$}{$(a,b,c)$}&
{$(1,0,0)$}&{$(1,1,1)$}&{$(2,2,1)$}&{$(3,2,2)$}&{$(3,3,1)$}\\
\hline
\hline
0&$1$& $1$&$1$&$8$&$1$ \\
\hline
1&& $1$&$1$&$6$&$1$ \\
\hline
2&&&$1$&$4$&$1$ \\
\hline
3&&&&$2$&$1$ \\
\hline
\hline
\end{tabular}

\vspace{1cm}

\centering
     \begin{tabular}{|l||c|c|c||c|c|c|c||}\hline
\diagbox[innerwidth=2cm]{$l$}{$(a,b,c)$}&
{$(3,3,3)$}&{$(4,3,2)$}&{$(4,4,1)$}&
{$(4,4,3)$}&{$(5,3,3)$}&{$(5,4,2)$}&{$(5,5,1)$}\\
\hline
\hline
0&$216$&48&1&$3864$& $1086$&$256$&$1$\\
\hline
1&$126$&32&1&$1980$& $606$&$160$&1\\
\hline
2&$68$&20&1&$960$&$318$&$96$&1\\
\hline
3&$34$&12&1&$444$&$158$&$56$&1\\
\hline
4&$16$&8&1&$200$&$78$&$32$&1\\
\hline
5&&&&$92$&$46$&$16$&1\\
\hline
\hline
\end{tabular}

\vspace{1cm}

\centering
     \begin{tabular}{|l||c|c|c|c|c||}\hline
\diagbox[innerwidth=2cm]{$l$}{$(a,b,c)$}&{$(5,4,4)$}&{$(5,5,3)$}&{$(6,4,3)$}&{$(6,5,2)$}&{$(6,6,1)$}\\
\hline
\hline
0&$174360$ &57608&18424&1280&1\\
\hline
1&$77064$ &27276&9256&768&1\\
\hline
2&$32496$ &12400&4432&448&1\\
\hline
3&$13136$ &5444&2032&256&1\\
\hline
4&5160&2328&904&144&1\\
\hline
5&2040&$988$&408&80&1\\
\hline
6&896 &$448$&224&48&1\\
\hline
\hline
\end{tabular}

\vspace{1cm}

\centering
     \begin{tabular}{|l||c|c|c|c|c|c|c||}\hline
\diagbox[innerwidth=2cm]{$l$}{$(a,b,c)$}&
{$(5,5,5)$}&{$(6,5,4)$}&{$(6,6,3)$}&{$(7,4,4)$}&{$(7,5,3)$}&{$(7,6,2)$}&{$(7,7,1)$}\\
\hline
\hline
0&15253434&5998848&773808&819200&268575&6144&1\\
\hline
1&5840298&2410496&347202&360896&125855&3584&1\\
\hline
2&2148978&931712&151028&152192&56831&2048&1\\
\hline
3&762450&347520&63958&61568&24831&1152&1\\
\hline
4&262842&125952&26488&24064&10559&640&1\\
\hline
5&89418&45056&10794&9280&4415&352&1\\
\hline
6&31122&16512&4412&3712&1887&192&1\\
\hline
7&12690&6784&1982&1536&991&96&1\\
\hline
\hline
\end{tabular}

\caption{Non-vanishing genus $0$ Welschinger invariants  $W_{(\R P^1)^3}^{\mathfrak{s}_{(\R P^1)^3},\mathfrak{o}_{(\R P^1)^3}}((a,b,c),l)$ of real $3$-dimensional del Pezzo varieties of degree $6$, whose real part is $\rprprp$, using Formula (\ref{Eq:W.in.deg.6.1}) with $a+b+c\leq 15$.} 
\label{Tab:W.in.deg.6.1}
\end{table}

 \begin{table}[H]
\centering
     \begin{tabular}{|l||c||c|c||c|c|c||}\hline
\diagbox{$l$}{$(a;c)$}&
{$(1;1)$}&{$(2;1)$}&{$(2;3)$}&{$(3;1)$}&{$(3;3)$}&{$(3;5)$}\\
\hline
\hline
0&$-1$&1&6&$-1$&$-120$&$-576$\\
\hline
1&$-1$&1&4&$-1$&$-62$&$-288$\\
\hline
2&&1&2&$-1$&$-28$&$-128$\\
\hline
3&&&0&$-1$&$-10$&$-48$\\
\hline
4&&&&&$0$&$-16$\\
\hline
5&&&&&&$-16$\\

\hline
\hline
\end{tabular}

\vspace{1cm}

\centering
     \begin{tabular}{|l||c|c|c|c||}\hline
\diagbox{$l$}{$(a;c)$}&
{$(4;1)$}&{$(4;3)$}&{$(4;5)$}&{$(4;7)$}\\
\hline
\hline
0&1&1692&61704&294336\\
\hline
1&1&768&24048&116352\\
\hline
2&1&324&8592&42624\\
\hline
3&1&128&2760&14208\\
\hline
4&1&44&792&4224\\
\hline
5&&0&224&1152\\
\hline
6&&&96&320\\
\hline
7&&&&$-256$\\
\hline
\hline
\end{tabular}

\vspace{1cm}

\centering
     \begin{tabular}{|l||c|c|c|c|c||}\hline
\diagbox{$l$}{$(a;c)$}&
{$(5;1)$}&{$(5;3)$}&{$(5;5)$}&{$(5;7)$}&{$(5;9)$}\\
\hline
\hline
0&$-1$&$-20760$&$-3792888$&$-97970688$&$-493848576$\\
\hline
1&$-1$&$-8764$&$-1271016$&$-31204224$&$-160966656$\\
\hline
2&$-1$&$-3536$&$-399216$&$-9331200$&$-49582080$\\
\hline
3&$-1$&$-1380$&$-117072$&$-2593536$&$-14303232$\\
\hline
4&$-1$&$-520$&$-32056$&$-663936$&$-3821568$\\
\hline
5&$-1$&$-172$&$-8136$&$-157344$&$-938496$\\
\hline
6&&0&$-1872$&$-35776$&$-215040$\\
\hline
7&&&$-1104$&$-7840$&$-47872$\\
\hline
8&&&&$-4096$&$-10496$\\
\hline
9&&&&&$-26880$\\
\hline
\hline
\end{tabular}

\caption{Genus $0$ Welschinger invariants $W_{\mathbb{S}^2\times \R P^1}^{\mathfrak{s}_{\mathbb{S}^2\times \R P^1},\mathfrak{o}_{\mathbb{S}^2\times \R P^1}}((a;c),l)$ of real $3$-dimensional del Pezzo varieties of degree $6$, whose real part is $\ssrp$,  using Formula (\ref{Eq:W.in.deg.6.2}) with $a \leq 5$.} 
\label{Tab:W.in.deg.6.2}
\end{table}

 
\newcommand{\etalchar}[1]{$^{#1}$}


\begin{thebibliography}{IPP{\etalchar{+}}10+++}

\bibitem[Bru15]{brugalle2015floor}
E.~Brugall{\'e}, \emph{Floor diagrams relative to a conic, and {GW}--{W}
  invariants of {D}el {P}ezzo surfaces}, Adv.\ Math.\ \textbf{279}
  (2015), 438--500, \doi{10.1016/j.aim.2015.04.006}.

\bibitem[Bru18]{brugalle2018surgery}
\bysame, \emph{Surgery of real symplectic fourfolds and {W}elschinger
  invariants}, J.~Singul.\ \textbf{17} (2018), 267--294,
\doi{10.5427/jsing.2018.17l}.

\bibitem[Bru21]{brugalle2021invariance}
  \bysame, \emph{On the invariance of {W}elschinger invariants},
  St.\ Petersburg Math.~J.\ \textbf{32} (2021), no.~2, 199--214,
\doi{10.1090/spmj/1644}.

\bibitem[BG16]{brugalle2016pencils}
E.~Brugall{\'e} and P.~Georgieva, \emph{Pencils of quadrics and
  {G}romov--{W}itten--{W}elschinger invariants of $\mathbb{C} {P}^3$},
  Math.\ Ann.\ \textbf{365} (2016), no.~1-2, 363--380,
\doi{10.1007/s00208-016-1398-x}.

\bibitem[BM07]{brugalle2007enumeration}
E.~Brugall{\'e} and G.~Mikhalkin, \emph{Enumeration of curves via floor
  diagrams}, C.~R.~Math.\ Acad.\ Sci.\ Paris \textbf{345} (2007), no.~6, 329--334,
\doi{10.1016/j.crma.2007.07.026}.

\bibitem[CZ21]{chen2021wdvv}
X.~Chen and A.~Zinger, \emph{WDVV-type relations for {W}elschinger
  invariants: {A}pplications}, Kyoto J.~Math.\ \textbf{61} (2021),
  no.~2, 339--376, \doi{10.1215/21562261-2021-0005}.

\bibitem[CZ24]{chen2024spin}
\bysame, \emph{Spin/pin-structures and real enumerative geometry}, World
  Scientific Publishing Co.\ Pte.\ Ltd., Hackensack, NJ, 2024.

\bibitem[Cos09]{Coskun}
  I.~Coskun,\emph{The enumerative geometry of Del Pezzo surfaces via degenerations}, Amer.~J.\ Math.\ \textbf{128} (2006), no.~3, 751--786, \doi{10.1353/ajm.2006.0020}.

\bibitem[DPT09]{demazure2006seminaire}
M.~Demazure, H.\,C.~Pinkham, and B.~Teissier (eds), \emph{S{\'e}minaire sur les
singularit{\'e}s des surfaces} (reprint of the 1980 original), Lecture Notes in Math., vol.~777, Springer, Berlin, 2009.

\bibitem[Din20]{ding2020remark}
Y.~Ding, \emph{A remark on the {G}romov--{W}itten--{W}elschinger invariants of
  $\mathbb{C}{P}^3$ $\#$ $\overline{{\mathbb{C}P}^3}$}, Math.\ Notes
  \textbf{107} (2020), no.~5-6, 727--739,
\doi{10.1134/S000143462005003X}.

\bibitem[EH16]{eisenbud20163264}
D.~Eisenbud and J.~Harris, \emph{3264 and All That: A Second Course in
  Algebraic Geometry}, Cambridge Univ.\ Press, Cambridge, 2016, 
\doi{10.1017/CBO9781139062046}.

\bibitem[GZ17]{georgieva2017enumeration}
P.~Georgieva and A.~Zinger, \emph{Enumeration of real curves in
  $\mathbb{P}^{2n-1}$ and a {W}itten--{D}ijkgraaf--{V}erlinde--{V}erlinde
  relation for real {G}romov--{W}itten invariants}, Duke Math.~J.\
  \textbf{166} (2017), no.~17, 3291--3347,
\doi{10.1215/00127094-2017-0023}.

\bibitem[GP98]{gottsche1996quantum}
L.~G{\"o}ttsche and R.~Pandharipande, \emph{The quantum cohomology of blow-ups
of\, $\mathbb{P}^2$ and enumerative geometry}, J.~Differential Geom.\ \textbf{48} (1998), no.~1, 61--90, \doi{10.4310/jdg/1214460607}.

\bibitem[GH83]{griffiths1983infinitesimal}
P.~Griffiths and J.~Harris, \emph{Infinitesimal variations of {H}odge structure
  II: {A}n infinitesimal invariant of {H}odge classes}, Compos.\
Math.\ \textbf{50} (1983), no.~2-3, 207--265.

\bibitem[GH78]{griffiths1978principles}
  \bysame, \emph{Principles of algebraic geometry}, Pure Appl.\ Math., Wiley-Interscience [John Wiley \& Sons], New York, 1978.
 
\bibitem[Isk77]{iskovskikh1977fano}
  V.\,A.~Iskovskikh, \emph{Fano $3$-folds. I}, Izv.\ Akad.\ Nauk SSSR, Ser.\ Mat.\ \textbf{41} (1977), no.~3, 516--562.
  

\bibitem[IP10]{iskovskikh2010algebraic}
V.\,A.~Iskovskikh, Yu.\,G.~Prokhorov, 
\emph{Algebraic geometry V. Fano varieties} (reprint of the 1999 hardcover version),  Encyclopaedia Math.\ Sci., vol.~47, Springer-Verlag, Berlin, Heidelberg, 2010.


\bibitem[IKS03]{itenberg2009welschinger}
I.~Itenberg, V.~Kharlamov, and E.~Shustin, \emph{Welschinger invariant and
  enumeration of real rational curves}, Int.\ Math.\ Res.\ Not.\ \textbf{2003}, no.~49, 2639--2653, \doi{10.1155/S1073792803131352}.

\bibitem[KT90]{kirby1990p}
R.~Kirby and L.~Taylor, \emph{${\rm Pin}$ structures on low-dimensional manifolds}, in: \emph{Geometry of low-dimensional manifolds, 2} ({D}urham, 1989), pp.~177-242, London Math.\ Soc.\ Lecture Note Ser., vol.~151, Cambridge Univ.\ Press, Cambridge, 1990, \doi{10.1017/CBO9780511629341.015}.

\bibitem[Kol15]{kollar2015examples}
J.~Koll\'{a}r, \emph{Examples of vanishing {G}romov-{W}itten-{W}elschinger
  invariants}, J.~Math.\ Sci.\ Univ.\ Tokyo \textbf{22} (2015), no.~1, 261--278.

\bibitem[KM94]{KontsevichManin}
M.~Kontsevich and Y.~Manin, \emph{Gromov-{W}itten classes, quantum cohomology,
  and enumerative geometry}, Comm.\ Math.\ Phys.\ \textbf{164} (1994), 525--562,
\doi{10.1007/BF02101490}.

\bibitem[Mik05]{mikhalkin2005enumerative}
G.~Mikhalkin, \emph{Enumerative tropical algebraic geometry in $\mathbb{R}^2$},
  J.~Amer.\ Math.\ Soc.\ \textbf{18} (2005), no.~2, 313--377, 
\doi{10.1090/S0894-0347-05-00477-7}.

\bibitem[Ngu22]{nguyen2022real}
  T.\,N.\,A.~Nguyen, \emph{Real enumerative geometry of {F}ano varieties of dimension 3 and index 2}, Ph.D.~thesis, Nantes Universit{\'e}, 2022, available at \url{https://theses.hal.science/tel-04003720v2}.

\bibitem[Sho80]{vsokurov1980smoothness}
V.\,V.~Shokurov, \emph{Smoothness of the general anticanonical divisor on a
  {F}ano $3$--fold}, Math.\ USSR-Izv.\ \textbf{14} (1980),
  no.~2, 395--405, 
\doi{10.1070/IM1980v014n02ABEH001123}.

\bibitem[Sol06]{solomon2006intersection}
J.\,P.~Solomon, \emph{Intersection theory on the moduli space of holomorphic
  curves with Lagrangian boundary conditions}, preprint \arXiv{math/0606429}
  (2006).

\bibitem[Vak97]{Vakil}
R.~Vakil, \emph{Enumerative geometry of curves via degeneration methods},
Ph.D.-thesis (Harvard Univ.), ProQuest LLC, Ann Arbor, MI, 1997.

\bibitem[Wel05a]{welschinger2005stongly}
J.\,Y.~Welschinger, \emph{Enumerative invariants of strongly semipositive real
  symplectic six-manifolds}, preprint \arXiv{math/0509121} (2005).

\bibitem[Wel05b]{welschinger2005fourfolds}
\bysame, \emph{Invariants of real symplectic $4$-manifolds and lower bounds in
  real enumerative geometry}, Invent.\ Math.\ \textbf{162} (2005),
  no.~1, 195--234, 
\doi{10.1007/s00222-005-0445-0}.

\bibitem[Wel05c]{welschinger2005spinor}
\bysame, \emph{Spinor states of real rational curves in real algebraic convex
  $3$-manifolds and enumerative invariants}, Duke Math.~J.\
  \textbf{127} (2005), no.~1, 89--121, 
\doi{10.1215/S0012-7094-04-12713-7}.

\bibitem[Zah18]{zahariuc2018rational}
A.~Zahariuc, \emph{Rational curves on {D}el {P}ezzo manifolds}, Adv.\
Geom.\ \textbf{18} (2018), no.~4, 451--465, 
\doi{10.1515/advgeom-2018-0010}.

\end{thebibliography}
\end{document}